\numberwithin{equation}{section}
\newtheorem{Theorem}{Theorem}[section]
\newtheorem*{Theorem*}{Theorem}
\newtheorem*{Question*}{Question}
\newtheorem{Corollary}[Theorem]{Corollary}
\newtheorem{Lemma}[Theorem]{Lemma}
\newtheorem{Proposition}[Theorem]{Proposition}
\theoremstyle{definition}
\newtheorem{Definition}[Theorem]{Definition}
\newtheorem{Example}[Theorem]{Example}
\newtheorem{Remark}[Theorem]{Remark} }
\def\H{\mathbb H}
\def\GL{\mathrm{GL}}
\def\SL{\mathrm{SL}}
\def\SU{\mathrm{SU}}
\def\sM{\mathfrak{M}}
\def\C{\mathbb C}
\def\R{\mathbb R}
\def\Z{\mathbb Z}
\def\mod{\ \mathrm{mod}\ }
\def\ord{\operatorname{ord}}
\def\LT{\operatorname{LT}}
\def\Im{\operatorname{Im}}
\def\gen#1{\langle #1\rangle}
\def\gauss#1{\left\lfloor #1\right\rfloor}
\def\SL{\mathrm{SL}}
\def\PSL{\mathrm{PSL}}
\def\ol#1{\overline{#1}}
\def\wt#1{\widetilde{#1}}
\def\M#1#2#3#4{\begin{pmatrix}#1&#2\\#3&#4\end{pmatrix}}
\def\SM#1#2#3#4{\left(\begin{smallmatrix}#1&#2\\#3&#4\end{smallmatrix}
 \right)}
\def\X #1,#2{X^{#1}_0(#2)}
\begin{document}
\allowdisplaybreaks

\newcommand{\arXivNumber}{2106.12438}

\renewcommand{\PaperNumber}{013}

\FirstPageHeading

\ShortArticleName{Modular Ordinary Differential Equations on $\mathrm{SL}(2,\mathbb{Z})$ of Third Order}

\ArticleName{Modular Ordinary Differential Equations on $\boldsymbol{\mathrm{SL}(2,\mathbb{Z})}$\\ of Third Order and Applications}

\Author{Zhijie CHEN~$^{\rm a}$, Chang-Shou LIN~$^{\rm b}$ and Yifan YANG~$^{\rm c}$}

\AuthorNameForHeading{Z.~Chen, C.-S.~Lin and Y.~Yang}

\Address{$^{\rm a)}$~Department of Mathematical Sciences, Yau Mathematical Sciences Center,\\
\hphantom{$^{\rm a)}$}~Tsinghua University, Beijing, 100084, China}
\EmailD{\href{mailto:zjchen2016@tsinghua.edu.cn}{zjchen2016@tsinghua.edu.cn}}

\Address{$^{\rm b)}$~Center for Advanced Study in Theoretical Sciences, National Taiwan University,\\
\hphantom{$^{\rm b)}$}~Taipei 10617, Taiwan}
\EmailD{\href{mailto:cslin@math.ntu.edu.tw}{cslin@math.ntu.edu.tw}}

\Address{$^{\rm c)}$~Department of Mathematics, National Taiwan University\\
\hphantom{$^{\rm c)}$}~and National Center for Theoretical Sciences, Taipei 10617, Taiwan}
\EmailD{\href{mailto:yangyifan@ntu.edu.tw}{yangyifan@ntu.edu.tw}}

\ArticleDates{Received June 24, 2021, in final form February 13, 2022; Published online February 22, 2022}

\Abstract{In this paper, we study third-order modular ordinary differential equations (MODE for short) of the following form $y'''+Q_2(z)y'+Q_3(z)y=0$, $z\in\mathbb{H}=\{z\in\mathbb{C} \,|\,\operatorname{Im}z>0 \}$, where $Q_2(z)$ and $Q_3(z)-\frac12 Q_2'(z)$ are meromorphic modular forms on ${\rm SL}(2,\mathbb{Z})$ of weight $4$ and $6$, respectively. We show that any quasimodular form of depth $2$ on ${\rm SL}(2,\mathbb{Z})$ leads to such a MODE. Conversely, we introduce the so-called Bol representation $\hat{\rho}\colon {\rm SL}(2,\mathbb{Z})\to{\rm SL}(3,\mathbb{C})$ for this MODE and give the necessary and sufficient condition for the irreducibility (resp.\ reducibility) of the representation. We show that the irreducibility yields the quasimodularity of some solution of this MODE, while the reducibility yields the modularity of all solutions and leads to solutions of certain ${\rm SU}(3)$ Toda systems. Note that the ${\rm SU}(N+1)$ Toda systems are the classical Pl\"ucker infinitesimal formulas for holomorphic maps from a~Riemann surface to $\mathbb{CP}^N$.}

\Keywords{modular differential equations; quasimodular forms; Toda system}

\Classification{11F11; 34M03}

\section{Introduction}

Let $Ly=0$ be a Fuchsian ordinary differential equation of third order defined on the upper half plane $\mathbb{H}=\{z\in \mathbb{C} \,|\, \operatorname{Im}z>0\}$:
\begin{equation}\label{eq-01}
Ly:=y'''+Q_2(z)y'+Q_3(z)y=0,\qquad z\in\mathbb{H},
\end{equation}
where $':=\frac{{\rm d}}{{\rm d}z}$. Near a regular point $z_0$ of $Ly=0$, a local
solution $y(z)$ can be obtained by giving the initial values
$y^{(k)}(z_0)$, $k=0,1,2$, and then $y(z)$ could be globally defined
through analytic continuation. However, globally $y(z)$ might be
multi-valued. If $Ly=0$ is defined on~$\mathbb{C}$, then the monodromy
representation from $\pi_1(\mathbb{C}\setminus\{\text{singular
 points}\})$ to $\SL(3,\mathbb{C})$ is introduced to
characterize the multi-valueness of solutions. If the potentials
$Q_2(z)$ and $Q_3(z)$ are elliptic functions with periods $1$ and
$\tau$ ($\Im \tau>0$) and any solution of $Ly=0$ is single-valued and
meromorphic, then the monodromy representation reduces to a
homomorphism from $\pi_1(E_{\tau})$ to $\SL(3,\mathbb{C})$, where
$E_{\tau}:=\mathbb{C}/(\mathbb{Z}+\mathbb{Z}\tau)$ is the elliptic
curve. The well-known examples are the integral Lam\'{e} equations and
its generalizations; see, e.g., \cite{CKL1,CKL2,CKL3} for some recent
developments of this subject. Note that $\pi_1(E_{\tau})$ is
abelian. In this paper, we consider the case that $Ly=0$ is defined on
$\mathbb{H}$ and instead of the monodromy representation defined on
$\pi_1(\mathbb{H}\setminus\{\text{singular points}\})$, we study a new
representation defined on a discrete non-abelian group $\Gamma$ that
is related to the modular property of $\Gamma$ acting on
$\mathbb{H}$. It is called the Bol representation in this paper as in
\cite{LY} where the Bol representation was first introduced for second
order differential equations.

Let $\Gamma$ be a discrete subgroup of $\SL(2,\mathbb{R})$ that is commensurable with $\SL(2,\mathbb{Z})$.
Equation~\eqref{eq-01} is called a \emph{modular ordinary
 differential equation} (MODE for short) on $\Gamma$ if $Q_2(z)$ and
$Q_3(z)-\frac12 Q_2'(z)$ are meromorphic modular forms on $\Gamma$ of
weight $4$ and $6$, respectively.
Modular ordinary differential equations (of general order) appear
prominently in the study of rational conformal field theories
(see, e.g., \cite{Arike-Kaneko-Nagatomo-Sakai,
 Franc-Mason2,Gaberdiel-Keller,Hampapura-Mukhi,KNS,KS,
 Mathur-Mukhi-Sen,Zhu}). They provide a practical tool for
classifying rational conformal field theories. As an object in the
theory of modular forms, modular differential equations have also been
studied by mathematicians. See, for example,
\cite{Franc-Mason0,Franc-Mason1,Grabner,Kaneko-Koike-2003,KK-2006,Sebbar-Saber}.

Given a MODE \eqref{eq-01}, it is natural to ask whether there are
solutions satisfying some modular property. The main goal of this
paper is to study when MODE \eqref{eq-01} has solutions that lead to
modular forms or quasimodular forms. The approach is to calculate the
Bol representation, which will be explained below.

First we recall some basic notions from the ODE aspect.
Equation \eqref{eq-01} is called \emph{Fuchsian} if the order of any pole
of $Q_j(z)$ is at most $j$, $j=2,3$. At the
cusp $\infty$, we let $q_N={\rm e}^{2\pi {\rm i} z/N}$, where~$N$ is the width of
$\infty$ in $\Gamma$. Then $\frac{{\rm d}}{{\rm d}z}=\frac{2\pi {\rm i}
}{N}q_N\frac{{\rm d}}{{\rm d}q_N}$ and so \eqref{eq-01} becomes
\begin{equation}\label{eq-03}
\left(q_N\frac{{\rm d}}{{\rm d}q_N}\right)^3y+\left(\frac{N}{2\pi {\rm i}}\right)^2 Q_2(z)q_N\frac{{\rm d}}{{\rm d}q_N}y+\left(\frac{N}{2\pi {\rm i}}\right)^3Q_3(z)y=0.
\end{equation}
From here we see that
\[
\text{\it \eqref{eq-01} is Fuchsian at $\infty$ if and only if $Q_2(z)$ and $Q_3(z)$ are holomorphic at $\infty$},
\]
and similar conclusions hold for other cusps of $\Gamma$. By (\ref{eq-03}), the indicial equation at the cusp $\infty$ is given by
\[
\kappa^3+\Big(\frac{N}{2\pi {\rm i}}\Big)^2 Q_2(\infty)\kappa+\left(\frac{N}{2\pi {\rm i}}\right)^3Q_3 (\infty)=0,
\]
the roots of which are called the local exponents of (\ref{eq-01}) at $\infty$, denoted by $\kappa_{\infty}^{(1)}$, $\kappa_{\infty}^{(2)}$ and $\kappa_{\infty}^{(3)}$, satisfying $\sum_{j}\kappa_{\infty}^{(j)}=0$.
In this paper, we always assume that the exponent differences $\kappa_{\infty}^{(j)}-\kappa_{\infty}^{(1)}$ are \emph{integers} for $j=2,3$, Then $\sum_{j}\kappa_{\infty}^{(j)}=0$ implies
$\kappa_{\infty}^{(j)}\in\frac13\mathbb{Z}$ for all~$j$
and so we may assume $\kappa_{\infty}^{(1)}\leq \kappa_{\infty}^{(2)}\leq \kappa_{\infty}^{(3)}$. Similar assumptions are made for other cusps.

On the other hand, let $z_0\in\mathbb{H}$ be a singular point of (\ref{eq-01}) and write
\[Q_j(z)=A_j(z-z_0)^{-j}+O\big((z-z_0)^{-j+1}\big) \qquad\text{\it at }z_0,\]
then the indicial equation at $z_0$ is given by
\[
\kappa (\kappa-1)(\kappa-2)+A_2 \kappa+A_3=0,
\]
the roots of which are the local exponents of $(\ref{eq-01})$ at $z_0$, denoted by $\kappa_{z_0}^{(1)}$, $\kappa_{z_0}^{(2)}$ and $\kappa_{z_0}^{(3)}$, satisfying $\sum_{j}\kappa_{z_0}^{(j)}=3$. In this paper, we always assume that the exponent differences $\kappa_{z_0}^{(j)}-\kappa_{z_0}^{(1)}$ are \emph{integers} for $j=2,3$. Then $\sum_{j}\kappa_{z_0}^{(j)}=3$ implies
$\kappa_{z_0}^{(j)}\in\frac13\mathbb{Z}$ for all~$j$
and so we may assume $\kappa_{z_0}^{(1)}\leq\kappa_{z_0}^{(2)}\leq\kappa_{z_0}^{(3)}$.
Since the exponent differences are integers, (\ref{eq-01}) might have solutions with logarithmic singularities at $z_0$. See Appendix~\ref{section-3} for all possibilities of the solution structure of (\ref{eq-01}) at~$z_0$. The singularity~$z_0$ is called \emph{apparent} if~\eqref{eq-01} has no solutions with logarithmic singularities at~$z_0$. In this case, the three local exponents must be distinct, i.e., $\kappa_{z_0}^{(1)}<\kappa_{z_0}^{(2)}<\kappa_{z_0}^{(3)}$; see, e.g., Appendix~\ref{section-3}. In this paper, we always assume that~$L$ is \emph{apparent at any singularity $z_0\in \mathbb{H}$}. More precisely, we assume that the MODE~(\ref{eq-01}) satisfies
\begin{itemize}\itemsep=0pt
\item[(H1)] The MODE (\ref{eq-01}) is Fuchsian on $\mathbb{H}\cup\{\text{cusps}\}$;
\item[(H2)] At any singular point $z_0\in\mathbb{H}$,
 $\kappa_{z_0}^{(1)}<\kappa_{z_0}^{(2)}<\kappa_{z_0}^{(3)}$ satisfy
 $\kappa_{z_0}^{(1)}\in\frac13\mathbb{Z}_{\leq 0}$ and
 $\kappa_{z_0}^{(j)}-\kappa_{z_0}^{(1)}\in\mathbb{Z}$ for
 $j=2,3$. Furthermore, $z_0$ is apparent.
\item[(H3)] At any cusp $s$ of $\Gamma$,
 $\kappa_{s}^{(1)}\leq\kappa_{s}^{(2)}\leq \kappa_{s}^{(3)}$
 satisfies $\kappa_{s}^{(1)}\in\frac13\mathbb{Z}_{\leq 0}$ and
 $\kappa_{s}^{(j)}-\kappa_{s}^{(1)}\in\mathbb{Z}$ for $j=2,3$.
\end{itemize}
The motivation of all these assumptions will be clear from Theorem~\ref{thm-02} below.

Note $\ln q_N=\frac{2\pi {\rm i}}{N}z$. Under our assumption (H3),
(\ref{eq-01}) might have solutions containing $(\ln
q_N)^2=-\frac{4\pi^2}{N^2}z^2$ terms; see Remark
\ref{rmk-apparent}. In this case, we call the cusp $\infty$ to be
\emph{completely not apparent} or \emph{maximally unipotent},
because under the Bol representation $\hat{\rho}$ that will be
introduced below, the corresponding matrix $\hat{\rho}(T)\in \SL(3,\mathbb{C})$ of
$T=\SM{1}{N}{0}{1}\in\Gamma$ has eigenvalues $\{1,1,1\}$ but
$\operatorname{rank}(\hat{\rho}(T)-I_3)=2$, i.e., $\hat{\rho}(T)$ is maximally unipotent. Here $I_3$ denotes the $3\times 3$
identity matrix.

One class of the MODEs can be derived from quasimodular forms of depth
$2$. The notion of quasimodular forms was first introduced by Kaneko
and Zagier~\cite{KZ}.
See Section~\ref{section-2} for a~brief overview of basic properties of quasimodular forms. In particular, given a holomorphic function~$\phi(z)$ satisfying
\begin{equation*}
(\phi\big|_{2}\gamma)(z):=(cz+d)^{-2}\phi(\gamma z)=\phi(z)+\frac{\alpha c}{cz+d}
\end{equation*}
for all $\gamma=\SM{a}{b}{c}{d}$ for some nonzero complex number $\alpha$, any quasimodular form $f(z)$ of weight $k$ and depth $2$ with character $\chi$ can be expressed as
\[f(z)=f_0(z)+f_1(z)\phi(z)+f_2(z)\phi(z)^2,\]
where $f_j(z)$ is a modular form on $\Gamma$ of weight $k-2j$ with character $\chi$ and $f_2\neq 0$. Consider
\begin{align}\label{0eq-22}
\begin{pmatrix}
h_1(z)\\
h_2(z)\\
h_3(z)
\end{pmatrix}:=\begin{pmatrix}
z^2 f(z)+\alpha z(f_1(z)+2f_2(z)\phi(z))+\alpha^2f_2(z)\\
2 z f(z)+\alpha(f_1(z)+2f_2(z)\phi(z))\\
f(z)
\end{pmatrix}
\end{align}
and define
\begin{equation}\label{wfz}W_f(z):=\det\begin{pmatrix}
h_1& h_1'& h_1''\\
h_2& h_2'& h_2''\\
h_3& h_3'& h_3''
\end{pmatrix}\end{equation}
to be the Wronskian associated to $f$.
Then
$W_f(z)$ is a modular form on $\Gamma$ of weight $3k$ with character $\chi^3$; see Lemma \ref{lemma-2-1} for a proof. This $W_f(z)$ was first introduced by Pellarin \cite{PN} for $\Gamma=\SL(2,\mathbb{Z})$ and $\phi(z)=E_2(z)$.

Now we define $g_j(z):=\frac{h_j(z)}{\sqrt[3]{W_f(z)}}$, then
\[\det\begin{pmatrix}
g_1& g_1'& g_1''\\
g_2& g_2'& g_2''\\
g_3& g_3'& g_3''
\end{pmatrix}=1,\]
and a further differentiation leads to
\begin{equation}\label{eq-171}\det\begin{pmatrix}
g_1& g_1'& g_1'''\\
g_2& g_2'& g_2'''\\
g_3& g_3'& g_3'''
\end{pmatrix}=0,\end{equation}
so $g_3(z)$ is a solution of (\ref{eq-01}) with
\begin{equation}\label{0eq-7}Q_2(z):=\frac{g_1'''g_2-g_1g_2'''}{g_1g_2'-g_1'g_2},\qquad Q_3(z):=\frac{g_1'g_2'''-g_2'g_1'''}{g_1g_2'-g_1'g_2}.\end{equation}
It is easy to see that $Q_2(z)$ and $Q_3(z)$ are single-valued, and $g_1$, $g_2$ are also solutions of~(\ref{eq-01}). Our first result reads as follows.

\begin{Theorem}\label{thm-02}
Let $Q_2(z)$ and $Q_3(z)$ be given by \eqref{0eq-7}. Then
\begin{itemize}\itemsep=0pt
\item[$(1)$] \eqref{eq-01} is a MODE, i.e., $Q_2(z)$ and $Q_3(z)-\frac12
 Q_2'(z)$ are meromorphic modular forms on $\Gamma$ $($with trivial
 character$)$ of weight $4$ and $6$, respectively.
\item[$(2)$] $(H1)$--$(H3)$ hold for \eqref{eq-01}.
\end{itemize}
Furthermore, for $\Gamma=\SL(2,\mathbb{Z})$, we have that
\begin{itemize}\itemsep=0pt
\item[$(3)$] At the elliptic point ${\rm i}$, $\big\{3\kappa_{\rm i}^{(1)},3\kappa_{\rm i}^{(2)},
 3\kappa_{\rm i}^{(3)}\big\}\equiv\{0,0,1\} \mod 2$.
\item[$(4)$] At the elliptic point $\rho=\frac{-1+\sqrt{3}i}{2}$, $\kappa_{\rho}^{(j)}\in\mathbb{Z}$ for all $j$ and $\big\{\kappa_{\rho}^{(1)},\kappa_{\rho}^{(2)},
 \kappa_{\rho}^{(3)}\big\}\equiv\{0,1,2\} \mod 3$.
\end{itemize}
\end{Theorem}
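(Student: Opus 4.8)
The linchpin is the transformation law of the frame $(h_1,h_2,h_3)$ under $\Gamma$; everything else follows from it. Put $\psi:=f_1+2f_2\phi$. Using $f_j(\gamma z)=(cz+d)^{k-2j}f_j(z)$ and $\phi(\gamma z)=(cz+d)^2\phi(z)+\alpha c(cz+d)$, I would first record
\[f(\gamma z)=(cz+d)^k f+\alpha c(cz+d)^{k-1}\psi+\alpha^2c^2(cz+d)^{k-2}f_2,\]
together with $\psi(\gamma z)=(cz+d)^{k-2}\psi+2\alpha c(cz+d)^{k-3}f_2$ and $f_2(\gamma z)=(cz+d)^{k-4}f_2$. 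Inverting the relations in \eqref{0eq-22} to express $f,\psi,f_2$ through $h_1,h_2,h_3$ and substituting, the computation collapses to
\[\begin{pmatrix}h_1\\h_2\\h_3\end{pmatrix}(\gamma z)=(cz+d)^{k-2}\rho(\gamma)\begin{pmatrix}h_1\\h_2\\h_3\end{pmatrix}(z),\qquad\rho(\gamma)=\begin{pmatrix}a^2&ab&b^2\\2ac&ad+bc&2bd\\c^2&cd&d^2\end{pmatrix},\]
the symmetric-square (Bol) representation, with $\det\rho(\gamma)=1$. Since $W_f$ has weight $3k$ (Lemma~\ref{lemma-2-1}), $W_f^{1/3}(\gamma z)=\mu(\gamma)(cz+d)^kW_f^{1/3}(z)$ with $\mu(\gamma)^3=1$, so $g_j(\gamma z)=\mu(\gamma)^{-1}(cz+d)^{-2}(\rho(\gamma)g)_j(z)$: the solution frame is vector-valued of weight $-2$. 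To finish part~(1) I would feed a generic solution $g\in V:=\gen{g_1,g_2,g_3}$ into $L$. Writing $u(z):=(cz+d)^2g(\gamma z)$, the chain rule gives $u'=g'(\gamma z)+2c(cz+d)g(\gamma z)$ and $u'''=(cz+d)^{-4}g'''(\gamma z)$. Since $u\in V$ we have $Lu=0$; substituting $g'''(\gamma z)=-Q_2(\gamma z)g'(\gamma z)-Q_3(\gamma z)g(\gamma z)$ and matching the independent coefficients of $g'(\gamma z)$ and $g(\gamma z)$ yields $Q_2(\gamma z)=(cz+d)^4Q_2(z)$ and $Q_3(\gamma z)=(cz+d)^6Q_3(z)+2c(cz+d)^5Q_2(z)$. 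Differentiating the first identity and combining shows $\big(Q_3-\frac12Q_2'\big)(\gamma z)=(cz+d)^6\big(Q_3-\frac12Q_2'\big)(z)$; meromorphy is immediate from \eqref{0eq-7}, and the scalar $\mu$ having cancelled, the characters are trivial.

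For part~(2), at an interior singular point $z_0$ I would exploit that $g_1,g_2,g_3$ share the common factor $W_f^{-1/3}$, whose monodromy around $z_0$ is the scalar $\zeta^{-n}$ with $n=\ord_{z_0}W_f$ and $\zeta^3=1$; scalar monodromy is diagonalizable, so there are no logarithms and $z_0$ is apparent. Choosing a basis of $\gen{h_1,h_2,h_3}$ with strictly increasing vanishing orders $a_1<a_2<a_3$ at $z_0$, the exponents are $\kappa_{z_0}^{(j)}=a_j-n/3$; the Wronskian order identity $n=a_1+a_2+a_3-3$ then gives $\sum_j\kappa_{z_0}^{(j)}=3$, forces $\kappa_{z_0}^{(1)}=a_1-n/3\le 0$ lying in $\frac13\Z_{\le0}$, and makes the differences integral, which is (H2); finiteness of the exponents makes $z_0$ regular singular, which is (H1) on $\H$. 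At a cusp I would pass to $q_N$: the explicit $z=\frac{N}{2\pi{\rm i}}\log q_N$ entering $h_1,h_2$ produces at most $(\log q_N)^2$, i.e.\ maximal unipotency, while $W_f$ and the $q_N$-expansions of $f,\psi,f_2$ are of moderate growth, so the cusp is regular singular; reading off the $q_N$-powers gives (H1) and (H3). This last cusp bookkeeping is routine but tedious.

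Parts~(3)--(4) come from the elliptic symmetry. At a fixed point $z_0$ of a stabilizer generator $\gamma_0$, if $v$ is an eigenvector of $\rho(\gamma_0)^{\top}$ with eigenvalue $\eta$ then $H_v:=\sum_j v_jh_j$ satisfies $H_v(\gamma_0z)=\eta(cz+d)^{k-2}H_v(z)$; expanding both sides at $z_0$ with $\gamma_0z-z_0\sim\lambda(z-z_0)$, $\lambda=(cz_0+d)^{-2}$, and comparing leading terms gives $\lambda^{a}=\eta(cz_0+d)^{k-2}$ with $a=\ord_{z_0}H_v$. For $z_0={\rm i}$, $\gamma_0=S=\SM{0}{-1}{1}{0}$, one has $\lambda=-1$ and $\rho(S)=\left(\begin{smallmatrix}0&0&1\\0&-1&0\\1&0&0\end{smallmatrix}\right)$ of eigenvalues $\{1,-1,-1\}$; since the weight $k$ is even, $(c{\rm i}+d)^{k-2}={\rm i}^{k-2}$, and the relation forces the three orders to be $\equiv\{k/2+1,k/2,k/2\}\pmod 2$. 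Substituting into $3\kappa_{\rm i}^{(j)}=3a_j-n$ with $n=\sum a_j-3$ gives $\big\{3\kappa_{\rm i}^{(j)}\big\}\equiv\{0,0,1\}\pmod 2$. For $z_0=\rho$, $\gamma_0=ST=\SM{0}{-1}{1}{1}$, one has $\lambda=\omega^{-1}$ with $\omega={\rm e}^{2\pi{\rm i}/3}$ and $\rho(ST)=\left(\begin{smallmatrix}0&0&1\\0&-1&-2\\1&1&1\end{smallmatrix}\right)$, of trace $0$, determinant $1$ and order~$3$, hence eigenvalues $\{1,\omega,\omega^2\}$; the relation now forces the orders to be $\equiv\{0,1,2\}\pmod 3$, so $\sum a_j\equiv0$ and $n\equiv0\pmod 3$, whence $\kappa_\rho^{(j)}=a_j-n/3\in\Z$ and $\big\{\kappa_\rho^{(j)}\big\}\equiv\{0,1,2\}\pmod 3$. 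The main obstacle throughout is the opening computation: pinning down the symmetric-square matrix $\rho(\gamma)$ and the correct automorphy exponent $k-2$ exactly, since the modular weights in~(1) and the elliptic congruences in~(3)--(4) are all controlled by the eigenvalues of $\rho$ at $S$ and $ST$ and by the parity of $k$.
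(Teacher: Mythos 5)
For parts (1) and (2) your route is essentially the paper's own. The symmetric-square law $F_f(\gamma z)=\chi(\gamma)(cz+d)^{k-2}AF_f(z)$ is exactly the computation in Lemma~\ref{lemma-2-1}; the modularity of $Q_2$ and $Q_3-\frac12Q_2'$ by slashing a solution through $L$ and matching the coefficients of $y'(\gamma z)$ and $y(\gamma z)$ is the argument of Theorem~\ref{thm-1}; and your mechanism for (H1)--(H3) (no logarithms in the frame $h_jW_f^{-1/3}$ at interior points, hence apparentness, with exponents $\kappa^{(j)}_{z_0}=a_j-n/3$ and the valuation count $n=\ord_{z_0}W_f=a_1+a_2+a_3-3$; the explicit $z$, $z^2$ in $h_2$, $h_1$ producing $(\ln q_N)^2$ at the cusp) is the paper's mechanism as well. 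Your cusp treatment is compressed: for a cusp $s\neq\infty$ of a general $\Gamma$ you still need the conjugation step the paper performs (rewriting $f$ through forms $\wt f_j$ on $\ker\chi\cap\SL(2,\Z)$ and slashing by $\sigma$ with $\sigma\infty=s$), but the idea is identical.

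Parts (3)--(4) are where you genuinely depart, and your argument is correct and in fact shorter than the paper's. The paper first normalizes ($f_0,f_1,f_2$ with no common zeros, $\chi$ trivialized by $f\mapsto f/\eta^m$), uses Lemma~\ref{lemma: nonvanishing} and Proposition~\ref{prop-3} to identify $\kappa_{z_0}^{(1)}=-\frac13\ord_{z_0}W_f$, applies the valence formula to $W_f$, and then computes explicit $w$-expansions (via Proposition~\ref{prop-2}) of the particular combinations $h_1+{\rm i}h_2-h_3$ at ${\rm i}$ and $h_1-\ol\rho h_2+\ol\rho^2h_3$ at $\rho$, with a separate delicate computation in the residual case $k\equiv0\mod3$. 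You replace all of this by diagonalizing the local action of the stabilizer: the relation $\lambda^{a}=\eta(cz_0+d)^{k-2}$ for an $\eta$-eigenvector of $A(\gamma_0)^{\top}$, with spectra $\{1,-1,-1\}$ for $A(S)$ and $\{1,\omega,\omega^2\}$ for $A(R)$, pins down the three valuations mod $2$ (resp.\ mod $3$), and the Wronskian identity converts these to the exponent congruences. This explains conceptually why the paper's special combinations work --- $(1,{\rm i},-1)$ is a $(-1)$-eigenvector of $A(S)^{\top}$ and $(1,-\ol\rho,\ol\rho^2)$ an $\omega$-eigenvector of $A(R)^{\top}$ --- and it eliminates the mod-$4$ and mod-$3$ case analysis, the valence formula, and even the no-common-zero normalization. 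Two small patches are needed to make it airtight. First, you silently assume the character is trivial: as in the paper, replace $f$ by $f/\eta^m$ (which leaves the MODE unchanged) before writing $(c{\rm i}+d)^{k-2}={\rm i}^{k-2}$; otherwise $\chi(S)$, $\chi(R)$ enter the eigenvalue relation and your congruences shift. Second, at ${\rm i}$ the eigenvalue $-1$ has a two-dimensional eigenspace, so you must add the standard valuation-filtration remark that a two-dimensional space of holomorphic germs attains exactly two distinct vanishing orders; these two orders share the parity forced by $\eta=-1$, the $\eta=1$ order has the opposite parity, and only then do the eigenvector orders exhaust $\{a_1,a_2,a_3\}$ with multiset of parities $\{k/2,k/2,k/2+1\}$. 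At $\rho$ no such care is needed since the three eigenvalues are distinct and already force three distinct residues mod $3$. With these one-line additions your proof of (3)--(4) is complete and constitutes a genuine simplification of the paper's.
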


We emphasize that (1) and (2) in Theorem~\ref{thm-02} hold for any~$\Gamma$, not only for $\Gamma=\SL(2,\mathbb{Z})$. They will lay the
ground for our future study of general MODEs on other congruence
subgroups.

As an example, in Section~\ref{section-extremal}, we will work out the MODE in the case
$f(z)$ is an extremal quasimodular form on $\SL(2,\Z)$, introduced
first in~\cite{KK-2006}; see Theorem~\ref{thm-2}.

Now we introduce the notion of the Bol representation of $\Gamma$
associated to the MODE (\ref{eq-01}), which was first introduced in~\cite{LY} for second order MODEs. It is well known that any (local)
solution $y(z)$ of (\ref{eq-01}) can be extended to a multi-valued function in
$\mathbb{H}$ through analytic continuation. Fix a point
$z_0\in\mathbb{H}$ that is not a singular point of~(\ref{eq-01}) and
let $U$ be a simply-connected neighborhood of $z_0$ that contains no
singularities of (\ref{eq-01}). For $\gamma=\SM{a}{b}{c}{d}\in\Gamma$,
choose a~path~$\sigma$ from~$z_0$ to $\gamma z_0$ and consider the
analytic continuation of $y(z)$, $z\in U$, along the path. Then
$y(\gamma z)$ is well-defined in $U$. Define
\[(y|_{-2}\gamma)(z):=(cz+d)^2y(\gamma z),\qquad z\in U,\]
then by a direct computation or by using Bol's identity \cite{Bol}, we see that
$(y|_{-2}\gamma)(z)$ is also a~solution of (\ref{eq-01}). Thus, given
a~fundamental system of solutions $Y(z)=(y_1(z),y_2(z),y_3(z))^t$,
there is $\hat{\gamma}\in \SL(3,\mathbb{C})$ such that
\[(Y\big|_{-2}\gamma)(z)=\hat{\gamma}Y(z),\]
where the fact $\det \hat{\gamma}=1$ follows from that the Wronskians
of $Y$ and $(Y\big|_{-2}\gamma)$ are the same. Obviously, this matrix
$\hat{\gamma}$ depends on the choice of the path $\sigma$. However,
under the above assumptions, all local monodromy matrices are
$\varepsilon I_3$ with $\varepsilon^3=1$, so different choices of
$\sigma$ will only possibly change $\hat{\gamma}$ to ${\rm e}^{\pm\frac{2\pi {\rm i}}{3}}\hat{\gamma}$. From here, we see that there is a
well-defined homomorphism $\rho\colon \Gamma\to
\PSL(3,\mathbb{C})$ such that
\[(Y\big|_{-2}\gamma)(z)={\rm e}^{\frac{2\pi {\rm i} k}{3}}\rho(\gamma)Y(z),\qquad k\in\{0,\pm 1\},\]
where $y_j(\gamma z)$ are always understood to take analytic
continuation along the same path for $j=1,2,3$. This homomorphism
$\rho$ will be called the \emph{Bol representation} as in
\cite{LY}. For the convenience of computations, it is better to lift
$\rho$ to a homomorphism $\hat{\rho}\colon \Gamma\to {\rm GL}(3,\mathbb{C})$ as
follows. Suppose that we can find a multi-valued meromorphic function
$F(z)$ such that: (i)~The analytic continuation of
$\hat{y}(z):=F(z)y(z)$, where $y(z)$ is any solution of (\ref{eq-01}),
gives rise to a~single-valued holomorphic function on $\mathbb{H}$,
and (ii)~$F(z)^3$ is a modular form on~$\Gamma$ of weight~$3k$ with
some character, where $k\in\mathbb{N}$. Such~$F(z)$ can be constructed
explicitly when $\Gamma$ is a triangle group. Then by letting
$\hat{Y}(z):=F(z)Y(z)$, there is $\hat{\rho}(\gamma)\in
{\rm GL}(3,\mathbb{C})$ such that
\[\big(\hat{Y}\big|_{\ell}\gamma\big)(z)=\hat{\rho}(\gamma)\hat{Y}(z),\qquad
 \text{where }\ell=k-2.\]
This homomorphism $\hat{\rho}\colon \Gamma\to {\rm GL}(3,\mathbb{C})$, as a
lift of $\rho$, will also be called the \emph{Bol representation}
since there is no confusion arising.
Naturally we consider the following problem:

\begin{Question*}
Can we characterize, in terms of local exponents, the MODEs
 \eqref{eq-01} whose Bol representations are irreducible?
\end{Question*}

One purpose of this paper is to answer this question for the case
$\Gamma=\SL(2,\mathbb{Z})$. For $\Gamma=\SL(2,\mathbb{Z})$, the above
$F(z)$ can be taken to be
\begin{equation}\label{0eq-5-5}
F(z):=\Delta(z)^{-\kappa_{\infty}^{(1)}}E_4(z)^{-\kappa_{\rho}^{(1)}}E_6(z)^{-\kappa_{\rm i}^{(1)}}
\prod_{j=1}^m F_j(z)^{-\kappa_{z_j}^{(1)}},
\end{equation}
where
\begin{equation} \label{equation: E4, E6}
E_4(z)=1+240\sum_{n=1}^\infty\frac{n^3q^n}{1-q^n}, \qquad
E_6(z)=1-504\sum_{n=1}^\infty\frac{n^5q^n}{1-q^n}, \qquad q={\rm e}^{2\pi {\rm i}z},
\end{equation}
are the Eisenstein series of weight $4$ and $6$, respectively,
\[
\Delta(z)=\frac{E_4(z)^3-E_6(z)^2}{1728}=q-24q^2+252q^3-1472q^4+\cdots,
\]
${\rm i}=\sqrt{-1}$ and $\rho=\big({-}1+\sqrt{3}{\rm i}\big)/2$ are the elliptic points of
$\SL(2,\mathbb{Z})$, $\{z_1,\dots, z_m\}$ $\sqcup\{{\rm i},\rho,\infty\}$
denotes the set of singular points of the MODE (\ref{eq-01}) mod
$\SL(2,\mathbb{Z})$, $t_j:=E_4(z_j)^3/E_6(z_j)^2$ and
$F_j(z):=E_4(z)^3-t_j E_6(z)^2$. Then $F(z)^3$ is a modular form of
weight $3(\ell+2)$, where the integer $\ell=k-2$ is given by
\begin{equation*}
\ell:=-2-12\kappa_{\infty}^{(1)}-4\kappa_{\rho}^{(1)}-6\kappa_{\rm i}^{(1)}-12\sum_{j=1}^m \kappa_{z_j}^{(1)}.
\end{equation*}
 In other words, besides the assumptions (H1)--(H3), we need to assume
 further that $\kappa_{\rho}^{(1)}\in\mathbb{Z}$ such that
 $\ell\in\mathbb{Z}$. Consequently, we will see from Lemma~\ref{lemm-3-1} that the Bol representation $\hat{\rho}$ is indeed a group homomorphism from $\SL(2,\mathbb{Z})$ to $\SL(3,\mathbb{C})$.

\begin{Remark} The choice of $F(z)$ is not unique since we can
 multiply $F(z)$ by a holomorphic modular form to obtain a new
 one. Different choices of $F(z)$'s may give different weights $k$
 (and so $\ell$) but keeping $\hat{\rho}(\gamma)$ invariant. For
 example, when the MODE~(\ref{eq-01}) comes from a quasimodular form
 $f(z)$ of depth $2$ on~$\Gamma$ as shown in Theorem~\ref{thm-02},
 then one choice is to take $\sqrt[3]{W_f(z)}$ as~$F(z)$,
 i.e., $\hat{Y}=(h_1,h_2,h_3)^t$ defined in~(\ref{0eq-22}). Note that
 for $\Gamma=\SL(2,\mathbb{Z})$, $\sqrt[3]{W_f(z)}$ might be different
 from the~$F(z)$ given by~(\ref{0eq-5-5}). To obtain that
 $\sqrt[3]{W_f(z)}$ equals to the $F(z)$ given by~(\ref{0eq-5-5}), we
 need to assume that $f_0$, $f_1$, $f_2$ have no common zeros.
\end{Remark}

Note from $\sum_{j}\kappa_{\rm i}^{(j)}=3$ that we have either
 $\big\{3\kappa_{\rm i}^{(1)},3\kappa_{\rm i}^{(2)},
 3\kappa_{\rm i}^{(3)}\big\}\equiv\{0,0,1\} \mod 2$ or
 $\big\{3\kappa_{\rm i}^{(1)},\allowbreak 3\kappa_{\rm i}^{(2)},
 3\kappa_{\rm i}^{(3)}\big\}\equiv\{1,1,1\} \mod 2$. Our second result of this
 paper is

\begin{Theorem}\label{thm-01} Let $\Gamma=\SL(2,\mathbb{Z})$ and
 suppose that the MODE \eqref{eq-01} satisfies $(H1)$--$(H3)$ and
 $\kappa_{\rho}^{(1)}\in\mathbb{Z}$. Then the Bol representation
 $\hat\rho$ is irreducible if and only if
 $\big\{3\kappa_{\rm i}^{(1)},3\kappa_{\rm i}^{(2)}$,
 $3\kappa_{\rm i}^{(3)}\big\}\equiv\{0,0,1\} \mod 2$.
\end{Theorem}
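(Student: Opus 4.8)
The plan is to analyze $\hat\rho$ through its values on the generators $S=\SM{0}{-1}{1}{0}$ and $ST=\SM{0}{-1}{1}{1}$ of $\SL(2,\Z)$, which fix the elliptic points ${\rm i}$ and $\rho$ and have orders $4$ and $6$. Write $A=\hat\rho(S)$ and $B=\hat\rho(ST)$; these are of finite order, hence diagonalizable. The core computation is to read off the eigenvalues of $A$ from the local exponents at ${\rm i}$. Near ${\rm i}$ one has $F(z)=E_6(z)^{-\kappa_{\rm i}^{(1)}}\times(\text{non-vanishing})$ with $E_6$ having a simple zero at ${\rm i}$, so a solution $y\sim (z-{\rm i})^{\kappa_{\rm i}^{(j)}}$ produces a single-valued $\hat y=Fy\sim (z-{\rm i})^{\kappa_{\rm i}^{(j)}-\kappa_{\rm i}^{(1)}}$. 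Since $S$ acts on the local coordinate $z-{\rm i}$ through $(c{\rm i}+d)^{-2}=-1$ and $F(Sz)=\eta\,(cz+d)^{\ell+2}F(z)$ for a cube root of unity $\eta$ fixed by single-valuedness, comparing $\hat y(Sz)$ with $(cz+d)^{\ell}\hat y(z)$ yields the eigenvalue attached to the $j$-th exponent. I expect this to collapse, after using $\sum_j\kappa_{\rm i}^{(j)}=3$ and $\kappa_{\rm i}^{(j)}-\kappa_{\rm i}^{(1)}\in\Z$ (so that the integers $3\kappa_{\rm i}^{(j)}$ are congruent mod $3$ and $\eta$ is thereby determined), to the clean formula that the eigenvalue for the $j$-th exponent equals $-(-1)^{3\kappa_{\rm i}^{(j)}}$, i.e.\ it is $+1$ or $-1$ according to the parity of $3\kappa_{\rm i}^{(j)}$. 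This is where both the factor $3$ (from taking the cube root of the $E_6$-power in $F$) and the reduction mod $2$ (from $(c{\rm i}+d)^2=-1$ and the order-$2$ image of $S$) enter.

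Granting this, the two implications follow from the resulting dichotomy. If $\{3\kappa_{\rm i}^{(1)},3\kappa_{\rm i}^{(2)},3\kappa_{\rm i}^{(3)}\}\equiv\{1,1,1\}\pmod 2$, then every eigenvalue of $A$ is $+1$, so $A=\hat\rho(S)=I_3$; as $S$ and $T$ generate $\SL(2,\Z)$, the image of $\hat\rho$ is then the cyclic group generated by $\hat\rho(T)$, which is abelian and hence has an eigenline, so $\hat\rho$ is reducible. This gives the contrapositive of the ``only if'' direction. For the ``if'' direction the pattern $\{0,0,1\}$ forces the eigenvalues of $A$ to be $\{-1,-1,1\}$; up to the scalar $-1$ this is a genuine complex reflection, i.e.\ $-A$ fixes a hyperplane.

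To prove irreducibility itself I would use that the push-forward of \eqref{eq-01} to the quotient $X(1)=\SL(2,\Z)\backslash\H^\ast\cong\P^1$ is a third-order Fuchsian equation whose only non-apparent singularities are the three images of ${\rm i}$, $\rho$, $\infty$ (the $z_j$ being apparent, their local monodromy is scalar and drops out). Thus $\hat\rho$ is, up to the scalar twist coming from $F$, the monodromy of a rank-$3$ \emph{rigid} local system on $\P^1\setminus\{3\text{ points}\}$ whose local monodromies $A$, $B$, $\hat\rho(T)$ satisfy the genus-$0$ relation. Since $A$ is a reflection in this case, the system is of hypergeometric type, and I would invoke the Beukers--Heckman irreducibility criterion: the representation is irreducible precisely when the eigenvalue data at the remaining two points $\rho$ and $\infty$ share no common value after the scalar normalization. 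I would verify this disjointness from the hypotheses: at $\rho$ the assumption $\kappa_\rho^{(1)}\in\Z$ together with $\sum_j\kappa_\rho^{(j)}=3$ forces the three eigenvalues of $B$ to be the three distinct cube roots of unity (equivalently $\{\kappa_\rho^{(j)}\}\equiv\{0,1,2\}\pmod3$), while (H3) pins down $\hat\rho(T)$ at the cusp; a short bookkeeping then rules out every candidate common invariant subspace.

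The main obstacle is precisely this irreducible direction, namely excluding a common invariant subspace. The subtlety is that $\PSL(2,\Z)\cong\Z/2\ast\Z/3$ is a free product, so for arbitrary matrices $A$ (of order $2$) and $B$ (of order $3$) irreducibility is \emph{not} forced by the eigenvalues alone: reducibility can occur when the eigenspaces of $A$ and $B$ sit in special relative position. What rescues the argument is rigidity, since the three-singular-point Fuchsian structure determines $\hat\rho$ up to isomorphism from the local exponents, so the relative position of the eigenspaces is not free and the reflection criterion becomes applicable. The delicate points will be (i) justifying rigidity in the presence of the apparent singularities $z_j$ and of a possibly non-maximally-unipotent cusp, and (ii) carefully tracking the scalar twist by $F$ so that the reflection normalization at ${\rm i}$ and the eigenvalue-disjointness at $\rho$ and $\infty$ are stated consistently. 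By contrast, I expect the eigenvalue computation at ${\rm i}$ to be routine once the branch of the cube root defining $F$ near ${\rm i}$ is fixed.
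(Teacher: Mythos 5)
Your eigenvalue computation at ${\rm i}$ is correct and is exactly the content of the paper's Lemma~\ref{lem-3-9}: unwinding $\ell$ one finds that the eigenvalue attached to $\kappa_{\rm i}^{(j)}$ is ${\rm i}^{-\ell-2(\kappa_{\rm i}^{(j)}-\kappa_{\rm i}^{(1)})}=-(-1)^{3\kappa_{\rm i}^{(j)}}$, as you predicted. Your reducible direction is also fine (and is a weaker-but-sufficient version of the paper's Theorem~\ref{prop-110}, which shows $\hat\rho$ is in fact trivial). One caveat: the claim that the exponent with odd $3\kappa_{\rm i}^{(j)}$ \emph{actually occurs} in the spectrum, so that in the $\{0,0,1\}$ case the eigenvalues are $\{1,-1,-1\}$ rather than $\{1,1,1\}$, is the ``precisely'' part of Lemma~\ref{lem-3-9}; diagonalizability and $\det\hat\rho(S)=1$ alone leave both multisets possible, and one needs the short expansion argument (if $\hat\rho(S)=I_3$, the solution with the odd exponent would satisfy $\hat y|_\ell S=\hat y$, forcing its leading coefficient to vanish). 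This is routine, as you guessed, but it must be said.

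The genuine gap is in the irreducible direction, at the step where you ``verify the disjointness from the hypotheses''. Your claim that $\kappa_\rho^{(1)}\in\Z$ and $\sum_j\kappa_\rho^{(j)}=3$ force $\big\{\kappa_\rho^{(j)}\big\}\equiv\{0,1,2\}\bmod 3$ is false: $\big(\kappa_\rho^{(1)},\kappa_\rho^{(2)},\kappa_\rho^{(3)}\big)=(-2,1,4)$ meets every hypothesis at $\rho$ and is $\equiv(1,1,1)\bmod 3$. That mod-$3$ pattern is precisely Corollary~\ref{coro-1-3} of the paper, which is \emph{deduced from} the theorem you are proving, so importing it would be circular. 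The appeal to rigidity is also misplaced: after the $F$-twist the apparent points $z_j$ disappear and $\hat\rho$ factors through $\PSL(2,\Z)\cong\Z/2\ast\Z/3$, a free product, so there is no three-point Fuchsian relation pinning the relative position of the eigenspaces, and Katz rigidity presupposes irreducibility anyway. Fortunately your plan survives without either ingredient. In the $\{0,0,1\}$ case $\hat\rho(S)$ is diagonalizable with eigenvalues $\{1,-1,-1\}$, so $\operatorname{rank}(\hat\rho(S)+I_3)=1$, whence $\operatorname{rank}(\hat\rho(R)+\hat\rho(T))=\operatorname{rank}\big((\hat\rho(S)+I_3)\hat\rho(T)\big)=1$. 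Now apply the paper's Lemma~\ref{lemma-4} (Beukers--Heckman) to $A=\hat\rho(R)$ and $B=-\hat\rho(T)$: since $\hat\rho(R)^3=(-1)^\ell I_3=I_3$, the spectrum of $\hat\rho(R)$ lies in $\big\{1,{\rm e}^{2\pi{\rm i}/3},{\rm e}^{-2\pi{\rm i}/3}\big\}$, while $\hat\rho(T)$ is unipotent by (H3), so $\operatorname{spec}(-\hat\rho(T))=\{-1\}$. The spectra are disjoint \emph{unconditionally} --- no mod-$3$ information at $\rho$ is needed --- so $\langle\hat\rho(R),-\hat\rho(T)\rangle$, and hence $\hat\rho$, is irreducible.

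With that repair your route is genuinely different from the paper's. The paper proves irreducibility by first showing that $\infty$ is completely not apparent (Lemma~\ref{lemma-3}, itself proved by contradiction using the same BH rank-one lemmas), then constructing the explicit basis $\hat y_3$, $\hat y_3|_\ell S$, $\hat y_3|_\ell R$ and computing $\hat\rho(S)$, $\hat\rho(R)$, $\hat\rho(T)$ in closed form; this is longer but simultaneously yields Theorem~\ref{thm-01+}(1), the quasimodularity of $\hat y_+$, which your argument does not produce. Your BH shortcut is closest in spirit to the alternative proof via Tuba--Wenzl sketched in Remark~\ref{remark: Tuba}, with Lemma~\ref{lemma-4} playing the role of the classification.
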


Note that all irreducible representations of $\SL(2,\Z)$ of rank up to
$5$ have been classified by Tuba and Wenzl~\cite{Tuba-Wenzl}.\footnote{We thank the referee for providing the
 reference.} One may use their results, the work of Westbury~\cite{Westbury}, and Lemma~\ref{lem-3-9} below to give another
proof of Theorem~\ref{thm-01} different from that given in Section~\ref{section-5}. See Remark~\ref{remark: Tuba}.

As an application of Theorem \ref{thm-01}, we can show that the converse statement of Theorem \ref{thm-02} holds. More precisely, we have

\begin{Theorem}\label{thm-01+} Let $\Gamma=\SL(2,\mathbb{Z})$ and
 suppose that the MODE \eqref{eq-01} satisfies $(H1)$--$(H3)$ and
 $\kappa_{\rho}^{(1)}\in\mathbb{Z}$. Let $y_+(z)$ be the solution of~\eqref{eq-01} of the form
 $y_+(z)=q^{\kappa_\infty^{(3)}}\sum_{j=0}^\infty c_jq^j$, $c_0=1$,
 and~$F(z)$ be defined by~\eqref{0eq-5-5}.
\begin{itemize}\itemsep=0pt
\item[$(1)$] If $\big\{3\kappa_{\rm i}^{(1)},3\kappa_{\rm i}^{(2)},
 3\kappa_{\rm i}^{(3)}\big\}\equiv\{0,0,1\} \mod 2$, then $\hat{y}_+(z):=F(z)y_+(z)$ is a quasimodular form of weight $\ell+2$ and depth~$2$.
\item[$(2)$] If $\big\{3\kappa_{\rm i}^{(1)},3\kappa_{\rm i}^{(2)},
 3\kappa_{\rm i}^{(3)}\big\}\equiv\{1,1,1\} \mod 2$, then the Bol representation $\hat\rho$ is trivial, i.e., $\hat{\rho}(\gamma)=I_3$ for all $\gamma\in \SL(2,\mathbb{Z})$. In particular, $12 |\ell$ and $\hat{y}(z):=F(z)y(z)$ is a modular form of weight $\ell$ for any solution~$y(z)$ of~\eqref{eq-01}.
\end{itemize}
\end{Theorem}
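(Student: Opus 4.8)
Write $\hat Y=(\hat y_1,\hat y_2,\hat y_3)^t:=FY$ for a fundamental system $Y=(y_1,y_2,y_3)^t$, so that $(\hat Y|_\ell\gamma)=\hat\rho(\gamma)\hat Y$ with each $\hat y_j$ single-valued and holomorphic on $\mathbb{H}$. Since $E_6,E_4,\Delta$ vanish simply at $\mathrm i,\rho,\infty$ respectively while the remaining factors of \eqref{0eq-5-5} stay non-zero there, multiplication by $F$ lowers every local exponent by the smallest one; hence at $\mathrm i,\rho,\infty$ the solution $\hat y_j$ has exponent $m^\bullet_j:=\kappa^{(j)}_\bullet-\kappa^{(1)}_\bullet\in\mathbb{Z}_{\ge0}$, with the exponent-$0$ solution holomorphic and non-vanishing. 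The generators $S$ and $ST$ fix $\mathrm i$ and $\rho$, where $cz+d$ equals $\mathrm i$ and $\rho+1=e^{\mathrm i\pi/3}$; diagonalizing in the exponent-adapted bases, I would compute that the eigenvalues of $\hat\rho(S)$ (resp.\ $\hat\rho(ST)$) on the solution of exponent $m^{\mathrm i}_j$ (resp.\ $m^\rho_j$) are $\mathrm i^{-\ell}(-1)^{m^{\mathrm i}_j}$ (resp.\ $e^{-\mathrm i\pi\ell/3}\zeta^{m^\rho_j}$, with $\zeta$ a primitive cube root of unity). At $\infty$, $T$ fixes $q$, so the integrality of $m^\infty_j$ makes $\hat\rho(T)$ unipotent. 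These three local facts drive both parts.

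For part~(2), the hypothesis $\{3\kappa^{(j)}_{\mathrm i}\}\equiv\{1,1,1\}$ together with $m^{\mathrm i}_j\equiv 3\kappa^{(j)}_{\mathrm i}-3\kappa^{(1)}_{\mathrm i}\pmod2$ forces all $(-1)^{m^{\mathrm i}_j}$ to coincide, so $\hat\rho(S)=\omega I_3$ is scalar. As $\omega=\mathrm i^{-\ell}$ is a fourth root of unity and $\det\hat\rho(S)=\omega^3=1$, we get $\omega=1$; thus $\hat\rho(S)=I_3$ and $4\mid\ell$. Then $\hat\rho(ST)=\hat\rho(T)$ is unipotent, while $(ST)^3=S^2$ gives $\hat\rho(T)^3=\hat\rho(S)^2=I_3$; writing $\hat\rho(T)=I_3+N$ with $N^3=0$ yields $3N+3N^2=0$, hence $N=0$ since $I_3+N$ is invertible. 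Therefore $\hat\rho$ is trivial. Evaluating the eigenvalue of $\hat\rho(ST)=I_3$ on the exponent-$0$ solution at $\rho$ gives $e^{-\mathrm i\pi\ell/3}=1$, i.e.\ $6\mid\ell$, so $12\mid\ell$. Finally $\hat\rho(T)=I_3$ means $\hat y_j(z+1)=\hat y_j(z)$, so there are no logarithms at $\infty$ and each $\hat y_j$—and hence $\hat y=Fy$ for every solution $y$—is a holomorphic modular form of weight $\ell$.

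For part~(1) the plan is to reverse the construction preceding Theorem~\ref{thm-02}. By Theorem~\ref{thm-01} the hypothesis $\{3\kappa^{(j)}_{\mathrm i}\}\equiv\{0,0,1\}$ makes $\hat\rho$ irreducible, and the computation above shows the eigenvalues of $\hat\rho(S)$ are $\{\mu,\mu,-\mu\}$—precisely the signature of a symmetric square, since $\mathrm{Sym}^2\,\mathrm{diag}(\nu,\nu^{-1})=\{\nu^2,1,\nu^{-2}\}$ has two coincident eigenvalues exactly when $\nu^2=-1$. I would use the invariant symmetric bilinear form that the third-order equation places on the solution space, together with this eigenvalue pattern and the unipotency of $\hat\rho(T)$, to identify $\hat\rho$ with $\mathrm{Sym}^2\sigma$ for a two-dimensional projective representation $\sigma$; since $\sigma(T)$ is then a non-trivial unipotent block, $\hat\rho(T)$ is a single $3\times3$ Jordan block, the cusp is maximally unipotent, and the depth is exactly $2$. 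From $\sigma$ one recovers the period $\phi$ with $(\phi|_2\gamma)=\phi+\alpha c/(cz+d)$ and builds the chain $h_3=\hat y_+$, $h_2\equiv2z\,\hat y_+$, $h_1\equiv z^2\hat y_+$ modulo lower-order holomorphic terms as in~\eqref{0eq-22}; reading off the transformation of $h_3$ then shows $\hat y_+$ is quasimodular of weight $\ell+2$ and depth $2$.

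The hard part is this identification $\hat\rho\cong\mathrm{Sym}^2\sigma$ in part~(1): once it is in hand, both the maximal unipotency at the cusp and the depth-$2$ quasimodularity of $\hat y_+$ are essentially bookkeeping, while part~(2) is formal. The role of the $\{0,0,1\}$ pattern is decisive here, since the two coincident eigenvalues of $\hat\rho(S)$ at $\mathrm i$ are exactly what allows the rank-$3$ representation to be reconstructed as the symmetric square of a rank-$2$ object; this is also the point where one may instead invoke the Tuba–Wenzl classification and Westbury's work, as noted after Theorem~\ref{thm-01}.
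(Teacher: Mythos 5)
Your part (2) is correct and is essentially the paper's own argument (Theorem~\ref{prop-110}): the eigenvalue formulas you assert for $\hat\rho(S)$ and $\hat\rho(ST)$ are the content of Lemma~\ref{lem-3-9}, and for this direction only the easy half is needed (any eigenvector's leading exponent at the elliptic point is one of the local exponents and determines the eigenvalue, so under the $\{1,1,1\}$ hypothesis $\hat\rho(S)$ cannot have two distinct eigenvalues); together with diagonalizability from $\hat\rho(S)^2=\hat\rho(-I_2)=(-1)^\ell I_3=I_3$, your chain $\hat\rho(S)=I_3\Rightarrow\hat\rho(T)=I_3\Rightarrow 12\mid\ell$ matches the paper line by line.

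Part (1), however, has genuine gaps. First, the proposed key tool --- ``the invariant symmetric bilinear form that the third-order equation places on the solution space'' --- does not exist in general: the operator $y'''+Q_2y'+Q_3y$ is anti-self-adjoint, equivalently a symmetric square of a second-order operator, exactly when $Q_3=\frac12Q_2'$, and for the MODEs in question the weight-$6$ form $Q_3-\frac12Q_2'$ is generically nonzero (the paper's example in Section~\ref{section-6} explicitly singles out the case $R=0$ as the special symmetric-square situation, and the extremal examples of Theorem~\ref{thm-2} have $R\neq0$). The fact that $\hat\rho$ is conjugate to $\mathrm{Sym}^2$ of the standard representation is true in the irreducible case --- it is visible in the paper's explicit matrices \eqref{eq-55}--\eqref{eq-57} --- but it is an \emph{output} of the argument, not an input supplied by the ODE; you cannot assume the form to derive the $\mathrm{Sym}^2$ structure. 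Second, invoking Theorem~\ref{thm-01} for irreducibility is circular in the paper's logical order: Theorem~\ref{thm-01} is deduced from Theorems~\ref{thm-sl-2} and~\ref{prop-110}, i.e., from precisely the content of Theorem~\ref{thm-01+}. The Tuba--Wenzl detour you mention requires knowing the eigenvalues of $\hat\rho(R)$ are $1,\varepsilon,\varepsilon^2$, which is equivalent to the $\{0,1,2\}\bmod 3$ pattern at $\rho$ --- and that is Corollary~\ref{coro-1-3}, again proved downstream of Theorem~\ref{thm-sl-2}. (An independent irreducibility proof is in fact possible from the $\{1,-1,-1\}$ pattern of $\hat\rho(S)$, unipotency of $\hat\rho(T)$ and the relations alone: a $1{+}2$ decomposition forces a two-dimensional factor $\tau$ with trivial character on the line, hence $\tau(S)=-I_2$ and $\tau(T)$ unipotent, whence $\tau(T)^3=\tau(S)^2\tau(-I_2)^{-1}$ leads to $\tau(T)^3=-I_2$, absurd --- but you do not give this.) Relatedly, the paper establishes maximal unipotency at $\infty$ (Lemma~\ref{lemma-3}) unconditionally, via Beukers--Heckman rigidity together with the valence-formula Lemma~\ref{lemma-3-1}, whereas you derive it from the unproven $\mathrm{Sym}^2$ identification. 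Finally, the decisive analytic step --- that $\hat y_1=z^2\hat y_++z\hat m_1^*+\hat m_2$ holds with nonzero (indeed $=1$) leading coefficient and that $\hat m_2,\hat m_1,\hat m_0$ are modular of weights $\ell-2,\ell,\ell+2$ (Lemma~\ref{lemma-5-8} and Theorem~\ref{thm-sl-2}(b), via the functional equations \eqref{eq-58}--\eqref{eq-59}) --- is compressed in your proposal to ``reading off the transformation of $h_3$''; this is where the paper does its real work, and no period $\phi$ needs to be ``recovered from $\sigma$'' since $\phi=E_2$ is already available on $\SL(2,\Z)$.
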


Together with Theorems \ref{thm-01+} and \ref{thm-02}, we can obtain

\begin{Corollary}\label{coro-1-3}
Let $\Gamma=\SL(2,\mathbb{Z})$ and suppose $(H1)$--$(H3)$ and
$\kappa_{\rho}^{(1)}\!\in\mathbb{Z}$ hold for the MODE~\eqref{eq-01}. Then~$(3)$ and~$(4)$ in Theorem~{\rm \ref{thm-02}} are
equivalent.
\end{Corollary}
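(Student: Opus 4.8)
The plan is to prove the equivalence (3)$\Leftrightarrow$(4) by reducing both implications to a single local statement at the elliptic point $\rho$: whether the image $\hat\rho(R)$ of the order-six element $R:=\SM{0}{-1}{1}{1}$ (which fixes $\rho$, satisfies $R^3=-I$, and generates $\SL(2,\Z)$ together with $S:=\SM{0}{-1}{1}{0}$) is a scalar matrix. Under the standing hypothesis $\kappa_\rho^{(1)}\in\Z$ all three $\kappa_\rho^{(j)}$ are integers, and since $\sum_j\kappa_\rho^{(j)}=3\equiv0\pmod3$ while two equal residues force the third to agree, the residues $\kappa_\rho^{(j)}\bmod3$ are either pairwise distinct---this is exactly condition~(4), the set $\{0,1,2\}$---or all equal. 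I will show that the first case holds iff $\hat\rho$ is irreducible and the second iff $\hat\rho$ is trivial; combined with Theorem~\ref{thm-01} (which gives (3)$\Leftrightarrow$ irreducibility) and Theorem~\ref{thm-01+}(2) (failure of (3) forces $\hat\rho$ trivial), this will yield the corollary.

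The engine is a leading-order computation at $\rho$. As $E_4$ has a simple zero there while $\Delta$, $E_6$ and each $F_j$ are nonzero at $\rho$, formula~\eqref{0eq-5-5} gives $F(z)\sim(z-\rho)^{-\kappa_\rho^{(1)}}$; and since $\rho$ is apparent by~(H2), \eqref{eq-01} has solutions $y_j\sim(z-\rho)^{\kappa_\rho^{(j)}}$ without logarithms. Hence $\hat y_j:=F y_j$ has $\ord_\rho\hat y_j=\kappa_\rho^{(j)}-\kappa_\rho^{(1)}=:n_j$, with $0=n_1<n_2<n_3$. Because $R$ fixes $\rho$ and $\frac{{\rm d}}{{\rm d}z}(Rz)=(z+1)^{-2}$ takes the value $(\rho+1)^{-2}=\omega^{-1}$ at $\rho$, where $\omega={\rm e}^{2\pi {\rm i}/3}$ and $\rho+1={\rm e}^{\pi {\rm i}/3}$, feeding this into $\big(\hat Y\big|_\ell R\big)=\hat\rho(R)\hat Y$ and matching leading $(z-\rho)^{n_j}$ terms shows that in the basis $(\hat y_1,\hat y_2,\hat y_3)$ the matrix $\hat\rho(R)$ is upper triangular with diagonal entries $\lambda_j=(\rho+1)^{-\ell}\omega^{-n_j}=\mu\,\omega^{-\kappa_\rho^{(j)}}$ for a common constant $\mu$. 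Since $R^6=I$ forces $\hat\rho(R)^6=I_3$, the matrix $\hat\rho(R)$ is semisimple; thus it has three distinct eigenvalues exactly when the residues $\kappa_\rho^{(j)}\bmod3$ are distinct, i.e.\ under~(4), and it is scalar exactly when they all coincide, i.e.\ when~(4) fails.

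It remains to connect scalarity of $\hat\rho(R)$ to the global behaviour. For (3)$\Rightarrow$(4): by Theorem~\ref{thm-01}, (3) makes $\hat\rho$ irreducible; were $\hat\rho(R)$ scalar, then, as $\SL(2,\Z)=\langle S,R\rangle$, the image $\hat\rho(\SL(2,\Z))$ would be generated by the single matrix $\hat\rho(S)$ together with a central scalar, hence abelian---impossible for a three-dimensional irreducible representation. So $\hat\rho(R)$ is non-scalar and the previous paragraph gives~(4). (Alternatively, as the author indicates, Theorem~\ref{thm-01+}(1) makes $f:=F y_+$ a depth-$2$ quasimodular form, realizing \eqref{eq-01} as the MODE attached to $f$ in the sense of Theorem~\ref{thm-02}; part~(4) of that theorem is then exactly the assertion~(4).) For (4)$\Rightarrow$(3) I argue contrapositively: if (3) fails, Theorem~\ref{thm-01+}(2) makes $\hat\rho$ trivial, so $\hat\rho(R)=I_3$ is scalar and the previous paragraph shows~(4) fails. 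The step I expect to demand the most care is the local analysis of the middle paragraph---getting the automorphy factor $(\rho+1)^{-\ell}$ and the linearization $\omega^{-1}$ of $R$ right so that the eigenvalues $\lambda_j$, and with them the trichotomy of residues mod $3$, come out correctly; granting this, the rest is a short deduction from Theorems~\ref{thm-01}, \ref{thm-01+} and~\ref{thm-02}.
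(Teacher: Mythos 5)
Your proposal is correct, and its logical skeleton coincides with the paper's proof: the same dichotomy (integrality of the $\kappa_\rho^{(j)}$ plus $\sum_j\kappa_\rho^{(j)}=3$ forces the residues mod $3$ to be either all equal or exactly $\{0,1,2\}$), the same use of Theorem~\ref{thm-01} (via Theorem~\ref{thm-sl-2}) to get non-scalarity of $\hat\rho(R)$ under~(3), and the same contrapositive use of Theorem~\ref{thm-01+}(2) (i.e., Theorem~\ref{prop-110}, which gives $\hat\rho(R)=I_3$) when (3) fails. The one place you diverge is the local input at $\rho$: the paper quotes its Lemma~\ref{lem-3-9}, which establishes that the eigenvalues of $\hat\rho(R)$ are \emph{precisely} ${\rm e}^{-\frac{\pi{\rm i}}3(\ell+2\kappa)}$, $\kappa\in\{0,\kappa_\rho^{(2)}-\kappa_\rho^{(1)},\kappa_\rho^{(3)}-\kappa_\rho^{(1)}\}$, by expanding eigenfunctions in $w=(z-\rho)/(z-\bar\rho)$ and running a multiplicity-counting argument ($N_\kappa$) to pin down multiplicities; you instead re-derive exactly this formula by observing that in the Frobenius basis $(\hat y_1,\hat y_2,\hat y_3)$ (log-free by apparentness (H2), with strictly increasing integer orders $n_j$ at $\rho$ since $\ord_\rho F=-\kappa_\rho^{(1)}$) the matrix $\hat\rho(R)$ is triangular with diagonal $(\rho+1)^{-\ell}\omega^{-n_j}$, and is diagonalizable because $\hat\rho(R)^6=I_3$ — note $\hat\rho(R)^3=(-1)^\ell I_3=I_3$ already suffices, as $\ell$ is even. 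This triangularity-plus-semisimplicity route is a legitimate and somewhat more economical substitute for Lemma~\ref{lem-3-9}, since the multiplicities come for free from the diagonal entries; your Schur-type argument (scalar $\hat\rho(R)$ with $\SL(2,\Z)=\langle S,R\rangle$ forces an abelian image, impossible for an irreducible three-dimensional representation) likewise replaces the paper's direct observation that in the basis of Theorem~\ref{thm-sl-2} the matrix $\hat R$ is the cyclic permutation~\eqref{eq-56}, hence has eigenvalues $1,\varepsilon,\varepsilon^2$. Your parenthetical alternative for (3)$\Rightarrow$(4) — identifying \eqref{eq-01} with the MODE attached to the quasimodular form $\hat y_+$ from Theorem~\ref{thm-01+}(1) and citing Theorem~\ref{thm-02}(4) — is also sound, provided one notes that the Wronskian of $(\hat y_1,\hat y_2,\hat y_3)$ is a constant multiple of $F^3$, so the $g_j$'s of Section~\ref{section-2} are constant multiples of the original solutions and the two equations agree.
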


In the reducible case, Theorem \ref{thm-01+}(2) can be applied to
construct solutions of the $\SU(3)$ Toda system. See Section~\ref{section-6} for the precise statement. The Toda system is an important integrable system in mathematical physics. In algebraic geometry, the~$\SU(N+1)$ Toda system is exactly the classical infinitesimal Pl\"{u}cker
formula associated with holomorphic maps from Riemann surfaces to
$\mathbb{CP}^N$; see, e.g., \cite{CL-JDG,LNW,LWY} and references therein for
the recent development of the Toda system.

The rest of this paper is organized as follows. In Section~\ref{section-2}, we give the proof of Theorem~\ref{thm-02}, namely we
will prove that every quasimodular form of depth~$2$ leads to a MODE
(\ref{eq-01}) satisfying the conditions (H1)--(H3). We focus on the
case $\Gamma=\SL(2,\mathbb{Z})$ from Section~\ref{section-5}. Theorems
\ref{thm-01}--\ref{thm-01+} and Corollary~\ref{coro-1-3} will be proved
in Section \ref{section-5}. In Section~\ref{section-6}, we discuss the reducible case and prove the converse
statement of Theorem~\ref{thm-01+}(2). We also give an application to
the $\SU(3)$ Toda system. In Section~\ref{section-4}, we discuss the
criterion on the existence of the MODE~(\ref{eq-01}) which is Fuchsian and
apparent throughout~$\H$ with prescribed local exponents at
singularities and at cusps. In Section~\ref{section-extremal}, as
examples of MODEs, we will work out the explicit expressions of
$Q_j(z)$'s for an extremal quasimodular form~$f(z)$. Finally in Appendix~\ref{section-3}, we recall the theory
of the solution structure of third order ODEs at a regular singular
point.

\section[Quasimodular forms of depth 2 and its associated 3rd order MODE]{Quasimodular forms of depth 2\\ and its associated 3rd order MODE}\label{section-2}

The main purpose of this section is to prove Theorem~\ref{thm-02}.
Let $\Gamma$ be a discrete subgroup of $\SL(2,\mathbb{R})$ that is
commensurable with $\SL(2,\mathbb{Z})$ and $\chi\colon \Gamma\to
\mathbb{C}^{\times}$ be a character of~$\Gamma$ of finite order. A~holomorphic function $f(z)$ defined on the upper half plane
$\mathbb{H}$ is a modular form of weight~$k$ with character~$\chi$ if
the following conditions hold:
\begin{itemize}\itemsep=0pt
\item[(1)] $(f\big|_{k}\gamma)(z):=(cz+d)^{-k}f(\gamma z)=\chi(\gamma)f(z)$ for any $\gamma\in\SM{a}{b}{c}{d}\in\Gamma$;
\item[(2)] $f$ is holomorphic at any cusp $s$ of~$\Gamma$.
\end{itemize}
For example, the Eisenstein series $E_4(z)$ and $E_6(z)$ in
\eqref{equation: E4, E6} are modular forms of weight~$4$ and~$6$ on
$\SL(2,\Z)$, respectively. We let $\sM_k(\Gamma, \chi)$ denote the
space of modular forms of weight $k$ with character~$\chi$ on~$\Gamma$. For example, for $\Gamma=\SL(2,\mathbb{Z})$, $\infty$ is the
only cusp and we assume that the character is trivial,
i.e., $\chi\equiv 1$. Then condition~(1) implies $f(z+1)=f(z)$, which
implies that~$f(z)$ can be viewed as a function of $q={\rm e}^{2\pi {\rm i} z}$,
and condition~(2) just means that~$f$ is holomorphic at $q=0$.

The notion of quasimodular forms was introduced by Kaneko and Zagier~\cite{KZ}. Originally, they are defined as the holomorphic parts
of nearly holomorphic modular forms.
For our purpose, it suffices to know that a holomorphic function~$f(z)$ is a quasimodular form of weight~$k$ and depth~$r$
with character $\chi$ on $\Gamma$ if and only if~$f(z)$ can be expressed as
\[f(z)=\sum_{j=0}^r f_j(z)\phi(z)^j,\]
where $f_j(z)\in\sM_{k-2j}(\Gamma, \chi)$ with $f_r\not\equiv0$ and $\phi(z)$ is a holomorphic function satisfying that
\begin{equation}\label{eqe-5}(\phi\big|_{2}\gamma)(z):=(cz+d)^{-2}\phi(\gamma z)=\phi(z)+\frac{\alpha c}{cz+d}\end{equation}
for all $\gamma=\SM{a}{b}{c}{d}\in\Gamma$ for some nonzero complex number
$\alpha$ and $\phi(z)$ is holomorphic at cusps of~$\Gamma$. This
$\phi(z)$ is called a quasimodular form of weight~$2$ and depth~$1$ on~$\Gamma$. For example, if $\Gamma$ is a subgroup of
$\SL(2,\mathbb{Z})$, we can always let
\[
\phi(z)=E_2(z):=\frac1{2\pi {\rm i}}\frac{\Delta'(z)}{\Delta(z)}
=1-24\sum_{n=1}^\infty\frac{nq^n}{1-q^n},
\]
and so
$\alpha=\frac{6}{\pi {\rm i}}$. We let $\widetilde{\sM}_k^{\leq
 r}(\Gamma,\chi)$ denote the space of quasimodular forms of weight
$k$ and depth $\leq r$ with character $\chi$.
 One basic property is that the quasi-modularity is invariant under
 the differentiation, namely if $f(z)\in \widetilde{\sM}_k^{\leq
 r}(\Gamma,\chi)$, then $f'(z)\in \widetilde{\sM}_{k+2}^{\leq
 r+1}(\Gamma,\chi)$; see \cite[Proposition 20]{Z}. We refer the
 reader to \cite{Choie-Lee,KZ,Z} for the general theory of quasimodular forms.

Now we consider the Wronskian $W_f(z)$ (see \cite{PN}) associated to
\[
f(z)=f_0(z)+f_1(z)\phi(z)+f_2(z)\phi(z)^2\in\widetilde{\sM}_{k}^{\leq
 2}(\Gamma,\chi),
 \]
 where $f_j(z)\in\sM_{k-2j}(\Gamma, \chi)$ with
$f_2\neq 0$. Then
\begin{align}\label{eqe-1}\big(f\big|_{k}\gamma\big)(z):={} &(cz+d)^{-k}f(\gamma z)\\
={} &\chi(\gamma)\sum_{j=0}^2f_j(z)\left(\phi(z)+\frac{\alpha c}{cz+d}\right)^j,\qquad \gamma=\begin{pmatrix}
a & b\\
c & d
\end{pmatrix}
\in \Gamma.\nonumber
\end{align}
As in \cite{PN}, we set
\begin{gather}
P_{f}(t):=\sum_{j=0}^2f_j(z)\left(\phi(z)+\alpha t\right)^j,\nonumber\\
Q_f(t):=t^2P_f(1/t)=f(z)t^2+\alpha(f_1(z)+2f_2(z)\phi(z))t+\alpha^2f_2(z),\nonumber\\
F_f(z):=\begin{pmatrix}
Q_f(t)\\
\dfrac{\partial}{\partial t}Q_f(t)\vspace{1mm}\\
\dfrac12\dfrac{\partial^2}{\partial t^2}Q_f(t)
\end{pmatrix}_{|{t=z}}\nonumber\\
\hphantom{F_f(z)~}{} =\begin{pmatrix}
z^2 f(z)+\alpha z(f_1(z)+2f_2(z)\phi(z))+\alpha^2f_2(z)\\
2 z f(z)+\alpha(f_1(z)+2f_2(z)\phi(z))\\
f(z)
\end{pmatrix}=:\begin{pmatrix}
h_1(z)\\
h_2(z)\\
h_3(z)
\end{pmatrix}.\label{eq-22}
\end{gather}
and define $W_f(z)$ as in \eqref{wfz}
to be the Wronskian associated to $f$.

\begin{Lemma}\label{lemma-2-1}
$W_f(z)$ is a modular form on $\Gamma$ of weight $3k$ with character $\chi^3$.
\end{Lemma}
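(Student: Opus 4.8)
The plan is to show that $W_f$ transforms correctly under the slash action and that it is holomorphic at the cusps. The key observation is that the column vector $F_f(z) = (h_1,h_2,h_3)^t$ in \eqref{eq-22} transforms as a vector-valued modular form under a fixed, explicit $\SL(3,\mathbb C)$-representation of $\Gamma$, and $W_f$ is a Wronskian-type determinant built from $F_f$ and its derivatives. I would therefore first compute how $F_f(z)$ transforms under $\gamma = \SM{a}{b}{c}{d}$. Using \eqref{eqe-1} together with the substitution rule for $\phi$ in \eqref{eqe-5}, one finds that the polynomial $Q_f(t)$ transforms in a controlled way: writing $Q_f(t) = t^2 P_f(1/t)$ and tracking how $P_f$ picks up the shift $\phi \mapsto \phi + \alpha c/(cz+d)$, the triple $(h_1,h_2,h_3)$ at $\gamma z$ should equal $(cz+d)^k \chi(\gamma)$ times the image of $(h_1,h_2,h_3)$ at $z$ under a linear map $M_\gamma \in \SL(3,\mathbb C)$ that depends only on $c,d$. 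Concretely, because $Q_f$ is a degree-$2$ polynomial in $t$ and the action of $\gamma$ on $t = z$ corresponds to a linear fractional change of variable, $M_\gamma$ is the matrix of the symmetric-square representation $\mathrm{Sym}^2$ applied to $\gamma$ (up to the automorphy factor), so in particular $\det M_\gamma = 1$.

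Granting this, the heart of the argument is a chain-rule computation for the Wronskian. First I would record the scalar transformation $h_j(\gamma z) = (cz+d)^k \chi(\gamma)\, (M_\gamma \cdot \vec h(z))_j$, and then differentiate in $z$. Each derivative in $z$ on the left produces a factor $(cz+d)^{-2}$ from $\frac{d}{dz}(\gamma z) = (cz+d)^{-2}$, plus lower-order correction terms coming from differentiating the automorphy factor $(cz+d)^k$. The crucial point is that although $\frac{d}{dz}h_j(\gamma z)$ and $\frac{d^2}{dz^2}h_j(\gamma z)$ each acquire such correction terms, these corrections are $\mathbb C$-linear combinations of lower-order rows of the determinant
\[
W_f(z) = \det\begin{pmatrix} h_1 & h_1' & h_1''\\ h_2 & h_2' & h_2''\\ h_3 & h_3' & h_3''\end{pmatrix},
\]
so they are annihilated by the determinant's multilinearity and alternation. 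Hence only the top-order terms survive, each contributing a factor $(cz+d)^{-2}$ to the second and third columns, and the scalar prefactor $(cz+d)^k\chi(\gamma)$ enters once per row. Collecting the powers of $(cz+d)$ gives $3k + 2(-2) + \dots$; I would verify carefully that the column factors contribute $(cz+d)^{0+(-2)+(-4)} = (cz+d)^{-6}$, while the three row prefactors give $(cz+d)^{3k}$, and the determinant $\det M_\gamma = 1$ contributes nothing, so that $W_f(\gamma z) = (cz+d)^{3k+6}\chi(\gamma)^3 W_f(z)$. Wait: since $W_f$ is assigned weight $3k$, the net automorphy factor must work out to $(cz+d)^{3k}$, which means the $-6$ from the columns must be absorbed—this is exactly the classical fact that a third-order Wronskian of weight-$k$ forms has weight $3k - 2\binom{3}{2} + \dots$; I would double-check the bookkeeping so that the stated weight $3k$ emerges, accounting for the fact that $\vec h$ is built from a weight-$(k{-}2)$-shifted polynomial rather than naive weight-$k$ forms.

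The remaining, routine step is holomorphy at the cusps: since $f$ and each $f_j$ are holomorphic at the cusps and $\phi$ is holomorphic at the cusps by assumption, the $h_j$ are holomorphic there, and the Wronskian of functions with $q$-expansions is again a function with a $q$-expansion having no polar terms, so $W_f$ is holomorphic at every cusp. I expect the main obstacle to be the second paragraph: correctly identifying the matrix $M_\gamma$ (i.e., verifying it is $\mathrm{Sym}^2(\gamma)$ with $\det = 1$) and then executing the Wronskian transformation while keeping scrupulous track of the powers of $(cz+d)$ and confirming that all the correction terms from differentiating the automorphy factors genuinely drop out by multilinearity. Once the weight is pinned down to $3k$ and the character to $\chi^3$ from the three factors of $\chi(\gamma)$, the lemma follows.
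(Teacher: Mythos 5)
The transformation-law half of your proposal is in substance the paper's argument. The paper computes directly from \eqref{eqe-1} and \eqref{eqe-5} that
\[
F_f(\gamma z)=\chi(\gamma)(cz+d)^{k-2}AF_f(z),\qquad
A=\begin{pmatrix}a^2&ab&b^2\\2ac&ad+bc&2bd\\c^2&cd&d^2\end{pmatrix},
\]
i.e., $A=\mathrm{Sym}^2(\gamma)$ with $\det A=(ad-bc)^3=1$, exactly as you guessed. Two slips in your setup: the automorphy factor is $(cz+d)^{k-2}$, not $(cz+d)^k$ (the vector $F_f$ transforms with weight $k-2$ under $\mathrm{Sym}^2$), and $A$ depends on all of $a$, $b$, $c$, $d$, not only on $c$, $d$. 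Your flagged bookkeeping wobble resolves exactly as you suspect: differentiating the transformation law, the terms coming from the automorphy factor are multiples of earlier columns and are killed by alternation of the determinant, and the three surviving columns carry factors $(cz+d)^{k-2}$, $(cz+d)^{k}$, $(cz+d)^{k+2}$, whose sum is $3k$; together with $\det A=1$ this gives $W_f(\gamma z)=\chi(\gamma)^3(cz+d)^{3k}W_f(z)$, not $(cz+d)^{3k+6}$.

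The genuine gap is your cusp step. The claim that ``the $h_j$ are holomorphic at the cusps'' is false: $h_1$ and $h_2$ contain the explicit factors $z^2$ and $z$, i.e., $\ln q_N$ terms, so they have no $q_N$-expansions at all (the paper later exploits precisely this: the $(\ln q_N)^2$ term in $g_1$ is what makes every cusp completely not apparent). Holomorphy of $W_f$ at the cusps is exactly the nontrivial assertion that all $z$-dependence cancels in the determinant, and your argument gives no reason for this. The paper closes this by observing that the transformation law applied to $T=\SM1N01$ forces $W_f$ to be a polynomial in $f_0$, $f_1$, $f_2$, $\phi$ and their derivatives; since the ring of quasimodular forms is closed under differentiation, $W_f$ is quasimodular, and a quasimodular form transforming like a modular form of weight $3k$ is modular, hence holomorphic at the cusps. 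An elementary alternative fix: writing $g=f_1+2f_2\phi$ and performing the row operations $R_1\mapsto R_1-zR_2+z^2R_3$ and $R_2\mapsto R_2-2zR_3$ turns the determinant into
\[
W_f=\det\begin{pmatrix}\alpha^2f_2&\alpha g+\alpha^2f_2'&2f+2\alpha g'+\alpha^2f_2''\\ \alpha g&2f+\alpha g'&4f'+\alpha g''\\ f&f'&f''\end{pmatrix},
\]
whose entries are quasimodular forms, hence holomorphic at the cusps. Without some such cancellation argument, your proof establishes the automorphy but not that $W_f$ is a modular form.
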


\begin{proof} For $\Gamma=\SL(2,\mathbb{Z})$, this result is proved in \cite{PN} and can be also derived from Mason \cite[Lemma~3.1]{Mason}, and the approaches in \cite{Mason,PN} can be easily applied to general discrete subgroups~$\Gamma$. Here we provide an elementary proof for general $\Gamma$ for completeness.
By~(\ref{eqe-1}) and $ad-bc=1$ we have
\begin{gather*}
f(\gamma z) =\chi(\gamma)(cz+d)^{k}\left[f+\frac{\alpha c}{cz+d}(f_1+2 f_2\phi)+\frac{\alpha^2 c^2}{(cz+d)^2}f_2\right]\nonumber\\
\hphantom{f(\gamma z)}{}
=\chi(\gamma)(cz+d)^{k-2}\big(c^2h_1+cdh_2+d^2h_3\big),
\\
h_2(\gamma z)= 2 \frac{az+b}{cz+d} f(\gamma z)+\alpha(f_1(\gamma z)+2f_2(\gamma z)\phi(\gamma z))\\
\hphantom{h_2(\gamma z)}{}
= \chi(\gamma)(cz+d)^{k-2}\bigg[2\frac{az+b}{cz+d}((cz+d)^2f+\alpha c(cz+d)(f_1+2 f_2\phi)\\
\hphantom{h_2(\gamma z)=}{} +\alpha^2 c^2f_2)+\alpha(f_1+2f_2\phi)+\frac{2\alpha^2 c}{cz+d}f_2\bigg]\\
\hphantom{h_2(\gamma z)}{}
=\chi(\gamma)(cz+d)^{k-2}[2ac h_1+(ad+bc)h_2+2bd h_3],
\\
h_1(\gamma z)=
\frac{(az+b)^2}{(cz+d)^2} f(\gamma z)+\alpha \frac{az+b}{cz+d}(f_1(\gamma z)+2f_2(\gamma z)\phi(\gamma z))+\alpha^2f_2(\gamma z)\\
\hphantom{h_1(\gamma z)}{}
=\chi(\gamma)(cz+d)^{k-2}\bigg[\frac{(az+b)^2}{(cz+d)^2}\big((cz+d)^2f+\alpha c(cz+d)(f_1+2 f_2\phi)\\
\hphantom{h_1(\gamma z)=}{}+\alpha^2 c^2f_2\big)+\frac{az+b}{cz+d}\Big[\alpha(f_1+2f_2\phi)+\frac{2\alpha^2 c}{cz+d}f_2\Big]+\frac{\alpha^2}{(cz+d)^2}f_2\bigg]\\
\hphantom{h_1(\gamma z)}{}
=\chi(\gamma)(cz+d)^{k-2}\big[a^2h_1+abh_2+b^2h_3\big].
\end{gather*}
Thus
\[F_f(\gamma z)=\chi(\gamma)(cz+d)^{k-2}AF_f(z),\qquad \text{where} \quad A:=\begin{pmatrix}a^2&ab&b^2\\ 2ac&ad+bc&2bd\\c^2&cd&d^2
\end{pmatrix}.\]
Together with the fact $\det A=(ad-bc)^3=1$, it is easy to see that
\begin{gather*}W_f(\gamma z) =\det(F_f(\gamma z), F_f'(\gamma z), F_f''(\gamma z))=\chi(\gamma)^3 (cz+d)^{3k}W_f(z).\end{gather*}
In particular,
for the case $\gamma=\SM1N01$, where $N$ is the width of the cusp $\infty$, the
transformation law shows that~$W_f(z)$ is a polynomial only in $f_0$, $f_1$, $f_2$, $\phi$,
and their derivatives. Thus, by the computation above and the fact
that the ring of quasimodular forms is invariant under
differentiation, $W_f(z)$ is a quasimodular form that is actually
modular. In other words, $W_f(z)\in\sM_{3k}\big(\Gamma,\chi^3\big)$. This completes the proof.
\end{proof}

Now we define $g_j(z):=h_j(z)/\sqrt[3]{W_f(z)}$. Then~(\ref{eq-171}) holds,
so $g_3(z)$ is a solution of
\begin{equation}\label{eq-1}
Ly:=y'''+Q_2(z)y'+Q_3(z)y=0,
\end{equation}
where
\begin{equation}\label{eq-7}
Q_2(z):=\frac{g_1'''g_2-g_1g_2'''}{g_1g_2'-g_1'g_2},\qquad Q_3(z):=\frac{g_1'g_2'''-g_2'g_1'''}{g_1g_2'-g_1'g_2}.
\end{equation}
Note that
\begin{itemize}\itemsep=0pt
\item[(i)] $g_1(z)$ and $g_2(z)$ are also solutions of~(\ref{eq-1})
 and $g_1$, $g_2$, $g_3$ are linearly independent.
\item[(ii)] Although $g_j$ might be multi-valued, $g_j'/g_j$ is
 single-valued. Thus $Q_j(z)$ is single-valued and meromorphic on
 $\mathbb{H}$ for $j=1,2$. We will see from Theorem~\ref{thm-1} below
 that any pole of~$Q_j(z)$ comes from zeros of~$W_f(z)$.
\end{itemize}

For $z_0\in\H$, we denote
$\kappa_{z_0}^{(1)}\leq\kappa_{z_0}^{(2)}\leq\kappa_{z_0}^{(3)}$ to be the local
exponents of \eqref{eq-1} at $z_0$.

\begin{Theorem}\label{thm-1} Under the above notations,
$Q_2(z)$ and $Q_3(z)-\frac12 Q_2'(z)$ are meromorphic modular forms on
$\Gamma$ $($with trivial character$)$ of weight $4$ and $6$,
respectively. Furthermore, all singular points of~\eqref{eq-1} on
$\mathbb{H}$ comes from the zeros of $W_f(z)$, $(H1)$--$(H3)$ hold, and
every cusp of~$\Gamma$ is completely not apparent for~\eqref{eq-1}.
\end{Theorem}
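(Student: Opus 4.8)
The plan is to treat the global modularity of $Q_2$ and $Q_3-\tfrac12Q_2'$ separately from the local analysis at singular points and cusps, using throughout the factorization $g_j=h_j/\sqrt[3]{W_f}$ in which the $h_j$ from \eqref{eq-22} are holomorphic on $\mathbb{H}$ and $W_f$ is modular by Lemma~\ref{lemma-2-1}. For the modularity I would start from the transformation law $F_f(\gamma z)=\chi(\gamma)(cz+d)^{k-2}AF_f(z)$ with $\det A=1$ established inside the proof of Lemma~\ref{lemma-2-1}, together with $W_f(\gamma z)=\chi(\gamma)^3(cz+d)^{3k}W_f(z)$. Taking cube roots gives $(cz+d)^2\mathbf{g}(\gamma z)=\zeta A\mathbf{g}(z)$ for some cube root of unity $\zeta$, where $\mathbf{g}=(g_1,g_2,g_3)^t$; thus the weight $-2$ slash $y\mapsto(y|_{-2}\gamma)(z):=(cz+d)^2y(\gamma z)$ maps the solution space $\langle g_1,g_2,g_3\rangle$ of \eqref{eq-1} into itself. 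By Bol's identity \cite{Bol}, $\tfrac{d^3}{dz^3}(y|_{-2}\gamma)=(cz+d)^{-4}y'''(\gamma z)$, and a short computation shows that $w:=y|_{-2}\gamma$ then satisfies the monic third-order equation with no $w''$ term and with coefficients $\tilde Q_2=(cz+d)^{-4}Q_2(\gamma z)$ and $\tilde Q_3=(cz+d)^{-6}Q_3(\gamma z)-2c(cz+d)^{-5}Q_2(\gamma z)$. Since $g_1|_{-2}\gamma,g_2|_{-2}\gamma,g_3|_{-2}\gamma$ form a fundamental system (independent as $\det A=1$) for both this transformed operator and for $L$ itself, the two monic operators coincide; equating $\tilde Q_j=Q_j$ gives $Q_2(\gamma z)=(cz+d)^4Q_2(z)$ and $Q_3(\gamma z)=(cz+d)^6Q_3(z)+2c(cz+d)^5Q_2(z)$, and differentiating the first and substituting yields $(Q_3-\tfrac12Q_2')(\gamma z)=(cz+d)^6(Q_3-\tfrac12Q_2')(z)$. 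The character is trivial because $Q_2,Q_3$ are homogeneous of degree $0$ in the $g_j$, so $\zeta$ and $\chi(\gamma)$ cancel.

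For the singularities I would use the normalization $\det(g_i,g_i',g_i'')=1$ together with \eqref{eq-171} to express $Q_2,Q_3$, up to sign, as the $3\times3$ determinants $\det(g_i,g_i'',g_i''')$ and $\det(g_i',g_i'',g_i''')$, which are polynomial in the $g_j$ and their derivatives; since the $h_j$ are holomorphic, these are holomorphic wherever $W_f\neq0$, so every singular point in $\mathbb{H}$ is a zero of $W_f$. At such a zero $z_0$, writing $W_f=(z-z_0)^nu$ with $u$ a holomorphic unit, the solution space is $(z-z_0)^{-n/3}V$, where $V=\langle u^{-1/3}h_1,u^{-1/3}h_2,u^{-1/3}h_3\rangle$ is a three-dimensional space of holomorphic germs. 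A basis of $V$ realizing its gap sequence $a_1<a_2<a_3$ of vanishing orders produces three solutions of the form $(z-z_0)^{-n/3+a_i}\times(\text{holomorphic})$ with no logarithmic terms, so $z_0$ is apparent with distinct exponents $\kappa_{z_0}^{(i)}=-n/3+a_i$; then $\sum_i\kappa_{z_0}^{(i)}=3$ gives $a_1+a_2+a_3=n+3$, whence $3a_1\le n$ and $\kappa_{z_0}^{(1)}\in\tfrac13\mathbb{Z}_{\le0}$ with integer differences, which is (H2). Counting vanishing orders in the two determinants shows the poles of $Q_2,Q_3$ at $z_0$ have order at most $2$ and $3$, so \eqref{eq-1} is Fuchsian on $\mathbb{H}$.

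At each cusp, say $\infty$ of width $N$, the $z$-free form $h_3=f\not\equiv0$ and the genuine $z$- and $z^2$-terms in $h_2,h_1$ give, under $z\mapsto z+N$, the relations $h_3\mapsto h_3$, $h_2\mapsto h_2+2Nh_3$ and $h_1\mapsto h_1+Nh_2+N^2h_3$ (up to the scalar $\chi(T)$), so the local monodromy $\hat\rho(T)$ is a single size-$3$ unipotent Jordan block; hence the cusp is maximally unipotent, i.e.\ completely not apparent. Expanding $g_j=h_jW_f^{-1/3}$ in $q_N$ with $M:=\operatorname{ord}_\infty W_f$, each solution is $q_N^{\kappa}$ times a polynomial in $\ln q_N$ times a holomorphic $q_N$-series, with $\kappa\in-M/3+\mathbb{Z}\subseteq\tfrac13\mathbb{Z}$ and $\kappa\ge-M/3$; the three exponents thus differ by integers, and $\sum_j\kappa_\infty^{(j)}=0$ forces $\kappa_\infty^{(1)}\in\tfrac13\mathbb{Z}_{\le0}$, which is (H3). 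Since a full basis of solutions has moderate growth, $q_N=0$ is a regular singular point, so by the criterion in the Introduction (Fuchsian at $\infty$ iff $Q_2,Q_3$ are holomorphic at $\infty$) equation \eqref{eq-1} is Fuchsian at the cusps as well.

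The step I expect to be the main obstacle is the cusp bookkeeping: one must justify that the mixed expansion — the $\tfrac13\mathbb{Z}$-shift from $\sqrt[3]{W_f}$, the integer shifts from the $q_N$-orders of the $h_j$, and the $\ln q_N$-polynomial factors — really assembles into a Frobenius basis with the stated exponents (in particular that no degeneration lowers the rank of $\hat\rho(T)-I_3$ or produces exponents outside $\tfrac13\mathbb{Z}_{\le0}$), and to pass rigorously from moderate growth of all solutions to the holomorphy of $Q_2,Q_3$ at the cusp via the local theory recalled in Appendix~\ref{section-3}. By contrast, the modularity and the interior analysis are essentially forced by Bol's identity together with the solution-space invariance and the gap-sequence argument.
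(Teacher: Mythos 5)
Most of your proposal tracks the paper's own proof closely and correctly: the modularity of $Q_2$ and $Q_3-\frac12Q_2'$ is obtained in the paper exactly as you do, by showing the weight $-2$ slash action preserves the solution space (via the matrix $A$ with $\det A=1$ from the proof of Lemma~\ref{lemma-2-1}) and comparing the transformed monic operator with $L$, yielding $Q_2(\gamma z)=(cz+d)^4Q_2(z)$ and $Q_3(\gamma z)=(cz+d)^6Q_3(z)+2c(cz+d)^5Q_2(z)$; and at interior points the paper uses the same expansion $g_j=(z-z_0)^{-\operatorname{ord}_{z_0}W_f/3}\times(\text{power series})$ to get distinct exponents in $\frac13\Z$ with integer differences, apparentness, and (H2). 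Your two local variants are actually cleaner than the paper's: writing $Q_2=\det(g_i,g_i'',g_i''')$ and $Q_3=-\det(g_i',g_i'',g_i''')$ gives at once that all singularities lie over zeros of $W_f$ (the paper instead argues by contradiction through exponents $\{0,1,2\}$ and Frobenius) and the Fuchsian pole bounds $\le 2,3$ (the paper computes leading terms of \eqref{eq-7} from two solutions with distinct exponents). One small circularity to patch in your order counting: you invoke $\sum_i\kappa_{z_0}^{(i)}=3$, an indicial-equation fact that presupposes the regular-singular property you are in the middle of proving; this is harmless, since $\sum_i\kappa_{z_0}^{(i)}=3$ follows directly from $\det(g_i,g_i',g_i'')\equiv1$ together with distinctness of the $a_i$ (the leading coefficient of the Wronskian is a nonzero Vandermonde factor $\prod_{i<j}\big(\kappa_{z_0}^{(j)}-\kappa_{z_0}^{(i)}\big)$, so the Wronskian has order exactly $\sum_i\kappa_{z_0}^{(i)}-3=0$).

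The genuine gap is at the cusps: \emph{"at each cusp, say $\infty$"} is not a legitimate reduction. The functions $h_1,h_2,h_3$ in \eqref{eq-22}, built from the polynomials in $z$ and the decomposition $f=f_0+f_1\phi+f_2\phi^2$, are adapted specifically to the cusp $\infty$, and a group commensurable with $\SL(2,\Z)$ has in general several $\Gamma$-inequivalent cusps. Your unipotent computation $h_3\mapsto h_3$, $h_2\mapsto h_2+2Nh_3$, $h_1\mapsto h_1+Nh_2+N^2h_3$ proves complete non-apparentness only at $\infty$; the theorem asserts it at \emph{every} cusp, and this is exactly where the paper does extra work. For $s=\sigma\infty$ with $\sigma=\SM abcd\in\SL(2,\Z)$, $s\neq\infty$, the paper regards $f$ as quasimodular on $\Gamma'=\ker\chi\cap\SL(2,\Z)$, rewrites $f=\wt f_0+\wt f_1E_2+\wt f_2E_2^2$, and computes $\left(g_3|_{-2}\sigma\right)(z)=\epsilon\big[(cz+d)^2p_1(z)+\alpha c(cz+d)p_2(z)+\alpha^2c^2p_3(z)\big]/\sqrt[3]{(W_f|_{3k}\sigma)(z)}$ with the $p_j$ admitting $q_M$-expansions; the coefficient $c^2p_1$ of $z^2$ survives precisely because $c\neq0$, which is what makes $s$ completely not apparent. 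A conjugation argument in your style can be made to work, but then you owe the nontrivial verification that $\phi|_2\sigma$ is again a quasimodular form of weight $2$ and depth $1$ on $\sigma^{-1}\Gamma\sigma$ (holomorphic at cusps) and that $f|_k\sigma$ has depth $2$ there — essentially the computation the paper performs. By contrast, the obstacle you flagged, the Frobenius bookkeeping at $\infty$, is unproblematic and is handled by Remark~\ref{rmk-apparent} and Appendix~\ref{section-3}.
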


\begin{proof} To prove the modularity of $Q_j$, we consider
\[\tilde{y}(z):=\big(y\big|_{-2}\gamma\big)(z)=(cz+d)^{2}y(\gamma z),\qquad\gamma\in \Gamma.\]
Then
\[\tilde{y}'(z)=y'(\gamma z)+2c(cz+d)y(\gamma z),\qquad \tilde{y}'''(z)=(cz+d)^{-4}y'''(\gamma z),\]
so
\begin{gather}
L\tilde{y}= (cz+d)^{-4}\big\{y'''(\gamma z)+(cz+d)^4Q_2(z)y'(\gamma z)\nonumber\\
\hphantom{L\tilde{y}=}{}
+\big[(cz+d)^6Q_3(z)+2c(cz+d)^5 Q_2(z)\big]y(\gamma z)\big\}.\label{eq-2}
\end{gather}
Recalling $g_3(z)=\frac{f(z)}{\sqrt[3]{W_f(z)}}$, we have
\begin{align*}
\big(g_3\big|_{-2}\gamma\big)(z)&=\bigg(\frac{f}{\sqrt[3]{W_f}}\Big|_{-2}\gamma\bigg)(z)
=\frac{(cz+d)^2 f(\gamma z)}{\sqrt[3]{W_f(\gamma z)}}
=\frac{\varepsilon}{\sqrt[3]{W_f(z)}}\big(c^2h_1+cdh_2+d^2h_3\big)\\
&=\varepsilon\big(c^2 g_1(z)+cd g_2(z)+d^2g_3(z)\big),
\end{align*}
where $\varepsilon^3=1$. Thus $\big(g_3\big|_{-2}\gamma\big)(z)$ is also a solution of~(\ref{eq-1}). From this and~(\ref{eq-2}), we have
\begin{gather*}
Q_2(\gamma z)=(cz+d)^4Q_2(z),\\
 Q_3(\gamma z)=(cz+d)^6Q_3(z)+2c(cz+d)^5 Q_2(z),
 \end{gather*}
so $\big(Q_2\big|_{4}\gamma\big)=Q_2$ and $\big((Q_3-\frac12 Q_2')\big|_{6}\gamma\big)=Q_3-\frac12 Q_2'$. This proves the modularity of~$Q_2$ and~$Q_3$.

To prove (H1)--(H3),
 we let $z_0$ be any pole of $Q_j(z)$ for some $j=1,2$. Clearly $g_j(z)=(z-z_0)^{\alpha_j}(c_j+O(z-z_0)^j)$ near $z_0$ for some $\alpha_j\in \frac13\mathbb{Z}$ and $c_j\neq 0$. By replacing $g_2$ by $g_2-\frac{c_2}{c_1}g_1$ if necessary, we may assume $\alpha_1\neq \alpha_2$. Then we easily deduce from (\ref{eq-7}) that{\samepage
\begin{gather*}
Q_2(z)=\frac{\alpha_1(\alpha_1-1)(\alpha_1-2)-\alpha_2(\alpha_2-1)(\alpha_2-2)}
{(\alpha_2-\alpha_1)(z-z_0)^2}+O\big((z-z_0)^{-1}\big),\\
 Q_3(z)=\frac{\alpha_1\alpha_2[(\alpha_2-1)(\alpha_2-2)-(\alpha_1-1)(\alpha_1-2)]}
{(\alpha_2-\alpha_1)(z-z_0)^3}+O\big((z-z_0)^{-2}\big),
\end{gather*}
so $z_0$ is a regular singular point of~(\ref{eq-1}). Thus, (\ref{eq-1}) is Fuchsian on~$\mathbb{H}$.}

Let $z_0$ be any singular point. It follows from $g_j(z)=\frac{h_j(z)}{\sqrt[3]{W_f(z)}}$ that
\begin{equation}\label{eqe-4}g_j(z)=(z-z_0)^{\frac{-\text{ord}_{z_0}W_f}{3}}\sum_{l\geq 0}d_j (z-z_0)^j,\end{equation}
where $\text{ord}_{z_0}W_f$ denotes the zero order of $W_f(z)$ at $z_0$. Since $(g_1, g_2, g_3)$ is a fundamental system of solutions of (\ref{eq-1}) and $g_j$'s have no logarithmic singularities at $z_0$, we conclude from (\ref{eqe-4}) and Remark \ref{rmkk} that (1) the local exponents $\kappa_{z_0}^{(j)}\in\frac13\mathbb{Z}$ and are all distinct; (2) the exponent differences are all nonzero integers, namely $m_{z_0}^{(j)}:=\kappa_{z_{0}}^{(j+1)}-\kappa_{z_{0}}^{(j)}-1$ are nonnegative integers for $j=1,2$; (3) $z_0$ is an apparent singularity of (\ref{eq-1}).

Since
\begin{equation}\label{eqe-6}Q_j(z)=A_j(z-z_0)^{-j}+O\big((z-z_0)^{-j+1}\big),\qquad j=2,3,\end{equation}
then the indicial equation of (\ref{eq-1}) at $z_0$ is
\begin{equation}\label{eqe-7}\kappa(\kappa-1)(\kappa-2)+A_2\kappa+A_3=0,\end{equation}
which implies $\sum_{j=1}^3 \kappa_{z_0}^{(j)}=3$ and so $\kappa_{z_0}^{(1)}=-\frac{2m_{z_0}^{(1)}+m_{z_0}^{(2)}}{3}\in \frac{1}{3}\mathbb{Z}_{\leq 0}$. This proves (H2). Remark that if $z_0$ is not a zero of $W_f(z)$, i.e., $\text{ord}_{z_0}W_f=0$, then it follows from (\ref{eqe-4}) that $\kappa_{z_0}^{(1)}\in\mathbb{Z}_{\geq 0}$ and so $\kappa_{z_0}^{(1)}=m_{z_0}^{(1)}=m_{z_0}^{(2)}=0$, i.e., the local exponents at $z_0$ are $\{0,1,2\}$. This already implies $A_2=A_3=0$. Together with the fact that $z_0$ is apparent, we easily deduce from the Frobenius method that both~$Q_2(z)$ and~$Q_3(z)$ are holomorphic at~$z_0$, a contradiction with that $z_0$ is a~singular point. Thus $z_0$ is a zero of $W_f(z)$. This proves that all singular points of~(\ref{eq-1}) on $\mathbb{H}$ come from the zeros of~$W_f(z)$.

Let $N$ be the width of the cusp $\infty$ and $q_N={\rm e}^{2\pi {\rm i}z/N}$. Since modular forms $f_j(z)$, $W_f(z)$ are holomorphic in terms of $q_N$ and $z=\frac{N}{2\pi {\rm i}}\ln q_N$, we see from (\ref{eq-22}) that \[g_j(z)=\frac{h_j(z)}{\sqrt[3]{W_f(z)}}=\sum_{k=0}^{3-j}(\ln q_N)^{k}q_N^{-\frac{\ord_{\infty}W_f}{3}}\sum_{t=0}^{\infty}c_{j,k,t}q_N^t,\qquad j=1,2,3,\] so $\infty$ is also a regular singular point of~(\ref{eq-1}), i.e.,~(\ref{eq-1}) is Fuchsian at $\infty$ and so $Q_j(z)$ is holomorphic at $\infty$ for $j=2,3$. Since $(\ln q_N)^2$ appears in the expression of~$g_1(z)$, we see from Remark~\ref{rmk-apparent} that $\infty$ is completely not apparent and the local exponents $\kappa_\infty^{(1)}\le\kappa_\infty^{(2)}\le\kappa_\infty^{(3)}$ satisfy $\kappa_\infty^{(j)}\in\frac13\mathbb{Z}$ and
\[m_{\infty}^{(1)}:=\kappa_\infty^{(2)}-\kappa_\infty^{(1)}\in\mathbb{Z}_{\geq 0},\qquad m_{\infty}^{(2)}:=\kappa_\infty^{(3)}-\kappa_\infty^{(2)}\in\mathbb{Z}_{\geq 0}.\]
Note that the indicial equation at $\infty$ is
\[\kappa^3+\left(\frac{N}{2\pi {\rm i}}\right)^2Q_2(\infty)\kappa+\left(\frac{N}{2\pi {\rm i}}\right)^3Q_3(\infty)=0,\]
which implies $\sum \kappa_{\infty}^{(j)}=0$ and so $\kappa_{\infty}^{(1)}=-\frac{2m_{\infty}^{(1)}+m_{\infty}^{(2)}}{3}$. We now consider other cusps.

Assume that $s$ is another cusp of $\Gamma$ different from
$\infty$. Let $\sigma=\SM abcd\in\SL(2,\Z)$ be a matrix such that
$\sigma\infty=s$. Regarding $f(z)$ as a~quasimodular form on
$\Gamma'=\ker\chi\cap\SL(2,\Z)$, we can express $f(z)$ as $f(z)=\wt
f_0(z)+\wt f_1(z)E_2(z)+\wt f_2(z)E_2(z)^2$ for some $\wt
f_j(z)\in\sM_{k-2j}(\Gamma')$. We check that
\begin{align*}
 \left(g_3|_{-2}\sigma\right)(z)
 &=\frac{(cz+d)^2f(\sigma z)}{\sqrt[3]{W_f(\sigma z)}}
 =\epsilon\frac{(cz+d)^2(f|_k\sigma)(z)}{\sqrt[3]{(W_f|_{3k}\sigma)(z)}}\\
 &=\epsilon\frac{(cz+d)^2p_1(z)+\alpha
 c(cz+d)p_2(z)+\alpha^2c^2p_3(z)}{\sqrt[3]{(W_f|_{3k}\sigma)(z)}},
 \end{align*}
where
\begin{gather*}
 p_1(z)=\big(\wt f_0|_k\sigma\big)(z)+\big(\wt f_1|_{k-2}\sigma\big)(z)E_2(z)
 +\big(\wt f_2|_{k-4}\sigma\big)(z), \\
 p_2(z)=\big(\wt f_1|_{k-2}\sigma\big)(z)+2\big(\wt
 f_2|_{k-4}\sigma\big)(z)E_2(z), \\
 p_3(z)=\big(\wt f_2|_{k-4}\sigma\big)(z),
 \end{gather*}
$\alpha=6/\pi {\rm i}$ and $\epsilon$ is a third root of unity. Except
for $cz+d$, every term in the expression has a~$q_M$-expansion, where
$M$ is the width of the cusp $\sigma$ and $q_M={\rm e}^{2\pi {\rm i} z/M}$. Since
$s\neq\infty$, we have $c\neq0$. This shows that there is a local
solution at the cusp $s$ having a factor $z^2$. According to the
solution structure discussed in the appendix, the point $s$ must be
completely not apparent and we have
$\kappa_s^{(2)}-\kappa_s^{(1)},\kappa_s^{(3)}-\kappa_s^{(1)}\in\Z$. By
the same reasoning as in the case of the cusp $\infty$, the sum
$\kappa_s^{(1)}+\kappa_s^{(2)}+\kappa_s^{(3)}$ is equal to~$0$ and
hence $\kappa_s^{(j)}\in\frac13\Z$ for all~$j$.
This proves (H1), (H3), and that every cusp is completely not apparent.
\end{proof}

For a MODE, the local exponents are invariant under $z_0\to \gamma z_0$ for any $\gamma\in\Gamma$.

\begin{Proposition}\label{prop-3} Let $z_0$ be a singular point of~\eqref{eq-1}. Then the local exponents of~\eqref{eq-1} at $\gamma
 z_0$ are the same for all $\gamma=\SM{a}{b}{c}{d}\in\Gamma$.
\end{Proposition}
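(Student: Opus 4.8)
The plan is to show that the indicial equation of \eqref{eq-1} at $\gamma z_0$ is identical to the one at $z_0$; since the local exponents are by definition the roots of the indicial equation, their equality yields the proposition. It suffices to treat an arbitrary $\gamma=\SM abcd\in\Gamma$ against the reference point $z_0$, the full statement following by comparing $\gamma_1 z_0$ and $\gamma_2 z_0$ through $z_0$. The only inputs I need are the two transformation laws established in the proof of Theorem~\ref{thm-1}, namely
\[
Q_2(\gamma z)=(cz+d)^4Q_2(z),\qquad Q_3(\gamma z)=(cz+d)^6Q_3(z)+2c(cz+d)^5Q_2(z),
\]
which encode precisely that \eqref{eq-1} is a MODE.

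First I would record the local coordinate change near $z_0$. Writing $w=\gamma z$ and using $ad-bc=1$, a direct computation gives
\[
w-\gamma z_0=\gamma z-\gamma z_0=\frac{z-z_0}{(cz+d)(cz_0+d)}.
\]
Since $z_0\in\H$ forces $cz_0+d\neq0$ (otherwise $c=d=0$, contradicting $ad-bc=1$), this is a biholomorphism near $z_0$, and to leading order $z-z_0=(cz_0+d)^2(w-\gamma z_0)+O\big((w-\gamma z_0)^2\big)$.

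Next I would extract the leading coefficients. Writing $Q_2(z)=A_2(z-z_0)^{-2}+O((z-z_0)^{-1})$ and $Q_3(z)=A_3(z-z_0)^{-3}+O((z-z_0)^{-2})$ as in \eqref{eqe-6}, I substitute into the transformation laws and re-expand in the variable $w-\gamma z_0$. For $Q_2$, the weight-$4$ law gives $Q_2(w)=A_2(w-\gamma z_0)^{-2}+\cdots$, so the coefficient $A_2$ is unchanged at $\gamma z_0$. For $Q_3$, the principal term $(cz+d)^6Q_3(z)$ likewise yields $A_3(w-\gamma z_0)^{-3}+\cdots$; the crux is that the extra term $2c(cz+d)^5Q_2(z)$ is only of order $(z-z_0)^{-2}$, i.e.\ of order $(w-\gamma z_0)^{-2}$, hence strictly subleading and invisible to the coefficient of $(w-\gamma z_0)^{-3}$. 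Thus the leading coefficient of $Q_3$ at $\gamma z_0$ is again $A_3$, and the indicial equation $\kappa(\kappa-1)(\kappa-2)+A_2\kappa+A_3=0$ at $\gamma z_0$ coincides with that at $z_0$.

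The only delicate point is this order count for $Q_3$: the inhomogeneous term $2c(cz+d)^5Q_2$ does alter the full Laurent tail, so one must verify it cannot reach the $(w-\gamma z_0)^{-3}$ slot, which is exactly the degree gap between $Q_2$ (pole order $\le2$) and $Q_3$ (pole order $3$) guaranteed by the Fuchsian condition (H1). An equivalent, more conceptual route I would mention as a cross-check is to use that $\tilde y(z):=(y|_{-2}\gamma)(z)=(cz+d)^2y(\gamma z)$ sends solutions to solutions: if $y\sim(w-\gamma z_0)^{\kappa}$ near $\gamma z_0$, the coordinate computation above shows $\tilde y\sim\mathrm{const}\cdot(z-z_0)^{\kappa}$ near $z_0$, so every exponent at $\gamma z_0$ is an exponent at $z_0$, and applying $\gamma^{-1}$ gives the reverse inclusion.
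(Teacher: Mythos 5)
Your proof is correct and takes essentially the same route as the paper: both reduce the proposition to showing that the leading Laurent coefficients $A_2$, $A_3$ of \eqref{eqe-6} (and hence the indicial equation \eqref{eqe-7}) are unchanged under the coordinate change $\gamma z-\gamma z_0=(z-z_0)/\big((cz+d)(cz_0+d)\big)$, with the $Q_2$ step handled identically. The only cosmetic difference is in the $A_3$ step: the paper invokes the weight-$6$ modularity of $Q_3-\frac12Q_2'$ to obtain $\tilde{A}_3+\tilde{A}_2=A_3+A_2$, whereas you directly discard the inhomogeneous term $2c(cz+d)^5Q_2(z)$ via the Fuchsian pole-order gap (order $\le 2$ versus $3$) — an equivalent computation, and your order count is valid.
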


\begin{proof}
Let
\[Q_j(z)=\tilde{A}_j(z-\gamma z_0)^{-j}+O\big((z-\gamma z_0)^{-j+1}\big),\qquad j=2,3. \]
Recalling (\ref{eqe-6}) and (\ref{eqe-7}). we only need to prove $\big(\tilde{A}_2,\tilde{A}_3\big)=(A_2, A_3)$.

Since
\[
\gamma z-\gamma z_0=\frac{z-z_0}{(cz+d)(cz_0+d)}=\frac{z-z_0}{(cz_0+d)^2}(1+O(z-z_0)) \qquad \text{as $z\to z_0$},
\]
 we have for $z\to z_0$ that
\begin{align*}
Q_2(z)&=(cz+d)^{-4}Q_2(\gamma z)\\
&=(cz_0+d)^{-4}(1+O(z-z_0))\big[\tilde{A}_2(\gamma z-\gamma z_0)^{-2}+O\big((\gamma z-\gamma z_0)^{-1}\big)\big]\\
&=\tilde{A}_2(z-z_0)^{-2}+O\big((z-z_0)^{-1}\big),
\end{align*}
so $\tilde{A}_2=A_2$. Since $Q_3-\frac12 Q_2'$ is a modular form of weight $6$, a similar argument implies $\tilde{A}_3+\tilde{A}_2=A_3+A_2$ and so $\tilde{A}_3=A_3$.
\end{proof}

Now we consider $\Gamma=\SL(2,\mathbb{Z})$ and discuss the local exponents of~(\ref{eq-1}) at the elliptic points ${\rm i}$ and $\rho$, namely to complete the proof of Theorem~\ref{thm-02}. For this purpose, we note that the remark below could be used to simplify some computations.

\begin{Remark}
If $f_0$, $f_1$, $f_2$ have a common zero $z_0\in\mathbb{H}\cup\{\infty\}$, we take $M(z)$ to be a holomorphic modular form such that it has only one simple zero at $z_0$. Then $f_j(z)/M(z)$ are holomorphic modular forms and $f(z)/M(z)$ is a quasimodular form. Since $W_f(z)=M(z)^3W_{f/M}(z)$ and so
\[g_j(z)=\frac{h_j(z)}{\sqrt[3]{W_f(z)}}=\frac{h_j(z)/M(z)}{\sqrt[3]{W_{f/M}(z)}},\]
namely all $g_j(z)$'s are invariant by replacing $f(z)$ by $f(z)/M(z)$, so the differential equation~(\ref{eq-1}) derived from $f(z)$ and that from $f(z)/M(z)$ are the same (Note that $g_j(z)$ are solutions of~(\ref{eq-1}) but neither $f(z)$ nor $f(z)/M(z)$ are solutions of~(\ref{eq-1}), so we do not mean that $f(z)/M(z)$ satisfies the same differential equation as $f(z)$). Therefore, without loss of generality, we assume throughout the section that $f_0$, $f_1$, $f_2$ have no common zeros on $\mathbb{H}\cup\{\infty\}$.
\end{Remark}

By applying this remark, we have

\begin{Lemma} \label{lemma: nonvanishing}
 Let $\Gamma_\infty=\pm\gen T$ be the stabilizer subgroup of $\infty$
 in $\Gamma$. Then there are at most two right cosets
 $\Gamma_\infty\gamma$ in $\Gamma_\infty\backslash\Gamma$ $($the set of
 right cosets of $\Gamma_\infty$ in $\Gamma)$ such that $f(\gamma
 z_0)=0$.
\end{Lemma}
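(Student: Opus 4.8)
The plan is to show that the vanishing condition $f(\gamma z_0)=0$ is governed by a single complex parameter attached to the coset $\Gamma_\infty\gamma$, on which $f(\gamma z_0)=0$ becomes a polynomial equation of degree at most two. Throughout I take $z_0\in\H$, so that $cz_0+d\neq0$ for every $\gamma=\SM{a}{b}{c}{d}\in\Gamma$.

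First I would specialize the transformation law~\eqref{eqe-1} at $z=z_0$. Writing
\[
w(\gamma):=\phi(z_0)+\frac{\alpha c}{cz_0+d},
\]
the identity $(cz_0+d)^{-k}f(\gamma z_0)=\chi(\gamma)\sum_{j=0}^2 f_j(z_0)\,w(\gamma)^j$ together with the nonvanishing of $\chi(\gamma)(cz_0+d)^k$ shows that $f(\gamma z_0)=0$ if and only if $P(w(\gamma))=0$, where $P(X):=f_2(z_0)X^2+f_1(z_0)X+f_0(z_0)$. By the Remark preceding the lemma we may assume $f_0,f_1,f_2$ have no common zero, so not all of $f_0(z_0),f_1(z_0),f_2(z_0)$ vanish; hence $P$ is a nonzero polynomial of degree at most two and has at most two roots in $\C$.

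It therefore suffices to prove that the assignment $\Gamma_\infty\gamma\mapsto w(\gamma)$ is well defined on right cosets and injective, for then the cosets on which $f(\gamma z_0)=0$ inject into the root set of $P$. Since $\alpha\neq0$, this reduces to the same statement for $\gamma\mapsto c/(cz_0+d)$. Well-definedness is immediate: every $\delta\in\Gamma_\infty=\pm\gen{T}$ has bottom row $(0,\pm1)$, so $\delta\gamma$ has bottom row $\pm(c,d)$ and leaves $c/(cz_0+d)$ unchanged. For injectivity, if $c/(cz_0+d)=c'/(c'z_0+d')$ for $\gamma,\gamma'$, then clearing denominators and cancelling the common $cc'z_0$ term yields $cd'=c'd$; but $c'd-cd'$ is exactly the lower-left entry of $\gamma'\gamma^{-1}$, so $\gamma'\gamma^{-1}$ is upper triangular, fixes $\infty$, and hence lies in the stabilizer $\Gamma_\infty$. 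Thus $\Gamma_\infty\gamma=\Gamma_\infty\gamma'$.

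Combining the two steps, distinct cosets give distinct values $w(\gamma)$, while $f(\gamma z_0)=0$ forces $w(\gamma)$ to be one of the at most two roots of $P$; hence at most two cosets occur. The only genuinely substantive step is the injectivity in the third paragraph — converting the equality of the parameter $c/(cz_0+d)$ into the triangularity of $\gamma'\gamma^{-1}$, and using that the stabilizer of $\infty$ in $\Gamma$ is precisely $\Gamma_\infty$. The remaining ingredients (specializing~\eqref{eqe-1} and counting roots of a quadratic) are routine, and the degenerate possibilities $c=0$ or $f_2(z_0)=0$ only lower the degree of $P$ and thus only sharpen the bound.
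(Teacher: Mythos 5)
Your proof is correct and takes essentially the same approach as the paper's: both specialize the transformation law~\eqref{eqe-1} at $z=z_0$, turning $f(\gamma z_0)=0$ into a polynomial condition of degree at most two in the coset parameter $\alpha c/(cz_0+d)$, which is nontrivial because $f_0$, $f_1$, $f_2$ have no common zero. The only difference is organizational: the paper argues by contradiction from three vanishing cosets (normalizing one to the identity coset), whereas you argue directly and, in doing so, explicitly verify the well-definedness and injectivity of $\Gamma_\infty\gamma\mapsto c/(cz_0+d)$ on right cosets, a point the paper uses implicitly when it treats distinct cosets as giving distinct parameter values.
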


\begin{proof} Suppose that there are three distinct right cosets
 $\Gamma_\infty\gamma$ in $\Gamma_\infty\backslash\Gamma$ such
 that $f(\gamma z_0)=0$. Without loss of generality, we assume that
 one of them is the coset of $I$, i.e., $f(z_0)=0$. Now we have
 \[
 \left(f|_k\gamma\right)(z)=\chi(\gamma)\left(f(z)
 +\frac{\alpha c}{cz+d}(f_1(z)+2f_2(z)\phi(z))
 +\left(\frac{\alpha c}{cz+d}\right)^2f_2(z)\right)
 \]
 for $\gamma\in\Gamma$. If $f(\gamma_1z_0)=f(\gamma_2z_0)=0$ for
 $\gamma_1$ and $\gamma_2$ in two different cosets in
 $\Gamma_\infty\backslash\Gamma$, then we have
 $f_1(z_0)+2f_2(z_0)\phi(z_0)=f_2(z_0)=0$. However, this
 implies that $f_0(z_0)\neq 0$ since $f_j(z)$ are assumed to have no
 common zeros on $\H$, and hence $f(z_0)\neq 0$, a contradiction.
\end{proof}

Now we let $\Gamma={\rm SL}(2,\mathbb{Z})$. Given a character $\chi$, we have $\chi(T)={\rm e}^{2\pi {\rm i} m/24}$ for some integer $m\in [0,23]$. Recall the Dedekind eta function
\[\eta(z)={\rm e}^{2\pi {\rm i} z/24}\prod_{n=1}^\infty\big(1-{\rm e}^{2n\pi {\rm i} z}\big).\]
Since the MODE associated to $f$ is the same as that associated to $f/\eta^m$, by considering $f/\eta^m$ if necessary, we can always assume $\chi(T)=1$ and so $\chi(S)=\chi(R)=1$, i.e., we can always assume that the character $\chi$ is trivial for $\Gamma={\rm SL}(2,\mathbb{Z})$.

\begin{proof}[Proof of Theorem \ref{thm-02}]
The conclusions (1) and~(2) are proved in Theorem~\ref{thm-1}. It suffices to consider $\Gamma=\SL(2,\mathbb{Z})$ and prove (3) and~(4).

Let $z_0\in \{{\rm i}, \rho\}$.
By Lemma~\ref{lemma: nonvanishing}, we have $f(\gamma z_0)\neq 0$ for
 some $\gamma\in\SL(2,\Z)$. Then it follows from Proposition~\ref{prop-3} that
 \begin{equation*}
 \kappa_{z_0}^{(1)}=\kappa_{\gamma z_0}^{(1)}
 =\ord_{\gamma z_0}\frac{f(z)}{\sqrt[3]{W_f(z)}}
 =-\frac13\ord_{z_0} W_f(z).
 \end{equation*}
 Since $W_f(z)$ is a modular form of weight $3k$ on $\SL(2,\Z)$, it
 follows from the valence formula for modular forms (see, e.g., \cite{Serre}) that
 \[\frac{\ord_{\rm i} W_f}{2}+\frac{\ord_{\rho} W_f}{3}\equiv\frac{k}{4}\quad\mod 1.\]
 This implies
 $\kappa_\rho^{(1)}=-\frac{\ord_{\rho} W_f}{3}\in\Z_{\le 0}$ and
 $3\kappa_{\rm i}^{(1)}=-\ord_{\rm i}W_f\equiv k/2 \mod 2$.

Recall that we may assume that $f_0$, $f_1$, and $f_2$
 have no common zeros. When $k\equiv 0\mod 4$, we have
 \[
 m_{\rm i}^{(2)}\equiv 3\kappa_{\rm i}^{(1)}\equiv\frac k2\equiv 0\quad \mod 2,
 \]
 and we are done. When $k\equiv 2\mod 4$, the weights of $f_0$ and
 $f_2$ are congruent to $2$ modulo $4$ and their expansions in
 $w=(z-{\rm i})/(z+{\rm i})$ are of the form
 \[
 f_0(z)=(1-w)^k\sum_{n=0}^\infty a_{2n+1}w^{2n+1}, \qquad
 f_2(z)=(1-w)^{k-4}\sum_{n=0}^\infty c_{2n+1}w^{2n+1},
 \]
 while the expansion of $f_1(z)$ is of the form
\[
 f_1(z)=(1-w)^{k-2}\sum_{n=0}^\infty b_{2n}w^{2n}.
 \]
 (See Proposition~\ref{prop-2} and Remark~\ref{remark: about
 coefficients} below.)

 Let $h_j(z)$, $j=1,2,3$, be given by (\ref{eq-22}). Then the local exponent of $ah_1(z)+bh_2(z)+ch_3(z)$ at $z={\rm i}$ must be one of
 \begin{equation}\label{eeqqqq}
 \big\{0, \kappa_{\rm i}^{(2)}-\kappa_{\rm i}^{(1)}, \kappa_{\rm i}^{(3)}-\kappa_{\rm i}^{(1)}\big\}
 \end{equation} for any $(a,b,c)\in\mathbb{C}^3$.
 Consider the function
 \begin{gather*}
 h_1+{\rm i}h_2-h_3 =z^2\big(f_0+f_1E_2+f_2E_2^2\big)+\alpha z(f_1+2f_2E_2)+\alpha^2f_2 \\
\hphantom{h_1+{\rm i}h_2-h_3 =}{}
+2{\rm i}z\big(f_0+f_1E_2+f_2E_2^2\big)+{\rm i}\alpha(f_1+2f_2E_2)
 -\big(f_0+f_1E_2+f_2E_2^2\big) \\
\hphantom{h_1+{\rm i}h_2-h_3}{}=(z+{\rm i})^2f_0+(z+{\rm i})((z+{\rm i})E_2+\alpha)f_1+((z+{\rm i})E_2+\alpha)^2f_2.
 \end{gather*}
 We compute that $z={\rm i}(1+w)/(1-w)$ and hence
 \begin{equation} \label{equation: z+i}
 z+{\rm i}=\frac{2{\rm i}}{1-w}, \qquad
 \frac{{\rm d}w}{{\rm d}z}=\frac{2{\rm i}}{(z+{\rm i})^2}=\frac{(1-w)^2}{2{\rm i}}.
 \end{equation}
 Also, since $E_2=\frac1{2\pi {\rm i}}{\rm d}\log\Delta(z)/{\rm d}z$ and the expansion
 of $\Delta(z)$ is of the form
 \[
 \Delta(z)=(1-w)^{12}\sum_{n=0}^\infty d_{2n}w^{2n},
 \]
 we find that
 \begin{align}
 E_2(z)&=-\frac{(1-w)^2}{4\pi}\left(-\frac{12}{1-w}
 +\frac{\sum 2nd_{2n}w^{2n-1}}{\sum d_{2n}w^{2n}}\right) \nonumber\\
 &=\frac3\pi(1-w)-\frac{(1-w)^2}{4\pi}
 \sum_{n=0}^\infty d_{2n+1}'w^{2n+1} \label{equation: E2 expansion}
 \end{align}
 for some power series $\sum d_{2n+1}'w^{2n+1}$. It follows that, by~\eqref{equation: z+i},
\[
 (z+{\rm i})E_2(z)+\frac{6}{\pi {\rm i}}=\frac{1-w}{2\pi {\rm i}}
 \sum_{n=0}^\infty d_{2n+1}'w^{2n+1}.
\]
 From this, we see that
 \[
 h_1+{\rm i}h_2-h_3=(1-w)^{k-2}\sum_{n=0}^\infty e_{2n+1}w^{2n+1}
\]
 for some $e_j$. This, together with (\ref{eeqqqq}), implies that either
 $\kappa_{\rm i}^{(2)}-\kappa_{\rm i}^{(1)}$ or $\kappa_{\rm i}^{(3)}-\kappa_{\rm i}^{(1)}$
 is odd. This proves the assertion (3) that $\big\{3\kappa_{\rm i}^{(1)},3\kappa_{\rm i}^{(2)},
 3\kappa_{\rm i}^{(3)}\big\}\equiv\{0,0,1\}$ $\mod 2$.

 The proof of (4) is similar. We consider the function
\[
 h_1-\ol\rho h_2+\ol\rho^2 h_3
 =(z-\ol\rho)^2f_0+(z-\ol\rho)((z-\ol\rho)E_2+\alpha)f_1
 +((z-\ol\rho)E_2+\alpha)^2f_2.
\]
 Setting $w=(z-\rho)/(z-\ol\rho)$, we have $z=(\rho-\ol\rho w)/(1-w)$,
\[
 z-\ol\rho=\frac{\sqrt 3{\rm i}}{1-w}, \qquad
 \frac{{\rm d}w}{{\rm d}z}=\frac{(1-w)^2}{\sqrt3{\rm i}},
\]
 and
 \begin{equation} \label{equation: (z-rho)E2}
 (z-\ol\rho)E_2(z)+\frac6{\pi {\rm i}}=\frac{1-w}{2\pi {\rm i}}\sum_{n=0}^\infty
 d_{3n+2}'w^{3n+2}
 \end{equation}
 (since the expansion of $\Delta(z)$ is $(1-w)^{12}\sum d_{3n}w^{3n}$
 for some $d_{3n}$). When $k\equiv 1\mod 3$, the expansions of $f_j$
 are of the form $f_0=(1-w)^k\sum a_{3n+1}w^{3n+1}$,
 $f_1=(1-w)^{k-2}\sum b_{3n+2}w^{3n+2}$, and $f_2=(1-w)^{k-4}\sum
 c_{3n}w^{3n}$. Therefore, the expansion of $h_1-\ol\rho
 h_2+\ol\rho^2h_3$ is of the form
\[
 h_1-\ol\rho h_2+\ol\rho^2 h_3=(1-w)^{k-2}\sum_{n=0}^\infty
 e_{3n+1}w^{3n+1},
\]
 which implies that either $\kappa_\rho^{(2)}-\kappa_\rho^{(1)}$ or
 $\kappa_\rho^{(3)}-\kappa_\rho^{(1)}$ is congruent to $1$ modulo
 $3$. Since the sum of~$\kappa_\rho^{(j)}$ is~$3$, we deduce that the
 set $\big\{\kappa_\rho^{(1)},\kappa_\rho^{(2)},\kappa_\rho^{(3)}\big\}$ is
 congruent to $\{0,1,2\}$ modulo~$3$. Likewise, when $k\equiv 2\mod
 3$, we can show that $h_1-\ol\rho h_2+\ol\rho^2h_3=(1-w)^{k-2}
 \sum e_{3n+2}w^{3n+2}$ and obtain the same conclusion.

 For the case $k\equiv 0\mod 3$, we need to make the computation more
 precise. Let
 \begin{gather*}
 f_0(z)=(1-w)^k\sum_{n=0}^\infty a_{3n}w^{3n}, \\
 f_1(z)=(1-w)^{k-2}\sum_{n=0}^\infty b_{3n+1}w^{3n+1}, \\
 f_2(z)=(1-w)^{k-4}\sum_{n=0}^\infty c_{3n+2}w^{3n+2}
 \end{gather*}
 be the expansions of $f_j$. A computation similar to \eqref{equation: E2
 expansion} yields
\[
 E_2(z)=\frac{2\sqrt3}\pi(1-w)-\frac{(1-w)^2}{2\pi\sqrt3}
 \sum_{n=0}^\infty d_{3n+2}'w^{3n+2}
\]
 and hence
 \begin{gather*}
 f(z)=(1-w)^k\sum_{n=0}^\infty a_{3n}w^{3n}\\
 \hphantom{f(z)=}{}
 +(1-w)^{k-1}\left(\frac{2\sqrt3}\pi-\frac{1-w}{2\pi\sqrt3}
 \sum_{n=0}^\infty d_{3n+2}'w^{3n+2}\right)
 \left(\sum_{n=0}^\infty b_{3n+1}w^{3n+1}\right)\\
 \hphantom{f(z)=}{}+(1-w)^{k-2}\left(\frac{2\sqrt3}\pi-\frac{1-w}{2\pi\sqrt3}
 \sum_{n=0}^\infty d_{3n+2}'w^{3n+2}\right)^2
 \left(\sum_{n=0}^\infty c_{3n+2}w^{3n+2}\right).
 \end{gather*}
 On the other hand, by \eqref{equation: (z-rho)E2}, we have
 \begin{gather*}
 h_1-\ol\rho h_2+\ol\rho^2h_3 \\
 \qquad =(1-w)^{k-2}\left(-3\sum_{n=0}^\infty a_{3n}w^{3n}
 +\frac{\sqrt3}{2\pi}\left(\sum_{n=0}^\infty
 d_{3n+2}'w^{3n+2}\right)
 \left(\sum_{n=0}^\infty b_{3n+1}w^{3n+1}\right) \right.\\
 \left.\qquad\quad{}-\frac1{4\pi^2}\left(\sum_{n=0}^\infty
 d_{3n+2}'w^{3n+2}\right)^2
 \left(\sum_{n=0}^\infty c_{3n+2}w^{3n+2}\right)\right).
 \end{gather*}
 We then check that the expansion of $h_1-\ol\rho
 h_2+\ol\rho^2h_3+3f$ is of the form
\[
 (1-w)^{k-2}\sum_{n=0}^\infty\big(e_{3n+1}w^{3n+1}+e_{3n+2}w^{3n+2}\big),
\]
 which again implies that either
 $\kappa_\rho^{(2)}-\kappa_\rho^{(1)}$ or
 $\kappa_\rho^{(3)}-\kappa_\rho^{(1)}$ is not congruent to $0$ modulo
 $3$ and hence
 $\big\{\kappa_\rho^{(1)},\kappa_\rho^{(2)},\kappa_\rho^{(3)}\big\}\equiv\{0,1,2\}\mod
 3$. This completes the proof.
\end{proof}

\section[The MODE on SL(2,Z)]{The MODE on $\boldsymbol{\SL(2,\mathbb{Z})}$}\label{section-5}

The purpose of this section is to prove Theorems~\ref{thm-01} and~\ref{thm-01+}, and Corollary~\ref{coro-1-3}.
Let $Q_2(z)$ and $Q_3(z)-\frac12 Q_2(z)$ be meromorphic modular forms on $\SL(2,\mathbb{Z})$ of weight $4$ and $6$ respectively, i.e., \begin{equation}\label{eq-10}
Ly:=y'''(z)+Q_2(z)y'(z)+Q_3(z)y(z)=0,\qquad z\in\mathbb{H}
\end{equation} is a MODE. In this section, we use the notations $S=\bigl(\begin{smallmatrix}0 & -1\\
1 & 0\end{smallmatrix}\bigr)$, $T=\bigl(\begin{smallmatrix}1 & 1\\
0 & 1\end{smallmatrix}\bigr)$ and $R=ST=\bigl(\begin{smallmatrix}0 & -1\\
1 & 1\end{smallmatrix}\bigr)$ such that $S^2=R^3=-I_2$.
Suppose the MODE (\ref{eq-10}) has regular singularities at $\{z_1,\dots, z_m\}\sqcup\{{\rm i}, \rho,\infty\}$ mod $\SL(2,\mathbb{Z})$, where $\rho=\big({-}1+\sqrt{3}{\rm i}\big)/2$ is the fixed point of $R$.

\subsection{Proof of Theorem \ref{thm-01+}(1)}
First we want to give the proof of Theorem \ref{thm-01+}(1), which is long and will be separated into several lemmas. For this purpose, throughout this section we always assume that
 \begin{itemize}\itemsep=0pt
 \item[(S1)] $\kappa_{\infty}^{(1)}, \kappa_{\rm i}^{(1)}, \kappa_{z_j}^{(1)}\in\frac13\mathbb{Z}_{\leq 0}$ and $\kappa_\rho^{(1)}\in\mathbb{Z}_{\leq 0}$ such that
 \begin{equation*}
\ell:=-2-12\kappa_{\infty}^{(1)}-4\kappa_{\rho}^{(1)}-6\kappa_{\rm i}^{(1)}-12\sum_{j=1}^m \kappa_{z_j}^{(1)}\in\mathbb{Z}.
\end{equation*} (Note $\ell\in\mathbb{Z}$ implies $\ell\in 2\mathbb{Z}$.) Furthermore, $m_{z}^{(j)}:=\kappa_{z}^{(j+1)}-\kappa_{z}^{(j)}-1\in\mathbb{Z}_{\geq 0}$ for $z\in \{z_1,\dots, z_m\}\sqcup\{{\rm i}, \rho\}$ and $j=1,2$, and $m_{\infty}^{(j)}:=\kappa_{\infty}^{(j+1)}-\kappa_{\infty}^{(j)}\in\mathbb{Z}_{\geq 0}$ for $j=1,2$.
 \item[(S2)]ODE (\ref{eq-10}) is apparent at any singular point $z\in \{z_1,\dots, z_m\}\sqcup\{{\rm i}, \rho\}$.
 \item[(S3)]$\big\{3\kappa_{\rm i}^{(1)},3\kappa_{\rm i}^{(2)},
 3\kappa_{\rm i}^{(3)}\big\}\equiv\{0,0,1\} \mod 2$.
 \end{itemize}
\begin{Remark}Note that (S1) and~(S2) are equivalent to the assumptions (H1)--(H3) and $\kappa_{\rho}^{(1)}\in\mathbb{Z}_{\leq 0}$ in Theorem~\ref{thm-01}, while (S3) is needed to obtain quasimodular forms as stated in Theorem~\ref{thm-01}(1). In view of Theorem~\ref{thm-02}, the above assumptions (S1)--(S3) are necessary for the validity of Theorem~\ref{thm-01+}(1). We will prove below that they are also sufficient.\end{Remark}

Set $t_j:=E_4(z_j)^3/E_6(z_j)^2\not\in\{0,1,\infty\}$ and
$F_j(z):=E_4(z)^3-t_j E_6(z)^2$.
Then this modular form $F_j(z)$ has only one simple zero at $z_j$, up
to $\SL(2,\Z)$-equivalence. Define
\begin{equation*}
F(z):=\Delta(z)^{-\kappa_{\infty}^{(1)}}E_4(z)^{-\kappa_{\rho}^{(1)}}E_6(z)^{-\kappa_{\rm i}^{(1)}}
\prod_{j=1}^m F_j(z)^{-\kappa_{z_j}^{(1)}}.
\end{equation*}
Then $F(z)^3$ is a modular form of weight $3(\ell+2)$.

Clearly for any solution $y(z)$ of (\ref{eq-10}),
\[\hat{y}(z):=F(z)y(z)\]
is single-valued and holomorphic on $\mathbb{H}$. Furthermore, its order at $z\in\{z_1,\dots, z_m\}\sqcup\{{\rm i}, \rho\}$ is one of $\big\{0,m_{z}^{(1)}+1, m_{z}^{(1)}+m_{z}^{(2)}+2\big\}$, and at $\infty$ is one of $\big\{0,m_{\infty}^{(1)}, m_{\infty}^{(1)}+m_{\infty}^{(2)}\big\}$.

Fix a fundamental system of solutions $Y(z)=(y_1(z)$, $ y_2(z), y_3(z))^t$ of (\ref{eq-10}) and let $\hat{Y}(z):=F(z)Y(z)$. Then for any $\gamma\in \SL(2,\Z)$, there is a matrix $\hat{\rho}(\gamma)\in {\rm GL}(3,\mathbb{C})$ such that
\begin{equation}\label{eq-82}\big(\hat{Y}\big |_{\ell}\gamma\big)(z)=\hat{\rho}(\gamma)\hat{Y}(z).\end{equation}
This is a lifting of the Bol representation. We will use freely the notation $\hat{\gamma}=\hat{\rho}(\gamma)$ just for convenience.

\begin{Lemma}\label{lemm-3-1}
There holds $\det\hat{\rho}(\gamma)=1$ for any $\gamma\in \SL(2,\Z)$. That is, $\hat{\rho}$ is a group homomorphism from $\SL(2,\mathbb{Z})$ to $\SL(3,\mathbb{C})$.
\end{Lemma}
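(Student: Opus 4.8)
The plan is to read off $\det\hat\rho(\gamma)$ from the transformation law of the scalar Wronskian $W_{\hat Y}(z):=\det\bigl(\hat Y,\hat Y',\hat Y''\bigr)$ of the transformed fundamental system, computing that law in two independent ways whose comparison leaves no room for a nontrivial determinant.

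First I would identify $W_{\hat Y}$ explicitly. By the multiplier identity for Wronskians, $W_{\hat Y}=F^3\,W_Y$, where $W_Y:=\det(Y,Y',Y'')$ is the Wronskian of the fundamental system of \eqref{eq-10}. Because \eqref{eq-10} has no $y''$ term, Abel's (Liouville's) formula gives $W_Y'=0$, so $W_Y$ is a nonzero constant (nonzero by the linear independence of $y_1,y_2,y_3$). Since $F^3$ is a nonzero modular form on $\SL(2,\mathbb{Z})$ of weight $3(\ell+2)$ with trivial character, it follows that $W_{\hat Y}$ is itself a nonzero modular form of weight $3(\ell+2)$ and trivial character; in particular
\[
W_{\hat Y}(\gamma z)=(cz+d)^{3(\ell+2)}\,W_{\hat Y}(z)\qquad\text{for all }\gamma\in\SL(2,\mathbb{Z}).
\]

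Next I would compute the same transformation directly from \eqref{eq-82}. Writing $J:=cz+d$ and differentiating $\hat Y(\gamma z)=J^{\ell}\hat\rho(\gamma)\hat Y(z)$ twice by the chain rule (using $(\gamma z)'=J^{-2}$), one obtains
\[
\bigl(\hat Y(\gamma z),\hat Y'(\gamma z),\hat Y''(\gamma z)\bigr)=\hat\rho(\gamma)\,\bigl(\hat Y,\hat Y',\hat Y''\bigr)\,B,
\]
where $B$ is upper triangular with diagonal $\bigl(J^{\ell},J^{\ell+2},J^{\ell+4}\bigr)$. Taking determinants gives
\[
W_{\hat Y}(\gamma z)=\det\hat\rho(\gamma)\,(cz+d)^{3(\ell+2)}\,W_{\hat Y}(z).
\]
Comparing with the previous display and using $W_{\hat Y}\not\equiv0$ yields $\det\hat\rho(\gamma)=1$ for every $\gamma$. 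Since $\hat\rho$ is already a homomorphism into $\mathrm{GL}(3,\mathbb{C})$ by construction (the lift of the Bol representation via \eqref{eq-82}, with $\ell\in2\mathbb{Z}$ so that the weight-$\ell$ slash obeys the cocycle relation), the determinant-one conclusion places its image in $\SL(3,\mathbb{C})$, which is the assertion.

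The one point demanding care is the trivial-character claim for $W_{\hat Y}$: although $F$ is defined only up to a cube root of unity, $W_{\hat Y}$ depends on $F$ solely through $F^3$, a genuine single-valued modular form, so no stray cube root of unity can survive in its multiplier. This is exactly what pins $\det\hat\rho(\gamma)$ to $1$ rather than merely to a cube root of unity, and it is the crux that distinguishes the lift $\hat\rho$ from the projective representation $\rho$. The chain-rule identity producing the triangular factor $B$ is the other place to stay attentive, but it is a routine computation; conceptually it is the lifted analogue of the remark in the introduction that the Wronskians of $Y$ and $Y\big|_{-2}\gamma$ coincide.
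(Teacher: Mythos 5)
Your proof is correct and follows essentially the same route as the paper: both compare the transformation law of the Wronskian $\hat W=F^3W_Y$ obtained from \eqref{eq-82} (which produces the factor $\det\hat\rho(\gamma)(cz+d)^{3(\ell+2)}$) with the modularity of $F^3$ of weight $3(\ell+2)$, forcing $\det\hat\rho(\gamma)=1$. Your only addition is spelling out the chain-rule/upper-triangular computation that the paper leaves implicit in the identity $\big(\hat W\big|_{3(\ell+2)}\gamma\big)=\det\hat\rho(\gamma)\hat W$.
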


\begin{proof} The proof is similar to that of \cite[Lemma 4.2]{LY},
 where the second-order MODE was studied. Let
\[W(z)=\det\begin{pmatrix}y_1&y_1'&y_1''\\y_2&y_2'&y_2''\\y_3&y_3'&y_3''\end{pmatrix},\qquad
\hat{W}(z)=\det\begin{pmatrix}\hat{y}_1&\hat{y}_1'&\hat{y}_1''\\
\hat{y}_2&\hat{y}_2'&y_2''\\ \hat{y}_3&\hat{y}_3'&\hat{y}_3''\end{pmatrix}.\]
Then $W(z)\equiv C$ is a nonzero constant.
By (\ref{eq-82}) we have
\[\big(\hat{W}\big|_{3(\ell+2)}\gamma\big)(z)=\det\hat{\rho}(\gamma)\hat{W}(z).\]
Since $\hat{W}(z)=F(z)^3W(z)=CF(z)^3$, we also have
\begin{align*}
\hat{W}\big|_{3(\ell+2)}\gamma=C\big(F^3\big|_{3(\ell+2)}\gamma\big)
=\frac{\big(F^3\big|_{3(\ell+2)}\gamma\big)(z)}{F(z)^3}\hat{W}(z).
\end{align*}
Thus
\begin{equation*}
\det\hat{\rho}(\gamma)=\frac{\big(F^3\big|_{3(\ell+2)}\gamma\big)(z)}{F(z)^3}.
\end{equation*}
This proves $\det\hat{\rho}(\gamma)=1$ because $F(z)^3$ is a modular form of weight $3(\ell+2)$ on $\SL(2,\mathbb{Z})$.
\end{proof}

Remark that under our assumption $\kappa_{\infty}^{(1)}, \kappa_{\rm i}^{(1)}, \kappa_{z_j}^{(1)}\in\frac13\mathbb{Z}_{\leq 0}$ and $\kappa_\rho^{(1)}\in\mathbb{Z}_{\leq 0}$, $y(z)^3$ is a single-valued and meromorphic function on $\mathbb{H}$ for any solution $y(z)$ of (\ref{eq-10}).

\begin{Lemma}\label{lemma-3-1}
Under the assumptions $(S1)$--$(S3)$, there is at least one solution $y(z)$
of~\eqref{eq-10} such that $y(z)^3$ is not a meromorphic modular form
of weight~$-6$.
\end{Lemma}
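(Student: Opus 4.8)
The plan is to argue by contradiction: assume that $y(z)^3$ is a meromorphic modular form of weight $-6$ for \emph{every} solution $y(z)$ of~\eqref{eq-10}, and derive a contradiction with~$(S3)$ by showing that this assumption forces $\hat\rho(S)$ to be a scalar matrix, whereas a local computation at the elliptic point~${\rm i}$ shows that it cannot be. First I would reformulate the hypothesis through the lift $\hat y=Fy$. Since $F^3$ is a modular form of weight $3(\ell+2)$, the function $y^3$ is modular of weight $-6$ if and only if $\hat y^3=F^3y^3$ is modular of weight $3\ell$, that is, $(\hat y|_\ell\gamma)^3=\hat y^3$ for all $\gamma\in\SL(2,\mathbb{Z})$. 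Now $\hat y|_\ell\gamma$ lies in the span of $\hat y_1,\hat y_2,\hat y_3$ by~\eqref{eq-82}, and two holomorphic functions on the connected domain~$\H$ with equal cubes differ by a constant cube root of unity; hence $\hat y|_\ell\gamma=\omega_\gamma\hat y$ with $\omega_\gamma^3=1$, i.e.\ $\hat y$ is an eigenvector of $\hat\rho(\gamma)$ for every~$\gamma$. If this held for every solution, then every vector would be a common eigenvector of each $\hat\rho(\gamma)$, forcing each $\hat\rho(\gamma)$, and in particular $\hat\rho(S)$, to be scalar.

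Next I would compute the eigenvalues of $\hat\rho(S)$ and show $(S3)$ makes it non-scalar. Because $\ell\in2\mathbb{Z}$ one checks $\hat\rho(-I_2)=I_3$, so $\hat\rho(S)^2=\hat\rho(S^2)=\hat\rho(-I_2)=I_3$; together with $\det\hat\rho(S)=1$ from Lemma~\ref{lemm-3-1}, the eigenvalues are either $\{1,1,1\}$ or $\{1,-1,-1\}$. To decide which, I would pass to the local coordinate $w=(z-{\rm i})/(z+{\rm i})$, under which ${\rm i}$ corresponds to $w=0$ and $S\colon z\mapsto-1/z$ becomes $w\mapsto-w$. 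In the Frobenius basis $\hat y_1,\hat y_2,\hat y_3$ adapted to~${\rm i}$, where $\hat y_j$ vanishes to order $a_j:=\kappa_{\rm i}^{(j)}-\kappa_{\rm i}^{(1)}\in\mathbb{Z}_{\geq0}$ (so that $a_1=0$), the map $(\hat y_j|_\ell S)(z)=z^{-\ell}\hat y_j(-1/z)$ preserves the order filtration at~${\rm i}$ and is triangular, with diagonal entry read off the leading term as $z^{-\ell}|_{z={\rm i}}\cdot(-1)^{a_j}=(-1)^{\ell/2}(-1)^{a_j}$. Hence the eigenvalues of $\hat\rho(S)$ are $(-1)^{\ell/2+a_j}$, $j=1,2,3$.

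Finally I would extract the relevant parity from~$(S3)$. Since $a_j\in\mathbb{Z}$ we have $3\kappa_{\rm i}^{(j)}\equiv3\kappa_{\rm i}^{(1)}+a_j\pmod2$, so the hypothesis $\{3\kappa_{\rm i}^{(1)},3\kappa_{\rm i}^{(2)},3\kappa_{\rm i}^{(3)}\}\equiv\{0,0,1\}\pmod2$ says that exactly one of $3\kappa_{\rm i}^{(1)}$, $3\kappa_{\rm i}^{(1)}+a_2$, $3\kappa_{\rm i}^{(1)}+a_3$ is odd. This is impossible if $a_2$ and $a_3$ are both even, so at least one of them is odd; consequently the eigenvalues $(-1)^{\ell/2+a_j}$ do not all coincide, $\hat\rho(S)$ is non-scalar, and we reach the desired contradiction with the first paragraph. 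This proves the existence of a solution $y$ with $y^3$ not modular of weight $-6$.

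I expect the crux to be the local analysis at~${\rm i}$: one must justify that, in the order filtration at~${\rm i}$, the matrix of $\hat\rho(S)$ is genuinely triangular and that its diagonal is governed solely by the leading-order scaling $w\mapsto-w$ weighted by $z^{-\ell}|_{z={\rm i}}=(-1)^{\ell/2}$, with no interference from the higher-order terms of the Frobenius expansions or from the cube-root ambiguity in the construction of $F$. The remaining ingredients --- the equivalence of the three modularity statements and the parity bookkeeping from~$(S3)$ --- are routine.
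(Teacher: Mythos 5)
Your proof is correct, but it takes a genuinely different route from the paper's. The paper's own proof is a two-line arithmetic argument: assuming $y(z)^3$ is a meromorphic modular form of weight $-6$ for every solution, the valence formula gives $\frac{\ord_{\rm i}(y^3)}{2}+\frac{\ord_{\rho}(y^3)}{3}\equiv\frac12 \mod \Z$; since $\kappa_\rho^{(j)}\in\Z$ by (S1), the $\rho$-term is an integer, so $\ord_{\rm i}(y^3)$ is always odd, and because apparentness at ${\rm i}$ (S2) guarantees each exponent $\kappa_{\rm i}^{(j)}$ is realized by some solution, all three numbers $3\kappa_{\rm i}^{(j)}$ would be odd, contradicting (S3). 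You instead upgrade the hypothesis to a representation-theoretic statement --- every solution is an eigenvector of every $\hat\rho(\gamma)$, forcing $\hat\rho(S)$ scalar --- and then contradict this by computing the spectrum of $\hat\rho(S)$ locally at ${\rm i}$. That spectral computation is sound: it is precisely the $S$-half \eqref{e2ee--} of the paper's later Lemma~\ref{lem-3-9}, and the ``crux'' you flag is fillable exactly as the paper does at $\rho$: with $w=(z-{\rm i})/(z+{\rm i})$ one has $w(Sz)=-w$ and $1-w(Sz)=-{\rm i}z\,(1-w)$, so on the expansion $\hat y=(1-w)^\ell\sum b_nw^{n+\kappa}$ the slash action gives $(\hat y|_\ell S)={\rm i}^{-\ell}(1-w)^\ell\sum b_n(-w)^{n+\kappa}$; since (S2) makes the exponents at ${\rm i}$ distinct, the order filtration is genuinely a flag, the action is triangular on it, and the diagonal (hence the full multiset of eigenvalues, with multiplicity) is $(-1)^{\ell/2+a_j}$ --- no interference from higher-order terms, because the prefactor ${\rm i}^{-\ell}$ is exactly constant, nor from the cube roots in $F$, since $\hat y$ is single-valued with integer order $a_j$ at ${\rm i}$. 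Your parity bookkeeping from (S3) is also right. Comparing the two: the paper's proof is shorter and needs no matrices at all, but uses the $\rho$-data directly; yours bypasses the valence formula, uses nothing at $\rho$ beyond $\ell\in2\Z$, and in effect pre-proves part of Lemma~\ref{lem-3-9} --- note also that your scalar-forcing step is the exact converse of the implication ($\hat S=\hat R=I_3\Rightarrow y^3$ modular of weight $-6$) that the paper itself uses in the proof of Lemma~\ref{lemma-3}, so the two arguments fit together consistently. Two harmless loose ends: your dichotomy $\{1,1,1\}$ versus $\{1,-1,-1\}$ for the eigenvalues of $\hat\rho(S)$ is never needed (scalarity alone contradicts the unequal diagonal), and triangularity should be stated with respect to the flag of solutions ordered by vanishing order, which is where distinctness of the $a_j$ enters.
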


\begin{proof}
Suppose the conclusion is not true, namely $y(z)^3$ is a meromorphic
modular form of weight $-6$ for any solution $y(z)$. Then by the
well-known valence formula for modular forms (see, e.g., \cite{Serre}),
we obtain
\[\frac{\ord_{\rm i}\big(y^3\big)}{2}+\frac{\ord_{\rho}\big(y^3\big)}{3}\equiv \frac12\quad \mod \mathbb{Z},\]
so $\ord_{\rm i}\big(y^3\big)$ is odd. Since $\ord_{\rm i}\big(y^3\big)$ can be chosen as any one of $3\kappa_{\rm i}^{(1)}$, $3\kappa_{\rm i}^{(2)}$, $3\kappa_{\rm i}^{(3)}$,
these three numbers are all odd, clearly a contradiction with our assumption~(S3).
\end{proof}

\begin{Lemma}\label{lemma-3}
Under the assumptions $(S1)$--$(S3)$, \eqref{eq-10} is completely not apparent at~$\infty$.
\end{Lemma}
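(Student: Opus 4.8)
The plan is to translate the statement into a question about the Jordan type of $\hat\rho(T)$ and then read everything off from the eigenvalues of $\hat\rho(S)$ and $\hat\rho(R)$. Since each $\hat y_j=Fy_j$ is holomorphic at $\infty$ with nonnegative integer $q$-order, $\hat\rho(T)$ is unipotent, and by the solution structure recalled in Appendix~\ref{section-3} the cusp $\infty$ is completely not apparent precisely when $\hat\rho(T)$ is a single $3\times3$ Jordan block. So it suffices to exclude the Jordan types $(1,1,1)$ and $(2,1)$. First I would record that $\hat\rho$ factors through $\PSL(2,\mathbb{Z})$: since $\ell$ is even (S1) we have $\hat\rho(-I_2)=I_3$, whence $\hat\rho(S)^2=\hat\rho(R)^3=I_3$ and $\hat\rho(T)=\hat\rho(S)\hat\rho(R)$.

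Next I would compute the eigenvalues of $\hat\rho(S)$ at the elliptic point ${\rm i}$. By (S2) the point ${\rm i}$ is apparent, so the $\hat y_j$ are holomorphic there with distinct orders among $\big\{0,m_{\rm i}^{(1)}+1,m_{\rm i}^{(1)}+m_{\rm i}^{(2)}+2\big\}$; write $d_j:=\kappa_{\rm i}^{(j)}-\kappa_{\rm i}^{(1)}$. In the local coordinate $w=(z-{\rm i})/(z+{\rm i})$ the map $S$ acts as $w\mapsto-w$ to leading order and the weight factor contributes ${\rm i}^{-\ell}=(-1)^{\ell/2}$, so $S$ preserves the order filtration at ${\rm i}$ and acts on the associated graded by the scalars $(-1)^{\ell/2+d_j}$; these are therefore the eigenvalues of $\hat\rho(S)$. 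Assumption (S3) forces the parities of the $d_j$ to be not all equal (two agree, one differs), and together with $\det\hat\rho(S)=1$ this pins the eigenvalue multiset down to $\{1,-1,-1\}$. Because $\hat\rho(T)$ is unipotent, $\hat\rho(R)$ cannot be scalar (a scalar $\zeta I_3$ would give $\hat\rho(T)$ the eigenvalue $-\zeta\ne1$), so being a non-scalar order-$3$ element of $\SL(3,\mathbb{C})$ its eigenvalues are $\{1,\omega,\omega^2\}$ for a primitive cube root of unity $\omega$.

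With this data the type $(1,1,1)$ is immediate: $\hat\rho(T)=I_3$ forces $\hat\rho(S)=\hat\rho(R)^{-1}$, an element of order dividing both $2$ and $3$, hence $\hat\rho(S)=I_3$, contradicting its eigenvalues $\{1,-1,-1\}$. (Alternatively, $\hat\rho(T)=I_3$ would make every $\hat y_j$ a weight-$\ell$ modular form and hence every $y^3$ a modular form of weight $-6$, contradicting Lemma~\ref{lemma-3-1}.) The main obstacle is the type $(2,1)$, where $\dim\ker(\hat\rho(T)-I_3)=2$. Using $\hat\rho(S)^2=I_3$ one has $\ker(\hat\rho(T)-I_3)=\ker(\hat\rho(S)-\hat\rho(R))$, so $N:=\hat\rho(S)-\hat\rho(R)$ has rank $1$; expanding $(\hat\rho(R)+N)^2=I_3$ and evaluating at a vector $e_1$ with $\hat\rho(R)e_1=e_1$ yields $Ne_1=0$, i.e.\ $e_1$ is a common eigenvector of $\hat\rho(S)$ and $\hat\rho(R)$ of eigenvalue $1$. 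Then $\langle e_1\rangle$ is $\hat\rho$-invariant with trivial character, and on the quotient $\sigma_2=\mathbb{C}^3/\langle e_1\rangle$ the eigenvalues of $\sigma_2(S)$ and $\sigma_2(R)$ are $\{-1,-1\}$ and $\{\omega,\omega^2\}$, so $\sigma_2(S)=-I_2$ and $\sigma_2(T)=-\sigma_2(R)$ has eigenvalues $\{-\omega,-\omega^2\}$. This is not unipotent, contradicting that $\sigma_2(T)$ is a quotient of the unipotent $\hat\rho(T)$. Hence $\hat\rho(T)$ is a single Jordan block and $\infty$ is completely not apparent.

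I expect the rank-one step in the $(2,1)$ case to be the delicate point: one must verify that the two-dimensional fixed space of $\hat\rho(T)$ really produces a genuine common eigenvector $e_1$ (so that $\hat\rho$ becomes reducible with \emph{trivial} character on $\langle e_1\rangle$), and that the induced quotient inherits exactly the eigenvalue data $\{-1,-1\}$ and $\{\omega,\omega^2\}$; the eigenvalue computation of $\hat\rho(S)$ at ${\rm i}$ from (S3) is the other ingredient that makes the argument go through. Everything else is bookkeeping with local orders and determinants.
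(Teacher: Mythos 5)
Your overall architecture is sound and runs parallel to the paper's: the paper also encodes the failure of the conclusion as $\operatorname{rank}\big(\hat T-I_3\big)\le 1$, passes to $\operatorname{rank}\big(\hat S-\hat R\big)\le 1$ (note $\hat T-I_3=\hat S\big(\hat R-\hat S\big)$ since $\hat S^2=I_3$, so the two ranks agree), and then does an eigenvalue case analysis. Your unipotence of $\hat T$, the exclusion of a scalar $\hat R$, and the disposal of the Jordan type $(1,1,1)$ are all correct. Your computation of the eigenvalues of $\hat S$ from the local exponents at ${\rm i}$ via the order filtration is legitimate and is essentially the paper's Lemma~\ref{lem-3-9} (proved later in the same section); the paper's own proof of Lemma~\ref{lemma-3} instead gets the two options $\hat S=I_3$ or $\hat S\sim\operatorname{diag}(1,-1,-1)$ abstractly from $\hat S^2=I_3$, $\det\hat S=1$, and rules out $\hat S=I_3$ by the valence-formula Lemma~\ref{lemma-3-1} --- that is where (S3) enters for the paper, whereas for you (S3) enters through the parities of $\kappa_{\rm i}^{(j)}-\kappa_{\rm i}^{(1)}$.

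However, the step you yourself flagged in the type $(2,1)$ case is genuinely broken. Setting $N=\hat S-\hat R$ and expanding $\big(\hat R+N\big)^2=I_3$ at a vector $e_1$ with $\hat Re_1=e_1$ gives $\hat RNe_1+Ne_1+N^2e_1=0$, i.e., $\big(\hat R+N+I_3\big)Ne_1=\big(\hat S+I_3\big)Ne_1=0$. This carries no information: since $Ne_1=\big(\hat S-I_3\big)e_1$ and $\big(\hat S+I_3\big)\big(\hat S-I_3\big)=\hat S^2-I_3=0$, the identity holds automatically and only says $Ne_1$ lies in the $(-1)$-eigenspace of $\hat S$, not that $Ne_1=0$. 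So the common eigenvector $e_1$, and with it the quotient representation $\sigma_2$ and the final contradiction, is not established. In fact no direct evaluation can establish it, because under the spectra $\{1,-1,-1\}$ and $\big\{1,\varepsilon,\varepsilon^2\big\}$ the hypothesis $\operatorname{rank}(N)\le 1$ is itself vacuous; any valid argument must produce the contradiction by some structural tool. The paper fills exactly this hole with the Beukers--Heckman rank-one criterion and Levelt's lemma (Lemmas~\ref{lemma-4} and~\ref{lemma-5}, Corollary~\ref{coro-5-6}), which yield only a proper invariant subspace $V$ --- not necessarily a common $1$-eigenline --- and then a separate case analysis for $\dim V=2$ and $\dim V=1$. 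A quick repair of your proof that bypasses all of this: since $\hat S$ is diagonalizable with eigenvalues $\{1,-1,-1\}$, write $\hat S=-I_3+2P$ with $P$ a rank-one projection; then $\hat S-\hat R=-\big(I_3+\hat R\big)+2P$, and $I_3+\hat R$ is invertible because the eigenvalues of $\hat R$ are cube roots of unity, so $\operatorname{rank}\big(\hat S-\hat R\big)\ge 3-1=2$. This contradicts $\operatorname{rank}\big(\hat S-\hat R\big)\le 1$ and kills the types $(1,1,1)$ and $(2,1)$ simultaneously, completing your argument (and giving a shorter proof than the paper's case analysis).
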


To prove Lemma \ref{lemma-3}, we need the following well-known lemma due to Beukers and Heckman~\cite{BH}.

\begin{Lemma}[\cite{BH}] \label{lemma-4} Let $n\geq 2$ and $H\subset {\rm GL}(n,\mathbb{C})$ be a subgroup generated by two matrices $A$, $B$ such that $\operatorname{rank}(A-B)\leq 1$. Then $H$ acts irreducibly on $\mathbb{C}^n$ if and only if $A$ and $B$ have distinct eigenvalues.
\end{Lemma}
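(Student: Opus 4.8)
The plan is to prove the two implications separately, reading the phrase ``$A$ and $B$ have distinct eigenvalues'' as the (Beukers--Heckman) condition that the spectra of $A$ and $B$ are disjoint, i.e., $A$ and $B$ share no common eigenvalue. Write $A-B=vw^t$ for a column vector $v$ and a row $w^t$ afforded by the rank $\le 1$ hypothesis. The degenerate case $A=B$ is immediate, since then $H=\langle A\rangle$ is abelian and hence reducible for $n\ge 2$, while $A$ and $B$ tautologically share every eigenvalue; so I would assume $v,w\neq 0$ from the outset.

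For the easy implication I would argue the contrapositive: if $H$ is reducible then $A$ and $B$ share an eigenvalue. Let $U$ be a proper nonzero subspace invariant under both $A$ and $B$, so that for every $u\in U$ one has $(A-B)u=(w^tu)\,v\in U$. If $w^t$ vanishes on $U$, then $A|_U=B|_U$, and any eigenvalue of this common restriction (which exists since $U\neq 0$ and we are over $\mathbb{C}$) is a shared eigenvalue. Otherwise some $u_0\in U$ satisfies $w^tu_0\neq 0$, forcing $v\in U$; then $A-B$ maps all of $\mathbb{C}^n$ into $U$, so $A$ and $B$ induce the same operator on the quotient $\mathbb{C}^n/U$, and any eigenvalue of that common quotient operator is simultaneously an eigenvalue of $A$ and of $B$. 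Either way a common eigenvalue is produced.

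The harder implication -- manufacturing an invariant subspace out of a common eigenvalue -- is where the real content lies, and I would dispatch it by a short two-sided eigenvector computation. Suppose $\lambda$ is a common eigenvalue of $A$ and $B$. Choose a right eigenvector $a$ with $Aa=\lambda a$ and a left eigenvector $b^t$ with $b^tB=\lambda b^t$ (the latter exists because $\lambda$ is an eigenvalue of $B^t$). Evaluating $b^tBa$ in two ways -- using $b^tB=\lambda b^t$ on one side and $Ba=Aa-(w^ta)v=\lambda a-(w^ta)v$ on the other -- gives $\lambda(b^ta)=\lambda(b^ta)-(w^ta)(b^tv)$, hence $(w^ta)(b^tv)=0$. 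Thus either $w^ta=0$, in which case $Ba=\lambda a$ and the line $\mathbb{C}\,a$ is invariant under both matrices; or $b^tv=0$, in which case $b^tA=b^tB+(b^tv)w^t=\lambda b^t$ and the hyperplane $\ker b^t$ is invariant under both. In both cases $H$ is reducible, which is exactly the contrapositive of the remaining implication.

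The main obstacle is this second implication, and the key device is the scalar identity $(w^ta)(b^tv)=0$, obtained by pairing a right eigenvector of $A$ against a left eigenvector of $B$ through the rank-one difference $A-B=vw^t$; once it is in hand, the invariant subspace (a line or a hyperplane) falls out with no further case analysis. The only points I would verify carefully are the routine ones: that eigenvectors of the stated one-sided type exist, and that the reduction to $v,w\neq 0$ genuinely covers the $\operatorname{rank}(A-B)=0$ case.
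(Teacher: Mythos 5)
Your proof is correct and complete, but note that the paper itself does not prove this lemma at all: it is quoted from Beukers--Heckman \cite{BH} (their irreducibility criterion for hypergeometric groups, where $A^{-1}B$ is a pseudoreflection), so there is no in-paper argument to compare against. Your route is essentially the standard one, organized slightly differently from the source: Beukers--Heckman phrase everything through the pseudoreflection $C=A^{-1}B$, whose fixed hyperplane $\ker(C-I)$ and image line $\operatorname{Im}(C-I)$ play the roles of your $\ker w^t$ and $\mathbb{C}v$; your direction-1 dichotomy (either $w^t$ vanishes on the invariant subspace $U$, so $A|_U=B|_U$, or $v\in U$, so $A$ and $B$ agree on the quotient) is exactly their observation that an invariant subspace either lies in the reflection hyperplane or contains the reflection's image. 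The genuinely nice touch in your write-up is the converse: the scalar identity $(w^ta)(b^tv)=0$, obtained by pairing a right eigenvector of $A$ against a left eigenvector of $B$ through $A-B=vw^t$, produces in one stroke either a common eigenvector (invariant line) or a common left eigenvector (invariant hyperplane), with no case analysis on Jordan structure. Two small points you rightly flag and which do check out: invariance of a subspace under an invertible operator automatically gives invariance under its inverse, so invariance under $A$ and $B$ suffices for invariance under $H=\langle A,B\rangle$; and the reading of ``distinct eigenvalues'' as \emph{disjoint spectra} is the intended one --- it is both what \cite{BH} prove and how the present paper uses the lemma in Corollary~\ref{coro-5-6}, where ``common eigenvalues'' is the stated obstruction to irreducibility. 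Your handling of the degenerate case $A=B$ (both sides of the equivalence fail for $n\ge2$) correctly covers $\operatorname{rank}(A-B)=0$.
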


Let $\{a_1,\dots,a_n\}$ and $\{b_1,\dots,b_n\}$ be the eigenvalues of $A$ and $B$ respectively. The following lemma, due to Levelt, is to recover $A$ and $B$ by their eigenvalues. See~\cite{BH} for a proof.

\begin{Lemma}[cf.~\cite{BH}] \label{lemma-5}
Suppose that $\operatorname{rank}(A-B)= 1$ and $a_1,\dots,a_n$,
$b_1,\dots,b_n$ are all nonzero complex numbers with $a_i\neq b_j$
for any $i$, $j$. Then up to a common conjugation in ${\rm GL}(n,\mathbb{C})$,
$A$~and~$B$ can be uniquely determined by
\[A=\begin{pmatrix}0&0&\cdots&0&-A_n\\
1&0&\cdots&0&-A_{n-1}\\
0&1&\cdots&0&-A_{n-2}\\
\vdots&\vdots&\ddots&\vdots&\vdots\\
0&0&\cdots&1&-A_1\end{pmatrix},\qquad B=\begin{pmatrix}0&0&\cdots&0&-B_n\\
1&0&\cdots&0&-B_{n-1}\\
0&1&\cdots&0&-B_{n-2}\\
\vdots&\vdots&\ddots&\vdots&\vdots\\
0&0&\cdots&1&-B_1\end{pmatrix},\]
where $A_j$'s and $B_j$'s are given by
\begin{gather*}
\prod_{j=1}^{n}(t-a_j)=t^n+A_1 t^{n-1}+\cdots+A_n,\\
\prod_{j=1}^{n}(t-b_j)=t^n+B_1 t^{n-1}+\cdots+B_n.
\end{gather*}
\end{Lemma}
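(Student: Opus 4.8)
The plan is to use the irreducibility supplied by Lemma~\ref{lemma-4} to manufacture a \emph{single} basis in which both $A$ and $B$ become companion matrices; once such a basis exists, the entries are forced to be the elementary symmetric functions of the prescribed eigenvalues, which gives simultaneously the explicit form and the uniqueness up to conjugation. First I would record the only external input: since $a_i\neq b_j$ for all $i,j$, Lemma~\ref{lemma-4} guarantees that $\gen{A,B}$ acts irreducibly on $\mathbb{C}^n$. Because $\operatorname{rank}(A-B)=1$ I would then write $A-B=\xi\eta^{t}$ with $\xi,\eta\neq 0$ and set $K:=\ker\eta^{t}=\ker(A-B)$, a hyperplane on which $A$ and $B$ agree, i.e.\ $Ax=Bx$ for every $x\in K$.

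The heart of the construction is to produce a vector $f_1$ that is cyclic for $A$ and whose first $n-1$ Krylov iterates all lie in $K$. I would first show that the covectors $\eta^{t},\eta^{t}A,\dots,\eta^{t}A^{n-2}$ are linearly independent: the annihilator of $\operatorname{span}\{\eta^{t}A^{k}:k\ge 0\}$ is visibly $A$-invariant and is contained in $K$, so on it $B$ coincides with $A$ and it is $B$-invariant as well; irreducibility forces it to be $\{0\}$, whence the $\eta^{t}A^{k}$ span the dual space and the first $n-1$ of them are independent. Consequently $\bigcap_{k=0}^{n-2}\ker\big(\eta^{t}A^{k}\big)$ is one-dimensional; I let $f_1$ span it, so that $f_1,Af_1,\dots,A^{n-2}f_1\in K$ by construction. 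Then I would check that $f_1$ is cyclic for $A$: if the Krylov space $W=\gen{A^{k}f_1:k\ge 0}$ had dimension $d\le n-1$ it would be spanned by $f_1,\dots,A^{d-1}f_1$ with $d-1\le n-2$, all lying in $K$; hence $BA^{k}f_1=A^{k+1}f_1\in W$ for $k\le d-1$, so $W$ would be both $A$- and $B$-invariant, contradicting irreducibility. Therefore $f_j:=A^{j-1}f_1$, $j=1,\dots,n$, is a basis.

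In this basis $Af_j=f_{j+1}$ for $j<n$, while Cayley--Hamilton applied to $\prod_{j}(t-a_j)=t^{n}+A_1t^{n-1}+\dots+A_n$ gives $Af_n=A^{n}f_1=-(A_1f_n+\dots+A_nf_1)$; this is exactly the displayed companion matrix for $A$. Crucially, since $f_1,\dots,f_{n-1}\in K$ one also has $Bf_j=Af_j=f_{j+1}$ for $j<n$, so $f_j=B^{j-1}f_1$ as well, and the identical Cayley--Hamilton computation for $\prod_{j}(t-b_j)=t^{n}+B_1t^{n-1}+\dots+B_n$ yields the companion matrix for $B$ in the same basis. Finally, the entries $A_j,B_j$ are the elementary symmetric functions of the given eigenvalues, hence fixed; rescaling $f_1$ rescales the whole basis uniformly and changes neither matrix, so any pair with the prescribed spectra and $\operatorname{rank}(A-B)=1$ is conjugate to this one by a single element of $\mathrm{GL}(n,\mathbb{C})$, giving the asserted uniqueness.

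I expect the main obstacle to be the step producing $f_1$: reconciling the constraint that its first $n-1$ iterates lie in the hyperplane $K$ with the requirement that $f_1$ be a genuine cyclic (non-derogatory) vector for $A$. Both halves hinge on converting a candidate proper invariant subspace into a contradiction with irreducibility, so the argument stays clean only if one consistently reads everything through the pair $(A,B)$ acting on $K$ and its $A$-orbit, rather than through $A$ alone.
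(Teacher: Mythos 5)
Your proof is correct, and since the paper itself gives no argument for this lemma (it defers to Beukers--Heckman \cite{BH}), the right comparison is with Levelt's classical proof reproduced there: your construction of the cyclic vector $f_1$ spanning $\bigcap_{k=0}^{n-2}\ker\big(\eta^{t}A^{k}\big)=\bigcap_{k=0}^{n-2}A^{-k}K$, with irreducibility (Lemma~\ref{lemma-4}) killing both the annihilator of the covector Krylov span and any proper $A$-Krylov space inside $K$, is essentially that same argument in dual formulation. The only step stated a bit tersely --- that the full dual being spanned by $\{\eta^{t}A^{k}\}_{k\ge0}$ forces the first $n-1$ covectors to be independent --- is the standard fact that Krylov spans grow strictly until they stabilize, so nothing is missing.
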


\begin{Corollary}\label{coro-5-6}
Let $A$, $B$ be $3\times 3$ matrices such that
$\operatorname{rank}(A-B)\leq 1$. Suppose that $A^2=I_3$ and the
eigenvalues of~$A$ are $\{1,-1,-1\}$. Then $A$, $B$ have common
eigenvalues and so the group~$H$ generated by $A$ and $B$ acts on~$\mathbb{C}^3$ reducibly.
\end{Corollary}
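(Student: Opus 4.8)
The plan is to prove Corollary~\ref{coro-5-6} by combining the structural information about $A$ with the dichotomy provided by Lemma~\ref{lemma-4}. Since $A^2=I_3$ with eigenvalues $\{1,-1,-1\}$, the matrix $A$ is diagonalizable and its three eigenvalues lie in $\{1,-1\}$, with $-1$ occurring with multiplicity two. I would split the argument into two cases according to whether $\operatorname{rank}(A-B)=0$ or $\operatorname{rank}(A-B)=1$, since the hypothesis only asserts $\operatorname{rank}(A-B)\le 1$.

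First, the trivial case: if $\operatorname{rank}(A-B)=0$, then $A=B$, so $A$ and $B$ obviously share all their eigenvalues, and the group $H$ they generate is just $\langle A\rangle$, which is cyclic of order $2$ and hence acts reducibly on $\mathbb{C}^3$ (indeed each eigenspace of $A$ is $H$-invariant). So I may assume $\operatorname{rank}(A-B)=1$ for the remainder.

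For the main case, I would argue by contraposition using Lemma~\ref{lemma-4}. Suppose, for contradiction, that $A$ and $B$ have \emph{no} common eigenvalue. Then in particular all eigenvalues of $B$ are distinct from those of $A$, so $A$ and $B$ have distinct eigenvalues in the sense of Lemma~\ref{lemma-4}, and that lemma forces $H=\langle A,B\rangle$ to act irreducibly on $\mathbb{C}^3$. The key point I would then extract is a contradiction with the eigenvalue structure of $A$: because $A$ has the repeated eigenvalue $-1$, the $(-1)$-eigenspace $V_{-1}$ is two-dimensional, and the $(+1)$-eigenspace $V_{1}$ is one-dimensional. The constraint $\operatorname{rank}(A-B)=1$ means $A$ and $B$ differ by a rank-one operator, so $B=A+uv^{t}$ for suitable nonzero vectors $u,v$. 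The obstruction to irreducibility is that a two-dimensional eigenspace is "too large" to be fully disturbed by a rank-one perturbation: concretely, the kernel of $v^{t}$ restricted to $V_{-1}$ is at least one-dimensional, giving a nonzero vector $w\in V_{-1}$ with $Bw=Aw=-w$, so that $-1$ is a common eigenvalue of $A$ and $B$ after all. This directly contradicts the assumption that they share no eigenvalue, and hence $A$ and $B$ must in fact have a common eigenvalue; by Lemma~\ref{lemma-4} (its contrapositive), $H$ acts reducibly.

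The main obstacle—really the only subtle point—is producing the shared eigenvector cleanly, i.e.\ verifying that a rank-one perturbation of $A$ cannot move \emph{every} vector of the two-dimensional eigenspace $V_{-1}$ off that eigenspace. I would handle this by the dimension count just sketched: writing $A-B=uv^{t}$, the linear functional $v^{t}$ vanishes on a subspace of $V_{-1}$ of dimension at least $\dim V_{-1}-1=1$, and any nonzero $w$ in that subspace satisfies $Bw=Aw-u\,(v^{t}w)=Aw=-w$. This is the heart of why the repeated eigenvalue of $A$ is exactly what is needed, and it is the place where the multiplicity-two hypothesis on $-1$ is used in an essential way. Everything else is a direct application of Lemmas~\ref{lemma-4} and~\ref{lemma-5}.
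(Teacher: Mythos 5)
Your proof is correct, but it takes a genuinely different route from the paper's. The paper argues by contradiction through Levelt's rigidity lemma (Lemma~\ref{lemma-5}): if $A$ and $B$ shared no eigenvalue, then up to conjugation $A$ would be the companion matrix of its characteristic polynomial $(t-1)(t+1)^2=t^3+t^2-t-1$, and one checks that this companion matrix fails $A^2=I_3$; reducibility then follows from Lemma~\ref{lemma-4}. You instead give a direct linear-algebra argument: $A^2=I_3$ makes $A$ diagonalizable, so the eigenvalue $-1$ has a two-dimensional eigenspace $V_{-1}$, and writing $A-B=uv^{t}$ the functional $v^{t}$ must vanish on some nonzero $w\in V_{-1}$, whence $Bw=Aw=-w$ and $-1$ is a common eigenvalue (equivalently, one could note $\operatorname{rank}(B+I_3)\le\operatorname{rank}(A+I_3)+\operatorname{rank}(B-A)\le 2$). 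Your route is more elementary and in fact slightly stronger: it avoids Lemma~\ref{lemma-5} entirely (together with its side condition that all eigenvalues be nonzero, which the paper's application tacitly imposes on $B$); it produces an explicit common eigenvector, so that $\mathbb{C}w$ is invariant under both $A$ and $B$ and reducibility of $H$ follows at once, without even needing Lemma~\ref{lemma-4}; and it makes clear that $A^2=I_3$ enters only through the fact that $-1$ has geometric multiplicity $2$, i.e., $\operatorname{rank}(A+I_3)\le 1$, so the conclusion holds in that generality. What the paper's approach buys is uniformity: it reuses the Beukers--Heckman/Levelt machinery (Lemmas~\ref{lemma-4} and~\ref{lemma-5}) already assembled for the proof of Lemma~\ref{lemma-3}. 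Two cosmetic remarks on your write-up: the ``suppose for contradiction'' wrapper is unnecessary, since your perturbation argument yields the common eigenvalue unconditionally; and your closing claim that the remainder is a direct application of Lemmas~\ref{lemma-4} and~\ref{lemma-5} is off on the second count---you never actually use Lemma~\ref{lemma-5}.
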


\begin{proof}
This corollary is trivial if $\operatorname{rank}(A-B)=0$,
i.e., $A=B$. So we may assume $\operatorname{rank}(A-B)\allowbreak =1$. Assume by contradiction
that $A$ and $B$ have no common eigenvalues, then by
$(t-1)(t+1)^2=t^3+ t^2-t-1$, we see from Lemma~\ref{lemma-5} that~$A$
is conjugate to
\[\begin{pmatrix}
0&0&1\\
1&0&1\\
0&1&-1\end{pmatrix}\]
and so $A^2\neq I_3$, a contradiction with the assumption $A^2=I_3$.

Thus $A$ and $B$ have common eigenvalues, and it follows from Lemma~\ref{lemma-4} that $H$ acts on~$\mathbb{C}^3$ reducibly.
\end{proof}

\begin{proof}[Proof of Lemma \ref{lemma-3}]
Suppose that the statement is not true. Then
$\operatorname{rank}\big(\hat{T}-I_3\big)\leq 1$ (note $\hat{T}=I_3$ if and
only if $\infty$ is apparent). Then $\hat{R}=\hat{S}\hat{T}$ implies
 $\operatorname{rank}\big(\hat{S}-\hat{R}\big)\leq 1$.

By $\hat{R}^3=\hat{\rho}\big(R^3\big)=\hat{\rho}(-I_2)=(-1)^{\ell}I_3=I_3$ and $\det \hat{R}=1$, we have either $\hat{R}=\lambda I_3$ for some $\lambda^3=1$ or $\hat{R}$ is conjugate to $\operatorname{diag}\big(1, \varepsilon, \varepsilon^2\big)$ where $\varepsilon={\rm e}^{2\pi {\rm i}/3}$.
Similarly, by $\hat{S}^2=\hat{\rho}\big(S^2\big)=\hat{\rho}(-I_2)\allowbreak =I_3$ and $\det \hat{S}=1$, we have either $\hat{S}=I_3$ or $\hat{S}$ is a conjugate of $\operatorname{diag}(1,-1,-1)$.

If $\hat{S}=I_3$, then by $\operatorname{rank}\big(\hat{S}-\hat{R}\big)\leq 1$ we obtain $\hat{R}=I_3$. This implies that for any solution~$y(z)$ of~(\ref{eq-10}), $\hat{y}(z)^3=F(z)^3y(z)^3$ is a modular form of weight $3\ell$ and so $y(z)^3$ is a meromorphic modular form of weight $-6$, a~contradiction with Lemma~\ref{lemma-3-1}.

Thus $\hat{S}$ is a conjugate of $\operatorname{diag}(1,-1,-1)$. If $\hat{R}=\lambda I_3$ for some $\lambda^3=1$, then by $\lambda\neq -1$ we obtain{\samepage
\[1\geq\operatorname{rank}\big(\hat{S}-\hat{R}\big)=\operatorname{rank}(\operatorname{diag}(1-\lambda,-1-\lambda,-1-\lambda))\geq 2,\]
a contradiction.}

So $\hat{R}$ is conjugate to $\operatorname{diag}\big(1, \varepsilon, \varepsilon^2\big)$. By Corollary~\ref{coro-5-6}, there is a subspace $V\subsetneqq \mathbb{C}^3$ which is invariant under the actions $\hat{S}$ and $\hat{R}$. If $\dim V=2$, then there is an invertible matrix $P$ such that
\[P\hat{S}P^{-1}=\begin{pmatrix}A_1&0\\
*&a_1\end{pmatrix},\qquad P\hat{R}P^{-1}=\begin{pmatrix}B_1&0\\
*&b_1\end{pmatrix},\]
where $A_1$ and $B_1$ are $2\times 2$ matrices. This implies
\begin{equation}\label{eeq}\operatorname{rank}\begin{pmatrix}A_1-B_1&0\\
 *&a_1-b_1\end{pmatrix}=\operatorname{rank}\big(\hat{R}-\hat{S}\big)\leq 1.
\end{equation}
Note $a_1\in \{1,-1\}$ and $b_1\in \big\{1,\varepsilon,\varepsilon^2\big\}$. If $a_1\neq b_1$, then (\ref{eeq}) implies $A_1=B_1$, namely $\hat{S}$ and $\hat{R}$ have two common eigenvalues, a contradiction. So $a_1=b_1=1$. Then the eigenvalues of $A_1$ are $\{-1,-1\}$, so $A_1=-I_2$. Similarly, $B_1$ is conjugate to $\operatorname{diag}\big(\varepsilon,\varepsilon^2\big)$. Thus
\[1\geq\operatorname{rank}(A_1-B_1)=\operatorname{rank}\big(\operatorname{diag}\big({-}1-\varepsilon,-1-\varepsilon^2\big)\big)= 2,\]
a contradiction.
So $\dim V=1$, which implies the existence of an invertible matrix $P$ such that
\[P\hat{S}P^{-1}=\begin{pmatrix}a_1&0\\
*&A_1\end{pmatrix},\qquad P\hat{R}P^{-1}=\begin{pmatrix}b_1&0\\
*&B_1\end{pmatrix},\]
where $A_1$ and $B_1$ are $2\times 2$ matrices. Clearly the same argument as~(\ref{eeq}) also yields a contradiction.
This completes the proof.
\end{proof}

Let $\hat{y}_3(z):=\hat{y}_+(z)=F(z)y_+(z)$, $\hat{y}_1(z):=(\hat{y}_3\big|_{\ell}S)(z)$ and $\hat{y}_2(z):=(\hat{y}_3\big|_{\ell}R)(z)$, and $y_j(z):=\hat{y}_j(z)/F(z)$ for $j=1,2,3$.

\begin{Lemma}\label{lemma-5-8}
Under the assumptions $(S1)$--$(S3)$, $\hat{y}_1(z)$, $\hat{y}_2(z)$ and $\hat{y}_3(z)$ are linearly independent and $\hat{y}_1(z)$ can be written as
\begin{equation}\label{eq-51}
\hat{y}_1(z)=\beta z^2\hat{y}_3(z)+z\hat{m}_1^*(z)+\hat{m}_2(z),
\end{equation}
where $\beta\neq 0$ is a constant and $\hat{m}_1^*(z)$, $\hat{m}_2(z)$ are of the form
\begin{equation}\label{eq-124}\frac{\hat{m}_1^*(z)}{F(z)}=m_1^{*}(z)=q^{\kappa_{\infty}^{(2)}}\sum_{j\geq 0}c_{j,1}q^j,\qquad \frac{\hat{m}_2(z)}{F(z)}=m_2(z)=q^{\kappa_{\infty}^{(1)}}\sum_{j\geq 0}c_{j,2}q^j.\end{equation}
\end{Lemma}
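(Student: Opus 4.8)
The plan is to reduce the whole lemma to the single assertion that the coefficient $\beta$ of $z^2$ is nonzero; the representation formula~\eqref{eq-51}, the shape~\eqref{eq-124} of $\hat m_1^*,\hat m_2$, and the linear independence all follow from this together with the maximal unipotency of $\infty$ proved in Lemma~\ref{lemma-3}. I work on the space $\hat V=\{Fy:Ly=0\}$ of single-valued holomorphic solutions and introduce the translation operator $\mathcal T\hat y(z):=\hat y(z+1)=(\hat y|_\ell T)(z)$ (the automorphy factor of $T=\SM1101$ is trivial). Since $\Delta,E_4,E_6,F_j$ are genuine $q$-series, the only nonperiodicity of $F$ comes from $\Delta^{-\kappa_\infty^{(1)}}$, giving $F(z+1)=e^{-2\pi i\kappa_\infty^{(1)}}F(z)$, and this factor is cancelled by the $q^{\kappa_\infty^{(3)}}$ of $y_+$; hence $\hat y_3=\hat y_+$ is periodic, i.e.\ $\mathcal T\hat y_3=\hat y_3$. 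By Lemma~\ref{lemma-3} the cusp $\infty$ is completely not apparent, so $\mathcal T$ is maximally unipotent on $\hat V$: $(\mathcal T-I)^3=0\ne(\mathcal T-I)^2$ and $\ker(\mathcal T-I)=\C\hat y_3$ is one-dimensional.

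Next I would determine the shape of an arbitrary $\hat y\in\hat V$. Solving $(\mathcal T-I)^3\hat y=0$ by successive antiderivatives (if $\hat g:=(\mathcal T-I)^2\hat y$ then $\hat g$ is periodic, and $\tfrac12 z^2\hat g$, $z\hat h_0$ furnish the needed particular solutions) shows that every $\hat y$ is a polynomial in $z$ of degree $\le 2$ with periodic $q$-series coefficients, whose leading coefficient is $\tfrac12(\mathcal T-I)^2\hat y\in\ker(\mathcal T-I)=\C\hat y_3$. Applying this to $\hat y_1=\hat y_3|_\ell S\in\hat V$ gives $\hat y_1=\beta z^2\hat y_3+z\hat m_1^*+\hat m_2$ with $\beta\in\C$ and $\hat m_1^*,\hat m_2$ periodic, and then $\hat y_2=\hat y_3|_\ell R=\mathcal T\hat y_1=\hat y_1(z+1)$ has the same top coefficient $\beta\hat y_3$. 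The precise $q$-orders in~\eqref{eq-124} follow by writing $y_1=\hat y_1/F=\beta z^2y_++zm_1^*+m_2$ and matching against the Frobenius solutions at the maximally unipotent cusp $\infty$ (Appendix~\ref{section-3}), where the $z$-power is correlated with the local exponent: $z^2\leftrightarrow\kappa_\infty^{(3)}$, $z^1\leftrightarrow\kappa_\infty^{(2)}$, $z^0\leftrightarrow\kappa_\infty^{(1)}$, so that $m_1^*=q^{\kappa_\infty^{(2)}}\sum c_{j,1}q^j$ and $m_2=q^{\kappa_\infty^{(1)}}\sum c_{j,2}q^j$.

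The crux, and the step I expect to be the main obstacle, is $\beta\ne0$. I would argue by contradiction. A direct computation gives $(\mathcal T-I)^2\hat y_1=2\beta\hat y_3$, so $\beta=0$ would force $\hat y_1\in V_1:=\ker(\mathcal T-I)^2$; then $\hat y_2=\mathcal T\hat y_1\in V_1$ and $\hat y_3\in V_1$, so the $\mathcal S,\mathcal T$-invariant subspace $W$ generated by $\hat y_3$ (note $\mathcal S\hat y_3=\hat y_1$ and $\mathcal S\hat y_1=\hat y_3|_\ell S^2=\hat y_3$ since $\ell$ is even) lies inside the two-dimensional $V_1$. Thus $W$ is a proper nonzero subrepresentation of dimension $1$ or $2$, and in either case one isolates a two-dimensional constituent — $W$ itself if $\dim W=2$, or the quotient $\hat V/W$ if $\dim W=1$ — on which $\mathcal T$ acts as a \emph{nontrivial} unipotent (because $\ker(\mathcal T-I)\subsetneq V_1$ for a single $3\times3$ Jordan block). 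This contradicts the elementary fact that in any two-dimensional representation of $\PSL(2,\Z)=\langle S,R\mid S^2=R^3=1\rangle$ the image of $T$ cannot be a nontrivial unipotent: writing $\hat T=\hat R\hat S$ with $\hat S^2=\hat R^3=I$, one has $\det\hat S\in\{1,-1\}$ and $\det\hat R\in\{1,\omega,\omega^2\}$, so $\det\hat T=1$ forces $\det\hat S=\det\hat R=1$, whence $\hat S=\pm I$ and $\hat T=\pm\hat R$ has finite order and is therefore $I$ if unipotent.

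Finally, once $\beta\ne0$ is secured, the linear independence in part~$(1)$ is immediate. The three functions $\hat y_3$, $\hat y_2-\hat y_1=2\beta z\hat y_3+(\beta\hat y_3+\hat m_1^*)$ and $\hat y_1$ are polynomials in $z$ of degrees $0,1,2$ with leading coefficients $\hat y_3$, $2\beta\hat y_3$, $\beta\hat y_3$, all nonzero multiples of $\hat y_3$. Hence a vanishing $\C$-linear combination must vanish degree by degree in $z$, forcing all coefficients to be zero, so $\hat y_1,\hat y_2,\hat y_3$ are linearly independent. I expect the only genuinely delicate points to be the periodicity bookkeeping for $F$ in the first paragraph and the exponent matching in the second, both of which are routine; the representation-theoretic exclusion of a nontrivial unipotent $\hat T$ in dimension $2$ is the conceptual heart of the argument.
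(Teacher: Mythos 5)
Your proof is correct, and it reorganizes the crux in a genuinely different way from the paper. The paper works directly with the Frobenius basis $(Fy_-,Fy_\perp,\hat y_3)$ from Remark~\ref{rmk-apparent} and kills the two degenerate cases by separate explicit matrix computations: if $\hat y_1$ were proportional to $\hat y_3$, it derives $S_1=\pm I_2$ and $(S_1R_1)^3=I_2$ with $R_1=\SM1201$, reaching the contradiction $\pm\SM1601=I_2$; if $\beta=0$, it writes $\hat\rho(S)$ and $\hat\rho(T)$ explicitly in the basis $(Fy_-,\hat y_1,\hat y_3)$ and observes that $\hat\rho(R)=\hat\rho(S)\hat\rho(T)$ then has eigenvalue $-1$, contradicting $\hat\rho(R)^3=I_3$; linear independence comes last from a determinant, $\det A=2\beta^2\neq0$. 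You instead exploit the nilpotent operator $\mathcal{T}-I$ abstractly: complete non-apparentness (Lemma~\ref{lemma-3}, the same input the paper invokes) forces a single Jordan block, which gives both the degree-$\le2$ polynomial structure with periodic coefficients and $\ker(\mathcal{T}-I)=\C\hat y_3$, and then $\beta=0$ would produce a $1$- or $2$-dimensional invariant subspace and hence a $2$-dimensional constituent on which $T$ acts as a nontrivial unipotent, impossible for a representation of $\PSL(2,\Z)$ by your determinant argument. This single argument subsumes the paper's Steps~1 and~2 (the paper's Step~1 is exactly your $\dim W=1$ case) and exposes the conceptual mechanism, which is also the germ of the Tuba--Wenzl-style alternative mentioned in Remark~\ref{remark: Tuba}; what the paper's hands-on computation buys is the explicit matrices \eqref{eq-55}--\eqref{eq-57}, which it needs again in Theorem~\ref{thm-sl-2} to pin down $\lambda=2$ and $\beta=1$. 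Your $q$-order matching for \eqref{eq-124} against $(y_-,y_\perp,y_+)$ is exactly how the paper reads the orders off Remark~\ref{rmk-apparent}, so that part coincides.

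Two small repairs, neither serious. First, your assertion that the invariant subspace $W$ generated by $\hat y_3$ lies in $V_1=\ker(\mathcal{T}-I)^2$ needs the one-line check that $\operatorname{span}\{\hat y_1,\hat y_3\}$ is already $\SL(2,\Z)$-invariant when $\beta=0$: indeed $\mathcal{S}$ swaps $\hat y_1\leftrightarrow\hat y_3$, while $\hat y_1\in V_1$ gives $(\mathcal{T}-I)\hat y_1\in\ker(\mathcal{T}-I)=\C\hat y_3$, so $\mathcal{T}\hat y_1=\hat y_1+c\hat y_3$; your parenthetical only records the $\mathcal{S}$-action. Second, the relation in your representation-theoretic step should be $T=SR$ in $\PSL(2,\Z)$ (you wrote $\hat T=\hat R\hat S$; note $RS=STS\neq\pm T$), but since your argument only uses determinants and then $\hat S=\pm I$, after which everything commutes, the conclusion $\hat T=\pm\hat R$ of finite order, hence $\hat T=I$ if unipotent, is unaffected.
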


\begin{proof} Under our assumption, Lemma~\ref{lemma-3} says that~$\infty$ is completely not apparent, so it follows from Remark~\ref{rmk-apparent} that (\ref{eq-10}) has a basis of solutions of
 the form $(y_{-}, y_{\perp}, y_+)$, where $y_+$, $y_\perp$, and~$y_-$ are given by~\eqref{eq-60-1} and~\eqref{eq-60-2}.

 {\bf Step 1.} We show that $\hat{y}_1(z)$ is linearly independent with $\hat{y}_3(z)$.

 Suppose not, i.e., there is some constant $\alpha\neq 0$ such that
 \[(\hat{y}_3\big|_{\ell}S)(z)=\hat{y}_1(z)=\alpha \hat{y}_3(z).\]
Then with respect to $(F y_{-}, F y_{\perp}, \hat{y}_3(z))^t$, we have{\samepage
\[\hat{\rho}(S)=\begin{pmatrix}S_1&*\\0&\alpha\end{pmatrix},\]
where $S_1$ is a $2\times 2$ matrix. Then $\hat{\rho}(S)^2=I_3$ implies $S_1^2=I_2$ and $\alpha^2=1$, i.e., $\alpha=\pm 1$.}

On the other hand, it follows from the expressions of $(y_{-}, y_{\perp}, y_+)$ in Remark~\ref{rmk-apparent} that with respect to $(F y_{-}, F y_{\perp}, \hat{y}_3(z))^t$,
\[\hat{\rho}(T)=\begin{pmatrix}1&2&*\\0&1&1\\0&0&1\end{pmatrix},\]
so
\[\hat{\rho}(R)=\hat{\rho}(S)\hat{\rho}(T)=\begin{pmatrix}S_1&*\\0&\alpha\end{pmatrix}
\begin{pmatrix}1&2&*\\0&1&1\\0&0&1\end{pmatrix}.\]
From this and $\hat{\rho}(R)^3=I_3$, we obtain
\begin{equation}\label{eq-78}
\left(S_1R_1\right)^3=I_2, \qquad\text{where}\quad R_1:=\begin{pmatrix}1&2\\0&1\end{pmatrix},
\end{equation} and $\alpha^3=1$, so $\alpha=1$. Then $\det S_1=\det\hat{\rho}(S)=1$, which together with $S_1^2=I_2$ easily implies $S_1=\pm I_2$, and then (\ref{eq-78}) yields $\pm\bigl(\begin{smallmatrix}1 & 6\\
0 & 1\end{smallmatrix}\bigr)=I_2$,
a contradiction.

{\bf Step 2.} By Step 1, there exists $(\beta, \delta)\neq (0,0)$ and $\epsilon$ such that
\begin{equation}\label{eq-79}\hat{y}_1=\beta Fy_-+\delta Fy_{\perp}+\epsilon \hat{y}_3.\end{equation}
We claim that $\beta\neq 0$ and (\ref{eq-51}) holds.

Assume by contradiction that $\beta=0$, i.e., $\hat{y}_1=\delta Fy_{\perp}+\epsilon \hat{y}_3$ with $\delta\neq 0$. Then with respect to $(Fy_-, \hat{y}_1, \hat{y}_3)$ we have
\[\hat{\rho}(T)=\begin{pmatrix}1&\frac{2}{\delta}&a\\0&1&\delta\\0&0&1\end{pmatrix}\qquad\text{for some }a\in\mathbb{C},\]
and
\[\hat{\rho}(S)=\begin{pmatrix}-1&b&b\\0&0&1\\0&1&0\end{pmatrix}\qquad\text{for some }b\in\mathbb{C},\]
 where we used $\hat{\rho}(S)^2=I_3$, $\det\hat{\rho}(S)=1$, $\hat{y}_1(z)=(\hat{y}_3\big|_{\ell}S)(z)$ and
 \begin{equation}\label{eq-80}(\hat{y}_1\big|_\ell S)(z)=(\hat{y}_3\big|_\ell (-I_2))(z)=(-1)^{-\ell}\hat{y}_3(z)=\hat{y}_3(z).\end{equation}
Thus
\[\hat{\rho}(R)=\hat{\rho}(S)\hat{\rho}(T)
=\begin{pmatrix}-1&b-\frac{2}{\delta}&b+b\delta-a\\0&0&1\\0&1&\delta\end{pmatrix}.\]
But this implies that $-1$ is an eigenvalue of $\hat{\rho}(R)$, a
contradiction with $\hat{\rho}(R)^3=I_3$. This proves $\beta\neq 0$
and so it follows from the expression of $(y_{-}, y_{\perp}, y_+)$ in
Remark~\ref{rmk-apparent} that (\ref{eq-51}) and~(\ref{eq-124}) hold.

{\bf Step 3.} We show that $\hat{y}_1(z)$, $\hat{y}_2(z)$ and $\hat{y}_3(z)$ are linearly independent.

In fact, we see from $R=ST$ that $\hat{y}_2(z)=(\hat{y}_+\big|_{\ell}R)(z)=(\hat{y}_1\big|_{\ell}T)(z)=\hat{y}_1(z+1)$, from which and~(\ref{eq-51}) we obtain
\begin{equation}\label{eq-52}
\hat{y}_2=\hat{y}_1+2\beta z\hat{y}_3+\hat{m}_1^*+\beta \hat{y}_3.
\end{equation}
By (\ref{eq-79}) and (\ref{eq-52}) we have
\[\begin{pmatrix}\hat{y}_1\\ \hat{y}_2\\ \hat{y}_3\end{pmatrix}=A\begin{pmatrix}Fy_-\\ Fy_{\perp}\\ \hat{y}_3\end{pmatrix}\qquad\text{with}\quad A=\begin{pmatrix}\beta&\delta&\epsilon\\ \beta&2\beta+\delta&*\\0&0&1\end{pmatrix}.\]
Since $\det A=2\beta^2\neq 0$, we obtain that $\hat{y}_1(z)$, $\hat{y}_2(z)$ and $\hat{y}_3(z)$ are also linearly independent. This completes the proof.
\end{proof}

Recalling \eqref{eq-51}, we
set $\hat{m}_1(z)$ and $\hat{m}_0(z)$ to be
\begin{gather}\label{eq-53}
\hat{m}_1^*(z)=\hat{m}_1(z)+\frac{\pi {\rm i}}{3}\hat{m}_2(z) E_2(z),
\\ \label{eq-54}
\hat{y}_3(z)=\left(\frac{\pi {\rm i}}{6}\right)^2\hat{m}_2(z) E_2(z)^2+\frac{\pi {\rm i}}{6}\hat{m}_1(z)E_2(z)+\hat{m}_0(z).
\end{gather}
The following result can be seen as the converse statement of Theorem \ref{thm-1}.

\begin{Theorem}\label{thm-sl-2}
Under the assumptions $(S1)$--$(S3)$, the following hold.
\begin{itemize}\itemsep=0pt
\item[$(a)$] $\beta =1$.
\item[$(b)$] $\hat{m}_j(z)$ are meromorphic modular forms of weight $\ell+2-2j$ for $j=0,1,2$, that is, $\hat{y}_3(z)$ is a quasimodular form of weight $\ell+2$ with depth~$2$.
\end{itemize}
\end{Theorem}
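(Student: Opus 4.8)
My plan is to compute the lifted Bol representation $\hat\rho$ explicitly in the basis $(\hat y_1,\hat y_2,\hat y_3)$ furnished by Lemma~\ref{lemma-5-8}, and then let the relations $S^2=R^3=-I_2$ in $\SL(2,\mathbb{Z})$ do the work. Since $\hat y_3$, $\hat m_1^*$, $\hat m_2$ are single-valued and periodic (their $q$-expansions in~\eqref{eq-124} have integral exponents), the quadratic-in-$z$ shape~\eqref{eq-51} lets me read off $\hat\rho(T)$ at once: from $\hat y_2=\hat y_1|_\ell T$ and the constant second finite difference of~\eqref{eq-51} one gets $\hat y_2|_\ell T=\hat y_1(z+2)=-\hat y_1+2\hat y_2+2\beta\hat y_3$, so
\[
\hat\rho(T)=\begin{pmatrix}0&1&0\\-1&2&2\beta\\0&0&1\end{pmatrix}.
\]
Similarly $\hat y_1|_\ell S=\hat y_3|_\ell S^2=(-1)^\ell\hat y_3=\hat y_3$ (recall $\ell$ is even) and $\hat y_3|_\ell S=\hat y_1$ fix the first and third rows of $\hat\rho(S)$, leaving $\hat\rho(S)=\bigl(\begin{smallmatrix}0&0&1\\p&q&r\\1&0&0\end{smallmatrix}\bigr)$ with only the middle row undetermined.

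To prove (a) I would first pin down $\hat\rho(S)$: the relation $\hat\rho(S)^2=\hat\rho(-I_2)=I_3$ together with $\det\hat\rho(S)=-q=1$ (Lemma~\ref{lemm-3-1}) forces $q=-1$ and $r=p$, so $\hat\rho(S)$ depends on a single unknown $p$. Forming $\hat\rho(R)=\hat\rho(S)\hat\rho(T)=\bigl(\begin{smallmatrix}0&0&1\\1&p-2&p-2\beta\\0&1&0\end{smallmatrix}\bigr)$, whose characteristic polynomial is $t^3-(p-2)t^2-(p-2\beta)t-1$, I then invoke $\hat\rho(R)^3=\hat\rho(-I_2)=I_3$: this makes $\hat\rho(R)$ diagonalizable with eigenvalues among the cube roots of unity, while the matrix is manifestly non-scalar (its $(1,1)$-entry vanishes but its $(1,3)$-entry is $1$), and $\det\hat\rho(R)=1$. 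Hence the eigenvalues are exactly $\{1,\varepsilon,\varepsilon^2\}$ and the characteristic polynomial equals $t^3-1$; matching coefficients yields $p=2$ and $\beta=1$, which is~(a). The point I want to stress is that this argument uses \emph{only} the group relations and no modularity — precisely what breaks the apparent circularity between ``$\beta=1$'' and ``the $\hat m_j$ are modular''.

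For (b), once $\beta=1$ is available the auxiliary vector $\mathbf h=(\wt h_1,\wt h_2,\wt h_3)^t$ with $\wt h_1=z^2\hat y_3+z\hat m_1^*+\hat m_2$, $\wt h_2=2z\hat y_3+\hat m_1^*$, $\wt h_3=\hat y_3$ satisfies $\hat y_1=\wt h_1$, $\hat y_2=\wt h_1+\wt h_2+\wt h_3$, $\hat y_3=\wt h_3$, i.e.\ $\hat Y=C\mathbf h$ for the \emph{constant} matrix $C=\bigl(\begin{smallmatrix}1&0&0\\1&1&1\\0&0&1\end{smallmatrix}\bigr)$ (the constancy of $C$ is exactly what requires $\beta=1$). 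A direct check on the generators gives $C^{-1}\hat\rho(\gamma)C=A(\gamma)$ for $\gamma\in\{S,T\}$, where $A(\gamma)=\bigl(\begin{smallmatrix}a^2&ab&b^2\\2ac&ad+bc&2bd\\c^2&cd&d^2\end{smallmatrix}\bigr)$ is the symmetric-square matrix from the proof of Lemma~\ref{lemma-2-1}; hence $\mathbf h|_\ell\gamma=A(\gamma)\mathbf h$ for all $\gamma$. The bottom row of $A(\gamma)$ then reads off as the depth-$2$ quasimodular transformation law $\hat y_3|_{\ell+2}\gamma=\hat y_3+\frac{c}{cz+d}\hat m_1^*+\frac{c^2}{(cz+d)^2}\hat m_2$, while the combination $\hat m_2=\wt h_1-z\wt h_2+z^2\wt h_3$ is shown to transform with weight $\ell-2$ by a short Möbius computation in which the factors $a-\frac{c(az+b)}{cz+d}$, $b-\frac{d(az+b)}{cz+d}$ collapse (using $ad-bc=1$) to $\pm(cz+d)^{-1}$ times $1,z$. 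Finally I would ``peel off'' the $E_2$-powers via~\eqref{eq-53}--\eqref{eq-54} and $E_2|_2\gamma=E_2+\frac{\alpha c}{cz+d}$: subtracting $\frac1{\alpha^2}\hat m_2E_2^2$ reduces $\hat y_3$ to a depth-$1$ form whose $\frac{c}{cz+d}$-coefficient is $\hat m_1$, and a further subtraction of $\frac1\alpha\hat m_1E_2$ leaves the modular $\hat m_0$; meromorphy at $\infty$ comes from~\eqref{eq-124}. This gives $\hat m_j\in\sM_{\ell+2-2j}$.

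The step I expect to be the genuine obstacle is the eigenvalue analysis of $\hat\rho(R)$ in the second paragraph and the accompanying insight that $\beta=1$ is forced purely group-theoretically before any modularity is invoked; everything afterward (the conjugation to $A(\gamma)$ and the peeling argument) is bookkeeping that mirrors the forward direction of Theorem~\ref{thm-1}.
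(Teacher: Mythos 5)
Your proof is correct, and while it shares the paper's overall framework --- computing the lifted Bol representation in the basis $(\hat y_1,\hat y_2,\hat y_3)$ of Lemma~\ref{lemma-5-8} and exploiting $S^2=R^3=-I_2$ --- both halves take a different route through it. For (a), the paper never needs your eigenvalue analysis: using $SR=-T$ and $R^2=T^{-1}S$ it determines $\hat\rho(R)$ \emph{exactly} as the cyclic permutation matrix \eqref{eq-56} (no unknowns), so that the second row $(-1,\lambda,\lambda)$ of $\hat\rho(T)=\hat\rho(S)\hat\rho(R)$ can be matched against the analytically computed row $(-1,2,2\beta)$ to give $\lambda=2$, $\beta=1$ directly. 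Your version instead leaves $\hat\rho(R)=\hat\rho(S)\hat\rho(T)$ with the two unknowns $p$, $\beta$ and forces its characteristic polynomial to equal $t^3-1$; this is sound, since $\hat\rho(R)^3=I_3$ gives diagonalizability, the matrix is visibly non-scalar, the only non-constant multiset of cube roots of unity with product $1$ is $\{1,\varepsilon,\varepsilon^2\}$, and matching coefficients indeed yields $p=2$, $\beta=1$. Your emphasis that (a) is purely group-theoretic and precedes all modularity matches the paper's logic, where (a) likewise feeds into (b). For (b) your route is genuinely different: the paper argues with the single generator $S$, matching powers of $z$ to obtain \eqref{eq-58}--\eqref{eq-59} and hence the $S$-invariance of $\hat m_2$, $\hat m_1$, $\hat m_0$ in turn ($T$-invariance being automatic from the $q$-expansions), whereas you conjugate $\hat\rho$ by the constant matrix $C$ to the symmetric-square matrices $A(\gamma)$ of Lemma~\ref{lemma-2-1} (your identities $C^{-1}\hat\rho(S)C=A(S)$ and $C^{-1}\hat\rho(T)C=A(T)$ both check out) and so obtain the depth-$2$ transformation law of $\hat y_3$ for \emph{all} $\gamma$ at once; this buys a conceptually cleaner statement --- $\hat\rho$ is literally the symmetric square of the standard representation --- which exhibits the theorem as the exact converse of Lemma~\ref{lemma-2-1}, at the cost of a little more matrix algebra. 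Two points worth tightening in a final write-up: the expansions in \eqref{eq-124} have exponents in $\kappa_\infty^{(j)}+\Z$, so the $1$-periodicity you invoke holds for the hatted functions only after multiplying by $F\sim q^{-\kappa_\infty^{(1)}}$ and using $m_\infty^{(j)}\in\Z$; and in the peeling step the modularity of $\hat m_1$ is not a pure read-off from the bottom row --- you need either the middle row of $A(\gamma)$, which by the same M\"obius collapse gives $\hat m_1^*\big|_\ell\gamma=\hat m_1^*+\frac{2c}{cz+d}\hat m_2$, or the standard cocycle argument applied to the depth-$1$ law; both are routine, so these are gaps of exposition rather than of substance.
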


\begin{proof}
(a) By Lemma \ref{lemma-5-8}, we can take $\hat{Y}(z)=(\hat{y}_1(z), \hat{y}_2(z), \hat{y}_3(z))^t$ to be a basis and let $\hat{T}$,~$\hat{S}$,~$\hat{R}$ denote the associated matrices $\hat{\rho}(T)$, $\hat{\rho}(S)$ and $\hat{\rho}(R)$ of the Bol representation. Recalling~(\ref{eq-80}) that $(\hat{y}_3\big|_{\ell}S)=\hat{y}_1(z)$ and $(\hat{y}_1\big|_{\ell}S)=\hat{y}_3(z)$, we have
\begin{equation}\label{eq-55}
\hat{S}=\begin{pmatrix}
0&0&1\\
\lambda &-1 &\lambda\\
1&0&0
\end{pmatrix},\qquad \text{for some }\lambda\in\mathbb{C},
\end{equation}
where $\hat{S}^2=I_3$ is used. Note from $SR=S^2T=-T$ that
\[\hat{y}_1\big|_\ell R=\hat{y}_3\big|_\ell SR=\hat{y}_3\big|_\ell (-T)=\hat{y}_3,\]
and from $R^2=T^{-1}S$ that
\[\hat{y}_2\big|_\ell R=\hat{y}_3\big|_\ell R^2=\hat{y}_3\big|_\ell \big(T^{-1}S\big)=\hat{y}_3\big|_\ell S=\hat{y}_1,\]
so
\begin{equation}\label{eq-56}
\hat{R}=\begin{pmatrix}
0&0&1\\
1 &0 &0\\
0&1&0
\end{pmatrix}.
\end{equation}
Therefore,
\begin{align}\label{eq-57}
\hat{T}=\hat{S}^{-1}\hat{R}=\hat{S}\hat{R}=\begin{pmatrix}
0&0&1\\
\lambda &-1 &\lambda\\
1&0&0
\end{pmatrix}\begin{pmatrix}
0&0&1\\
1 &0 &0\\
0&1&0
\end{pmatrix}
=\begin{pmatrix}
0&1&0\\
-1 &\lambda &\lambda\\
0&0&1
\end{pmatrix}.
\end{align}

On the other hand, by (\ref{eq-52}) we have
\begin{align*}
\hat{y}_2\big|_{\ell}T=\hat{y}_1\big|_{\ell}T+2\beta z\hat{y}_3+\hat{m}_1^*+3\beta \hat{y}_3
=-\hat{y}_1+2\hat{y}_2+2\beta\hat{y}_3.
\end{align*}
Hence (\ref{eq-57}) yields $\lambda=2$ and $\beta=1$. This proves
(a). In particular, it is easy to see from (\ref{eq-55})--(\ref{eq-57})
that the representation $\hat{\rho}$ is irreducible, i.e., there is no
proper nontrivial subspace of
$\mathbb{C}\hat{y}_1+\mathbb{C}\hat{y}_2+\mathbb{C}\hat{y}_3$ that is
invariant under $\hat\rho(\SL(2,\mathbb{Z}))$.

(b) To prove (b), we note that (\ref{eq-51}) and (\ref{eq-52}) become
\begin{gather}\label{eq-51--}
\hat{y}_1(z)=z^2\hat{y}_3(z)+z\hat{m}_1^*(z)+\hat{m}_2(z),
\\
\label{eq-52--}
\hat{y}_2=\hat{y}_1+2z\hat{y}_3+\hat{m}_1^*+ \hat{y}_3,
\end{gather}
which implies that
\[y^*(z):=2zy_3(z)+m_1^*(z)\]
is also a solution of (\ref{eq-10}) and
\[\hat{y}_2=\hat{y}_1+\hat{y}^*+\hat{y}_3,\qquad\text{where}\quad \hat{y}^*=Fy^*=2z\hat{y}_3+\hat{m}_1^*.\]
By (\ref{eq-55}) and $\lambda=2$, we have
\[
2\big(\tfrac{-1}{z}\big)\hat{y}_1+\hat{m}_1^*\big|_{\ell}S=\hat{y}^*\big|_{\ell}S=(\hat{y}_2-\hat{y}_1-\hat{y}_3)\big|_{\ell}S
=\hat{y}_1-\hat{y}_2+\hat{y}_3=-\hat{y}^*=-2z\hat{y}_3-\hat{m}_1^*.\]
From here and (\ref{eq-51--}), we obtain
\begin{equation}\label{eq-58}
z (\hat{m}_1^*\big|_\ell S)(z)=z\hat{m}_1^*(z)+2\hat{m}_2(z).
\end{equation}

On the other hand, by (\ref{eq-51--}),
\[\hat{y}_3=\hat{y}_1\big|_{\ell}S=\big(\tfrac{-1}{z}\big)^2\hat{y}_1
+\big(\tfrac{-1}{z}\big)(\hat{m}_1^*\big|_{\ell}S)+\hat{m}_2\big|_{\ell}S,\]
which implies
\[\hat{y}_1=z^2\hat{y}_3+z(\hat{m}_1^*\big|_\ell S)-z^2(\hat{m}_2\big|_{\ell}S).\]
Again by (\ref{eq-51--}), we have
\begin{equation}\label{eq-59}
z(\hat{m}_1^*\big|_{\ell}S)(z)=z\hat{m}_1^*(z)+\hat{m}_2(z)+z^2(\hat{m}_2\big|_{\ell}S)(z).
\end{equation}
Thus (\ref{eq-58}) and (\ref{eq-59}) imply $\hat{m}_2\big|_{\ell-2}S =z^2(\hat{m}_2\big|_{\ell}S)=\hat{m}_2$. This proves that $\hat{m}_2(z)$ is a modular form of weight $\ell-2$.

Recalling $E_2\big|_2 S=E_2+6/\pi {\rm i} z$,
it follows from \eqref{eq-53} and \eqref{eq-58} that
\begin{align*}
z\left(\hat{m}_1+\frac{\pi {\rm i} }{3}\hat{m}_2E_2\right)+2\hat{m}_2
& =z (\hat{m}_1^*\big|_\ell S)
=z\left\{\hat{m}_1\big|_{\ell}S+\frac{\pi {\rm i}}{3}(\hat{m}_2\big|_{\ell-2}S) \left(E_2+\frac{6}{\pi {\rm i}z}\right)\right\}\\
&=z(\hat{m}_1\big|_{\ell}S)+\frac{\pi {\rm i} z}{3}\hat{m}_2E_2+2\hat{m}_2,
\end{align*}
which yields $\hat{m}_1\big|_{\ell}S=\hat{m}_1$. This proves that $\hat{m}_1(z)$ is a modular form of weight $\ell$.

Finally, to prove the modularity of $\hat{m}_0(z)$, we use (\ref{eq-51--}) and (\ref{eq-54}) to obtain
\begin{gather*}
z^2\hat{y}_3+z\left(\hat{m}_1+\frac{\pi {\rm i} }{3}\hat{m}_2E_2\right)+\hat{m}_2
=\hat{y}_1=\hat{y}_3\big|_{\ell}S\\
\qquad {} = \left(\frac{\pi {\rm i} z}{6}\right)^2(\hat{m}_2\big|_{\ell-2}S) \Big(E_2+\frac{6}{\pi {\rm i} z}\Big)^2+\frac{\pi {\rm i} z^2}{6}(\hat{m}_1\big|_{\ell}S) \Big(E_2+\frac{6}{\pi {\rm i} z}\Big)+\hat{m}_0\big|_{\ell}S\\
\qquad{} = z^2\hat{y}_3
+z\left(\hat{m}_1+\frac{\pi {\rm i} }{3}\hat{m}_2E_2\right)+\hat{m}_2+\hat{m}_0\big|_{\ell}S-z^2\hat{m}_0,
\end{gather*}
which implies $\hat{m}_0\big|_{\ell+2}S=z^{-2}(\hat{m}_0\big|_{\ell}S)=\hat{m}_0$. This proves that $\hat{m}_0(z)$ is a modular form of weight $\ell+2$. The proof is complete.
\end{proof}

Clearly the above arguments imply Theorem \ref{thm-01+}(1).

\subsection{Proofs of Theorems \ref{thm-01} and \ref{thm-01+}, and Corollary~\ref{coro-1-3}} In this section,
we complete the proof of Theorems \ref{thm-01} and \ref{thm-01+}, and Corollary \ref{coro-1-3}. First we need the following general observation.

\begin{Lemma}\label{lem-3-9} Let $(S1)$--$(S2)$ hold. Then the eigenvalues of $\hat\rho(R)$ are precisely
\begin{equation}\label{e2ee}
{\rm e}^{-\frac{\pi {\rm i}}{3}(\ell+2\kappa)},\qquad\kappa\in\big\{0, \kappa_{\rho}^{(2)}-\kappa_{\rho}^{(1)}, \kappa_{\rho}^{(3)}-\kappa_{\rho}^{(1)}\big\},
\end{equation}
and
the eigenvalues of $\hat{\rho}(S)$ are precisely
\begin{equation}\label{e2ee--}
i^{-\ell-2\kappa}, \qquad\kappa\in\big\{0, \kappa_{\rm i}^{(2)}-\kappa_{\rm i}^{(1)}, \kappa_{\rm i}^{(3)}-\kappa_{\rm i}^{(1)}\big\}.
\end{equation}
\end{Lemma}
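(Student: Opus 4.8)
The plan is to reduce the statement to a purely local computation at the elliptic fixed point of the relevant generator: $z_0=\rho$ for $R=\SM0{-1}11$ and $z_0={\rm i}$ for $S=\SM0{-1}10$. In either case $\gamma z_0=z_0$, and writing $\gamma=\SM abcd$ I set $\mu:=cz_0+d$. A direct computation gives $\mu=\rho+1={\rm e}^{\pi {\rm i}/3}$ for $\gamma=R$ and $\mu={\rm i}$ for $\gamma=S$, so it suffices to prove that the eigenvalues of $\hat\rho(\gamma)$ are exactly $\mu^{-\ell-2m_j}$ for $j=1,2,3$, where $m_j:=\kappa_{z_0}^{(j)}-\kappa_{z_0}^{(1)}$. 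Indeed, substituting the two values of $\mu$ into this formula produces \eqref{e2ee} and \eqref{e2ee--}.

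First I would set up the normalized solution space near $z_0$. Choose a basis $y^{(1)},y^{(2)},y^{(3)}$ of solutions of \eqref{eq-10} with $y^{(j)}(z)\sim(z-z_0)^{\kappa_{z_0}^{(j)}}$ as $z\to z_0$; this is possible because under $(S1)$--$(S2)$ the point $z_0$ is apparent with distinct local exponents (see Appendix~\ref{section-3}). The functions $\hat y^{(j)}:=Fy^{(j)}$ are single-valued and holomorphic on $\H$, with $\ord_{z_0}\hat y^{(j)}=\ord_{z_0}F+\kappa_{z_0}^{(j)}$. Since $E_4$ (resp.\ $E_6$) has a simple zero in $z$ at $\rho$ (resp.\ ${\rm i}$), while $\Delta$, the other Eisenstein factor, and each $F_j$ are nonvanishing there, one obtains $\ord_{z_0}F=-\kappa_{z_0}^{(1)}$, so that $\ord_{z_0}\hat y^{(j)}=m_j$ with $0=m_1<m_2<m_3$ integers. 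Hence the representation space $\hat V:=\C\hat y_1+\C\hat y_2+\C\hat y_3=\{Fy\colon Ly=0\}$ carries the decreasing flag $V_j:=\{\hat y\in\hat V\colon\ord_{z_0}\hat y\ge m_j\}$ with $\dim V_j=4-j$.

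Next I would show this flag is $\hat\rho(\gamma)$-invariant and read off the graded scalars. Because $\gamma$ fixes $z_0$ and $(cz+d)^{-\ell}$ is holomorphic and nonvanishing at $z_0$, the map $\hat y\mapsto\hat y|_\ell\gamma=(cz+d)^{-\ell}\hat y(\gamma z)$ preserves $\ord_{z_0}$ (the Möbius map $\gamma$ is locally biholomorphic at its fixed point), hence preserves the flag; thus $\hat\rho(\gamma)$ is triangular in a flag-adapted basis, and its eigenvalues are the scalars acting on the one-dimensional quotients $V_j/V_{j+1}$. For $\hat y\in V_j\setminus V_{j+1}$ with leading term $a(z-z_0)^{m_j}$, the identity $\gamma z-z_0=(z-z_0)/\big((cz+d)\mu\big)$ gives, to leading order,
\[
\big(\hat y\big|_\ell\gamma\big)(z)=(cz+d)^{-\ell}\hat y(\gamma z)
=\mu^{-\ell}\cdot a\,\mu^{-2m_j}(z-z_0)^{m_j}+O\big((z-z_0)^{m_j+1}\big),
\]
so $\hat y|_\ell\gamma\equiv\mu^{-\ell-2m_j}\hat y\pmod{V_{j+1}}$. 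Therefore the eigenvalues of $\hat\rho(\gamma)$ are precisely $\mu^{-\ell-2m_j}$, $j=1,2,3$, which is the assertion.

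The only genuinely delicate point is the bookkeeping of orders at the elliptic point. Although $\kappa_{z_0}^{(1)}\in\frac13\Z$ and the cube root $F$ are individually multivalued with fractional order, the products $\hat y^{(j)}$ are single-valued with integer order $m_j$; the input making both the flag and the computation above legitimate is the identity $\ord_{z_0}F=-\kappa_{z_0}^{(1)}$, which rests on $E_4$ and $E_6$ vanishing to first order (in $z$) at $\rho$ and ${\rm i}$ respectively together with the nonvanishing of the remaining factors. I expect this order computation — and the verification that $|_\ell\gamma$ actually stabilizes the order filtration and not merely the solution space — to be the part requiring the most care; once these are in place the rest is a leading-coefficient calculation that yields the eigenvalues (but not, by design, the full matrices $\hat\rho(R)$ and $\hat\rho(S)$).
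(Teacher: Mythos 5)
Your proof is correct, and it takes a genuinely different route from the paper's. The paper's proof of Lemma~\ref{lem-3-9} expands an arbitrary solution at the fixed point in the coordinate $w=(z-z_0)/(z-\overline z_0)$ (via Remark~\ref{remark: about coefficients} and Lemma~\ref{lemma: local solutions}), computes $w(Rz)=\zeta^{-2}w(z)$ with $\zeta={\rm e}^{\pi{\rm i}/3}$, characterizes the eigenfunctions of $\hat\rho(R)$ as exactly those $\hat y$ whose $w$-expansion is supported on exponents congruent to a single $\kappa$ modulo $3$, and then---using diagonalizability from $\hat\rho(R)^3=I_3$---runs a separate counting argument (the $N_\kappa$ argument, subtracting eigenfunctions with equal leading coefficients to produce a forbidden extra exponent) to bound each eigenvalue's multiplicity by the number of $\kappa$'s producing it; this is the fiddliest part of the published proof. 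You instead triangularize: the order filtration $V_1\supset V_2\supset V_3$ of $\hat V$ at the fixed point is preserved by $\hat y\mapsto\hat y|_\ell\gamma$ (here you tacitly use that (S1) forces $\ell\in2\Z$, so $(cz+d)^{-\ell}$ is single-valued and nonvanishing at $z_0$), and the leading-coefficient computation with $\gamma z-z_0=(z-z_0)/\big((cz+d)\mu\big)$ reads the eigenvalues $\mu^{-\ell-2m_j}$ off the graded quotients. This buys two things: you never need $\hat\rho(\gamma)$ to be diagonalizable, and the multiset statement (``precisely,'' with multiplicities, even when the three candidate values coincide) comes for free from the triangular form, eliminating the paper's $N_\kappa$ case analysis; what the paper's route buys is an explicit description of the eigenfunctions themselves and reuse of the $\wt w$-expansion machinery of Section~\ref{section-4}. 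The two delicate points you flagged do check out: $\ord_{z_0}F=-\kappa_{z_0}^{(1)}$ holds because $E_4$ (resp.\ $E_6$) vanishes to first order in $z$ at $\rho$ (resp.\ ${\rm i}$) while $\Delta$, the complementary Eisenstein factor, and each $F_j=E_4^3-t_jE_6^2$ with $t_j\notin\{0,1\}$ are nonvanishing there; and the quantization $\ord_{z_0}\hat y\in\{m_1,m_2,m_3\}$ for nonzero $\hat y\in\hat V$---which is what upgrades ``order $>m_j$'' to membership in $V_{j+1}$ and gives $\dim V_j=4-j$---follows from apparentness (S2) supplying log-free expansions with distinct integer-spaced exponents.
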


\begin{proof} Under our assumption (S2) that $\rho$ is apparent, it follows from Remark~\ref{remark: about coefficients} and Lem\-ma~\ref{lemma: local solutions} below that any solution $y(z)$ of (\ref{eq-10})
 has an expansion of the form
\[
 \frac1{(1-w)^2}\sum_{n=0}^\infty a_nw^{n+\kappa}, \qquad
 w=w(z)=\frac{z-\rho}{z-\overline\rho}
 \]
 at $\rho$, where $a_0\neq0$ and
 $\kappa\in\big\{\kappa_{\rho}^{(j)}\colon j=1,2,3\big\}$. Hence, $\hat y(z)$ has
 an expansion of the form
 \[
 (1-w)^\ell\sum_{n=0}^\infty b_nw^{n+\kappa}, \qquad
 \kappa\in\big\{0,\kappa_{\rho}^{(2)}-\kappa_{\rho}^{(1)},
 \kappa_{\rho}^{(3)}-\kappa_{\rho}^{(1)}\big\}
\]
 with $b_0\neq 0$. Recalling that $\rho=\big({-}1+\sqrt{3}{\rm i}\big)/2$ is a fixed point of $R=\bigl(\begin{smallmatrix}0 & -1\\
1 & 1\end{smallmatrix}\bigr)$, we denote
\[
\zeta:=c\rho+d=\rho+1={\rm e}^{\pi {\rm i}/3}.
\] Then a direct computation gives
\[w(R z)=\zeta^{-2}w(z),
\qquad
 1-w(R z)
 =\zeta^{-1}(z+1)(1-w(z)),
 \]
so
\[
 \left(\hat y\big|_\ell R\right)(z)
 =\zeta^{-\ell}(1-w)^\ell\sum_{n=0}^\infty b_n\big(\zeta^{-2}w\big)^{n+k}.
 \] Therefore, $\hat y(z)$ is an eigenfunction of $\hat\rho(R)$ if and
 only if the series expansion of $\hat y(z)$ is of the form
\[
 (1-w)^\ell\sum_{n=0}^\infty b_{3n}w^{3n+\kappa}, \qquad
 \kappa\in\big\{0,\kappa_{\rho}^{(2)}-\kappa_{\rho}^{(1)},
 \kappa_{\rho}^{(3)}-\kappa_{\rho}^{(1)}\big\}
\]
 and the corresponding eigenvalue is
 $\zeta^{-\ell-2\kappa}={\rm e}^{-\frac{\pi {\rm i}}{3}(\ell+2\kappa)}$. Note
 from $\hat{\rho}(R)^{3}=(-1)^\ell I_3=I_3$ that~$\hat{\rho}(R)$ can
 be diagonizalied. We claim that the eigenvalues of~$\hat\rho(R)$
 are precisely those in~(\ref{e2ee}).

Indeed, for any $\kappa\in\big\{0,\kappa_{\rho}^{(2)}-\kappa_{\rho}^{(1)},
 \kappa_{\rho}^{(3)}-\kappa_{\rho}^{(1)}\big\}$, we define
 \[N_{\kappa}:=\#\big\{\tilde{\kappa}\in \big\{0,\kappa_{\rho}^{(2)}-\kappa_{\rho}^{(1)},
 \kappa_{\rho}^{(3)}-\kappa_{\rho}^{(1)}\big\} \,\big|\, \zeta^{-\ell-2\tilde{\kappa}}=\zeta^{-\ell-2\kappa}\big\}.\]
 Clearly (\ref{e2ee}) holds if $N_{\kappa}=3$ for some $\kappa$ (and so for all $\kappa$). So we only consider the case $N_{\kappa}\in \{1,2\}$ for all~$\kappa$.
 Assume by contradiction that there are $N_{\kappa}+1\in \{2,3\}$ linearly independent eigenfunctions \[\hat{y}_j=(1-w)^\ell\sum_{n=0}^\infty b_{j,3n}w^{3n+\kappa},\qquad b_{1,0}=b_{2,0}=b_{N_{\kappa}+1,0}\neq 0, \qquad 1\leq j\leq N_{\kappa}+1,\] corresponding to the same eigenvalue $\zeta^{-\ell-2\kappa}$. Then
 \[\hat{y}_1-\hat{y}_2=(1-w)^\ell\sum_{n=n_0}^\infty (b_{1,3n}-b_{2,3n})w^{3n+\kappa}\]
 is also an eigenfunction of $\zeta^{-\ell-2\kappa}$, where $n_0\geq 1$ is the smallest integer such that $b_{1,3n_0}-b_{2,3n_0}\allowbreak \neq 0$. This implies $\kappa,\kappa+3n_0 \in\big\{0,\kappa_{\rho}^{(2)}-\kappa_{\rho}^{(1)},
 \kappa_{\rho}^{(3)}-\kappa_{\rho}^{(1)}\big\}$, already a contradiction if $N_{\kappa}=1$. If $N_{\kappa}=2$, then by using the linear combination of $\hat{y}_1$, $\hat{y}_2$, $\hat{y}_3$, there is another $n_1\geq 1$ satisfying $n_1\neq n_0$ such that $\kappa,\kappa+3n_0, \kappa+3n_1 \in\big\{0,\kappa_{\rho}^{(2)}-\kappa_{\rho}^{(1)},
 \kappa_{\rho}^{(3)}-\kappa_{\rho}^{(1)}\big\}$, again a contradiction with $N_{\kappa}=2$.
 Thus, for any $\kappa$, the dimension of eigenfunctions of $\zeta^{-\ell-2\kappa}$ is at most $N_{\kappa}$. This implies the assertion~(\ref{e2ee}).

The proof of (\ref{e2ee--}) is similar and is omitted here.
\end{proof}

Note that if (S3) does not hold, it follows from $\kappa_{\rm i}^{(1)}+\kappa_{\rm i}^{(2)}+\kappa_{\rm i}^{(3)}=3$ that $\big\{3\kappa_{\rm i}^{(1)},3\kappa_{\rm i}^{(2)},
 3\kappa_{\rm i}^{(3)}\big\}\equiv\{1,1,1\} \mod 2$.
We have

\begin{Theorem}\label{prop-110}
Let $(S1)$--$(S2)$ hold and suppose $\big\{3\kappa_{\rm i}^{(1)},3\kappa_{\rm i}^{(2)},
 3\kappa_{\rm i}^{(3)}\big\}\equiv\{1,1,1\} \mod 2$. Then $12 | \ell$ and for any solution $y(z)$ of~\eqref{eq-10}, $\hat{y}(z)$ is a modular form of weight~$\ell$. In particular, the representation $\hat\rho$ is trivial.
\end{Theorem}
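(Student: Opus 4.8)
The plan is to show that the parity hypothesis at the elliptic point ${\rm i}$ forces $\hat\rho(S)=I_3$, and then to exploit the resulting collapse $\hat\rho(T)=\hat\rho(R)$ to pin down the cusp monodromy; both the triviality of $\hat\rho$ and the divisibility $12\mid\ell$ will follow. First I would analyze $\hat\rho(S)$ through Lemma~\ref{lem-3-9}. Since $3\kappa_{\rm i}^{(1)}$, $3\kappa_{\rm i}^{(2)}$, $3\kappa_{\rm i}^{(3)}$ are all odd while the differences $\kappa_{\rm i}^{(j)}-\kappa_{\rm i}^{(1)}$ are integers by~(S1), both $\kappa_{\rm i}^{(2)}-\kappa_{\rm i}^{(1)}$ and $\kappa_{\rm i}^{(3)}-\kappa_{\rm i}^{(1)}$ are even, so $2\kappa\equiv0\pmod4$ for every $\kappa\in\{0,\kappa_{\rm i}^{(2)}-\kappa_{\rm i}^{(1)},\kappa_{\rm i}^{(3)}-\kappa_{\rm i}^{(1)}\}$. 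Hence all three eigenvalues in~\eqref{e2ee--} coincide and equal ${\rm i}^{-\ell}$. Because $\hat\rho(S)^2=\hat\rho(-I_2)=(-1)^\ell I_3=I_3$ (recall $\ell$ is even), $\hat\rho(S)$ is diagonalizable with a single eigenvalue, so $\hat\rho(S)={\rm i}^{-\ell}I_3$. Taking determinants and using Lemma~\ref{lemm-3-1} gives ${\rm i}^{-3\ell}=1$, i.e., $4\mid\ell$, whence ${\rm i}^{-\ell}=1$ and $\hat\rho(S)=I_3$.

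Next I would deduce triviality. From $R=ST$ and $\hat\rho(S)=I_3$ we get $\hat\rho(T)=\hat\rho(R)$, and since $R^3=-I_2$ we have $\hat\rho(T)^3=\hat\rho(R)^3=\hat\rho(-I_2)=I_3$; in particular $\hat\rho(T)$ is diagonalizable. On the other hand, the exponent differences at $\infty$ are integers by~(S1), so the eigenvalues of the cusp monodromy $\hat\rho(T)$ are all equal to $1$, i.e., $\hat\rho(T)$ is unipotent. A diagonalizable unipotent matrix is the identity, so $\hat\rho(T)=I_3$ and $\infty$ is apparent. Since $S$ and $T$ generate $\SL(2,\mathbb{Z})$, the representation $\hat\rho$ is trivial. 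In particular $\hat\rho(R)=I_3$, so every eigenvalue in~\eqref{e2ee} equals $1$; taking $\kappa=0$ gives ${\rm e}^{-\pi{\rm i}\ell/3}=1$, that is $6\mid\ell$, and together with $4\mid\ell$ this yields $12\mid\ell$.

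Finally, triviality of $\hat\rho$ means $\hat{Y}|_\ell\gamma=\hat{Y}$ for all $\gamma\in\SL(2,\mathbb{Z})$, so each component $\hat{y}_j=Fy_j$ is weakly modular of weight~$\ell$; being holomorphic on $\mathbb{H}$ and, since $\kappa_\infty^{(j)}-\kappa_\infty^{(1)}\ge0$ and $\infty$ is now apparent, holomorphic at the cusp, it is a genuine modular form of weight~$\ell$, and the same follows for an arbitrary solution by linearity. I expect the delicate point to be the identification of the cusp monodromy rather than the computation at~${\rm i}$: the crux is that once $\hat\rho(S)=I_3$, the relation $\hat\rho(T)^3=\hat\rho(R)^3=I_3$ forces $\hat\rho(T)$ to be diagonalizable, which is incompatible with any logarithmic (Jordan) part at~$\infty$, thereby simultaneously trivializing $\hat\rho(T)$ and ruling out the completely-non-apparent case there.
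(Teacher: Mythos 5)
Your proof is correct and follows essentially the same route as the paper's: Lemma~\ref{lem-3-9} forces all eigenvalues of $\hat\rho(S)$ to coincide, whence $\hat\rho(S)=I_3$, then $\hat\rho(T)=\hat\rho(R)$ with $\hat\rho(T)^3=I_3$ and unipotent eigenvalues gives $\hat\rho(T)=I_3$, and the eigenvalue formulas \eqref{e2ee} and \eqref{e2ee--} yield $4\mid\ell$ and $6\mid\ell$, hence $12\mid\ell$. Your write-up is in fact slightly more careful than the paper's at two points it leaves implicit: using $\det\hat\rho(S)=1$ (Lemma~\ref{lemm-3-1}) to exclude $\hat\rho(S)=-I_3$, and noting that $\hat\rho(T)=I_3$ makes $\infty$ apparent so that $\hat y$ is genuinely holomorphic at the cusp.
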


\begin{proof}
By (\ref{e2ee--}) and $\big\{3\kappa_{\rm i}^{(1)},3\kappa_{\rm i}^{(2)},
 3\kappa_{\rm i}^{(3)}\big\}\equiv\{1,1,1\}$ $\mod 2$, we see that the eigenvalues of $\hat{S}$ are all the same, so we see from $\hat S^2=I_3$ that $\hat S=I_3$. Consequently, $\hat{T}=\hat{R}$ and then $\hat{T}^3=\hat{R}^3=I_3$. Since the eigenvalues of $\hat{T}$ are $\{1,1,1\}$, we obtain $\hat R=\hat T=I_3$, i.e., the representation $\hat\rho$ is trivial and $\hat{y}(z)$ is a modular form of weight $\ell$ for any solution~$y(z)$. Furthermore, it follows from Lemma~\ref{lem-3-9} that ${\rm e}^{-\frac{\pi {\rm i}}{3}\ell}={\rm i}^{-\ell}=1$, so $\ell\equiv 0 \mod 12$.
\end{proof}

\begin{proof}[Proof of Theorems \ref{thm-01} and~\ref{thm-01+}]
Theorem~\ref{thm-01} follows from Theorems~\ref{thm-sl-2} and~\ref{prop-110}.
\end{proof}

\begin{proof}[Proof of Corollary \ref{coro-1-3}]
Under the assumptions (H1)--(H3) and $\kappa_{\rho}^{(1)}\in\mathbb{Z}$, we have $\kappa_{\rho}^{(j)}\in\mathbb{Z}$ for all $j$. Together with $\kappa_\rho^{(1)}+\kappa_\rho^{(2)}+\kappa_\rho^{(3)}=3$, we have
either
$\kappa_\rho^{(1)}\equiv\kappa_\rho^{(2)}\equiv\kappa_\rho^{(3)}\mod
3$ or
$\big\{\kappa_\rho^{(1)},\kappa_\rho^{(2)},\kappa_\rho^{(3)}\big\}\equiv\{0,1,2\}\mod
3$.

First suppose $\big\{3\kappa_{\rm i}^{(1)},3\kappa_{\rm i}^{(2)},
 3\kappa_{\rm i}^{(3)}\big\}\equiv\{0,0,1\} \mod 2$. Then
Theorem~\ref{thm-sl-2} holds, in particular, $\hat{R}\neq I_3$ and the eigenvalues can not be all the same. This together with~(\ref{e2ee}) imply that $\kappa_\rho^{(1)}\equiv\kappa_\rho^{(2)}\equiv\kappa_\rho^{(3)}\mod
3$ is impossible, so
\begin{equation}\label{eq-130}
\big\{\kappa_{\rho}^{(1)},\kappa_{\rho}^{(2)},
 \kappa_{\rho}^{(3)}\big\}\equiv\{0,1,2\}\quad \mod 3.
\end{equation}

Conversely, suppose (\ref{eq-130}) holds. If $\big\{3\kappa_{\rm i}^{(1)},3\kappa_{\rm i}^{(2)},
 3\kappa_{\rm i}^{(3)}\big\}\equiv\{1,1,1\} \mod 2$, then Theorem~\ref{prop-110} implies $\hat{R}=I_3$, which together with (\ref{e2ee}) imply
$\kappa_\rho^{(1)}\equiv\kappa_\rho^{(2)}\equiv\kappa_\rho^{(3)}\mod
3$, a contradiction with~(\ref{eq-130}). Thus $\big\{3\kappa_{\rm i}^{(1)},3\kappa_{\rm i}^{(2)},
 3\kappa_{\rm i}^{(3)}\big\}\equiv\{0,0,1\} \mod 2$.
\end{proof}

\begin{Remark} \label{remark: Tuba}
 We note the under the assumption that the eigenvalues
 of $\hat\rho(T)$ are all~$1$, Proposition~2.5 and Corollary to
 Theorem~2.9 of~\cite{Tuba-Wenzl} and results of~\cite{Westbury}
 imply that $\hat\rho$ is irreducible if and only if
 the eigenvalues of $\hat\rho(S)$ and $\hat\rho(R)$ are $1$, $-1$, $-1$ and
 $1$, ${\rm e}^{2\pi {\rm i}/3}$, ${\rm e}^{-2\pi {\rm i}/3}$, respectively. Our Theorem~\ref{thm-01} shows that the irreducibility property of $\hat\rho$ is
 solely determined by the local exponents at~${\rm i}$. This link between
 the results of \cite{Tuba-Wenzl,Westbury} and Theorem~\ref{thm-01}
 is provided by Lemma~\ref{lem-3-9}. In other words, one may also use
 results of \cite{Tuba-Wenzl,Westbury} and Lemma~\ref{lem-3-9} to
 give an alternative proof of Theorem~\ref{thm-01}. Our approach has
 the advantage that it directly shows that~$\hat y_+(z)$ is a
 quasimodular form of depth~$2$.
 (Note that Westbury's paper~\cite{Westbury} does not seem to
 be easily available. We refer the reader to the introduction
 section of~\cite{LeBruyn} for a quick review of Westbury's results.)
\end{Remark}

\section[Reducibility and SU(3) Toda systems on SL(2,Z)]{Reducibility and $\boldsymbol{\SU(3)}$ Toda systems on $\boldsymbol{\SL(2,\Z)}$}\label{section-6}

In view of Theorem \ref{prop-110} or equivalently Theorem~\ref{thm-01+}(2), it is natural to ask whether the converse statement holds or not.
The purpose of this section is to establish such a converse statement and apply it to the~$\SU(3)$ Toda system.
Let $\Gamma$ be a discrete subgroup of $\SL(2,\R)$ commensurable with~$\SL(2,\Z)$.
In general, there are at least three sources of modular forms and
quasimodular forms that will give rise to third-order MODEs on~$\Gamma$:
\begin{enumerate}
 \item[(i)] If $f(z)\in\wt\sM_k^{\le 2}(\Gamma,\chi)$, then
 $f(z)/\sqrt[3]{W_f(z)}$ satisfies a third-order MODE on $\Gamma$. This case has been studied in Section~\ref{section-2}.
 \item[(ii)] If $f(z)=f_1(z)\phi(z)+f_0(z)\in\wt\sM_k^{\le 1}(\Gamma,\chi_1)$ and
 $g(z)\in\sM_{k-1}(\Gamma,\chi_2)$ with $\chi_1(-I_2)\chi_2(-I_2)\allowbreak =-1$,
 then a similar argument as Theorem~\ref{thm-1} shows that
 \[
 f(z)/\sqrt[3]{W_{f,g}(z)} , \qquad (zf+\alpha f_1)/\sqrt[3]{W_{f,g}(z)}\qquad \text{and}\qquad g(z)/\sqrt[3]{W_{f,g}(z)}
 \]
 are solutions of some third-order MODE on~$\Gamma$. Here
 \[W_{f,g}(z)=\det\begin{pmatrix}f&f'&f''\\zf+\alpha f_1&(zf+\alpha f_1)'&(zf+\alpha f_1)''\\g&g'&g''\end{pmatrix}.\]

 \item[(iii)] If $f(z)\in\sM_k(\Gamma,\chi_1)$,
 $g(z)\in\sM_k(\Gamma,\chi_2)$, and $h(z)\in\sM_k(\Gamma,\chi_3)$
 for some characters $\chi_j$ of~$\Gamma$, then
 $f(z)/\sqrt[3]{W_{f,g,h}(z)}$, $g(z)/\sqrt[3]{W_{f,g,h}(z)}$,
 $h(z)/\sqrt[3]{W_{f,g,h}(z)}$ are solutions of some third-order
 MODE on $\Gamma$. Here
 \begin{equation} \label{equation: Wfgh}
 W_{f,g,h}(z)=\det\begin{pmatrix}f&f'&f''\\g&g'&g''\\h&h'&h''\end{pmatrix}.
 \end{equation}
\end{enumerate}
To simplify the situation, we impose the condition that the values of
$\chi_j(T)$ in the second and the third cases are all the same (so
that $\rho(T)$ has only one eigenvalue with multiplicity $3$). In the
case $\Gamma=\SL(2,\Z)$, this condition implies that $\chi_j$ are all
the same, say, $\chi_j=\chi$ for all $j$, so Case (ii) will not
occur. Moreover, in Case (iii), we can divide $f$, $g$, $h$ by an
eta-power $\eta(z)^m$ satisfying ${\rm e}^{2\pi {\rm i}m/24}=\chi(T)$. The
differential equation corresponding to $f(z)/\eta(z)^m$ is the same as
that corresponding to $f(z)$. Thus, in the case $\Gamma=\SL(2,\Z)$, we
may assume that $\chi$ is trivial.

\begin{Lemma} Let $f,g,h\in\sM_k(\SL(2,\Z))$ be three linearly
 independent modular forms of weight~$k$ on $\SL(2,\Z)$. Define
 $W_{f,g,h}(z)$ by~\eqref{equation: Wfgh}. Then $W_{f,g,h}$ is a
 modular form of weight $3(k+2)$ on $\SL(2,\Z)$.
\end{Lemma}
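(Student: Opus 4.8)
The plan is to establish the weight-$3(k+2)$ transformation law directly by differentiating the modularity relation, and then to check holomorphy on $\H$ and at the cusp; this is analogous in spirit to the proof of Lemma~\ref{lemma-2-1}, but cleaner here because $f$, $g$, $h$ are genuine modular forms. First I would record how the derivatives transform. Fix $\gamma=\SM abcd\in\SL(2,\Z)$ and abbreviate $j:=cz+d$. Starting from $\phi(\gamma z)=j^k\phi(z)$ for $\phi\in\{f,g,h\}$ and differentiating twice (using $\frac{{\rm d}}{{\rm d}z}\gamma z=j^{-2}$) gives
\begin{gather*}
\phi(\gamma z)=j^k\phi,\qquad
\phi'(\gamma z)=j^{k+1}\big(kc\,\phi+j\phi'\big),\\
\phi''(\gamma z)=j^{k+2}\big(k(k+1)c^2\phi+2(k+1)cj\,\phi'+j^2\phi''\big).
\end{gather*}
The essential point is that these relations are \emph{uniform} in $\phi$: the coefficients depend only on $k$ and $\gamma$, not on which of the three forms is used.

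Next I would package the three rows into a single matrix identity. Writing
\[
M(z):=\begin{pmatrix}f&f'&f''\\g&g'&g''\\h&h'&h''\end{pmatrix},
\]
the relations above say precisely that
\[
M(\gamma z)=j^k\,M(z)\,B(\gamma,z),\qquad
B(\gamma,z)=\begin{pmatrix}1&kcj&k(k+1)c^2j^2\\0&j^2&2(k+1)cj^3\\0&0&j^4\end{pmatrix},
\]
with the \emph{same} upper-triangular matrix $B$ multiplying on the right for every row. Taking determinants and using $\det B=j^2\cdot j^4=j^6$ yields
\[
W_{f,g,h}(\gamma z)=\det M(\gamma z)=j^{3k}\,\det B\cdot\det M(z)=(cz+d)^{3(k+2)}W_{f,g,h}(z),
\]
which is the desired transformation law of weight $3(k+2)$.

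It then remains to verify holomorphy. On $\H$ the forms $f$, $g$, $h$ and all their derivatives are holomorphic, so the polynomial $W_{f,g,h}$ is holomorphic there. At the cusp, each form has a $q$-expansion $\sum_{n\ge0}a_nq^n$, and since $\frac{{\rm d}}{{\rm d}z}=2\pi {\rm i}\,q\frac{{\rm d}}{{\rm d}q}$ sends $q^n$ to $2\pi {\rm i}\,n\,q^n$, the derivatives $\phi'$, $\phi''$ remain holomorphic at $q=0$; hence so is $W_{f,g,h}$. Together with the transformation law this shows $W_{f,g,h}\in\sM_{3(k+2)}(\SL(2,\Z))$, and the linear independence of $f$, $g$, $h$ ensures $W_{f,g,h}\not\equiv0$ so that the weight is exactly $3(k+2)$.

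There is no genuine obstacle in this argument: the only step requiring attention is the observation that the conjugating matrix $B(\gamma,z)$ is independent of the particular modular form, which is exactly what permits factoring it out of the determinant and producing the extra weight $6=2+4$ recorded by $\det B$.
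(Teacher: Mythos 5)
Your proof is correct and takes essentially the same route as the paper, whose one-line argument packages $f,g,h$ into a vector $F(z)=(f,g,h)^t$ with $F(\gamma z)=(cz+d)^kF(z)$ and appeals to ``basic properties of the determinant function''---which, unwound, is exactly your identity $M(\gamma z)=(cz+d)^k M(z)B(\gamma,z)$ with upper-triangular $B$ and $\det B=(cz+d)^6$. You merely make explicit the differentiation of the transformation law and the holomorphy check at the cusp that the paper leaves implicit.
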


\begin{proof} Let $F(z)=(f(z),g(z),h(z))^t$, which satisfies $F(\gamma
 z)=(cz+d)^kF(z)$ for all $\gamma=\SM abcd\in\SL(2,\Z)$. Then the
 assertion follows from the basic properties of the determinant function.
\end{proof}

\begin{Theorem}\label{thm-66} Let $f,g,h\in\sM_k(\SL(2,\Z))$ be linearly
 independent modular forms and $\mathcal Ly=0$ be the differential
 equation satisfied by $f/\sqrt[3]{W_{f,g,h}}$,
 $g/\sqrt[3]{W_{f,g,h}}$, and $h/\sqrt[3]{W_{f,g,h}}$. Then
$Q_2(z)$ and $Q_3(z)-\frac12 Q_2'(z)$ are meromorphic modular forms of weight~$4$ and~$6$ respectively. Furthermore, $(H1)$--$(H3)$ hold and~$\infty$ is also apparent.
\end{Theorem}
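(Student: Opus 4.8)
The plan is to mirror the proof of Theorem~\ref{thm-1}, exploiting the simpler automorphy here. Since $f,g,h\in\sM_k(\SL(2,\Z))$ carry trivial character, the vector $(f,g,h)^t$ transforms by the \emph{scalar} factor $(cz+d)^k$, with no analogue of the symmetric-square matrix $A$ of Theorem~\ref{thm-1}, and $W_{f,g,h}$ is modular of weight $3(k+2)$ by the preceding lemma. Writing $g_1=f/\sqrt[3]{W_{f,g,h}}$, $g_2=g/\sqrt[3]{W_{f,g,h}}$, $g_3=h/\sqrt[3]{W_{f,g,h}}$, I would first note that for each $\gamma=\SM abcd\in\SL(2,\Z)$ one has $(g_j|_{-2}\gamma)(z)=\varepsilon\,g_j(z)$ for a common cube root of unity $\varepsilon$ (depending on $\gamma$ and the branch), since the weight-$k$ transformation of the numerator and the relation $\sqrt[3]{W_{f,g,h}(\gamma z)}=\varepsilon^{-1}(cz+d)^{k+2}\sqrt[3]{W_{f,g,h}(z)}$ combine to cancel all but a scalar. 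Hence each $g_j|_{-2}\gamma$ is again a solution; substituting into the formal identity~\eqref{eq-2} and using the linear independence of $g_1,g_2,g_3$ (so that the vectors $(y(\gamma z),y'(\gamma z))$ span $\C^2$) forces $Q_2(\gamma z)=(cz+d)^4Q_2(z)$ and $Q_3(\gamma z)=(cz+d)^6Q_3(z)+2c(cz+d)^5Q_2(z)$, i.e.\ the modularity of $Q_2$ and of $Q_3-\tfrac12Q_2'$.

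For an interior point $z_0\in\H$, I would use the Wronskian gap sequence of $V=\C f+\C g+\C h$. Let $a_1<a_2<a_3$ be the orders of vanishing at $z_0$ realised by nonzero elements of $V$; these are three distinct non-negative integers, since $f,g,h$ are holomorphic and linearly independent. A leading-term (Vandermonde) computation in the local coordinate $t=z-z_0$ gives $\ord_{z_0}W_{f,g,h}=a_1+a_2+a_3-3$, the $-3$ coming from the column factors $t^{-(j-1)}$ produced by $d/dz$. Therefore the solutions, being $V$ divided by $\sqrt[3]{W_{f,g,h}}$, have local exponents $\kappa_{z_0}^{(i)}=a_i-\tfrac13(a_1+a_2+a_3-3)$; these sum to $3$, are distinct with integer differences $a_i-a_1$, and satisfy $\kappa_{z_0}^{(1)}=-\tfrac13\big((a_2-a_1)+(a_3-a_1)\big)\in\tfrac13\Z_{\le0}$. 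The basis $g_1,g_2,g_3$ is free of logarithmic terms, so $z_0$ is apparent. Finally, if $\ord_{z_0}W_{f,g,h}=0$ then $\{a_1,a_2,a_3\}=\{0,1,2\}$, the exponents are $\{0,1,2\}$, and the Frobenius method shows $Q_2,Q_3$ are holomorphic at $z_0$; thus every singularity in $\H$ is a zero of $W_{f,g,h}$ and the equation is Fuchsian on $\H$. This yields (H2).

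The cusp $\infty$ is the one place where the conclusion differs from Theorem~\ref{thm-1}: because $f,g,h$ are honest modular forms, their $q$-expansions contain no $\log q$, so no logarithmic solution can arise and $\infty$ is \emph{apparent} rather than completely not apparent. Let $b_1<b_2<b_3$ be the $q$-orders realised by elements of $V$. Passing to $\theta=q\frac{d}{dq}=\tfrac1{2\pi i}\frac{d}{dz}$ and using $\theta q^n=nq^n$, the same Vandermonde computation now yields $\ord_\infty W_{f,g,h}=b_1+b_2+b_3$; the shift by $3$ disappears precisely because the natural derivative at the cusp is $\theta$ (which preserves $q$-order) rather than $d/dq$. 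Consequently each solution has a $q$-expansion $q^{\kappa_\infty^{(i)}}\sum_{n\ge0}c_nq^n$ with $\kappa_\infty^{(i)}=b_i-\tfrac13(b_1+b_2+b_3)$, so $\sum_i\kappa_\infty^{(i)}=0$, the differences are integers, $\kappa_\infty^{(1)}\in\tfrac13\Z_{\le0}$, and there are no logarithmic terms. Hence $\infty$ is a regular singular point with $Q_2,Q_3$ holomorphic there (so (H1) holds at $\infty$), the exponent conditions (H3) hold, and $\infty$ is apparent. As $\infty$ is the only cusp of $\SL(2,\Z)$, (H1) and (H3) follow.

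The only genuinely new point, and the one to handle with care, is the cusp analysis: one must keep straight the two orders of vanishing of $W_{f,g,h}$ --- namely $a_1+a_2+a_3-3$ at an interior point (derivative $d/dz$, coordinate $z-z_0$) versus $b_1+b_2+b_3$ at $\infty$ (derivative $\theta$, coordinate $q$) --- and then observe that the pure $q$-expansions of $f,g,h$ rule out logarithmic solutions, which is exactly what makes $\infty$ apparent here, in contrast to the quasimodular setting of Theorem~\ref{thm-1}.
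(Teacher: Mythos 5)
Your proposal is correct and takes essentially the paper's route: the paper's proof of Theorem~\ref{thm-66} simply invokes the proof of Theorem~\ref{thm-1} verbatim, noting that the only difference is that $\infty$ is apparent because the three log-free solutions $f/\sqrt[3]{W_{f,g,h}}$, $g/\sqrt[3]{W_{f,g,h}}$, $h/\sqrt[3]{W_{f,g,h}}$ are linearly independent, which is exactly your cusp argument (your gap-sequence/Vandermonde computations $\ord_{z_0}W_{f,g,h}=a_1+a_2+a_3-3$ and $\ord_\infty W_{f,g,h}=b_1+b_2+b_3$ are correct elaborations of what the paper leaves implicit). One small arithmetic slip: at an interior point the bottom exponent is $\kappa_{z_0}^{(1)}=1-\tfrac13\bigl((a_2-a_1)+(a_3-a_1)\bigr)$, not $-\tfrac13\bigl((a_2-a_1)+(a_3-a_1)\bigr)$ (you dropped the $+1$ coming from the $-3$ in the Wronskian order), but since $(a_2-a_1)+(a_3-a_1)\ge 3$ the needed conclusion $\kappa_{z_0}^{(1)}\in\tfrac13\Z_{\le 0}$ is unaffected.
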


\begin{proof} The proof is similar as that of Theorem~\ref{thm-1}. The only difference is that $\infty$ is also apparent because $f/\sqrt[3]{W_{f,g,h}}$, $g/\sqrt[3]{W_{f,g,h}}$ and $h/\sqrt[3]{W_{f,g,h}}$ are linearly
 independent solutions.
\end{proof}

\begin{Proposition}\label{prop-6-3} Let $f,g,h\in\sM_k(\SL(2,\Z))$ be linearly
 independent modular forms and $\mathcal Ly=0$ be the differential
 equation satisfied by $f/\sqrt[3]{W_{f,g,h}}$,
 $g/\sqrt[3]{W_{f,g,h}}$, and $h/\sqrt[3]{W_{f,g,h}}$. Then the local
 exponents of $\mathcal Ly=0$ at the elliptic points ${\rm i}$ and
 $\rho=\big({-}1+\sqrt{3}{\rm i}\big)/2$ satisfy
 \begin{enumerate}
 \item[$1)$]
 $\kappa_{\rm i}^{(2)}-\kappa_{\rm i}^{(1)},\kappa_{\rm i}^{(3)}-\kappa_{\rm i}^{(1)}\equiv
 0\mod 2$, and
 \item[$2)$] $\kappa_\rho^{(j)}\in\Z$ for all $j$, and
 $\kappa_\rho^{(1)}\equiv\kappa_\rho^{(2)}\equiv\kappa_\rho^{(3)}\mod 3$.
 \end{enumerate}
\end{Proposition}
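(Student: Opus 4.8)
The plan is to read off the local exponents at ${\rm i}$ and $\rho$ directly from the vanishing behaviour of the three-dimensional space $V:=\C f+\C g+\C h\subset\sM_k(\SL(2,\Z))$, and then to feed in the congruence constraints on the orders of weight-$k$ modular forms at the elliptic points that were already established (via Proposition~\ref{prop-2} and Remark~\ref{remark: about coefficients}) in the proof of Theorem~\ref{thm-02}.

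First I would identify the exponents with the jumps of the order filtration of $V$. By Theorem~\ref{thm-66} the equation $\mathcal Ly=0$ satisfies (H1)--(H3) and every point of $\H$ is apparent, so at each $z_0\in\{{\rm i},\rho\}$ the three solutions have distinct, well-defined leading orders equal to the local exponents. Every solution has the form $v/\sqrt[3]{W_{f,g,h}}$ with $v\in V$, hence order $\ord_{z_0}v-\tfrac13\ord_{z_0}W_{f,g,h}$. Let $\mu_{z_0}^{(1)}<\mu_{z_0}^{(2)}<\mu_{z_0}^{(3)}$ be the jumps of the order filtration of $V$ at $z_0$; these are three distinct integers, since a nonzero modular form has finite order at a point of $\H$, and at each realized order the dimension drops by exactly one (if two forms share leading order, a combination kills it). As a generic $v$ realises $\mu_{z_0}^{(1)}$ and successively smaller subspaces realise $\mu_{z_0}^{(2)}$ and $\mu_{z_0}^{(3)}$, matching increasing orders with increasing exponents gives
\begin{equation*}
\kappa_{z_0}^{(j)}=\mu_{z_0}^{(j)}-\tfrac13\ord_{z_0}W_{f,g,h},\qquad j=1,2,3.
\end{equation*}

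Next I would apply the elliptic-point expansions. Writing $w=(z-{\rm i})/(z+{\rm i})$, a nonzero form of weight $k$ has a $w$-expansion supported on even powers when $k\equiv0\bmod 4$ and on odd powers when $k\equiv2\bmod 4$; since every $0\neq v\in V$ has the same weight $k$, the orders $\mu_{\rm i}^{(1)},\mu_{\rm i}^{(2)},\mu_{\rm i}^{(3)}$ share a common parity, whence $\kappa_{\rm i}^{(j)}-\kappa_{\rm i}^{(1)}=\mu_{\rm i}^{(j)}-\mu_{\rm i}^{(1)}\equiv0\bmod 2$, which is part~1). Likewise, with $w=(z-\rho)/(z-\overline{\rho})$ a weight-$k$ form has a $w$-expansion supported on powers $\equiv k\bmod 3$, so $\ord_\rho v\equiv k\bmod 3$ for every $0\neq v\in V$ and therefore $\mu_\rho^{(1)}\equiv\mu_\rho^{(2)}\equiv\mu_\rho^{(3)}\equiv k\bmod 3$. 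Subtracting the common quantity $\tfrac13\ord_\rho W_{f,g,h}$ yields $\kappa_\rho^{(1)}\equiv\kappa_\rho^{(2)}\equiv\kappa_\rho^{(3)}\bmod 3$.

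It remains to prove the integrality $\kappa_\rho^{(j)}\in\Z$. By the lemma preceding Theorem~\ref{thm-66}, $W_{f,g,h}$ is a modular form of weight $3(k+2)\equiv0\bmod 3$, so applying the $\rho$-expansion fact to $W_{f,g,h}$ itself gives $\ord_\rho W_{f,g,h}\equiv0\bmod 3$, i.e.\ $\tfrac13\ord_\rho W_{f,g,h}\in\Z$; combined with $\mu_\rho^{(j)}\in\Z$ this forces $\kappa_\rho^{(j)}\in\Z$, finishing part~2). The step requiring the most care is the identification of the exponents with the filtration jumps $\mu_{z_0}^{(j)}$: this rests on the apparentness and distinctness of exponents at ${\rm i}$ and $\rho$ supplied by Theorem~\ref{thm-66}, which rules out logarithmic solutions and guarantees that leading orders are unambiguous, so that the three distinct jumps match the three exponents in increasing order.
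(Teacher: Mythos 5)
Your proof is correct, but it takes a genuinely different route from the paper's. Both arguments start identically: every solution of $\mathcal Ly=0$ is $(af+bg+ch)/\sqrt[3]{W_{f,g,h}}$, and as $(a,b,c)$ varies the orders at ${\rm i}$ and $\rho$ run exactly through the three local exponents shifted by $\tfrac13\ord_{z_0}W_{f,g,h}$; the paper asserts this in a single sentence, whereas your filtration argument (the leading-coefficient map $V_{\ge m}\to\C$ has kernel of codimension at most one, so a $3$-dimensional space realizes exactly three distinct orders, matched to the three distinct exponents guaranteed by apparentness from Theorem~\ref{thm-66}) is a careful justification of that step. The divergence is in the congruence input. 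The paper applies the valence formula to $v=af+bg+ch$ and to $f$, getting $\tfrac12\bigl(\ord_{\rm i}v-\ord_{\rm i}f\bigr)+\tfrac13\bigl(\ord_\rho v-\ord_\rho f\bigr)\in\Z$ and separating the terms mod $2$ and mod $3$; this yields only the \emph{relative} congruences among exponent differences, and integrality of $\kappa_\rho^{(j)}$ is then extracted from $\kappa_\rho^{(1)}+\kappa_\rho^{(2)}+\kappa_\rho^{(3)}=3$. You instead invoke the local expansion constraint at elliptic points (Proposition~\ref{prop-2}, i.e., the mechanism behind Corollary~\ref{coro}): a weight-$k$ form has $w$-expansion at ${\rm i}$ supported in a fixed parity class and at $\rho$ in a fixed residue class mod $3$, giving the \emph{absolute} congruences $\ord_\rho v\equiv k\bmod 3$, etc., and in particular $3\mid\ord_\rho W_{f,g,h}$ (weight $3(k+2)$), from which integrality falls out directly without using the exponent sum. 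Your route is local (invariance under the stabilizer of the elliptic point) where the paper's is global (valence formula); it proves slightly more along the way — the absolute congruences and $3\mid\ord_\rho W_{f,g,h}$, a fact the paper itself uses elsewhere, e.g., in the proof of Lemma~\ref{lemma: Q R general form} — at the cost of invoking the expansion machinery, while the valence-formula argument is shorter and self-contained. One point worth making explicit in your write-up: the parity claim at ${\rm i}$ presupposes $k$ even, which holds automatically here since $\sM_k(\SL(2,\Z))=0$ for odd $k$.
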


\begin{proof} Note that every solution $y(z)$ of $\mathcal Ly=0$ can
 be written as
 \[
 (af(z)+bg(z)+ch(z))/\sqrt[3]{W_{f,g,h}(z)}
 \]
 for some
 $a,b,c\in\C$. As $a$, $b$, and $c$ vary, the order of $y(z)$ at~${\rm i}$
 (respectively, $\rho$) will go through all possible local exponents
 of $\mathcal Ly=0$ at~${\rm i}$ (respectively,~$\rho$). Since
\[\frac{\ord_{\rm i} (af+bg+cz)}{2}+\frac{\ord_\rho (af+bg+cz)}{3}\equiv \frac{\ord_{\rm i} (f)}{2}+\frac{\ord_\rho (f)}{3}\quad \mod \Z,\]
so
\begin{gather*}
\ord_{\rm i} (af+bg+cz)-\ord_{\rm i} (f)\equiv 0\quad \mod 2, \\
 \ord_{\rho} (af+bg+cz)-\ord_{\rho} (f)\equiv 0\quad \mod 3
 \end{gather*}
hold for any $(a,b,c)\neq (0,0,0)$. From here and
\begin{gather*}
\kappa_{\rm i}^{(j)}+\tfrac{1}{3}\ord_{\rm i}W_{f,g,h}(z)\in\{\ord_{\rm i} (af+bg+cz) \,|\, (a,b,c)\neq (0,0,0)\},\\
 \kappa_{\rho}^{(j)}+\tfrac{1}{3}\ord_{\rho}W_{f,g,h}(z)\in\{\ord_{\rho} (af+bg+cz) \,|\, (a,b,c)\neq (0,0,0)\}
 \end{gather*}
for all $j$,
we obtain the assertion (1) and $\kappa_{\rho}^{(2)}-\kappa_{\rho}^{(1)},\kappa_{\rho}^{(3)}-\kappa_{\rho}^{(1)}\equiv
 0\mod 3$. This together with $\kappa_{\rho}^{(1)}+\kappa_{\rho}^{(2)}+\kappa_{\rho}^{(3)}=3$ imply $\kappa_\rho^{(j)}\in\Z$ for all $j$ and so the assertion~(2) holds.
\end{proof}

The above result is precisely the converse statement of Theorem \ref{thm-01+}(2).

\begin{Example} Recall that the smallest weight $k$ such that
 $\dim\sM_k(\SL(2,\Z))=3$ is $24$. Let $f(z)=E_4(z)^6$,
 $g(z)=E_4(z)^3\Delta(z)$, and $h(z)=\Delta(z)^2$, which form a basis
 for $\sM_{24}(\SL(2,\Z))$. To determine the differential equation
\[
 \mathcal Ly:=D_q^3y(z)+Q(z)D_qy(z)+\left(\frac12D_qQ(z)+R(z)\right)y(z)=0
\]
 satisfied by $f/\sqrt[3]{W_{f,g,h}}$,
 $g/\sqrt[3]{W_{f,g,h}}$, and $h/\sqrt[3]{W_{f,g,h}}$, we
 use Ramanujan's identities
\[
 D_qE_2=\frac{E_2^2-E_4}{12}, \qquad
 D_qE_4=\frac{E_2E_4-E_6}3, \qquad
 D_qE_6=\frac{E_2E_6-E_4^2}2,
\]
 and compute that $W_{f,g,h}(z)=cE_4(z)^6E_6(z)^3\Delta(z)^3$ for
 some nonzero number $c$. Noticing that $W_{f,g,h}(z)$ has a zero of
 order $3$ at~${\rm i}$ and a zero of order~$6$ at~$\rho$, we know that
 $\kappa_{\rm i}^{(1)}=-1$, $\kappa_\rho^{(1)}=-2$, which, by
 Proposition~\ref{prop-6-3}, implies that $\kappa_{\rm i}^{(2)}=1$,
 $\kappa_{\rm i}^{(3)}=3$, $\kappa_\rho^{(2)}=1$, and
 $\kappa_\rho^{(3)}=4$. In other words, the indicial equations at ${\rm i}$
 and at $\rho$ are $(x+1)(x-1)(x-3)=0$ and $(x+2)(x-1)(x-4)=0$,
 respectively. Also, we have $\ord_\infty f-\frac13\ord_\infty
 W_{f,g,h}=-1$, $\ord_\infty g-\frac13\ord_\infty W_{f,g,h}=0$, and
 $\ord_\infty h-\frac13\ord_\infty W_{f,g,h}=1$, which implies that
 the indicial equation at $\infty$ is $(x+1)x(x-1)$. Therefore,
 according to Lemmas~\ref{lemma: Q R general form}, \ref{lemma:
 indicial at infinity}, and~\ref{lemma: indicial at z}, the
 meromorphic modular forms~$Q(z)$ and~$R(z)$ in~$\mathcal Ly$ are
 \[
 Q(z)=-E_4(z)-\frac34\frac{E_4(z)(E_4(z)^3-E_6(z)^2)}{E_6(z)^2}
 +\frac89\frac{E_4(z)^3-E_6(z)^2}{E_4(z)^2}
\]
 (note that this can also be computed directly using \eqref{eq-7})
 and
\[
 R(z)=s_{\rm i}^{(1)}\frac{E_4(z)^3-E_6(z)^2}{E_6(z)}
\]
 for some complex number $s_{\rm i}^{(1)}$.
 Using the apparentness condition at ${\rm i}$, we can show that
 $s_{\rm i}^{(1)}=0$. In other words, the differential equation is
 $D_q^3y(z)+Q(z)D_qy(z)+\frac 12D_qQ(z)y(z)=0$. We remark that the
 reason why the differential equation is of this special form is due
 to the fact that it is the symmetric square of some second order MODE.
\end{Example}

It is worth to point out that Theorem \ref{thm-66} can be applied to construct solutions of the $\SU(3)$ Toda system
\begin{equation*}
\begin{cases}
\displaystyle \Delta v_1+2{\rm e}^{v_1}-{\rm e}^{v_2}=4\pi\sum_{k=1}^N n_{1,k}\delta_{p_k},\\
\displaystyle \Delta v_2+2{\rm e}^{v_2}-{\rm e}^{v_1}=4\pi \sum_{k=1}^N n_{2,k}\delta_{p_k}
\end{cases}\qquad\text{in} \ \mathbb{R}^2,
\end{equation*}
where $\Delta=\frac{\partial^2}{\partial x_1^2}+\frac{\partial^2}{\partial x_2^2}$ is the Laplace operator and $\delta_{p}$ denotes the Dirac measure at $p$. We always use the complex variable $w=x_1+{\rm i}x_2$. Then the Laplace operator $\Delta=4\partial_{w\bar{w}}$.

As in Theorem \ref{thm-66}, we let $f,g,h\in\sM_k(\SL(2,\Z))$ be linearly
 independent modular forms and \begin{gather}\label{eq-10--}
y'''(z)+Q_2(z)y'(z)+Q_3(z)y(z)=0,\qquad z\in\mathbb{H}\end{gather} be the MODE satisfied by $y_1(z):=f(z)/\sqrt[3]{W_{f,g,h}}$,
 $y_2(z):=g(z)/\sqrt[3]{W_{f,g,h}}$, and $y_3(z):=h(z)/\sqrt[3]{W_{f,g,h}}$. Denote the set of regular singular points of~(\ref{eq-10--}) modulo $\SL(2,\mathbb{Z})$ on $\mathbb{H}$ by
$\mathcal{S}=\{z_1,\dots, z_m, {\rm i}, \rho\}$. For each $z\in\mathcal{S}$, it follows from Theorem~\ref{thm-66} that there are $m_{z}^{(1)}, m_{z}^{(2)}\in\mathbb{Z}_{\geq 0}$ such that the local exponents of~(\ref{eq-10--}) at $\gamma z$ are the same as those at $z$ and are given by
\[\kappa_{z}^{(1)}=-\frac{2m_{z}^{(1)}+m_{z}^{(2)}}{3},\qquad \kappa_{z}^{(2)}=\kappa_{z}^{(1)}+m_{z}^{(1)}+1,\qquad \kappa_{z}^{(3)}=\kappa_{z}^{(2)}+m_{z}^{(2)}+1\]
for any $\gamma\in \SL(2,\mathbb{Z})$. Similarly, there are $m_{\infty}^{(1)}, m_{\infty}^{(2)}\in\mathbb{N}$ such that the local exponents of (\ref{eq-10--}) at the cusp $\infty$ are given by
\[\kappa_{\infty}^{(1)}=-\frac{2m_{\infty}^{(1)}+m_{\infty}^{(2)}}{3},\qquad \kappa_{\infty}^{(2)}=\kappa_{\infty}^{(1)}+m_{\infty}^{(1)},\qquad \kappa_{\infty}^{(3)}=\kappa_{\infty}^{(2)}+m_{\infty}^{(2)}.\]

Given any $\lambda,\mu>0$, we define
 \begin{gather*}
 {\rm e}^{-U_{1;\lambda,\mu}(z)}:= \frac{1}{4}
\big(\lambda^2\mu^{-1}|y_1|^2+\mu^2\lambda^{-1}|y_2|^2+\lambda^{-1}\mu^{-1}|y_3|^2\big),
\\
{\rm e}^{-U_{2;\lambda,\mu}(z)}
:=\frac{1}{4}
\big[\lambda\mu|W(y_1,y_2)|^2+\lambda^{-2}\mu|W(y_2,y_3)|^2+\lambda\mu^{-2}|W(y_3,y_1)|^2\big],
\end{gather*}
where $W(y_i, y_j):=y_i'y_j-y_j'y_i$. Note that ${\rm e}^{-U_{k;\lambda,\mu}(z)}$ is single-valued for any $z\in\mathbb{H}$ and $0<{\rm e}^{-U_{k;\lambda,\mu}(z)}<\infty$ as long as $z\notin \SL(2,\mathbb{Z})\mathcal{S}$. We have

\begin{Lemma} Given any $\lambda,\mu>0$, there holds
\begin{equation}\label{2toda0-eq}\begin{cases}
\Delta U_{1;\lambda,\mu}+{\rm e}^{2U_{1;\lambda,\mu}-U_{2;\lambda,\mu}}=0,\\
\Delta U_{2;\lambda,\mu}+{\rm e}^{2U_{2;\lambda,\mu}-U_{1;\lambda,\mu}}=0
\end{cases}\qquad\text{in} \ \mathbb{H}\setminus(\SL(2,\mathbb{Z})\mathcal{S}).\end{equation}
\end{Lemma}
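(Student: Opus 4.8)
The plan is to recognize $e^{-U_{1;\lambda,\mu}}$ and $e^{-U_{2;\lambda,\mu}}$ as the Hermitian norms of a holomorphic curve and of its associated (dual) curve, and then to invoke the infinitesimal Pl\"ucker formulas. First I would absorb the parameters $\lambda,\mu$ into the solution basis: setting $w_1=\lambda^2\mu^{-1}$, $w_2=\mu^2\lambda^{-1}$, $w_3=\lambda^{-1}\mu^{-1}$ and $\widetilde y_j=\sqrt{w_j}\,y_j$, the triple $\widetilde Y=(\widetilde y_1,\widetilde y_2,\widetilde y_3)^t$ is again a fundamental system of solutions of~\eqref{eq-10--}, since the equation is linear. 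Because $W(\widetilde y_i,\widetilde y_j)=\sqrt{w_iw_j}\,W(y_i,y_j)$ and $w_1w_2w_3=1$, a direct check identifies the two potentials as standard norms:
\[
4e^{-U_{1;\lambda,\mu}}=\sum_{j=1}^3|\widetilde y_j|^2=\|\widetilde Y\|^2=:\omega_0,\qquad
4e^{-U_{2;\lambda,\mu}}=\sum_{i<j}|W(\widetilde y_i,\widetilde y_j)|^2=\|\widetilde Y\wedge\widetilde Y'\|^2=:\omega_1.
\]
Since \eqref{eq-10--} has no $y''$-term, $\det(\widetilde Y,\widetilde Y',\widetilde Y'')$ is constant, and the normalization by $\sqrt[3]{W_{f,g,h}}$ together with $w_1w_2w_3=1$ forces $\det(\widetilde Y,\widetilde Y',\widetilde Y'')=W(\widetilde y_1,\widetilde y_2,\widetilde y_3)=1$. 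Away from $\SL(2,\Z)\mathcal S$ the function $\sqrt[3]{W_{f,g,h}}$ admits a local nonvanishing holomorphic branch, so there $\widetilde Y$ is locally a holomorphic $\mathbb C^3$-valued function; as the $U_{k;\lambda,\mu}$ are single-valued (noted above), it suffices to verify \eqref{2toda0-eq} locally using any such branch.

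The analytic core is the elementary identity, valid for any holomorphic $\mathbb C^n$-valued $v$,
\[
\partial_{z\bar z}\log\|v\|^2=\frac{\|v\|^2\|v'\|^2-|\langle v',v\rangle|^2}{\|v\|^4}=\frac{\|v\wedge v'\|^2}{\|v\|^4},
\]
which follows by differentiating $\|v\|^2=\langle v,v\rangle$ (using $\partial_{\bar z}v=0$) and the Lagrange identity. Here $z$ denotes the variable of $U_{k;\lambda,\mu}$ and $\Delta=4\partial_{z\bar z}$. Applying this to $v=\widetilde Y$ gives at once the first Pl\"ucker relation
\[
\partial_{z\bar z}\log\omega_0=\frac{\|\widetilde Y\wedge\widetilde Y'\|^2}{\omega_0^2}=\frac{\omega_1}{\omega_0^2}.
\]

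For the second relation I would pass to the dual curve. Identify $\wedge^2\mathbb C^3$ isometrically with $\mathbb C^3$ via the $\mathbb C$-bilinear cross product $u\times v$, the isometry being the Hermitian Lagrange identity $\|u\times v\|^2=\|u\|^2\|v\|^2-|\langle u,v\rangle|^2$, and set $V:=\widetilde Y\times\widetilde Y'$, so that $\|V\|^2=\omega_1$. Since $V'=\widetilde Y'\times\widetilde Y'+\widetilde Y\times\widetilde Y''=\widetilde Y\times\widetilde Y''$, the vector triple product identity $(a\times b)\times(a\times c)=\det(a,b,c)\,a$ yields
\[
V\times V'=(\widetilde Y\times\widetilde Y')\times(\widetilde Y\times\widetilde Y'')=\det(\widetilde Y,\widetilde Y',\widetilde Y'')\,\widetilde Y=\widetilde Y .
\]
Applying the identity of the previous paragraph to the holomorphic curve $V$ and using the isometry then gives the second Pl\"ucker relation
\[
\partial_{z\bar z}\log\omega_1=\frac{\|V\wedge V'\|^2}{\omega_1^2}=\frac{\|V\times V'\|^2}{\omega_1^2}=\frac{\|\widetilde Y\|^2}{\omega_1^2}=\frac{\omega_0}{\omega_1^2}.
\]
I expect the only delicate point to be the bookkeeping in this step: keeping the conjugate-linear Hermitian product straight against the holomorphic bilinear cross and triple products, and verifying that $u\wedge v\mapsto u\times v$ is a genuine isometry. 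Everything else is formal.

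Finally I would translate back. With $\omega_0=4e^{-U_{1;\lambda,\mu}}$ and $\omega_1=4e^{-U_{2;\lambda,\mu}}$, the two Pl\"ucker relations read
\[
\partial_{z\bar z}U_{1;\lambda,\mu}=-\frac{\omega_1}{\omega_0^2}=-\frac14\,e^{2U_{1;\lambda,\mu}-U_{2;\lambda,\mu}},\qquad
\partial_{z\bar z}U_{2;\lambda,\mu}=-\frac{\omega_0}{\omega_1^2}=-\frac14\,e^{2U_{2;\lambda,\mu}-U_{1;\lambda,\mu}}.
\]
Multiplying by $\Delta=4\partial_{z\bar z}$ produces exactly the system \eqref{2toda0-eq} on $\mathbb H\setminus(\SL(2,\Z)\mathcal S)$, which completes the proof.
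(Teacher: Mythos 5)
Your proposal is correct, and while it rests on the same geometric mechanism as the paper (the infinitesimal Pl\"ucker formulas), the execution is genuinely different. The paper forms the unimodular positive Hermitian matrix $R_{\lambda,\mu}=(\lambda\mu)^{-1}\mathcal{W}_{\lambda,\mu}\overline{\mathcal{W}_{\lambda,\mu}}^T$ and quotes from~\cite{LWY} the principal-minor identity $R_{\lambda,\mu;m}\,\partial_{z\bar z}R_{\lambda,\mu;m}-(\partial_z R_{\lambda,\mu;m})(\partial_{\bar z}R_{\lambda,\mu;m})=R_{\lambda,\mu;m-1}R_{\lambda,\mu;m+1}$ for $m=1,2$ (with $R_{\lambda,\mu;0}=R_{\lambda,\mu;3}=1$); this produces both equations with ${\rm e}^{-V_{\lambda,\mu}}:=\frac14 R_{\lambda,\mu;2}$ in place of ${\rm e}^{-U_{2;\lambda,\mu}}$, and the paper then needs a separate Lagrange-identity expansion to verify ${\rm e}^{-V_{\lambda,\mu}}={\rm e}^{-U_{2;\lambda,\mu}}$. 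You instead absorb $\lambda,\mu$ into the basis via $\widetilde y_j=\sqrt{w_j}\,y_j$ with $w_1w_2w_3=1$ (your weights do check out: $w_1w_2=\lambda\mu$, $w_2w_3=\lambda^{-2}\mu$, $w_3w_1=\lambda\mu^{-2}$), which makes $4{\rm e}^{-U_{1;\lambda,\mu}}=\|\widetilde Y\|^2$ and, crucially, $4{\rm e}^{-U_{2;\lambda,\mu}}=\|\widetilde Y\wedge\widetilde Y'\|^2$ true by inspection, so the paper's matching step disappears; both PDEs then follow from the single identity $\partial_{z\bar z}\log\|v\|^2=\|v\wedge v'\|^2/\|v\|^4$ for holomorphic $v$, applied once to $\widetilde Y$ and once to the dual curve $V=\widetilde Y\times\widetilde Y'$, where $V'=\widetilde Y\times\widetilde Y''$ and $V\times V'=\det(\widetilde Y,\widetilde Y',\widetilde Y'')\,\widetilde Y=\widetilde Y$ by the triple-product identity, the constancy of the Wronskian (no $y''$-term), and its normalization to~$1$. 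The point you flagged as delicate is indeed fine: the complex Lagrange identity makes $u\wedge v\mapsto u\times v$ an isometry for the Hermitian norms, and your local-branch reduction correctly disposes of the cube-root multivaluedness since the $U_{k;\lambda,\mu}$ are single-valued. As for trade-offs: your argument is self-contained (no appeal to the minor identity of~\cite{LWY}) and slightly shorter at rank~$3$, whereas the paper's minor recursion is uniform in $m$ and generalizes verbatim to the $\SU(N+1)$ Toda system; your cross-product duality is special to $\C^3$, though the Hodge star on $\wedge^{n-1}\C^n$ would play its role in higher rank.
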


\begin{proof} The proof can be easily adopted from \cite{CL-JDG,LNW,LWY}; we sketch the proof here for the reader's convenience. Given any $\lambda,\mu>0$,
we define
\[\mathcal{W}_{\lambda,\mu}:=\begin{pmatrix}\lambda^{\frac{3}{2}}y_1 & \mu^{\frac{3}{2}}y_2 & y_3\\\lambda^{\frac{3}{2}}y_1' & \mu^{\frac{3}{2}}y_2' & y_3'\\ \lambda^{\frac{3}{2}}y_1'' & \mu^{\frac{3}{2}}y_2'' & y_3''\end{pmatrix}.\]
Since the Wroksian $W(y_1,y_2,y_3)=1$, we have $\det \mathcal{W}_{\lambda,\mu}=(\lambda\mu)^{\frac{3}{2}}$. Define a positive definite matrix
\[R_{\lambda,\mu}:=(\lambda\mu)^{-1}\mathcal{W}_{\lambda,\mu}
\overline{\mathcal{W}_{\lambda,\mu}}^T.\]
then $\det R_{\lambda,\mu}=1$.
For $1\leq m\leq 3$, we let $R_{\lambda,\mu;m}$ denote the leading principal minor of $R_{\lambda,\mu}$ of dimension $m$. Since $y_j(z)$ is holomorphic in $\mathbb{H}\setminus(\SL(2,\mathbb{Z})\mathcal{S})$, a direct computation leads to (see, e.g.,~\cite{LWY})
\begin{equation}\label{n-Rm}
R_{\lambda,\mu;m}(\partial_{z\bar{z}}R_{\lambda,\mu;m})-(\partial_zR_{\lambda,\mu;m})
(\partial_{\bar{z}}R_{\lambda,\mu;m})=R_{{\lambda,\mu;m-1}}R_{\lambda,\mu;m+1},\qquad m=1,2,
\end{equation}
for $z\in \mathbb{H}\setminus(\SL(2,\mathbb{Z})\mathcal{S})$, where $R_{\lambda,\mu;0}:=1$.

On the other hand,
\begin{gather*}\frac{1}{4}R_{\lambda,\mu;1}
=\frac{1}{4}(\lambda\mu)^{-1}\big(\lambda^3|y_1|^2+\mu^3|y_2|^2+|y_3|^2\big)
={\rm e}^{-U_{1;\lambda,\mu}(z)}.
\end{gather*}
Define ${\rm e}^{-V_{\lambda,\mu}(z)}:=\frac{1}{4}R_{\lambda,\mu;2}$. we will prove that ${\rm e}^{-V_{\lambda,\mu}(z)}={\rm e}^{-U_{2;\lambda,\mu}(z)}$.

Note that $R_{\lambda,\mu;3}=\det R_{\lambda,\mu}= 1$. Letting $m=1$ in (\ref{n-Rm}) leads to (note $0<{\rm e}^{-U_{1;\lambda,\mu}}, {\rm e}^{-V_{\lambda,\mu}}<+\infty$ in $\mathbb{H}\setminus(\SL(2,\mathbb{Z})\mathcal{S})$)
\begin{align}
4{\rm e}^{-V_{\lambda,\mu}}&=R_{\lambda,\mu;2}=16\big[{\rm e}^{-U_{1;\lambda,\mu}}
\big(\partial_{z\bar{z}}{\rm e}^{-U_{1;\lambda,\mu}}\big)
-\big(\partial_z{\rm e}^{-U_{1;\lambda,\mu}}\big)\big(\partial_{\bar{z}}{\rm e}^{-U_{1;\lambda,\mu}}\big)\big]\nonumber\\
&=-16{\rm e}^{-2U_{1;\lambda,\mu}}\partial_{z\bar{z}}U_{1;\lambda,\mu}=-4{\rm e}^{-2U_{1;\lambda,\mu}}\Delta U_{1;\lambda,\mu}\quad \text{in }\mathbb{H}\setminus(\SL(2,\mathbb{Z})\mathcal{S}),\label{n-V1-eq}
\end{align}
and letting $m=2$ in (\ref{n-Rm}) leads to
\begin{align*}
4{\rm e}^{-U_{1;\lambda,\mu}}&=R_{\lambda,\mu;1}=16\big[{\rm e}^{-V_{\lambda,\mu}}
(\partial_{z\bar{z}}{\rm e}^{-V_{\lambda,,\mu}})
-\big(\partial_z{\rm e}^{-V_{\lambda,\mu}}\big)\big(\partial_{\bar{z}}{\rm e}^{-V_{\lambda,\mu}}\big)\big]\\
&=-16{\rm e}^{-2V_{\lambda,\mu}}\partial_{z\bar{z}}V_{\lambda,\mu}=-4{\rm e}^{-2V_{\lambda,\mu}}\Delta V_{\lambda,\mu}\quad \text{in }\mathbb{H}\setminus(\SL(2,\mathbb{Z})\mathcal{S}).
\end{align*}
Furthermore,
we insert ${\rm e}^{-U_{1;\lambda,\mu}}=\frac{1}{4}\sum|a_jy_j|^2$ (where $a_1=\lambda\mu^{-1/2}$, $a_2=\mu\lambda^{-1/2}$ and $a_3=(\lambda\mu)^{-1/2}$) into (\ref{n-V1-eq}), which leads to
\begin{align*}
\frac{1}{4}{\rm e}^{-V_{\lambda,\mu}}&
={\rm e}^{-U_{1;\lambda,\mu}}\big(\partial_{z\bar{z}}{\rm e}^{-U_{1;\lambda,\mu}}\big)
-\big(\partial_z{\rm e}^{-U_{1;\lambda,\mu}}\big)\big(\partial_{\bar{z}}{\rm e}^{-U_{1;\lambda,\mu}}\big)\\
&=\frac{1}{16}\left[\left(\sum |a_jy_j|^2\right)\left(\sum |a_jy_j'|^2\right)-\left(\sum a_j^{2}y_j'\overline{y_j}\right)\left(\sum a_j^{2}y_j\overline{y_j'}\right)\right]\\
&=\frac{1}{16}\big[|W(a_1 y_1,a_2y_2)|^2+|W(a_2y_2,a_3y_3)|^2
+|W(a_3y_3,a_1 y_1)|^2\big],
\end{align*}
so ${\rm e}^{-V_{\lambda,\mu}(z)}={\rm e}^{-U_{2;\lambda,\mu}(z)}$. This proves that $(U_{1;\lambda,\mu}(z),U_{2;\lambda,\mu}(z))$ solves the Toda system~\eqref{2toda0-eq}.
\end{proof}

Now any $\tilde{z}\in\mathcal{S}$, it follows from the local behavior of~$y_j$'s that near $\gamma \tilde{z}$,
\begin{gather*}
U_{1;\lambda,\mu}(z)=-2\kappa_{\tilde{z}}^{(1)}\ln|z-\gamma \tilde{z}|+O(1),\\
 U_{2;\lambda,\mu}(z)=-2\big(2-\kappa_{\tilde{z}}^{(3)}\big)\ln|z-\gamma \tilde{z}|+O(1).
 \end{gather*}
 Similarly, at the cusp $\infty$, we have
\begin{gather*}
U_{1;\lambda,\mu}(z)=-2\kappa_{\infty}^{(1)}\ln|q|+O(1),\\
 U_{2;\lambda,\mu}(z)=2\kappa_{\infty}^{(3)}\ln|q|+O(1),
\end{gather*}
 where $q={\rm e}^{2\pi {\rm i}z}$. Since $f$, $g$, $h$, $W_{f,g,h}$ are modular forms of weights $k$, $k$, $k$ and $3(k+2)$ respectively, we easily obtain
 \[|y_j(\gamma z)|^2=\frac{|y_j(z)|^2}{|cz+d|^4}, \qquad |W(y_i,y_j)(\gamma z)|^2=\frac{|W(y_i,y_j)(z)|^2}{|cz+d|^4}\]
 for any $\gamma=\SM{a}{b}{c}{d}\in \SL(2,\mathbb{Z})$, so
 \[U_{j;\lambda,\mu}(\gamma z)=U_{j;\lambda,\mu}(z)+4\ln|cz+d|,\qquad j=1,2.\]
Now we define
\[(u_{1;\lambda,\mu}, u_{2;\lambda,\mu}):=(2U_{1;\lambda,\mu}-U_{2;\lambda,\mu},
2U_{2;\lambda,\mu}-U_{1;\lambda,\mu}).\]
Then we have
\[\begin{cases}
\Delta u_{1;\lambda,\mu}+2{\rm e}^{u_{1;\lambda,\mu}}-{\rm e}^{u_{2;\lambda,\mu}}=0,\\
\Delta u_{2;\lambda,\mu}+2{\rm e}^{u_{2;\lambda,\mu}}-{\rm e}^{u_{1;\lambda,\mu}}=0
\end{cases}\qquad\text{in} \ \mathbb{H}\setminus(\SL(2,\mathbb{Z})\mathcal{S}),\]
and near $\gamma \tilde{z}$,
\[u_{1;\lambda,\mu}(z)=2m_{\tilde{z}}^{(1)}\ln|z-\gamma \tilde{z}|+O(1),\qquad
u_{2;\lambda,\mu}(z)=2m_{\tilde{z}}^{(2)}\ln|z-\gamma \tilde{z}|+O(1),\]
while at the cusp $\infty$,
\[u_{1;\lambda,\mu}(z)=2m_{\infty}^{(1)}\ln|q|+O(1),\qquad u_{2;\lambda,\mu}(z)=2m_{\infty}^{(2)}\ln|q|+O(1).\]
Therefore, $(u_{1;\lambda,\mu}, u_{2;\lambda,\mu})$ is a solution of the following $\SU(3)$ Toda system
\begin{equation}\label{Toda-H}
\begin{cases}
\displaystyle \Delta u_1+2{\rm e}^{u_1}-{\rm e}^{u_2}=4\pi\sum_{\gamma}\bigg(m_{\rm i}^{(1)}\delta_{\gamma i}+m_{\rho}^{(1)}\delta_{\gamma \rho}+\sum_{j=1}^m m_{z_j}^{(1)}\delta_{\gamma z_j}\bigg)\quad \text{on} \ \mathbb{H},\\
\displaystyle \Delta u_2+2{\rm e}^{u_2}-{\rm e}^{u_1}=4\pi\sum_{\gamma}\bigg(m_{\rm i}^{(2)}\delta_{\gamma i}+m_{\rho}^{(2)}\delta_{\gamma \rho}+\sum_{j=1}^m m_{z_j}^{(2)}\delta_{\gamma z_j}\bigg)\quad \text{on} \ \mathbb{H},\\
\displaystyle u_k(z)=2m_{\infty}^{(k)}\ln|q|+O(1)\quad\text{as} \ \operatorname{Im} z\to\infty,\\
u_{j}(\gamma z)=u_{j}(z)+4\ln|cz+d|,\quad\forall \gamma\in \SL(2,\mathbb{Z}).
\end{cases}
\end{equation}

Now consider the modular function $w\colon \mathbb{H}\to \mathbb{C}$ defined by
\[w=w(z):=\frac{E_4(z)^3}{E_4(z)^3-E_6(z)^2}.\]
It is well known that $w(z)$ is holomorphic, surjective and
\[w({\rm i})=1,\qquad w(\rho)=0,\qquad w(\infty)=\infty.\]
 A direct computation gives
\[w'(z)=-2\pi {\rm i}\frac{E_4(z)^2 E_6(z)}{E_4(z)^3-E_6(z)^2}.\]
Denote $p_j:=w(\gamma z_j)$. Then all points of $\{p_1,\dots,p_m, 1,0\}\subset\mathbb{C}$ are distinct.
Now we define $(v_1(w),v_2(w))$ for $w\in\mathbb{C}$ by
\[u_k(z)=v_k(w(z))+2\ln |w'(z)|,\qquad z\in\mathbb{H}.\]
Since $w(\gamma z)=w(z)$ gives $w'(\gamma z)=(cz+d)^2w'(z)$, it follows from $
u_k(\gamma z)=u_k(z)+4\ln |cz+d|$ that $v_k(w)$ is well-defined for $w\in\mathbb{C}$.
Now outside $\{p_1,\dots, p_m, 1, 0\}$, we have
\begin{align*}
\Delta u_k(z)&=4\partial_{z\bar{z}}u_k(z)=4\partial_{w\bar{w}}v_k(w)|w'(z)|^2=|w'(z)|^2\Delta v_k(w)\\
&={\rm e}^{u_{k'}(z)}-2{\rm e}^{u_k(z)}=|w'(z)|^2\big({\rm e}^{v_{k'}(w)}-2{\rm e}^{v_k(w)}\big),
\end{align*}
so
\[\Delta v_k(w)+2{\rm e}^{v_k(w)}-{\rm e}^{v_{k'}(w)}=0,\]
where $\{k,k'\}=\{1,2\}$. Furthermore, since $w'(z_j)\neq 0$, we have that at $w=p_j=w(z_j)$,
\begin{align*}v_k(w) =u_k(z)-2\ln |w'(z)|=2m_{z_j}^{(k)}\ln |z-z_j|+O(1)
 =2m_{z_j}^{(k)}\ln |w-p_j|+O(1).\end{align*}
At $w=w({\rm i})=1$, since $\ord_{\rm i}(w-1)=2$ and $\operatorname{ord}_{\rm i}w'=1$, we have
\begin{align*}v_k(w)&=u_k(z)-2\ln |w'(z)|
=2\big(m_{{\rm i}}^{(k)}-1\big)\ln |z-{\rm i}|+O(1)\\
&=\big(m_{{\rm i}}^{(k)}-1\big)\ln |w-1|+O(1).
\end{align*}
At $w=w(\rho)=0$, since $\operatorname{ord}_{\rho}w=3$ and $\operatorname{ord}_{ \rho}w'=2$, we have
\begin{align*}v_k(w)& =u_k(z)-2\ln |w'(z)|
=2\big(m_{\rho}^{(k)}-2\big)\ln |z-\rho|+O(1) \\
 &=\frac{2\big(m_{\rho}^{(k)}-2\big)}{3}\ln |w|+O(1).
 \end{align*}
At $w=w(\infty)=\infty$, since
\[w(z)=Cq^{-1}(1+O(q)),\qquad w'(z)=-2\pi {\rm i} C q^{-1}(1+O(q)),\]
where $q={\rm e}^{2\pi {\rm i} z}$ and $C\neq 0$ is a constant, so
\begin{align*}v_k(w)&=u_k(z)-2\ln |w'(z)|
 =2\big(m_{\infty}^{(k)}+1\big)\ln |q|+O(1)\\
&=-2\big(m_{\infty}^{(k)}+1\big)\ln |w|+O(1).\end{align*}
Therefore, (\ref{Toda-H}) is equivalent to
\begin{equation}\label{Toda-more-general}
\begin{cases}
\displaystyle \Delta v_1+2{\rm e}^{v_1}-{\rm e}^{v_2}=4\pi\frac{m_{{\rm i}}^{(1)}-1}{2}\delta_1+4\pi\frac{m_{\rho}^{(1)}-2}{3}\delta_0+ 4\pi\sum_{j=1}^m m_{z_j}^{(1)}\delta_{p_j}\ \text{in} \ \mathbb{R}^2,\\
\displaystyle \Delta v_2+2{\rm e}^{v_2}-{\rm e}^{v_1}=4\pi\frac{m_{{\rm i}}^{(2)}-1}{2}\delta_1+4\pi\frac{m_{\rho}^{(2)}-2}{3}\delta_0+4\pi \sum_{j=1}^m m_{z_j}^{(2)}\delta_{p_j}\ \text{in} \ \mathbb{R}^2,\\
v_k(w)=-2\big(m_{\infty}^{(k)}+1\big)\ln|w|+O(1)\quad\text{as} \ |w|\to\infty.
\end{cases}
\end{equation}
Note from Proposition \ref{prop-6-3} that
\[
m_{{\rm i}}^{(k)}+1=\kappa_{{\rm i}}^{(k+1)}-\kappa_{{\rm i}}^{(k)} \equiv 0 \mod 2 \qquad \text{and} \qquad
m_{\rho}^{(k)}+1=\kappa_{\rho}^{(k+1)}-\kappa_{\rho}^{(k)}\equiv 0 \mod 3,
\] so
$\frac{m_{{\rm i}}^{(k)}-1}{2}\in\mathbb{Z}_{\geq 0}$ and $\frac{m_{\rho}^{(2)}-2}{3}\in\mathbb{Z}_{\geq 0}$ for $k=1,2$.
In conclusion, starting from any given linearly independent modular
forms $f,g,h\in\sM_k(\SL(2,\Z))$, we can construct a two-parametric
family of solutions to certain Toda system~(\ref{Toda-more-general}):

\begin{Theorem}$(v_{1;\lambda,\mu}(w), v_{2;\lambda,\mu}(w))$ defined by
\[v_{k;\lambda,\mu}(w(z))=u_{k;\lambda,\mu}(z)-2\ln |w'(z)|,\qquad z\in\mathbb{H}\] are a two-parametric family of solutions of the $\SU(3)$ Toda system~\eqref{Toda-more-general}, where $\lambda,\mu>0$ can be arbitrary.
\end{Theorem}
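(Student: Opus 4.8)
The plan is to assemble the theorem from the three Toda systems produced in the preceding construction, each obtained from its predecessor by an explicit substitution, so that the proof is essentially a matter of chaining together the computations already carried out. The foundation is the preceding lemma, which shows that $(U_{1;\lambda,\mu},U_{2;\lambda,\mu})$ solves the symmetric system \eqref{2toda0-eq} on $\H\setminus(\SL(2,\Z)\mathcal{S})$; this itself rests on the cofactor identities \eqref{n-Rm} for the leading principal minors $R_{\lambda,\mu;m}$ of the positive-definite matrix $R_{\lambda,\mu}=(\lambda\mu)^{-1}\mathcal{W}_{\lambda,\mu}\overline{\mathcal{W}_{\lambda,\mu}}^{T}$, together with the normalization $R_{\lambda,\mu;3}=\det R_{\lambda,\mu}=1$ coming from $W(y_1,y_2,y_3)=1$.

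First I would pass from \eqref{2toda0-eq} to the $\SU(3)$ system \eqref{Toda-H} on $\H$ by the linear substitution $(u_1,u_2)=(2U_1-U_2,2U_2-U_1)$. Since $\Delta$ is linear this turns the exponents $2U_k-U_{k'}$ appearing in \eqref{2toda0-eq} into $u_k$ and produces $\Delta u_k+2{\rm e}^{u_k}-{\rm e}^{u_{k'}}=0$ with $\{k,k'\}=\{1,2\}$. The Dirac sources are then read off from the local expansions of $U_1,U_2$ at each $\gamma\tilde z$ and at the cusp, which are dictated by the local exponents $\kappa_{\tilde z}^{(j)}$; the resulting strengths $m_{\tilde z}^{(k)}=\kappa_{\tilde z}^{(k+1)}-\kappa_{\tilde z}^{(k)}-1$ are nonnegative integers by Theorem \ref{thm-66}. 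The automorphy $U_{j;\lambda,\mu}(\gamma z)=U_{j;\lambda,\mu}(z)+4\ln|cz+d|$, already derived from the weights of $f,g,h,W_{f,g,h}$, passes to $u_k$ and makes the source set $\SL(2,\Z)$-invariant, giving the last line of \eqref{Toda-H}.

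Next I would descend from $\H$ to $\C$ through the Hauptmodul $w(z)=E_4^3/(E_4^3-E_6^2)$. Because $w$ is $\SL(2,\Z)$-invariant while $w'(\gamma z)=(cz+d)^2w'(z)$, the automorphy $u_k(\gamma z)=u_k(z)+4\ln|cz+d|$ guarantees that $v_k$, defined by $u_k(z)=v_k(w(z))+2\ln|w'(z)|$, descends to a single-valued function of $w\in\C$. Away from the zeros of $w'$ the function $\ln|w'|$ is harmonic, so the identity $\Delta u_k=|w'|^2\Delta v_k$ shows that the conformal factor $|w'|^2$ cancels cleanly and the $\SU(3)$ equations for $v_k$ are preserved; this is the routine step.

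The step I expect to require the most care is the local analysis at the ramification values $w=1$, $w=0$, and $w=\infty$, where the shift $2\ln|w'|$ alters the source strengths. Since $\ord_{\rm i}(w-1)=2$ and $\ord_{\rm i}w'=1$, the coefficient of $\delta_1$ becomes $(m_{\rm i}^{(k)}-1)/2$; since $\ord_\rho w=3$ and $\ord_\rho w'=2$, that of $\delta_0$ becomes $(m_\rho^{(k)}-2)/3$; and the $q^{-1}$ behavior of $w$ at the cusp turns the logarithmic growth of $u_k$ into $-2(m_\infty^{(k)}+1)\ln|w|$ as $|w|\to\infty$. The crucial input here is Proposition \ref{prop-6-3}, giving $m_{\rm i}^{(k)}+1\equiv 0\mod 2$ and $m_\rho^{(k)}+1\equiv 0\mod 3$, which forces these apparently fractional coefficients to be nonnegative integers. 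Collecting all these expansions yields exactly \eqref{Toda-more-general}, and since $\lambda,\mu>0$ were arbitrary throughout, the solutions form a genuine two-parameter family.
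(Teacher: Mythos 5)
Your proposal is correct and follows the paper's own route essentially verbatim: the minor identities \eqref{n-Rm} for $R_{\lambda,\mu;m}$ yielding \eqref{2toda0-eq}, the linear substitution $(u_1,u_2)=(2U_{1;\lambda,\mu}-U_{2;\lambda,\mu},\,2U_{2;\lambda,\mu}-U_{1;\lambda,\mu})$ giving \eqref{Toda-H}, the descent through the Hauptmodul $w(z)$ using the automorphy $u_k(\gamma z)=u_k(z)+4\ln|cz+d|$ and conformal covariance of $\Delta$, and the local bookkeeping at $w=1$, $w=0$, $w=\infty$ via the orders of $w-1$, $w$, and $w'$, with Proposition~\ref{prop-6-3} supplying the integrality of the Dirac coefficients in \eqref{Toda-more-general}. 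There are no gaps; the paper's (implicit) proof is exactly this chain of computations preceding the theorem statement.
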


\section{Polynomial systems derived from the conditions (H1)--(H3)}\label{section-4}

In view of Theorem \ref{thm-01} proved in Section~\ref{section-5}, a
natural question is whether given a prescribed set of singular points
and the local exponents at singularities and at the cusps, there exist
MODEs (\ref{eq-01}) satisfying the conditions (H1)--(H3). We will see
in this section that this problem of existence is equivalent to that
of solving a certain system of polynomial equations. Note that in
view of Theorems~\ref{thm-02} and~\ref{thm-66}
such a MODE~\eqref{eq-01} exists for certain sets of data.

\subsection{Solution expansions for MODEs}
Let $\Gamma$ be a discrete subgroup of $\SL(2,\mathbb{R})$ commensurable with $\SL(2,\mathbb{Z})$ and (\ref{eq-01}) be a MODE on $\Gamma$. To verify the apparentness of a singular point $z_0$ of (\ref{eq-01}), we use the classical Frobenius method. However, since (\ref{eq-01}) is modular, it will be more convenient that all functions are expanded in terms of $\wt w$ as introduced in~\cite{LY} rather than~$z-z_0$.

Fix $z_0\in \H$, we let $w=(z-z_0)/(z-\overline z_0)$ and
\begin{equation} \label{eq: wt w 0}
 \wt w=\frac{w}{1+Aw},\qquad\text{with}\quad
 A=\frac{4\pi\phi^\ast(z_0)\Im z_0}{\alpha},
\end{equation}
where $\phi(z)$ is the quasimodular form of weight $2$ and depth $1$ on $\Gamma$, i.e.,
\begin{align*}
\big(\phi\big|_{2}\gamma\big)(z)=\phi(z)+\frac{\alpha_0 c}{2\pi {\rm i}(cz+d)},\qquad \gamma=\begin{pmatrix}
a & b\\
c & d
\end{pmatrix}
\in \Gamma,
\end{align*}
for some nonzero complex number $\alpha_0$ (note $\alpha_0=2\pi {\rm i} \alpha$ for $\alpha$ given in (\ref{eqe-5})), and
 $\phi^*(z):=\phi(z)+\frac{\alpha_0}{2\pi {\rm i} (z-\bar{z})}$. Clearly $\phi^*(z)$ satisfies
\begin{equation*}
\big(\phi^\ast\big|_{2}\gamma\big)(z)=\phi^\ast(z),\qquad \gamma\in\Gamma.
\end{equation*}

The advantage of the expansion in terms of $\wt w$ is the following result.

\begin{Proposition}[{\cite[Propositions A.4 and A.7]{LY}}]\label{prop-2}
Let $f(z)$ be a meromorphic modular form of weight $k$ on
$\Gamma$. Then~$f(z)$ admits an expansion of the form
\[
f(z)=(1-(1+A)\wt w)^k\sum_{n=n_0}^\infty
a_n(-4\pi(\Im z_0)\widetilde{w})^n.
\]
Furthermore, if $z_0$ is an elliptic point with the stabilizer subgroup $\Gamma_{z_0}$ of order $N$, then $a_n=0$ whenever $k+2n\not\equiv 0 \mod N$.
\end{Proposition}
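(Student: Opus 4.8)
The plan is to prove the two assertions separately: the first is essentially a normalization statement that follows from $\wt w$ being a local coordinate, while the second carries the real content and rests on the vanishing of $\phi^\ast$ at elliptic points. First I would record the elementary identity $1-(1+A)\wt w=(1-w)/(1+Aw)$ (a one-line manipulation of $\wt w=w/(1+Aw)$). Since the Cayley map $w=(z-z_0)/(z-\ol z_0)$ sends $z_0$ to $0$ with nonzero derivative and $1+Aw$ is a unit near $z_0$, the function $\wt w$ is a genuine holomorphic local coordinate at $z_0$; moreover the prefactor $(1-w)^k/(1+Aw)^k$ is holomorphic and equal to $1$ at $z_0$, hence invertible there. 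As $f$ is meromorphic at the interior point $z_0$, the quotient $f(z)/(1-(1+A)\wt w)^k$ is meromorphic in $\wt w$ and therefore has a Laurent expansion $\sum_{n\ge n_0}a_n(-4\pi(\Im z_0)\wt w)^n$. This is exactly the claimed form and defines the $a_n$; no modularity is needed, and the specific prefactor is natural because $(1-w)^k$ is a constant multiple of $(z-\ol z_0)^{-k}$, the factor trivializing the weight-$k$ automorphy.

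For the second assertion the crucial observation is that $A=0$ at an elliptic point, so that $\wt w=w$ and the prefactor collapses to $(1-w)^k$. Indeed, let $\gamma_0=\SM{a_0}{b_0}{c_0}{d_0}$ generate the stabilizer $\Gamma_{z_0}$ and set $j_0:=c_0z_0+d_0$. Evaluating the invariance $(\phi^\ast|_2\gamma_0)(z)=\phi^\ast(z)$ at $z=z_0$ and using $\gamma_0z_0=z_0$ gives $(j_0^{-2}-1)\phi^\ast(z_0)=0$; since $\gamma_0$ is elliptic and nontrivial in $\PSL$, its fixed-point multiplier $j_0^{-2}=\gamma_0'(z_0)\ne 1$, which forces $\phi^\ast(z_0)=0$ and hence $A=0$. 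I would then record the two facts that make the stabilizer act cleanly: $|j_0|=1$ (an elliptic fixed-point multiplier has modulus one, so $\ol{j_0}=j_0^{-1}$), and $\gamma\mapsto j(\gamma,z_0)$ is an injective homomorphism on $\Gamma_{z_0}$ (it is a cocycle, trivial only on $I$), so $j_0$ has order $N$, i.e.\ is a primitive $N$-th root of unity.

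The computation then runs as follows. From $1-w=(z_0-\ol z_0)/(z-\ol z_0)$ together with $\gamma_0z-\ol z_0=(z-\ol z_0)/\big((c_0z+d_0)\ol{j_0}\big)$ one obtains the two transformation laws $w(\gamma_0 z)=\zeta\,w(z)$ and $1-w(\gamma_0 z)=(c_0z+d_0)\,\ol{j_0}\,(1-w(z))$, where $\zeta=\ol{j_0}/j_0=\ol{j_0}^{\,2}$. Substituting the expansion $f=(1-w)^k\sum_n a_n(-4\pi(\Im z_0)w)^n$ into the modularity relation $f(\gamma_0 z)=(c_0z+d_0)^k f(z)$ and cancelling the common factor $(c_0z+d_0)^k(1-w)^k$ leaves $\ol{j_0}^{\,k}\sum_n a_n\zeta^n(-4\pi(\Im z_0)w)^n=\sum_n a_n(-4\pi(\Im z_0)w)^n$. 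Comparing coefficients of $w^n$ yields $a_n(\ol{j_0}^{\,k}\zeta^n-1)=0$, and since $\ol{j_0}^{\,k}\zeta^n=\ol{j_0}^{\,k+2n}$ with $\ol{j_0}$ a primitive $N$-th root of unity, we get $\ol{j_0}^{\,k+2n}=1$ iff $k+2n\equiv 0\pmod N$. Hence $a_n=0$ whenever $k+2n\not\equiv 0\pmod N$, which is the claim.

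The main obstacle is precisely the identity $\phi^\ast(z_0)=0$, equivalently $A=0$. Without it one is forced to analyse the $\wt w$-expansion directly, and $\wt w$ does not transform by a scalar under $\gamma_0$: one computes $\wt w(\gamma_0 z)=\zeta w/(1+A\zeta w)$, which is not $\zeta\wt w$, so the coefficient comparison does not decouple. It is exactly the vanishing of the weight-$2$ invariant $\phi^\ast$ at elliptic fixed points that reduces $\wt w$ to $w$ and turns the comparison into the stated congruence; everything after that point is routine bookkeeping with the automorphy factor $c_0z+d_0$.
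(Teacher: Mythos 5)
Your proof is correct, and it is worth noting that the paper never proves Proposition~\ref{prop-2} itself: it is imported wholesale from \cite[Propositions~A.4 and~A.7]{LY}, with only Remark~\ref{remark: about coefficients} recording the one fact your argument turns on, namely that $A=0$ at an elliptic point ``because $\phi^\ast$ transforms like a modular form of weight $2$, but any modular form of weight $2$ will vanish at every elliptic point.'' Your multiplier identity $\big(j_0^{-2}-1\big)\phi^\ast(z_0)=0$, with $j_0^{-2}=\gamma_0'(z_0)\neq 1$ for a nontrivial elliptic element, is precisely a direct verification of that remark, and the rest of your argument is a clean, self-contained substitute for the cited source: the first claim is indeed pure normalization (since $1-(1+A)\wt w=(1-w)/(1+Aw)$ is a unit at $z_0$ and $\wt w$ is a local coordinate, the $a_n$ are just the Laurent coefficients of $f/(1-(1+A)\wt w)^k$ in $\wt w$), and the congruence follows from comparing coefficients under the rotation $w\mapsto\ol{j_0}^{\,2}w$, using that $\gamma\mapsto j(\gamma,z_0)$ is an injective homomorphism on the finite cyclic stabilizer so that $j_0$ is a primitive $N$-th root of unity. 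The two facts you state without proof ($|j_0|=1$ and injectivity of $j(\cdot,z_0)$) are standard and can be disposed of in one line each, e.g.\ $\Im(\gamma_0 z_0)=\Im z_0/|j_0|^2$ gives $|j_0|=1$, and conjugating $\Gamma_{z_0}$ into $\mathrm{SO}(2)$, where $j(\gamma_\theta,{\rm i})={\rm e}^{-{\rm i}\theta}$, gives injectivity. What the approach of \cite{LY} buys beyond yours is quantitative rather than logical: there the coefficients $a_n$ are expressed through Serre derivatives of $f$ (which is what Remark~\ref{remark: Q(x) SL(2,Z)} actually uses to compute expansions such as \eqref{equation: E4 E6}) and the domain of convergence is controlled, whereas your argument yields only a local Laurent expansion near $z_0$ --- which is, however, everything the proposition as stated asserts and everything the Frobenius-type applications in Section~\ref{section-4} require.
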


\begin{Remark} \label{remark: about coefficients}
 Note that when $f(z)$ is a holomorphic modular form,
 the coefficients $a_n$ in the series can be expressed in terms of
 the Serre derivatives of $f(z)$. See \cite[Proposition A.4]{LY} for
 the precise statement.

 Also note that when $z_0$ is an elliptic point, we have $A=0$ and
 hence $\wt w=w$. This is because~$\phi^\ast$ transforms like a
 modular form of weight $2$, but any modular form of weight $2$ will
 vanish at every elliptic point.
\end{Remark}

By Proposition \ref{prop-2}, we can write
\begin{gather*}
Q(z):=\frac{Q_2(z)}{(2\pi {\rm i})^2}=(1-(1+A)\wt w)^4\sum_{n=-2}^\infty
a_n(-4\pi(\Im z_0)\widetilde{w})^n,
\\
R(z):=\frac{Q_3(z)-\frac{1}{2}Q_2'(z)}{(2\pi {\rm i})^3}=(1-(1+A)\wt w)^6\sum_{n=-3}^\infty
b_n(-4\pi(\Im z_0)\widetilde{w})^n.
\end{gather*}
Then (\ref{eq-01}) is equivalent to
\begin{equation} \label{equation: DE in y}
 D_q^3y(z)+Q(z)D_qy(z)+\left(\frac12D_qQ(z)+R(z)\right)y(z)=0,
 \end{equation}
 where $D_q:=q\frac{{\rm d}}{{\rm d}q}=\frac1{2\pi {\rm i}}\frac{{\rm d}}{{\rm d}z}$.
In particular, Proposition~\ref{prop-2} yields
\begin{Corollary}\label{coro}
Suppose $-I_2\in\Gamma$ and let $z_0$ be the elliptic point of order $e$ of $\Gamma$.
Then $N=2e$ and so $a_n=0$ if $n\not\equiv e-2\mod e$ and $b_n=0$ if
$n\not\equiv e-3 \mod e$.
\end{Corollary}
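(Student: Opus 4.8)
The plan is to derive Corollary~\ref{coro} directly from Proposition~\ref{prop-2} together with the structure of elliptic points on a group $\Gamma$ containing $-I_2$. First I would note that the stabilizer $\Gamma_{z_0}$ of an elliptic point $z_0$ of order $e$ is a finite cyclic group. Since $-I_2\in\Gamma$ acts trivially on $\H$, the image $\overline{\Gamma}_{z_0}=\Gamma_{z_0}/\{\pm I_2\}$ in $\PSL(2,\mathbb{R})$ has order exactly $e$, while $\Gamma_{z_0}$ itself has order $N=2e$. This is the elementary group-theoretic input, and it requires only that $-I_2$ lie in $\Gamma$ and not be the identity on $\H$.

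Once $N=2e$ is established, the conclusion follows by feeding this into the divisibility condition of Proposition~\ref{prop-2}. The proposition asserts that in the expansion of a weight-$k$ meromorphic modular form, the coefficient $a_n$ vanishes unless $k+2n\equiv 0 \pmod{N}$, i.e.\ $k+2n\equiv 0\pmod{2e}$. For the potential $Q(z)$, which is a meromorphic modular form of weight $4$ (up to the normalization by $(2\pi i)^2$), the relevant congruence is $4+2n\equiv 0\pmod{2e}$, which simplifies to $2+n\equiv 0\pmod e$, that is $n\equiv -2\equiv e-2\pmod e$. Hence $a_n=0$ whenever $n\not\equiv e-2\pmod e$. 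For $R(z)$, which is a meromorphic modular form of weight $6$, the congruence $6+2n\equiv 0\pmod{2e}$ gives $3+n\equiv 0\pmod e$, i.e.\ $n\equiv e-3\pmod e$, so $b_n=0$ whenever $n\not\equiv e-3\pmod e$.

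I do not expect any genuine obstacle here, since the corollary is a direct specialization of Proposition~\ref{prop-2}. The only point requiring slight care is the bookkeeping with the normalizations: one must recall that $Q$ and $R$ are defined as $Q_2/(2\pi i)^2$ and $(Q_3-\tfrac12 Q_2')/(2\pi i)^3$, and that by Theorem~\ref{thm-1} these are meromorphic modular forms of weight $4$ and $6$ respectively, so that Proposition~\ref{prop-2} applies to each with the correct weight. The identification $N=2e$ is where the hypothesis $-I_2\in\Gamma$ is used; without it one would only obtain $N=e$ and a different congruence. Thus the heart of the argument is simply observing that the presence of $-I_2$ doubles the order of the stabilizer, and then reading off the two congruences from the weights $4$ and $6$.
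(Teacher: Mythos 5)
Your proof is correct and is essentially the paper's own argument: the paper states Corollary~\ref{coro} as an immediate consequence of Proposition~\ref{prop-2}, with exactly your two steps --- $-I_2\in\Gamma_{z_0}$ gives $N=|\Gamma_{z_0}|=2e$ because $\Gamma_{z_0}/\{\pm I_2\}$ has order $e$, and the congruence $k+2n\equiv 0 \mod 2e$ for $k=4$ and $k=6$ reduces to $n\equiv e-2 \mod e$ and $n\equiv e-3\mod e$, respectively. One small citation fix: in Section~\ref{section-4} the modularity of $Q_2$ and $Q_3-\frac12 Q_2'$ (weights $4$ and $6$) is part of the standing assumption that \eqref{eq-01} is a MODE on $\Gamma$, not a consequence of Theorem~\ref{thm-1}, which concerns the particular MODE built from a quasimodular form.
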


For later usage, we recall Bol's identity in the following
form.

\begin{Lemma} \label{lemma: Bol} Let $\gamma=\SM abcd\in\GL(2,\C)$ and $r$ be a positive integer.
 Set \[ w=\gamma z=(az+b)/(cz+d).\] Then for a $(r+1)$-th differentiable
 function~$g(z)$, we have
\[
 \frac{{\rm d}^{r+1}}{{\rm d}z^{r+1}}\left(\frac{(\det\gamma)^{r/2}}
 {(a-cw)^r}g(w)\right)
 =\frac{(a-cw)^{r+2}}{(\det\gamma)^{r/2+1}}
 \frac{{\rm d}^{r+1}}{{\rm d}w^{r+1}}g(w).
\]
\end{Lemma}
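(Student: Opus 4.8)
The plan is to reduce the identity to a statement purely in the variable $w$ and then prove that by induction on the order of differentiation. Writing $\lambda=\det\gamma$ and $u=a-cw$, I would first record the two elementary substitution formulas. Inverting $\gamma$ in $w=(az+b)/(cz+d)$ gives $z=(dw-b)/(a-cw)$, hence $cz+d=\lambda/(a-cw)=\lambda/u$; differentiating $w$ then yields $dw/dz=\lambda/(cz+d)^2=u^2/\lambda$. Consequently $d/dz$ acts as the operator $\mathcal D:=(u^2/\lambda)\,d/dw$, and the factor $(a-cw)^{-r}$ is exactly what carries the $z$-derivatives over to $w$-derivatives. Substituting $cz+d=\lambda/u$ into both sides, the constant powers of $\lambda$ (including the half-integer exponents $r/2$) match automatically, and the whole lemma collapses to the clean claim
\[
\mathcal D^{\,n}\big(u^{-(n-1)}g\big)=\lambda^{-n}u^{\,n+1}\,\partial^{\,n}g,\qquad n\ge 1,
\]
where $\partial:=d/dw$; taking $n=r+1$ (so that $u^{-(n-1)}=u^{-r}$) recovers the statement.

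Next I would prove this claim by induction on $n$. The base case $n=1$ is immediate, since $\mathcal D(g)=(u^2/\lambda)g'=\lambda^{-1}u^{2}\partial g$. For the inductive step I would peel off one copy of $\mathcal D$: using $\partial u=-c$ one computes $\mathcal D(u^{-n}g)=\lambda^{-1}u^{-(n-1)}(cn\,g+u\,g')$, so that $\mathcal D^{\,n+1}(u^{-n}g)=\lambda^{-1}\mathcal D^{\,n}\big(u^{-(n-1)}\tilde g\big)$ with $\tilde g:=cn\,g+u\,g'$. Applying the induction hypothesis to the function $\tilde g$ turns this into $\lambda^{-(n+1)}u^{\,n+1}\,\partial^{\,n}\tilde g$.

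The crux, and the step I expect to be the only real obstacle, is the final cancellation inside $\partial^{\,n}\tilde g$. Here I would use that $u=a-cw$ is \emph{linear} in $w$, so $\partial^2 u=0$ and the Leibniz expansion of $\partial^{\,n}(u\,g')$ has only two surviving terms, namely $\partial^{\,n}(u\,g')=u\,g^{(n+1)}-cn\,g^{(n)}$. Adding $\partial^{\,n}(cn\,g)=cn\,g^{(n)}$, the two $g^{(n)}$ contributions cancel exactly, leaving $\partial^{\,n}\tilde g=u\,g^{(n+1)}$. This gives $\mathcal D^{\,n+1}(u^{-n}g)=\lambda^{-(n+1)}u^{\,n+2}g^{(n+1)}$, which is precisely the claim for $n+1$, completing the induction. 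The delicate point is arranging the inductive statement to be flexible enough — retaining an arbitrary function $g$ and precisely the exponent $u^{-(n-1)}$ — so that the hypothesis can be reapplied to $\tilde g$ and the powers of $u$ telescope correctly; a more rigid formulation would fail to line up.
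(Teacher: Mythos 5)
Your proof is correct, but it takes a genuinely different route from the paper's. The paper does not prove the $(r+1)$-fold derivative identity at all: it simply quotes the classical Bol identity $\big(y\big|_{-r}\gamma\big)^{(r+1)}(z)=\big(y^{(r+1)}\big|_{r+2}\gamma\big)(z)$ from \cite{Bol}, and the only content of its two-line proof is the observation $a-cw=\det\gamma/(cz+d)$, which rewrites the automorphy factor $(\det\gamma)^{1/2}/(cz+d)$ of the slash operator as $(a-cw)/(\det\gamma)^{1/2}$ and thereby turns the known identity into the $w$-dependent form stated in the lemma. You use the same change of variables ($cz+d=\lambda/u$ with $\lambda=\det\gamma$, $u=a-cw$, hence $\tfrac{{\rm d}}{{\rm d}z}=\mathcal D:=(u^2/\lambda)\tfrac{{\rm d}}{{\rm d}w}$), but then actually \emph{prove} Bol's identity, by induction on $n$ in the normalized form $\mathcal D^{\,n}\big(u^{-(n-1)}g\big)=\lambda^{-n}u^{\,n+1}g^{(n)}$; with $n=r+1$ this is exactly the lemma after dividing out $\lambda^{r/2}$, and since only the integral power $\lambda^{-(r+1)}$ survives in the reduced claim, the branch ambiguity in $\lambda^{r/2}$ cancels identically between the two sides. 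Your induction checks out: the base case is the chain rule; the peeling step $\mathcal D\big(u^{-n}g\big)=\lambda^{-1}u^{-(n-1)}(cn\,g+u\,g')$ is right because $\partial u=-c$; the hypothesis legitimately applies to $\tilde g=cn\,g+u\,g'$, which is $n$-times differentiable whenever $g$ is $(n+1)$-times differentiable; and the key cancellation is exactly as you say: linearity of $u$ truncates Leibniz to $\partial^{\,n}(u\,g')=u\,g^{(n+1)}-cn\,g^{(n)}$, so $\partial^{\,n}\tilde g=u\,g^{(n+1)}$ and the powers of $u$ telescope. What each approach buys: the paper's version is shorter and makes transparent that the lemma is merely a notational reformulation of a standard fact, while yours is self-contained and in effect reproves Bol's identity from scratch, at the cost of the computation the paper deliberately outsources to the literature.
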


\begin{proof} Bol's identity states that
 \[
 \big(y\big|_{-r}\gamma\big)^{(r+1)}(z)
 =\big(y^{(r+1)}\big|_{r+2}\gamma\big)(z).
\]
 Noticing that
\[
 a-cw=a-c\frac{az+b}{cz+d}=\frac{\det\gamma}{cz+d},
\]
 we find that the factor $(\det\gamma)^{1/2}/(cz+d)$ appearing in the
 slash operator can be written as
 \begin{equation} \label{equation: a-cw}
 \frac{(\det\gamma)^{1/2}}{cz+d}=\frac{a-cw}{(\det\gamma)^{1/2}},
 \end{equation}
 which yields the version of Bol's identity stated in the lemma.
\end{proof}

To apply Frobenius' method by using the expansion in Proposition~\ref{prop-2}, we need the following result.

\begin{Lemma} \label{lemma: local solutions}
 Let $Q(z)$ and $R(z)$ be meromorphic modular forms
 of weight $4$ and $6$, respectively, on $\Gamma$. Assume that
 $\wt Q(x)=\sum_{n\ge n_0}a_nx^n$ and $\wt R(x)=\sum_{n\ge n_0}b_nx^n$
 are the power series such that
 \begin{gather}
 Q(z)=(1-(1+A)\wt w)^4\wt Q(-4\pi(\Im z_0)\wt w),\nonumber\\
 R(z)=(1-(1+A)\wt w)^6\wt R(-4\pi(\Im z_0)\wt w).\label{eq: QR in wt w 0}
 \end{gather}
 Then
 \begin{equation}\label{eq-11}
 y(z)=\frac1{(1-(1+A)\wt w)^2}\sum_{n=0}^\infty c_n
 (-4\pi(\Im z_0)\wt w)^{n+\alpha}
\end{equation}
 is a solution of \eqref{equation: DE in y}
 if and only if the series $\wt y(x)=\sum_{n=0}^\infty c_nx^{n+\alpha}$
 satisfies
 \begin{equation} \label{equation: DE in wt y}
 \frac{{\rm d}^3}{{\rm d}x^3}\wt y(x)+\wt Q(x)\frac{{\rm d}}{{\rm d}x}\wt y(x)
 +\left(\frac12\frac{{\rm d}}{{\rm d}x}\wt Q(x)+\wt R(x)\right)\wt y(x)=0.
 \end{equation}
\end{Lemma}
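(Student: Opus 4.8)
The plan is to recognize the substitution $x=-4\pi(\Im z_0)\wt w$ as a Möbius change of variable and to transport the operator in \eqref{equation: DE in y} onto the one in \eqref{equation: DE in wt y} by Bol's identity (Lemma~\ref{lemma: Bol}). Since $w=(z-z_0)/(z-\ol{z_0})$ and $\wt w=w/(1+Aw)$ are Möbius, so is $x=\sigma z$ for an explicit $\sigma=\SM{a}{b}{c}{d}\in\GL(2,\C)$; one finds $a=-4\pi\Im z_0$ and $c=1+A$, whence $x=a\wt w$. The only consequences I carry forward are
\begin{equation*}
a-cx=-4\pi(\Im z_0)\big(1-(1+A)\wt w\big),\qquad a^2=2\pi{\rm i}\,\det\sigma,
\end{equation*}
the first being immediate and the second a short computation of $\det\sigma=-8\pi{\rm i}(\Im z_0)^2$. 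In particular the weight factor in \eqref{eq-11} obeys $\big(1-(1+A)\wt w\big)^{-2}=2\pi{\rm i}\,\det\sigma\,(a-cx)^{-2}$, so $y(z)$ is a nonzero constant multiple of $\lambda(z)\wt y(x)$ with $\lambda(z):=\det\sigma/(a-cx)^2$; as \eqref{equation: DE in y} is linear I may drop this constant.

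The heart of the matter is the top-order term, and here Bol's identity does all the work. Taking $\gamma=\sigma$ and $r=2$ in Lemma~\ref{lemma: Bol} gives
\begin{equation*}
\frac{{\rm d}^3}{{\rm d}z^3}\big(\lambda(z)\wt y(x)\big)=\frac{(a-cx)^4}{(\det\sigma)^2}\,\frac{{\rm d}^3}{{\rm d}x^3}\wt y(x),
\end{equation*}
in which no lower-order derivatives of $\wt y$ survive; this clean conversion is exactly why Bol's identity is invoked, and it removes at a stroke the $\wt y''$-contamination one would otherwise have to cancel by hand. For the remaining terms I convert \eqref{equation: DE in y} to ordinary derivatives: writing $'={\rm d}/{\rm d}z$, $Q_2:=(2\pi{\rm i})^2Q$ and $Q_3:=\tfrac12 Q_2'+(2\pi{\rm i})^3R$, the operator $\mathcal{D}:=\frac{{\rm d}^3}{{\rm d}z^3}+Q_2\frac{{\rm d}}{{\rm d}z}+Q_3$ equals $(2\pi{\rm i})^3$ times the operator in \eqref{equation: DE in y}, so $y$ solves \eqref{equation: DE in y} iff $\mathcal{D}[y]=0$.

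It then remains to expand $\mathcal{D}\big[\lambda(z)\wt y(x)\big]$ and match coefficients. Using $\lambda x'=1$ and $\lambda'=2c/(a-cx)$ together with the expansions \eqref{eq: QR in wt w 0} of $Q$ and $R$ in terms of $\wt Q$ and $\wt R$ and the two identities above, one checks that the coefficient of $\wt y'$ is $\tfrac{(a-cx)^4}{(\det\sigma)^2}\wt Q(x)$, while in the coefficient of $\wt y$ the cross-term $2c(a-cx)^3\wt Q/(\det\sigma)^2$ coming from $Q_2\lambda'$ cancels exactly the matching term produced by $\tfrac12 Q_2'$ inside $Q_3$, leaving $\tfrac{(a-cx)^4}{(\det\sigma)^2}\big(\tfrac12\wt Q'(x)+\wt R(x)\big)$. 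Collecting everything,
\begin{equation*}
\mathcal{D}\big[\lambda(z)\wt y(x)\big]=\frac{(a-cx)^4}{(\det\sigma)^2}\left(\frac{{\rm d}^3}{{\rm d}x^3}\wt y+\wt Q\,\frac{{\rm d}}{{\rm d}x}\wt y+\Big(\tfrac12\frac{{\rm d}}{{\rm d}x}\wt Q+\wt R\Big)\wt y\right),
\end{equation*}
and since the prefactor is not identically zero, \eqref{equation: DE in y} and \eqref{equation: DE in wt y} are equivalent. I expect the one genuine obstacle to be the bookkeeping in this last step: the term $\tfrac12 D_q Q$ built into the zeroth-order coefficient of a MODE is precisely what absorbs the derivative of the weight factor $\lambda$, and confirming that this cancellation is exact—while tracking the powers of $a-cx$, of $\det\sigma$, and the factors of $2\pi{\rm i}$—is where all the care lies.
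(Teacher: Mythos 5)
Your proof is correct and follows essentially the same route as the paper: the paper also introduces the matrix $\gamma=\SM{-4\pi\Im z_0}{(4\pi\Im z_0)z_0}{1+A}{-Az_0-\overline z_0}$, notes $(a-cx)^2/\det\gamma=2\pi{\rm i}(1-(1+A)\wt w)^2$ so that $y=2\pi{\rm i}\,(\wt y|_{-2}\gamma)$, applies Bol's identity with $r=2$ for the third-order term, and verifies the same cancellation in which the $\tfrac12 D_qQ$ part of the zeroth-order coefficient absorbs the derivative of the weight factor. Your bookkeeping in terms of $a-cx$, $\det\sigma$, $\lambda x'=1$ and $\lambda'=2c/(a-cx)$ is just a repackaging of the paper's explicit computation of $D_qy$ and $D_qQ$ in the variable $\wt w$, and all your intermediate identities (including $a^2=2\pi{\rm i}\det\sigma$) check out.
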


\begin{proof} Let
 \[
 \gamma=\M{-4\pi\Im z_0}001\M10A1\M1{-z_0}1{-\overline z_0}
 =\M{-4\pi\Im z_0}{(4\pi\Im z_0)z_0}{1+A}{-Az_0-\overline z_0}
 \]
 with $\det\gamma=-4\pi(\Im z_0)(z_0-\overline z_0)$. Let $x=\gamma
 z=-4\pi(\Im z_0)\wt w$. Note that if we write $\gamma$ as
 $\gamma=\SM abcd$, then
\[
 \frac{(a-cx)^2}{\det\gamma}
 =2\pi {\rm i}(1-(1+A)\wt w)^2.
\]
 Thus, by \eqref{equation: a-cw}, $y(z)$ and $\wt y(x)$ are related by
 \[ y(z)=2\pi {\rm i}\big(\wt y\big|_{-2}\gamma\big)(z).
\]
 Hence, applying Lemma \ref{lemma: Bol} with $r=2$, we obtain
\[
 D_q^3y(z)=\frac1{(2\pi {\rm i})^2}\frac{{\rm d}^3}{{\rm d}z^3}
 \big(\wt y\big|_{-2}\gamma\big)(z)
 =(1-(1+A)\wt w)^4\frac{{\rm d}^3}{{\rm d}x^3}\wt y(x).
 \]
 Also, a direct computation yields, by \eqref{equation: a-cw},
\[
 \frac{{\rm d}x}{{\rm d}z}=\frac{{\rm d}\gamma z}{{\rm d}z}=\frac{\det\gamma}{(cz+d)^2}
 =\frac{(a-cx)^2}{\det\gamma}=2\pi {\rm i}(1-(1+A)\wt w)^2,
\]
 and
\[
 \frac{{\rm d}\wt w}{{\rm d}z}=-\frac1{4\pi\Im z_0}\frac{{\rm d}x}{{\rm d}z}
 =\frac{(1-(1+A)\wt w)^2}{2{\rm i}\Im z_0}.
 \]
 Hence,
 \begin{align*}
 D_qy(z)&=\frac1{2\pi {\rm i}}\frac{{\rm d}}{{\rm d}z}\left(\frac1{(1-(1+A)\wt w)^2}
 \wt y(x)\right) \\
 &=-\frac{1+A}{2\pi(\Im z_0)(1-(1+A)\wt w)}\wt y(x)
 +\frac{{\rm d}}{{\rm d}x}\wt y(x),
 \end{align*}
 and
 \begin{align*}
 D_qQ(z)&=\frac1{2\pi {\rm i}}\frac{{\rm d}}{{\rm d}z}\left((1-(1+A)\wt w)^4
 \wt Q(x)\right) \\
 &=(1-(1+A)\wt w)^6\left(\frac{1+A}{\pi(\Im z_0)(1-(1+A)\wt w)}\wt Q(x)
 +\frac{{\rm d}}{{\rm d}x}\wt Q(x)\right).
 \end{align*}
 Putting everything together, we find that
 \begin{gather*}
 D_q^3y(z)+Q(z)D_qy(z)+\left(\frac12D_q(z)+R(z)\right)y(z) \\
\qquad{} = (1-(1+A)\wt w)^4\left(
 \frac{{\rm d}^3}{{\rm d}x^3}\wt y(x)+\wt Q(x)\frac{{\rm d}}{{\rm d}x}\wt y(x)
 +\left(\frac12\frac{{\rm d}}{{\rm d}x}\wt Q(x)+\wt R(x)\right)\wt y(x)
 \right).
 \end{gather*}
 Thus, $y(z)$ is a solution of~\eqref{equation: DE in y} if and
 only if $\wt y(x)$ is a solution of~\eqref{equation: DE in wt y}.
\end{proof}

We will see from Lemma~\ref{lemma-4-7} below that Corollary~\ref{coro} and Lemma~\ref{lemma: local solutions} can be applied to prove that (\ref{eq-01}) or equivalently~(\ref{equation: DE in y}) is apparent at elliptic points of order $e\geq 3$. This is a great advantage of using expansions in terms of $\wt w$ rather than~$z-z_0$.

\begin{Remark} \label{remark: Q(x) SL(2,Z)}
 In practice, the power series $\wt Q(x)$ and $\wt R(x)$ can be
 computed using Proposition~A.4 of~\cite{LY}. For example, for the
 Eisenstein series $E_4(z)$ and $E_6(z)$ on $\SL(2,\Z)$, we find that
 \begin{gather}
 E_4(z) =(1-(1+A)\wt w)^4\left(B-\frac 13Cu+\frac5{72}B^2u^2
 -\frac5{432}BCu^3+\cdots\right), \nonumber\\
 E_6(z) =(1-(1+A)\wt w)^6\left(C-\frac12B^2u+\frac7{48}BCu^2+\cdots\right),\label{equation: E4 E6}
 \end{gather}
 where $u=-4\pi(\Im z_0)\wt w$, $B=E_4(z_0)$, and $C=E_6(z_0)$.
\end{Remark}

\subsection[Existence of Q(z) and R(z)]{Existence of $\boldsymbol{Q(z)}$ and $\boldsymbol{R(z)}$}

 In this section we shall discuss the criterion of the existence of meromorphic modular
 forms $Q(z)$ and $R(z)$ of weight $4$ and~$6$, respectively, on
 $\SL(2,\Z)$, such that the differential equation~(\ref{equation: DE in y}) is Fuchsian and apparent throughout $\H$ with prescribed
 local exponents at singularities and at cusps.

 Throughout the section, we let $z_j$, $j=1,\dots,m$, be
 $\SL(2,\Z)$-inequivalent point on $\H$, none of which is an elliptic
 point. We let ${\rm i}=\sqrt{-1}$ and $\rho=\big({-}1+\sqrt3{\rm i}\big)/2$
 be the unique elliptic point of order $2$ and $3$ of $\SL(2,\Z)$,
 respectively. For $z=z_j$, ${\rm i}$, or $\rho$, we assume that
 $\kappa_{z}^{(1)}<\kappa_{z}^{(2)}<\kappa_{z}^{(3)}$,
 are rational numbers in $\frac13\Z$ such that
 $\kappa_{z}^{(1)}+\kappa_{z}^{(2)}+\kappa_{z}^{(3)}=3$ and
 $\kappa_z^{(2)}-\kappa_z^{(1)}\in\Z$ (and hence
 $\kappa_z^{(3)}-\kappa_z^{(1)}\in\Z)$. When $z={\rm i}$, we further assume
 that
 \begin{equation} \label{equation: assumption rho1}
 \big\{3\kappa_{\rm i}^{(1)},3\kappa_{\rm i}^{(2)},3\kappa_{\rm i}^{(3)}\big\}\equiv
 \{0,0,1\}\mod 2.
 \end{equation}
 Also, when $z=\rho$, we note that the assumptions on
 $\kappa_\rho^{(j)}$ above imply that $\kappa_\rho^{(j)}\in\Z$ for
 all~$j$. We further assume that
 \begin{equation} \label{equation: difference assumption}
 \big\{\kappa_\rho^{(1)},\kappa_\rho^{(2)},\kappa_\rho^{(3)}\big\}\equiv\{0,1,2\} \quad \mod
 3.
 \end{equation}
 Finally, for the cusp $\infty$ of $\SL(2,\Z)$, we let
 $\kappa_\infty^{(1)}\le\kappa_\infty^{(2)}\le\kappa_\infty^{(3)}$ be
 three rational numbers in $\frac13\Z$ such that
 $\kappa_\infty^{(1)}+\kappa_\infty^{(2)}+\kappa_\infty^{(3)}=0$ and
 $\kappa_\infty^{(2)}-\kappa_\infty^{(1)},
 \kappa_\infty^{(3)}-\kappa_\infty^{(2)}\in\Z$. We shall consider the
 problem whether there exist meromorphic modular forms $Q(z)$ and
 $R(z)$ of weight $4$ and $6$, respectively, on $\SL(2,\Z)$, such
 that~(\ref{eq-10}) is Fuchsian and apparent throughout~$\H$
 and the local exponents $\kappa$'s are given as above.

 \begin{Lemma} \label{lemma: Q R general form}
 Let notations ${\rm i}$ and $\rho$ be
 as above. Then meromorphic modular forms~$Q(z)$ and~$R(z)$ of
 weight $4$ and $6$, respectively, on $\SL(2,\Z)$ that have poles of order
 at most $2$ and $3$, respectively, at points $\SL(2,\Z)$-equivalent
 to $z_j$, ${\rm i}$, or $\rho$ and are holomorphic at other points
 and cusps are of the form
 \begin{gather}
 Q(z) =r_\infty E_4(z)
 +r_{{\rm i}}^{(2)}\frac{E_4(z)\Delta_0(z)}{E_6(z)^2}
 +r_{\rho}^{(2)}\frac{\Delta_0(z)}{E_4(z)^2} \nonumber\\
\hphantom{Q(z) =}{}+\sum_{j=1}^m\left(r_{z_j}^{(2)}
 \frac{E_4(z)\Delta_0(z)^2}{F_j(z)^2}
 +r_{z_j}^{(1)}\frac{E_4(z)\Delta_0(z)}{F_j(z)}\right), \nonumber\\
 R(z) =s_\infty E_6(z)+s_{{\rm i}}^{(3)}
 \frac{\Delta_0(z)^2}{E_6(z)^3}
 +s_{{\rm i}}^{(1)}\frac{\Delta_0(z)}{E_6(z)} \nonumber\\
\hphantom{R(z) =}{}
+s_{\rho}^{(3)}\frac{E_6(z)\Delta_0(z)}{E_4(z)^3}
 +\sum_{j=1}^m\sum_{k=1}^3s_{z_j}^{(k)}
 \frac{E_6(z)\Delta_0(z)^k}{F_j(z)^k},\label{equation: Q R SL(2,Z)}
 \end{gather}
 where $\Delta_0(z)=1728\Delta(z)=(E_4(z)^3-E_6(z)^2)$ and
 $F_j(z)=E_4(z)^3-t_jE_6(z)^2$ with $t_j=E_4(z_j)^3/E_6(z_j)^2$.
 \end{Lemma}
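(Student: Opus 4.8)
The plan is to prove the two set-equalities (one for $Q$, one for $R$) encoded in \eqref{equation: Q R SL(2,Z)} by the usual two-step scheme: first check that every function listed on the right-hand side genuinely lies in the prescribed space, and then show that these functions span the whole space by peeling off principal parts one singular orbit at a time until only a holomorphic form of weight $4$ (resp.~$6$) remains.

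For the first step I would record the divisors of the building blocks: on $\H$ the form $E_4$ vanishes only (simply) at $\rho$, $E_6$ vanishes only (simply) at ${\rm i}$, $\Delta_0=E_4^3-E_6^2$ is non-vanishing on $\H$ and vanishes to order $1$ at the cusp, and each $F_j$ has a single simple zero at $z_j$ and is non-vanishing at ${\rm i},\rho,\infty$ (as noted before the definition of $F(z)$, and since $F_j({\rm i})=E_4({\rm i})^3\neq0$, $F_j(\rho)=-t_jE_6(\rho)^2\neq0$, $F_j(\infty)=1-t_j\neq0$). A weight count shows each listed term has weight $4$ or $6$, and reading off the orders of numerator and denominator shows each term has a pole of the advertised order at exactly one of $z_j,{\rm i},\rho$ and is holomorphic elsewhere; holomorphy at the cusp follows because each non-Eisenstein term carries enough powers of $\Delta_0$ to make it vanish at $\infty$, while $E_4(\infty)=E_6(\infty)=1$. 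This gives the inclusion of the right-hand side of \eqref{equation: Q R SL(2,Z)} into the space in question.

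For the spanning step, let $Q$ be any weight-$4$ meromorphic modular form with poles of order at most $2$ on the orbits of $z_j,{\rm i},\rho$ and holomorphic at the cusp. Since $E_4\Delta_0^2/F_j^2$ and $E_4\Delta_0/F_j$ have poles only at $z_j$, of orders exactly $2$ and $1$, their principal parts are independent, so I can choose $r_{z_j}^{(2)},r_{z_j}^{(1)}$ to cancel the full principal part of $Q$ at each $z_j$ without creating poles elsewhere. I would then do the same at the elliptic points: at ${\rm i}$ the single function $E_4\Delta_0/E_6^2$ has a pole of order $2$, and by Corollary~\ref{coro} (with $e=2$) a weight-$4$ meromorphic modular form has \emph{no} order-$1$ pole at ${\rm i}$, so one constant $r_{{\rm i}}^{(2)}$ suffices; likewise Corollary~\ref{coro} (with $e=3$) forbids an order-$1$ pole at $\rho$, so $\Delta_0/E_4^2$ and $r_\rho^{(2)}$ suffice there. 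After these subtractions the remainder is a holomorphic modular form of weight $4$, hence a multiple of $E_4$ since $\dim\sM_4(\SL(2,\Z))=1$, yielding $r_\infty E_4$. The argument for $R$ is identical with base form $E_6$ and $\dim\sM_6(\SL(2,\Z))=1$; the only new point is that Corollary~\ref{coro} permits poles of orders $3$ and $1$ (but not $2$) at ${\rm i}$, matched by $\Delta_0^2/E_6^3$ and $\Delta_0/E_6$ of leading orders $3$ and $1$, while at $\rho$ only an order-$3$ pole is allowed, matched by $E_6\Delta_0/E_4^3$.

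The main obstacle will be the bookkeeping at the elliptic points, which is precisely where the shape of \eqref{equation: Q R SL(2,Z)} is nontrivial: Corollary~\ref{coro} must be invoked both to explain the absence of the ``missing'' terms (no $r_{{\rm i}}^{(1)}$, no $s_{{\rm i}}^{(2)}$, only order-$2$ poles of $Q$ and order-$3$ poles of $R$ at $\rho$) and to guarantee that the residual principal parts lie in the span of the provided functions. One must also verify the mild non-degeneracy that each elliptic-point function has a non-zero leading Laurent coefficient, which follows from the non-vanishing of $E_4,E_6,\Delta_0$ at the relevant point. Beyond this, the reduction to a holomorphic form needs only the one-dimensionality of $\sM_4(\SL(2,\Z))$ and $\sM_6(\SL(2,\Z))$, so no further structural input is required.
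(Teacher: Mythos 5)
Your proposal is correct and takes essentially the same route as the paper's own (very terse) proof: the paper likewise invokes Corollary~\ref{coro} to rule out the forbidden pole orders at the elliptic points (no simple pole at ${\rm i}$ or $\rho$ for weight $4$; only odd orders at ${\rm i}$ and orders divisible by $3$ at $\rho$ for weight $6$), subtracts the listed correction terms so that the difference becomes a holomorphic modular form of weight $4$ (resp.~$6$), and concludes it is a multiple of $E_4$ (resp.~$E_6$). Your write-up merely makes explicit the divisor bookkeeping and the triangular elimination of principal parts that the paper leaves implicit.
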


 \begin{proof} By Corollary \ref{coro}, there are no
 meromorphic modular forms of weight $4$ having a pole at ${\rm i}$
 or $\rho$ with a nonzero residue. Also, the order of a meromorphic modular form of weight
 $6$ on $\SL(2,\Z)$ at ${\rm i}$ is necessarily odd, while that at $\rho$
 is congruent to $0$ modulo~$3$. Thus, we can take $r_{\rm i}^{(2)}$, $r_{\rho}^{(2)}$, $r_{z_j}^{(2)}$, $r_{z_j}^{(1)}$ such that
\begin{gather*}
Q(z)-r_{{\rm i}}^{(2)}\frac{E_4(z)\Delta_0(z)}{E_6(z)^2}
 -r_{\rho}^{(2)}\frac{\Delta_0(z)}{E_4(z)^2}
 -\sum_{j=1}^m\left(r_{z_j}^{(2)}
 \frac{E_4(z)\Delta_0(z)^2}{F_j(z)^2}
 +r_{z_j}^{(1)}\frac{E_4(z)\Delta_0(z)}{F_j(z)}\right)
\end{gather*}
is a holomorphic modular form of weight $4$ on $\SL(2,\mathbb{Z})$, so it must be a multiple of $E_4(z)$. The proof for $R(z)$ is similar.
\end{proof}

We first determine the indicial equations of \eqref{equation: DE in y}
at $\infty$, ${\rm i}$, $\rho$, and $z_j$, $j=1,\dots,m$.

\begin{Lemma} \label{lemma: indicial at infinity}
 Suppose that $Q(z)$ and $R(z)$ are meromorphic modular
 forms given by \eqref{equation: Q R SL(2,Z)}. Then the indicial
 equation of \eqref{equation: DE in y} at the cusp $\infty$ is
 \[
 x^3+r_\infty x+s_\infty=0.
\]
\end{Lemma}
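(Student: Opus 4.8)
The plan is to substitute a Frobenius ansatz $y(z)=q^{x}(1+O(q))$ into the differential equation \eqref{equation: DE in y} and read off the leading power of $q$, for which only the constant terms of the coefficient functions $Q(z)$, $R(z)$, and $D_qQ(z)$ at the cusp are relevant. Since $D_q=q\,\mathrm{d}/\mathrm{d}q$ satisfies $D_q^{\,j}q^{x}=x^{j}q^{x}$, plugging $y=q^{x}$ into \eqref{equation: DE in y} and collecting the lowest-order term in $q$ gives
\[
\left(x^{3}+Q(\infty)\,x+\tfrac12(D_qQ)(\infty)+R(\infty)\right)q^{x}+O\big(q^{x+1}\big)=0,
\]
so the indicial equation at $\infty$ is $x^{3}+Q(\infty)x+\big(\tfrac12(D_qQ)(\infty)+R(\infty)\big)=0$. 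It then remains only to evaluate these three constant terms.

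First I would record the behavior of the building blocks at $\infty$: one has $E_4(\infty)=E_6(\infty)=1$, while $\Delta_0(z)=1728\Delta(z)=E_4(z)^3-E_6(z)^2$ has a simple zero at $\infty$, so $\Delta_0(\infty)=0$. Inspecting the explicit shapes in \eqref{equation: Q R SL(2,Z)}, every summand of $Q(z)$ other than $r_\infty E_4(z)$ carries a factor $\Delta_0(z)$, and every summand of $R(z)$ other than $s_\infty E_6(z)$ carries a factor $\Delta_0(z)^k$ with $k\ge1$; hence all of these summands vanish at $\infty$. The denominators cause no trouble, since $E_4(\infty),E_6(\infty)\neq0$ and $F_j(\infty)=1-t_j\neq0$ (because $z_j\neq {\rm i},\rho$ forces $E_4(z_j),E_6(z_j)\neq0$, and $\Delta(z_j)\neq0$ forces $t_j\neq1$). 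Therefore $Q(\infty)=r_\infty E_4(\infty)=r_\infty$ and $R(\infty)=s_\infty E_6(\infty)=s_\infty$.

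Finally, the $q$-derivative annihilates the constant term: writing the holomorphic $q$-expansion $Q(z)=\sum_{n\ge0}a_nq^n$, we have $D_qQ(z)=\sum_{n\ge1}na_nq^n$, whose value at $q=0$ is $0$, so $(D_qQ)(\infty)=0$. Hence the coefficient of $y$ contributes $\tfrac12(D_qQ)(\infty)+R(\infty)=s_\infty$, and the indicial equation collapses to
\[
x^{3}+r_\infty x+s_\infty=0,
\]
as asserted. I do not expect a genuine obstacle here; the only points needing care are the observation that the $\Delta_0$-factors make every term but the $E_4$- and $E_6$-multiples drop out at $\infty$, and that $D_qQ$ has no constant term.
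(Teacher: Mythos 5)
Your proof is correct and follows essentially the same route as the paper's: substitute the Frobenius ansatz $y=q^{x}(1+O(q))$ into \eqref{equation: DE in y} and read off the constant terms, which the paper dispatches with ``It is clear that $Q(z)=r_\infty+O(q)$ and $R(z)=s_\infty+O(q)$.'' Your only additions are to spell out the details the paper leaves implicit --- that every non-Eisenstein summand in \eqref{equation: Q R SL(2,Z)} carries a factor of $\Delta_0$ vanishing at $\infty$ while the denominators $E_4$, $E_6$, $F_j$ do not vanish there, and that $D_qQ$ has no constant term --- all of which are accurate.
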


\begin{proof} It is clear that $Q(z)=r_\infty+O(q)$ and
 $R(z)=s_\infty+O(q)$. Assume that there is a~solution of~\eqref{equation:
 DE in y} of the form $y(z)=q^\alpha(1+O(q))$, $\alpha\in\R$. We
 compute that
 \begin{gather*}
 D_q^3y(z)+Q(z)D_qy(z)+\left(\frac12D_qQ(z)+R(z)\right)y(z)\\
 \qquad{}=\alpha^3q^\alpha+r_\infty\alpha q^\alpha
 +s_\infty q^\alpha+O\big(q^{\alpha+1}\big),
 \end{gather*}
 from which we see that the indicial equation at $\infty$ is
 $x^3+r_\infty x+s_\infty=0$.
\end{proof}

 We now consider the indicial equation of \eqref{equation: DE in y}
 at a point in~$\H$. Let $z_0$ be one of $z_j$, ${\rm i}$, or~$\rho$ and~$\wt w$ be defined by~\eqref{eq: wt w 0}
 with $\phi^\ast(z)=E_2^\ast(z):=E_2(z)+6/(\pi {\rm i}(z-\overline z))$.
 Recall that in Lemma~\ref{lemma: local solutions} we have proved
 that if $\wt Q(x)$ and $\wt R(x)$ are the Laurent series in~$x$
 such that~\eqref{eq: QR in wt w 0} holds.
 Then
\[
 y(z;\alpha)=\frac1{(1-(1+A)\wt w)^2}\sum_{n=0}^\infty c_n(\alpha)
 (-4\pi(\Im z_0)\wt w)^{n+\alpha}, \qquad c_0(\alpha)=1,
\]
 is a solution of (\ref{equation: DE in y}) if and only if the series
 $\wt y(x;\alpha)=\sum_{n=0}^\infty c_n(\alpha)x^{n+\alpha}$ satisfies
 \begin{equation} \label{equation: existence temp}
 \frac{{\rm d}^3}{{\rm d}x^3}\wt y(x;\alpha)+\wt Q(x)\frac{{\rm d}}{{\rm d}x}\wt y(x;\alpha)
 +\left(\frac12\frac{{\rm d}}{{\rm d}x}\wt Q(x)+\wt R(x)\right)\wt y(x;\alpha)=0.
 \end{equation}
 Let
 \begin{equation} \label{equation: Q(x) R(x)}
 \wt Q(x)=\sum_{n=-2}^\infty a_nx^n, \qquad
 \frac12\frac{{\rm d}}{{\rm d}x}\wt Q(x)+\wt R(x)=\sum_{n=-3}^\infty b_nx^n,
 \end{equation}
 where each $a_n$ and $b_n$ is linear in the parameters $r$'s and
 $s$'s. Then following the computation in Appendix \ref{section-3},
 we see that \eqref{equation: existence temp} is equivalent to
 \begin{equation} \label{equation: Rn}
 R_n(\alpha):=f(\alpha+n)c_n(\alpha)+\sum_{k=0}^{n-1}
 [(\alpha+k)a_{n-k-2}+b_{n-k-3}]c_k(\alpha)=0
 \end{equation}
 for all $n\ge 0$, where $f(t):=t(t-1)(t-2)+a_{-2}t+b_{-3}$.
 In particular, $R_0(\alpha)\colon f(\alpha)=0$ is the indicial equation at
 $z_0$, i.e.,
\[
 f(t)=\big(t-\kappa_{z_0}^{(1)}\big)\big(t-\kappa_{z_0}^{(2)}\big)\big(t-\kappa_{z_0}^{(3)}\big).
\]
 Using \eqref{equation: E4 E6}, we can work out the coefficients $a_{-2}$ and $b_{-3}$.

 \begin{Lemma} \label{lemma: indicial at z}
 Suppose that $Q(z)$ and $R(z)$ are meromorphic modular
 forms given by~\eqref{equation: Q R SL(2,Z)}. Then the indicial
 equations of \eqref{equation: DE in y} at ${\rm i}$,
 $\rho$, and~$z_j$, $j=1,\dots,m$, are
 \begin{gather*}
 x(x-1)(x-2)+4r_{{\rm i}}^{(2)}x-4r_{{\rm i}}^{(2)}-8s_{{\rm i}}^{(3)}=0,
\\
 x(x-1)(x-2)-9r_\rho^{(2)}x+9r_\rho^{(2)}+27s_\rho^{(3)}=0,
\\
 x(x-1)(x-2)+\frac{r_{z_j}^{(2)}}{t_j}x-\frac{r_{z_j}^{(2)}}{t_j}
 +\frac{s_{z_j}^{(3)}}{t_j^2}=0,
\end{gather*}
 respectively.
 \end{Lemma}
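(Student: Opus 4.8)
The plan is to feed the pole structure of the modular forms \eqref{equation: Q R SL(2,Z)} into the Frobenius recursion already set up before the statement. By \eqref{equation: Rn}, the indicial equation of \eqref{equation: DE in y} at a point $z_0\in\H$ is $f(t)=0$, where $f(t)=t(t-1)(t-2)+a_{-2}t+b_{-3}$, and $a_{-2}$, $b_{-3}$ are the leading Laurent coefficients in \eqref{equation: Q(x) R(x)}, namely the coefficient of $x^{-2}$ in $\wt Q(x)$ and of $x^{-3}$ in $\frac12\frac{d}{dx}\wt Q(x)+\wt R(x)$, respectively. Since $\frac12\frac{d}{dx}(a_{-2}x^{-2})=-a_{-2}x^{-3}$, I would first record the decomposition $b_{-3}=-a_{-2}+\tilde b_{-3}$, where $\tilde b_{-3}$ denotes the coefficient of $x^{-3}$ in $\wt R(x)$ alone. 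Thus at each of the three points it suffices to compute the two numbers $a_{-2}$ and $\tilde b_{-3}$, and the indicial equation becomes $t(t-1)(t-2)+a_{-2}t-a_{-2}+\tilde b_{-3}=0$; matching this against the three displayed equations then amounts to verifying that $(a_{-2},\tilde b_{-3})$ equals $(4r_{\rm i}^{(2)},-8s_{\rm i}^{(3)})$ at ${\rm i}$, $(-9r_\rho^{(2)},27s_\rho^{(3)})$ at $\rho$, and $(r_{z_j}^{(2)}/t_j,s_{z_j}^{(3)}/t_j^2)$ at $z_j$.

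To extract these coefficients I would note that only the most singular term of $Q$ and of $R$ at the point in question contributes, so the holomorphic pieces $r_\infty E_4$, $s_\infty E_6$ and all lower-order pole terms may be discarded. The computation then rests on the expansions of $E_4$ and $E_6$ in $x=-4\pi(\Im z_0)\wt w$ from Remark~\ref{remark: Q(x) SL(2,Z)}, with $B=E_4(z_0)$ and $C=E_6(z_0)$. At the elliptic point ${\rm i}$ one has $A=0$ and $C=E_6({\rm i})=0$, so $E_6(z)=(1-\wt w)^6(-\frac12B^2x+\cdots)$ has a simple zero in $x$; substituting this together with $\Delta_0=(1-\wt w)^{12}(B^3+\cdots)$ into the only pole terms $\frac{E_4\Delta_0}{E_6^2}$ and $\frac{\Delta_0^2}{E_6^3}$ and reading off the coefficients of $x^{-2}$ and $x^{-3}$ gives $a_{-2}=4r_{\rm i}^{(2)}$ and $\tilde b_{-3}=-8s_{\rm i}^{(3)}$. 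At $\rho$ the roles of $E_4$ and $E_6$ reverse: now $B=E_4(\rho)=0$ and $C\neq0$, so $E_4(z)=(1-\wt w)^4(-\frac13Cx+\cdots)$ carries the simple zero, and the same procedure applied to $\frac{\Delta_0}{E_4^2}$ and $\frac{E_6\Delta_0}{E_4^3}$ yields $a_{-2}=-9r_\rho^{(2)}$ and $\tilde b_{-3}=27s_\rho^{(3)}$.

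The main obstacle is the generic singular point $z_j$, where $A$ need not vanish and the vanishing factor is $F_j=E_4^3-t_jE_6^2$ rather than $E_4$ or $E_6$ itself. The plan is to expand both $E_4^3$ and $E_6^2$ to first order in $x$ via Remark~\ref{remark: Q(x) SL(2,Z)}: the constant terms cancel exactly because $t_j=E_4(z_j)^3/E_6(z_j)^2$, and the surviving linear term gives $F_j(z)=(1-(1+A)\wt w)^{12}x\big(\frac{E_4(z_j)^2}{E_6(z_j)}\Delta_0(z_j)+O(x)\big)$. The delicate step is then to propagate this leading coefficient through the ratios $\frac{E_4\Delta_0^2}{F_j^2}$ and $\frac{E_6\Delta_0^3}{F_j^3}$ and to check that the accumulated factors of $\Delta_0(z_j)$, $E_4(z_j)$, $E_6(z_j)$ collapse into the clean powers of $t_j=E_4(z_j)^3/E_6(z_j)^2$, producing $a_{-2}=r_{z_j}^{(2)}/t_j$ and $\tilde b_{-3}=s_{z_j}^{(3)}/t_j^2$. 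Assembling the three cases through the decomposition $b_{-3}=-a_{-2}+\tilde b_{-3}$ then delivers the three indicial equations as stated.
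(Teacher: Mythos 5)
Your proposal is correct and follows essentially the same route as the paper: there too the indicial equation is read off from the Frobenius relation \eqref{equation: Rn} as $f(t)=t(t-1)(t-2)+a_{-2}t+b_{-3}$, with $a_{-2}$, $b_{-3}$ extracted from the $\wt w$-expansions of Remark~\ref{remark: Q(x) SL(2,Z)} using $E_6({\rm i})=0$, $E_4(\rho)=0$, and at $z_j$ the cancellation $F_j(z)=(1-(1+A)\wt w)^{12}\big((t_j-1)E_4(z_j)^2E_6(z_j)x+O\big(x^2\big)\big)$ forced by $t_j=E_4(z_j)^3/E_6(z_j)^2$, which matches your stated leading coefficient since $\frac{E_4(z_j)^2}{E_6(z_j)}\Delta_0(z_j)=(t_j-1)E_4(z_j)^2E_6(z_j)$. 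Your explicit decomposition $b_{-3}=-a_{-2}+\tilde b_{-3}$ is precisely the contribution of $\frac12\frac{{\rm d}}{{\rm d}x}\wt Q(x)$ that the paper carries implicitly (e.g., its $b_{-3}=-4r_{\rm i}^{(2)}-8s_{\rm i}^{(3)}$ at ${\rm i}$), and all your extracted values of $\big(a_{-2},\tilde b_{-3}\big)$ agree with the paper's.
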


 \begin{proof}
 For the elliptic point ${\rm i}$, we know that $E_6({\rm i})=0$. Hence, using
 \eqref{equation: E4 E6}, we find that
 \begin{gather*}
 \wt Q(x)=\frac{4r_{{\rm i}}^{(2)}}{x^2}+\dots, \qquad
 \wt R(x)=-\frac{8s_{{\rm i}}^{(2)}}{x^3}+\cdots.
 \end{gather*}
 Therefore, we have $a_{-2}=4r_{{\rm i}}^{(2)}$ and $b_{-3}=-4r_{{\rm i}}^{(2)}-8s_{{\rm i}}^{(2)}$.
 Similarly, for the elliptic point $\rho$, using $E_4(\rho)=0$ and~\eqref{equation: E4 E6} again, we find that
 $a_{-2}=-9r_\rho^{(2)}$ and $b_{-3}=9r_\rho^{(2)}+27s_\rho^{(3)}$.
 For the point~$z_j$, letting $B=E_4(z_j)$ and $C=E_6(z_j)$, we compute that
 \begin{gather*}
 \frac{\Delta_0(z)}{(1-(1+A)\wt w)^{12}} =\big(B^3-C^2\big)+O(\wt w)
 =C^2(t_j-1)+O(\wt w), \\
 \frac{F_j(z)}{(1-(1+A)\wt w)^{12}} =\left(
 B-\frac13Cu+O\big(\wt w^2\big)\right)^3
 -t_j\left(C-\frac12B^2u+O\big(\wt w^2\big)\right)^2 \\
 \hphantom{\frac{F_j(z)}{(1-(1+A)\wt w)^{12}}}{}
 =(t_j-1)B^2Cu+O\big(\wt w^2\big),
 \end{gather*}
 where $u=-4\pi(\Im z_j)\wt w$. It follows that
 \[
 \wt
 Q(x)=r_{z_j}^{(2)}\frac{(t_j-1)^2BC^4}{(t_j-1)^2B^4C^2x^2}+\cdots
 =\frac{r_{z_j}^{(2)}}{t_jx^2}+\dots,
 \] and
 \[
 \wt R(x)=s_{z_j}^{(3)}\frac{(t_j-1)^3C^7}{(t_j-1)^3B^6C^3}+\cdots
 =\frac{s_{z_j}^{(3)}}{t_j^2x^3}+\cdots.
 \]
 Therefore, $a_{-2}=r_{z_j}^{(2)}/t_j$ and
 $b_{-3}=-r_{z_j}^{(2)}/t_j+s_{z_j}^{(3)}/t_j^2$. This proves the lemma.
 \end{proof}

 The two lemmas show that the parameters $r_\infty$, $s_\infty$,
 $r_{{\rm i}}^{(2)}$, $s_{{\rm i}}^{(3)}$, $r_\rho^{(2)}$, $s_\rho^{(3)}$,
 $r_{z_j}^{(2)}$, and $s_{z_j}^{(3)}$, $j=1,\dots,m$, solely depend
 on the local exponents $\kappa$'s. The remaining parameters are
 \begin{equation*} 
 \begin{cases}
 r_{z_j}^{(1)},s_{z_j}^{(2)},s_{z_j}^{(1)}, \quad \text{for} \ j=1,\dots,m,\\
 s_{{\rm i}}^{(1)}.
 \end{cases}
 \end{equation*}
 That is, the number of remaining parameters is $3m+1$. We now show that the
 apparentness condition will impose $3m+1$ polynomial constraints on
 the remaining parameters. For convenience, we let $\mathbf r$ and
 $\mathbf s$ denote $r_{z_1}^{(1)},\dots,r_{z_m}^{(1)}$ and
 $s_{{\rm i}}^{(1)},s_{z_1}^{(2)},s_{z_1}^{(1)},\dots,s_{z_m}^{(2)},s_{z_m}^{(1)}$,
 respectively.

 Let $z_0\in\{{\rm i},\rho,z_1,\dots,z_m\}$. Assume that $\alpha$ is one
 of the local exponents $\kappa_{z_0}^{(k)}$. We observe that one can
 recursively determine $c_n(\alpha)$ using~\eqref{equation: Rn} as
 long as $f(\alpha+n)\neq 0$. Thus, there always
 exists a solution $\wt y\big(x;\kappa_{z_0}^{(3)}\big)$ with local exponent
 $\kappa_{z_0}^{(3)}$; see, e.g., Lemma~\ref{lemma-3-2}. For the
 exponent $\alpha=\kappa_{z_0}^{(2)}$, because
 \[
 f\big(\alpha+\kappa_{z_0}^{(3)}-\kappa_{z_0}^{(2)}\big)=f\big(\kappa_{z_0}^{(3)}\big)=0,
\]
 we can only solve for $c_n(\alpha)$ up to
 $n=\kappa_{z_0}^{(3)}-\kappa_{z_0}^{(2)}-1$. At
 $n'=\kappa_{z_0}^{(3)}-\kappa_{z_0}^{(2)}$, the relation
 $R_{n'}(\alpha)$ becomes
 \begin{equation} \label{equation: existence condition 1}
 \sum_{k=0}^{n'-1}[(\alpha+k)a_{n'-k-2}
 +b_{n'-k-3}]c_k(\alpha)=0.
 \end{equation}
 If $c_k(\alpha)$, $k=0,\dots,n'-1$, do not satisfy this relation,
 then there is no solution with exponent~$\kappa_{z_0}^{(2)}$.
 If $c_k(\alpha)$, $k=0,\dots,n'-1$, satisfy this relation, we then
 can choose $c_{n'}(\alpha)$ to be any number and solve
 for $c_n(\alpha)$ recursively and obtain a solution $\wt
 y\big(x;\kappa_{z_0}^{(2)}\big)$ with local exponent $\kappa_{z_0}^{(2)}$.
 Likewise, for the local exponent $\alpha=\kappa_{z_0}^{(1)}$, there will be
 two conditions~\eqref{equation: existence condition 1} corresponding
 to $n'=\kappa_{z_0}^{(2)}-\kappa_{z_0}^{(1)}$ and
 $n'=\kappa_{z_0}^{(3)}-\kappa_{z_0}^{(1)}$ that $c_k(\alpha)$ must
 satisfy. Thus, there are three polynomial equations
 \begin{equation*} 
 P_{z_0,k_1,k_2}(\mathbf r,\mathbf s)=0, \qquad
 (k_1,k_2)\in\{(1,2),(1,3),(2,3)\}
 \end{equation*}
 that $\mathbf r$ and $\mathbf s$ need to satisfy. However, we will
 see in a moment that when $z_0$ is an elliptic point, some or all of
 the three polynomial equations hold trivially.

 \begin{Lemma}\label{lemma-4-7}
 Assume that $z_0$ is an elliptic point of order $e$ of
 $\SL(2,\Z)$. Let $(k_1,k_2)\in\{(1,2),\allowbreak (1,3),(2,3)\}$.
 If $\kappa_{z_0}^{(k_2)}-\kappa_{z_0}^{(k_1)}\not\equiv 0\mod e$,
 then the polynomial $P_{z_0,k_1,k_2}(\mathbf r,\mathbf s)$ is
 identically zero.
 In particular, under our assumptions \eqref{equation: assumption
 rho1} and \eqref{equation: difference assumption}, the
 differential equation~\eqref{equation: DE in y} is apparent
 at $\rho$ for any $\mathbf r$ and $\mathbf s$, and also there is
 at most one pair $(k_1,k_2)$ such that $P_{{\rm i},k_1,k_2}(\mathbf
 r,\mathbf s)\neq 0$.
 \end{Lemma}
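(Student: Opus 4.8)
The plan is to exploit the congruence restrictions on the expansion coefficients $a_n$ and $b_n$ supplied by Corollary~\ref{coro}, which force the Frobenius recurrence \eqref{equation: Rn} to decouple into residue classes modulo $e$. First I would record that at an elliptic point $z_0$ of order $e$ one has $A=0$ and $\wt w=w$ (Remark~\ref{remark: about coefficients}), and that by Corollary~\ref{coro} the coefficients in \eqref{equation: Q(x) R(x)} satisfy $a_n=0$ unless $n\equiv e-2\mod e$ and $b_n=0$ unless $n\equiv e-3\mod e$ (for the $b_n$ this uses that both $\frac12\frac{{\rm d}}{{\rm d}x}\wt Q$ and $\wt R$ share this residue class). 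Consequently the coefficient $(\alpha+k)a_{n-k-2}+b_{n-k-3}$ appearing in \eqref{equation: Rn} vanishes unless $n-k\equiv 0\mod e$: indeed $a_{n-k-2}\neq 0$ forces $n-k-2\equiv e-2$, i.e.\ $n\equiv k\mod e$, and the same congruence is forced by $b_{n-k-3}\neq 0$.

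The key structural consequence is that \eqref{equation: Rn} couples $c_n(\alpha)$ only to those $c_k(\alpha)$ with $k\equiv n\mod e$. Starting from $c_0(\alpha)=1$, which lies in the residue class $0$, I would argue by induction on $n$ that $c_n(\alpha)=0$ whenever $n\not\equiv 0\mod e$. The only point requiring care is the appearance of a free parameter at a lower resonance: for $\alpha=\kappa_{z_0}^{(1)}$ the recurrence first meets $f(\alpha+n)=0$ at $n=\kappa_{z_0}^{(2)}-\kappa_{z_0}^{(1)}$, where $c_n(\alpha)$ becomes undetermined. If this index is $\equiv 0\mod e$ the free parameter sits in the class $0$ and leaves the nonzero classes untouched; if it is $\not\equiv 0\mod e$, then by the argument applied to the pair $(1,2)$ (for which there is no prior resonance, since no exponent lies strictly between $\kappa_{z_0}^{(1)}$ and $\kappa_{z_0}^{(2)}$) the obstruction $P_{z_0,1,2}$ vanishes identically, so the parameter genuinely exists and may be set to $0$. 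In every case the nonzero residue classes remain identically zero.

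With this in hand the conclusion is immediate. For a pair $(k_1,k_2)$ put $\alpha=\kappa_{z_0}^{(k_1)}$ and $n'=\kappa_{z_0}^{(k_2)}-\kappa_{z_0}^{(k_1)}$. The obstruction \eqref{equation: existence condition 1} defining $P_{z_0,k_1,k_2}$ is a sum over $0\le k\le n'-1$ of $[(\alpha+k)a_{n'-k-2}+b_{n'-k-3}]c_k(\alpha)$, and by the decoupling above only indices with $k\equiv n'\mod e$ contribute. If $\kappa_{z_0}^{(k_2)}-\kappa_{z_0}^{(k_1)}\not\equiv 0\mod e$ these indices all satisfy $k\not\equiv 0\mod e$, so every surviving $c_k(\alpha)$ is zero and $P_{z_0,k_1,k_2}\equiv 0$.

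Finally I would deduce the two special cases. At $\rho$ the order is $e=3$, and assumption \eqref{equation: difference assumption} says $\big\{\kappa_\rho^{(1)},\kappa_\rho^{(2)},\kappa_\rho^{(3)}\big\}$ represents all three classes modulo $3$; hence every pairwise difference is $\not\equiv 0\mod 3$, all three polynomials vanish, and \eqref{equation: DE in y} is apparent at $\rho$ for arbitrary $\mathbf r,\mathbf s$. At ${\rm i}$ the order is $e=2$; since $\kappa_{\rm i}^{(k_2)}-\kappa_{\rm i}^{(k_1)}$ is an integer, its parity equals that of $3\kappa_{\rm i}^{(k_2)}-3\kappa_{\rm i}^{(k_1)}$, so a pair has even difference exactly when the two values $3\kappa_{\rm i}^{(k_1)},3\kappa_{\rm i}^{(k_2)}$ have the same parity. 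By \eqref{equation: assumption rho1} exactly one of $3\kappa_{\rm i}^{(1)},3\kappa_{\rm i}^{(2)},3\kappa_{\rm i}^{(3)}$ is odd, whence exactly one pair (joining the two exponents with even $3\kappa_{\rm i}$) has even difference and the remaining two have odd difference. The two odd-difference pairs give $P_{{\rm i},k_1,k_2}\equiv 0$, so at most one pair survives, as claimed. I expect the main obstacle to be the careful bookkeeping of the free parameter at the $(1,3)$ resonance, namely verifying that it never feeds into a surviving nonzero residue class.
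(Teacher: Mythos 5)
Your proof is correct and is essentially the paper's own argument: Corollary~\ref{coro} forces the coefficients $a_n$, $b_n$ to vanish outside the residue classes $-2$, $-3 \bmod e$, so the recursion \eqref{equation: Rn} decouples into residue classes modulo $e$, induction gives $c_n(\alpha)=0$ for $n\not\equiv 0 \bmod e$, and then every summand of \eqref{equation: existence condition 1} vanishes when $n'\not\equiv 0\bmod e$, with the two special cases at $\rho$ and ${\rm i}$ following exactly as you say. Your explicit bookkeeping of the free coefficient at the intermediate resonance for the pair $(1,3)$ is a detail the paper's proof passes over with ``we can easily prove by induction,'' and your resolution agrees with the paper's convention (made explicit only later, in the proof of Proposition~\ref{proposition: existence}) of choosing $c_{n''}(\alpha)=0$ there.
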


 \begin{proof} By Corollary \ref{coro}, the
 coefficients $a_n$ in the expansion of $\wt Q(x)$ vanish whenever
 $n\not\equiv-2\allowbreak \mod e$. Likewise, the coefficients $b_n$ in $\wt
 R(x)$ vanish whenever $n\not\equiv-3\mod e$. Using these facts, we
 find that the condition \eqref{equation: Rn} reduces to
 \begin{equation} \label{equation: Rn 2}
 f(\alpha+n)c_n(\alpha)+\sum_{k\equiv n\mod e,\, k\le n-1}
 [(\alpha+k)a_{n-k-2}+b_{n-k-3}]c_k(\alpha)=0.
 \end{equation}
 Then we can easily prove by induction up to $n=n'-1$ that
 $c_n(\alpha)=0$ whenever $n\not\equiv 0\mod e$. Now if $n'\not\equiv
 0\mod e$, then \eqref{equation: existence condition 1} automatically
 holds because every summand is $0$ due to the facts that
 $a_{n'-k-2}$ and $b_{n'-k-3}$ are nonzero only when $k\equiv n'\mod
 e$ and $c_k(\alpha)$ is nonzero only when $k\equiv 0\mod e$, but $k$
 cannot be congruent to $n'$ and $0$ at the same time. This proves
 the lemma.
 \end{proof}

 In the following, we let $P_{{\rm i}}(\mathbf r,\mathbf s)$ denote the only
 nonzero polynomial $P_{{\rm i},k_1,k_2}(\mathbf r,\mathbf s)$ in
 the lemma. The discussion above shows that there are $3m+1$
 polynomial equations $P_{{\rm i}}(\mathbf r,\mathbf s)=0$,
 $P_{z_j,k_1,k_2}(\mathbf r,\mathbf s)=0$, $j=1,\dots,m$,
 $(k_1,k_2)\in\{(1,2),(1,3),(2,3)\}$ in $3m+1$ variables such that~\eqref{equation: DE in y} is Fuchsian and apparent throughout~$\H$
 and all $\SL(2,\Z)$-inequivalent singularities belong to
 $\{{\rm i},\rho,z_1,\dots,z_m\}$ with the given local exponents if and only if
 the parameters~$\mathbf r$ and~$\mathbf s$ are common roots of the
 polynomials. We now consider the degree of these polynomials.

 \begin{Proposition} \label{proposition: existence}
 We have
\[
 \deg P_{z_j,k_1,k_2}(\mathbf r,\mathbf s)=\begin{cases}
 \kappa_{z_j}^{(k_2)}-\kappa_{z_j}^{(k_1)},
 &\text{if }(k_1,k_2)=(1,2)\text{ or }(2,3), \\
 \kappa_{z_j}^{(3)}-\kappa_{z_j}^{(1)}-1,
 &\text{if }(k_1,k_2)=(1,3), \end{cases}
\]
 and
\[
 \deg P_{{\rm i}}(\mathbf r,\mathbf s)=\big(\kappa_{\rm i}^{(k_2)}-\kappa_{\rm i}^{(k_1)}\big)/2,
\]
 where $(k_1,k_2)$ is the unique pair such that
 $\kappa_{\rm i}^{(k_2)}-\kappa_{\rm i}^{(k_1)}\equiv 0\mod 2$. To be more precise,
 for a~polynomial $P(\mathbf r,\mathbf s)$ in $\mathbf r$
 and $\mathbf s$, we let $\LT(P)$ denote the sum of the terms of
 highest degree in~$P$. Then, up to nonzero scalars, we have
 \[
 \LT(P_{z_j,k_1,k_2}(\mathbf r,\mathbf s))
 =\prod_{k=1}^{\kappa_{z_j}^{(k_2)}-\kappa_{z_j}^{(k_1)}}
 \big(\big(\kappa_{z_j}^{(k_1)}+k-3/2\big)d_1r_{z_j}^{(1)}+d_2s_{z_j}^{(2)}\big)
 \]
 for $(k_1,k_2)=(1,2)$ or $(2,3)$, and
 \begin{gather*}
 \LT(P_{z_j,1,3}(\mathbf r,\mathbf s))
 =\big(d_3s_{z_j}^{(1)}+d_1'r_{z_1}^{(1)}+\cdots+d_m'r_{z_m}^{(1)}\big) \\
\hphantom{\LT(P_{z_j,1,3}(\mathbf r,\mathbf s))=}{}
\times\prod_{k=1,\,k\neq\kappa_{z_j}^{(2)}-\kappa_{z_j}^{(1)},
 \kappa_{z_j}^{(2)}-\kappa_{z_j}^{(1)}+1}
 \big(\big(\kappa_{z_j}^{(k_1)}+k-3/2\big)d_1r_{z_j}^{(1)}+d_2s_{z_j}^{(2)}\big),
 \end{gather*}
 where $d_1,d_2,d_3,d_1',\dots,d_m'$ are complex numbers with
 $d_1,d_2,d_3\neq 0$.

Also, $\LT(P_{\rm i}(\mathbf r,\mathbf s))$ is a
 product of $\big(\kappa_{\rm i}^{(k_2)}-\kappa_{\rm i}^{(k_1)}\big)/2$ linear sums in
 $s_{{\rm i}}^{(1)}$ and $\mathbf r$ with the coefficients of $s_{{\rm i}}^{(1)}$
 being nonzero.
 \end{Proposition}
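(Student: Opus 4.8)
The plan is to determine the degree and leading term of each apparentness obstruction by tracking how the Frobenius coefficients $c_n(\alpha)$ depend, as polynomials, on the free parameters $\mathbf r$ and $\mathbf s$. The preliminary observation is that in the expansions \eqref{equation: Q(x) R(x)} at a point $z_0\in\{{\rm i},z_1,\dots,z_m\}$ each $a_n$ is affine-linear in $\mathbf r$ alone (since $Q$ does not involve the $s$'s) and each $b_n$ is affine-linear in $(\mathbf r,\mathbf s)$; moreover, by the pole orders in Lemma \ref{lemma: Q R general form}, the local parameters $r_{z_j}^{(1)}$ and $s_{z_j}^{(2)}$ first enter $a_{-1}$ and $b_{-2}$, while $s_{z_j}^{(1)}$ and the remaining $r_{z_l}^{(1)}$ first enter $a_0$ and $b_{-1}$ (the coefficients $a_{-2},b_{-3}$ being fixed by the indicial data of Lemma \ref{lemma: indicial at z}). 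Using the explicit series \eqref{equation: E4 E6} of Remark \ref{remark: Q(x) SL(2,Z)} I would record the linear parts $a_{-1}\sim d_1 r_{z_j}^{(1)}$ and $x^{-2}$-coefficient of $\wt R$ $\sim d_2 s_{z_j}^{(2)}$, with $d_1,d_2\neq0$; since $\tfrac12\wt Q'$ feeds $-\tfrac12 a_{-1}$ into $b_{-2}$, the step-coefficient $(\alpha+n-1)a_{-1}+b_{-2}$ in \eqref{equation: Rn} acquires top-degree part $(\alpha+n-\tfrac32)d_1 r_{z_j}^{(1)}+d_2 s_{z_j}^{(2)}$, the shift by $\tfrac32$ arising precisely from this feedback.

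Running \eqref{equation: Rn}, where $f(\alpha+n)$ is a nonzero scalar whenever $\alpha+n$ is not a local exponent, the highest-degree contribution to $c_n(\alpha)$ propagates through the $k=n-1$ summand, giving $\deg c_n(\alpha)=n$ and
\[ \LT(c_n(\alpha))\ \propto\ \prod_{k=1}^{n}\Big(\big(\alpha+k-\tfrac32\big)d_1 r_{z_j}^{(1)}+d_2 s_{z_j}^{(2)}\Big). \]
For $(k_1,k_2)=(1,2)$ and $(2,3)$, with $\alpha=\kappa_{z_j}^{(k_1)}$, no earlier local exponent is met before $n'=\kappa_{z_j}^{(k_2)}-\kappa_{z_j}^{(k_1)}$, so the obstruction \eqref{equation: existence condition 1} is dominated by its $k=n'-1$ term, of degree $1+\deg c_{n'-1}=n'$; its leading term is the full product $\prod_{k=1}^{n'}(\cdots)$, which is exactly the asserted $\LT(P_{z_j,k_1,k_2})$.

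The case $(k_1,k_2)=(1,3)$ carries the main difficulty. Solving with $\alpha=\kappa_{z_j}^{(1)}$ one first crosses the exponent $\kappa_{z_j}^{(2)}$ at $n''=\kappa_{z_j}^{(2)}-\kappa_{z_j}^{(1)}$, where $f(\alpha+n'')=0$; on the locus $P_{z_j,1,2}=0$ the coefficient $c_{n''}$ is free, and I would normalize it to $0$. This severs the degree-raising chain: for $n>n''$ the top term of $c_n$ must route around the vanished $c_{n''}$, so it emanates from the $k=n''-1$ summand through $a_0,b_{-1}$, and one finds $\deg c_n=n-1$. Consequently the $k=n'-1$ term of \eqref{equation: existence condition 1} has degree $n'-1=\kappa_{z_j}^{(3)}-\kappa_{z_j}^{(1)}-1$, and its leading term is the product $\prod(\cdots)$ over $1\le k\le n'$ with the two rerouting indices $k=n''$ and $k=n''+1$ omitted, times a single new linear form $d_3 s_{z_j}^{(1)}+d_1'r_{z_1}^{(1)}+\cdots+d_m'r_{z_m}^{(1)}$ read off from $a_0$ and $b_{-1}$. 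The crux is to verify from \eqref{equation: E4 E6} that the coefficient $d_3$ of $s_{z_j}^{(1)}$ (coming from the genuine simple pole of $s_{z_j}^{(1)}E_6\Delta_0/F_j$) is nonzero, so that this factor is honestly linear of the stated shape, while the $d_l'$ arise from the holomorphic contributions at $z_j$ of the poles $r_{z_l}^{(1)}E_4\Delta_0/F_l$, $l\neq j$.

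Finally, for the elliptic point ${\rm i}$ the parity mechanism of Corollary \ref{coro} and the proof of Lemma \ref{lemma-4-7} reduce everything by a factor of two: with $e=2$ one has $a_n=0$ for odd $n$ and $b_n=0$ for even $n$, so only the $c_n(\alpha)$ with $n$ even survive and the recursion advances two indices at a time through $a_0$ and $b_{-1}$. Along the unique pair $(k_1,k_2)$ with $\kappa_{\rm i}^{(k_2)}-\kappa_{\rm i}^{(k_1)}$ even, this halves the chain length, yielding $\deg P_{\rm i}=(\kappa_{\rm i}^{(k_2)}-\kappa_{\rm i}^{(k_1)})/2$; since the only free parameter supported at ${\rm i}$ is $s_{\rm i}^{(1)}$ (where $E_6({\rm i})=0$, so one reads \eqref{equation: E4 E6} with $C=0$), each factor of $\LT(P_{\rm i})$ is linear in $s_{\rm i}^{(1)}$ and $\mathbf r$ with nonzero $s_{\rm i}^{(1)}$-coefficient. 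The genuine obstacle throughout is not the degree count, which follows formally once the chain-breaking at crossed exponents and the parity at ${\rm i}$ are accounted for, but the bookkeeping needed to confirm, via the expansions of Remark \ref{remark: Q(x) SL(2,Z)}, that the leading coefficients $d_1,d_2,d_3$ and the $s_{\rm i}^{(1)}$-coefficients never vanish.
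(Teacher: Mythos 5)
Your proposal is correct and takes essentially the same route as the paper's own proof: the same bookkeeping of where each parameter first enters the Laurent coefficients $a_n$, $b_n$ (with the $-\tfrac12 a_{-1}$ feedback from $\tfrac12\wt Q'$ producing the shift $\alpha+k-3/2$), the same inductive propagation of leading terms through the recursion \eqref{equation: Rn}, the same normalization $c_{n''}(\alpha)=0$ followed by rerouting through $(\alpha+n''-1)a_0+b_{-1}$ in the $(1,3)$ case, and the same parity reduction at ${\rm i}$ via Corollary~\ref{coro}. The only cosmetic difference is that you justify $d\neq 0$ at ${\rm i}$ by reading \eqref{equation: E4 E6} with $C=0$ rather than citing the leading coefficient of $\Delta/E_6$ directly, which is the same fact.
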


 \begin{proof}
 By Lemma \ref{lemma: Q R general form}, each of $a_n$ and $b_n$ in
 \eqref{equation: Q(x) R(x)} is a
 linear combination of $r$'s and $s$'s. The key
 observation here is that only $r_{z_j}^{(2)}$ (which has been
 determined by the local exponents and is regarded as a constant)
 appears in $a_{-2}$, only $r_{z_j}^{(2)}$ and $r_{z_j}^{(1)}$ appear
 in $a_{-1}$, only $r_{z_j}^{(2)}$ and $s_{z_j}^{(3)}$ (which has
 been determined by the local exponents and is regarded as a
 constant) appear in~$b_{-3}$, only
 $r_{z_j}^{(2)}$, $r_{z_j}^{(1)}$, $s_{z_j}^{(3)}$, and
 $s_{z_j}^{(2)}$ appear in $b_{-2}$, and only $r_{z_j}^{(2)}$,
 $r_{z_j}^{(1)}$, $s_{z_j}^{(3)}$, $s_{z_j}^{(2)}$, and
 $s_{z_j}^{(1)}$ appear in~$b_{-1}$. In particular, we have
 \begin{equation} \label{equation: LT a-1, b-2}
 \LT(a_{-1})=d_1r_{z_j}^{(1)}, \qquad
 \LT(b_{-2})=-\frac12d_1r_{z_j}^{(1)}+d_2s_{z_j}^{(2)},
 \end{equation}
 where $d_1$ and $d_2$ are the leading coefficients in the series
 expansions of $E_4(z)\Delta(z)/F_j(z)$ and
 $E_6(z)\Delta(z)^2/F_j(z)^2$ in $\wt w$ (and hence are nonzero).
 Consider \eqref{equation: existence condition 1} for the cases
 $(\alpha,n')=\big(\kappa_{z_j}^{(2)},\kappa_{z_j}^{(3)}-\kappa_{z_j}^{(2)}\big)$
 and
 $(\alpha,n')=\big(\kappa_{z_j}^{(1)},\kappa_{z_j}^{(2)}-\kappa_{z_j}^{(1)}\big)$
 first. From~\eqref{equation: Rn} and~\eqref{equation: LT a-1, b-2}, we
 can easily show inductively that, up to $n=n'-1$,
 \begin{align}
 \LT(c_n(\alpha))&=\prod_{k=1}^n
 \left(-\frac{\LT((\alpha+k-1)a_{-1}+b_{-2})}{f(\alpha+k)}\right) \nonumber\\
 &=\prod_{k=1}^n
 \left(-\frac{(\alpha+k-3/2)d_1r_{z_j}^{(1)}
 +d_2s_{z_j}^{(2)}}{f(\alpha+k)}\right),\label{equation: leading term}
 \end{align}
 where $d_1$ and $d_2$ are the two nonzero complex numbers in
 \eqref{equation: LT a-1, b-2}, and hence
\[
 \LT(P_{z_j,k_1,k_2}(\mathbf r,\mathbf s))=
 \prod_{k=1}^{\kappa_{z_0}^{(k_2)}-\kappa_{z_0}^{(k_1)}}
 \left(-\frac{(\alpha+k-3/2)d_1r_{z_j}^{(1)}
 +d_2s_{z_j}^{(2)}}{f(\alpha+k)}\right)
\]
 for $(k_1,k_2)=(1,2)$ or $(2,3)$.

 We now consider \eqref{equation: existence condition 1} for the
 remaining case $(\alpha,n')=\big(\kappa_{z_j}^{(1)},
 \kappa_{z_j}^{(3)}-\kappa_{z_j}^{(1)}\big)$. Let
 $n''=\kappa_{z_j}^{(2)}-\kappa_{z_j}^{(1)}$. Up to $n=n''-1$, the
 terms of highest degree in $c_n(\alpha)$ is given by
 \eqref{equation: leading term}. Since $P_{z_j,1,2}(\mathbf r,
 \mathbf s)=0$ is assumed to hold, \eqref{equation: Rn} holds for
 $n=n''$ for arbitrary~$c_{n''}(\alpha)$. Here, we simply choose $c_{n''}(\alpha)$ to be
 $0$. Then we have, by \eqref{equation: Rn}
\[
 c_{n''+1}(\alpha)=-\frac1{f(\alpha+n''+1)}
 \sum_{k=0}^{n''-1}[(\alpha+k)a_{n''-k-1}+b_{n''-k-2}]c_k(\alpha),
\]
 and hence
\[
 \LT(c_{n''+1}(\alpha))=-\frac1{f(\alpha+n''+1)}
 \LT((\alpha+n''-1)a_0+b_{-1})\LT(c_{n''-1}(\alpha)),
\]
 which, by \eqref{equation: leading term}, is equal to
\[
 -\frac{\LT((\alpha+n''-1)a_0+b_{-1})}{f(\alpha+n''+1)}
 \prod_{k=1}^{n''-1}\left(-\frac{(\alpha+k-3/2)d_1r_{z_j}^{(1)}
 +d_2s_{z_j}^{(2)}}{f(\alpha+k)}\right).
\]
 Then using \eqref{equation: Rn} we can inductively show that for $n$
 with $n''+1\le n\le n'-1$,
 \begin{align*}
 \LT(c_n(\alpha))
 &=\LT(c_{n''+1}(\alpha))\prod_{k=n''+2}^n
 \left(-\frac{\LT((\alpha+k-1)a_{-1}+b_{-2})}{f(\alpha+k)}\right)\\
 &=\LT(c_{n''+1}(\alpha))\prod_{k=n''+2}^n
 \left(-\frac{(\alpha+k-3/2)d_1r_{z_j}^{(1)}+d_2s_{z_j}^{(2)}}
 {f(\alpha+i)}\right).
 \end{align*}
 Thus, for $(k_1,k_2)=(1,3)$, we have
 \begin{gather*}
 \LT(P_{z_j,1,3}(\mathbf r,\mathbf s))
 =\LT(c_{n''+1}(\alpha))\prod_{k=n''+2}^{n'}
 \left(-\frac{(\alpha+k-3/2)d_1r_{z_j}^{(1)}+d_2s_{z_j}^{(2)}}
 {f(\alpha+k)}\right) \\
\hphantom{\LT(P_{z_j,1,3}(\mathbf r,\mathbf s))}{}
=-\frac{\LT((\alpha+n''-1)a_0+b_{-1})}{f(\alpha+n''+1)} \\
\hphantom{\LT(P_{z_j,1,3}(\mathbf r,\mathbf s))=}{}\times
 \prod_{k=1,\,k\neq n'',n''+1}^{n'}
 \left(-\frac{(\alpha+k-3/2)d_1r_{z_j}^{(1)}
 +d_2s_{z_j}^{(2)}}{f(\alpha+k)}\right).
 \end{gather*}
 Note that $\LT((\alpha+n''-1)a_0+b_{-1})$ is of the form
\[
 d_3s_{z_j}^{(1)}+d_1'r_{z_1}^{(1)}+\cdots+d_m'r_{z_m}^{(1)},
\]
 where $d_3$, $d_1',\dots,d_m'$ are complex numbers with $d_3\neq
 0$.

 We next consider the case where $z_0={\rm i}$ is the
 elliptic point of order $2$. There exists a unique pair of $k_1$ and
 $k_2$ with $k_1<k_2$ such that $\kappa_{{\rm i}}^{(k_2)}-\kappa_{{\rm i}}^{(k_1)}\equiv0\mod
 2$. We let $\alpha=\kappa_{{\rm i}}^{(k_1)}$ and
 $n'=\kappa_{{\rm i}}^{(k_2)}-\kappa_{z_0}^{(k_1)}$. We have seen earlier
 that $c_n(\alpha)\neq 0$ only when $2|n$. Also, from
 \eqref{equation: Rn 2}, we can inductively show that
\[
 \LT(c_n(\alpha))=\prod_{k=1}^{n/2}\left(
 -\frac{\LT((\alpha+2k-2)a_0+b_{-1})}{f(\alpha+2k)}\right)
\]
 and
\[
 \LT(P_{{\rm i}}(\mathbf r,\mathbf s))=\prod_{k=1}^{n'/2}\left(
 -\frac{\LT((\alpha+2k-2)a_0+b_{-1})}{f(\alpha+2k)}\right).
 \]
 Noting that $\LT((\alpha+2k-2)a_0+b_{-1}))
 =ds_{{\rm i}}^{(1)}+(\text{a linear sum in }\mathbf r)$ for some nonzero
 complex number $d$ (which is the leading coefficient of the
 expansion of $\Delta(z)/E_6(z)$ at ${\rm i}$ in $\wt w$), we conclude that
 $\LT(P_{{\rm i}}(\mathbf r,\mathbf s)$ is a product of
 $\big(\kappa_{{\rm i}}^{(k_2)}-\kappa_{{\rm i}}^{(k_1)}\big)/2$ linear sums in $s_{{\rm i}}^{(1)}$
 and $\mathbf r$ with all coefficients of~$s_{{\rm i}}^{(1)}$ nonzero. This
 completes the proof.
\end{proof}

\begin{Remark} The proposition suggests that when the local exponents
 $\kappa$'s are fixed, for generic points $z_j$, the number of pairs
 $(Q,R)$ of modular forms such that \eqref{equation: DE in y}
 satisfies the conditions (S1)--(S3) is
\[
 \frac{\kappa_{\rm i}^{(k_2)}-\kappa_{\rm i}^{(k_1)}}2\prod_{j=1}^m
 \big(\kappa_{z_j}^{(2)}-\kappa_{z_j}^{(1)}\big)
 \big(\kappa_{z_j}^{(3)}-\kappa_{z_j}^{(2)}\big)
 \big(\kappa_{z_j}^{(3)}-\kappa_{z_j}^{(1)}-1\big),
\]
 where $k_1$ and $k_2$ are the integers such that
 $\kappa_{\rm i}^{(k_2)}-\kappa_{\rm i}^{(k_1)}\in2\mathbb N$. (Notice that if
 $\big(\kappa_z^{(1)},\kappa_z^{(2)},\kappa_z^{(3)}\big)=(0,1,2)$ for all
 $z\in\H$, this number is~$1$, as expected.)
 However, because the polynomials have intersection of
 positive dimension at infinity in general, we are not able to use
 the B\'ezout theorem to obtain this conclusion. (Even the existence
 of $(Q,R)$ with an arbitrary set of given data is not established yet.) We
 leave this problem for future study.
\end{Remark}

\section{Extremal quasimodular forms}
\label{section-extremal}

We note that by using some results in Section \ref{section-4}, Theorem \ref{thm-1} can be improved in some special case. More precisely,
the main result of this section is Theorem \ref{thm-2} below, which
states that $Q_2(z)$, $Q_3(z)$ in Theorem~\ref{thm-1} can be explicitly written
down in the case $f(z)$ is an extremal quasimodular form on
$\Gamma=\SL(2,\mathbb{Z})$.

\begin{Definition}[\cite{KK-2006}]
A quasimodular form $f\in \widetilde{\sM}_{k}^{\leq r}(\SL(2,\mathbb{Z}))$ is said to be extremal if its vanishing order at $\infty$ is equal to $\dim \widetilde{\sM}_{k}^{\leq r}(\SL(2,\mathbb{Z}))-1$. We say $f$ is normalized if its leading Fourier coefficient is~$1$.
\end{Definition}

Pellarin \cite{PN} proved that if $r\leq 4$, then a normalized
extremal quasimodular form in \linebreak $\widetilde{\sM}_{k}^{\leq
 r}(\SL(2,\mathbb{Z}))$ exists and is unique.

\begin{Theorem}\label{thm-2} Let $f(z)$ be an extremal quasimodular form of weight
 $k$ and depth $2$ on $\SL(2,\Z)$ and
 \begin{equation} \label{equation: DE in theorem}
 D_q^3y(z)+Q(z)D_qy(z)+\left(\frac12D_qQ(z)+R(z)\right)y(z)=0
 \end{equation}
 be the differential equation satisfied by $f(z)/\sqrt[3]{W_f(z)}$ as derived in Section~{\rm \ref{section-2}}.
 \begin{enumerate}
 \item[$(i)$] If $k\equiv 0\mod 4$, then
\[
 Q(z)=-\frac{k^2}{48}E_4(z),
 \qquad
 R(z)=-\frac{k^3}{864}E_6(z).
 \]
 \item[$(ii)$] If $k\equiv 2\mod 4$, then
\[
 Q(z)=-\frac{(k-2)^2}{48}E_4(z)
 -\frac13\frac{E_4(z)\big(E_4(z)^3-E_6(z)^2\big)}{E_6(z)^2},
 \]
 and
 \begin{gather*}
 R(z)=-\frac{(k-2)^3}{864}E_6(z)
 +\frac5{54}\frac{\big(E_4(z)^3-E_6(z)^2\big)^2}{E_6(z)^3} \\
\hphantom{R(z)=}{} +\frac{12-(k-2)^2}{144}
 \frac{E_4(z)^3-E_6(z)^2}{E_6(z)}.
 \end{gather*}
 \end{enumerate}
\end{Theorem}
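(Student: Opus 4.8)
The plan is to reduce everything to the machinery of Section~\ref{section-4} once the local exponents of the MODE satisfied by $f/\sqrt[3]{W_f}$ are known; pinning down those exponents is the heart of the matter. Since this MODE comes from a quasimodular form, Theorem~\ref{thm-1} already guarantees that its singularities on $\H$ are exactly the zeros of $W_f$ and that every cusp is completely not apparent, so the task is to locate the zeros of $W_f$, read off the orders there and at $\infty$, and then convert them into $Q$ and $R$.

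First I would compute the extremal vanishing order. By definition $\ord_\infty f=\dim\widetilde{\sM}_k^{\le 2}(\SL(2,\Z))-1$, and from the direct sum decomposition $\widetilde{\sM}_k^{\le 2}=\sM_k\oplus E_2\sM_{k-2}\oplus E_2^2\sM_{k-4}$ together with the dimension formula for $\sM_n(\SL(2,\Z))$ one checks, by residue classes modulo $12$, that $\ord_\infty f=k/4$ if $k\equiv 0\bmod 4$ and $\ord_\infty f=(k-2)/4$ if $k\equiv 2\bmod 4$. The crucial step is then to prove that $\ord_\infty W_f=\ord_\infty f$. Writing $h_1,h_2,h_3$ as in~\eqref{eq-22}, with $g:=f_1+2f_2E_2$, the idea is that $f$ vanishes to high order at $\infty$ while the lower-order parts $\alpha^2f_2$ of $h_1$ and $\alpha g$ of $h_2$ do not vanish there, so the $2\times2$ block formed by $h_1,h_2$ contributes order $0$ (its two leading terms span a two-dimensional space because one carries a factor $z=\ln q/2\pi{\rm i}$), while $h_3=f$ contributes the order $\ord_\infty f$. \emph{This non-cancellation analysis of the leading coefficients of $h_1,h_2$ for the extremal form is the main obstacle}; it is ultimately forced by extremality, since the valence formula caps $\ord_\infty W_f$ at $k/4$ and any additional vanishing of $f_2$ and $g$ would push $\ord_\infty W_f$ above this bound.

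Granting $\ord_\infty W_f=\ord_\infty f$, the remaining orders are determined. At $\infty$ the top solution is $f/\sqrt[3]{W_f}$, so $\kappa_\infty^{(3)}=\ord_\infty f-\tfrac13\ord_\infty W_f=\tfrac23\ord_\infty f$, the bottom solution gives $\kappa_\infty^{(1)}=-\tfrac13\ord_\infty W_f$, and the vanishing sum fixes $\kappa_\infty^{(2)}$. For the elliptic points, Proposition~\ref{prop-2} shows that a weight-$3k$ form has $\ord_\rho\equiv 0\bmod 3$ and $\ord_{\rm i}\equiv 3k/2\bmod 2$, so $\ord_{\rm i}W_f$ is even for $k\equiv 0$ and odd for $k\equiv 2\bmod 4$. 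Feeding $\ord_\infty W_f=\ord_\infty f$ into the valence identity $\ord_\infty W_f+\tfrac12\ord_{\rm i}W_f+\tfrac13\ord_\rho W_f+\sum_j\ord_{z_j}W_f=k/4$ leaves no slack: in case~(i) every term after the first must vanish, so ${\rm i}$ and $\rho$ are ordinary and there are no other singularities; in case~(ii) one is forced to $\ord_{\rm i}W_f=1$, $\ord_\rho W_f=0$, again with no other singularities. Using $\kappa_{z_0}^{(1)}=-\tfrac13\ord_{z_0}W_f$ at a point where $f$ does not vanish (Lemma~\ref{lemma: nonvanishing}, Proposition~\ref{prop-3}), together with the congruence constraints of Theorem~\ref{thm-02}(3),(4), this yields the full exponent data: at $\infty$ they are $\bigl(-\tfrac{k}{12},-\tfrac{k}{12},\tfrac{k}{6}\bigr)$ in case~(i) and $\bigl(-\tfrac{k-2}{12},-\tfrac{k-2}{12},\tfrac{k-2}{6}\bigr)$ in case~(ii); at $\rho$ they are $\{0,1,2\}$ in both cases; and at ${\rm i}$ they are $\{0,1,2\}$ in case~(i) and $\bigl\{-\tfrac13,\tfrac23,\tfrac83\bigr\}$ in case~(ii).

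Finally I would translate exponents into $Q$ and $R$. As the only singularities are at ${\rm i},\rho,\infty$, Lemma~\ref{lemma: Q R general form} applies with $m=0$, so $Q$ and $R$ depend only on $r_\infty,s_\infty,r_{\rm i}^{(2)},s_{\rm i}^{(3)},s_{\rm i}^{(1)},r_\rho^{(2)},s_\rho^{(3)}$. In case~(i) ordinariness at ${\rm i}$ and $\rho$ kills all parameters except $r_\infty,s_\infty$, and matching the indicial equation $x^3+r_\infty x+s_\infty=0$ of Lemma~\ref{lemma: indicial at infinity} with the roots $\bigl(-\tfrac{k}{12},-\tfrac{k}{12},\tfrac{k}{6}\bigr)$ gives $r_\infty=-k^2/48$ and $s_\infty=-k^3/864$, which is exactly part~(i). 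In case~(ii) the same matching at $\infty$ gives $r_\infty=-\tfrac{(k-2)^2}{48}$ and $s_\infty=-\tfrac{(k-2)^3}{864}$; ordinariness at $\rho$ gives $r_\rho^{(2)}=s_\rho^{(3)}=0$; and Lemma~\ref{lemma: indicial at z} at ${\rm i}$, equating $x(x-1)(x-2)+4r_{\rm i}^{(2)}x-4r_{\rm i}^{(2)}-8s_{\rm i}^{(3)}$ with $\bigl(x+\tfrac13\bigr)\bigl(x-\tfrac23\bigr)\bigl(x-\tfrac83\bigr)$, gives $r_{\rm i}^{(2)}=-\tfrac13$ and $s_{\rm i}^{(3)}=\tfrac5{54}$. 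The last unknown $s_{\rm i}^{(1)}$ is pinned down by the apparentness of ${\rm i}$: by Lemma~\ref{lemma-4-7} and Proposition~\ref{proposition: existence} exactly one apparentness equation survives (for the exponent pair of difference~$2$), it is linear in $s_{\rm i}^{(1)}$, and solving it gives $s_{\rm i}^{(1)}=\tfrac{12-(k-2)^2}{144}$. Collecting these parameters into~\eqref{equation: Q R SL(2,Z)} reproduces part~(ii).
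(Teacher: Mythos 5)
Your proposal is correct, and its skeleton is essentially the paper's: extremality plus $\dim\wt\sM_k^{\le2}(\SL(2,\Z))=1+\gauss{k/4}$ gives $\ord_\infty f=\gauss{k/4}$; one shows $\ord_\infty W_f=\ord_\infty f$, reads off the exponents $(-r/3,-r/3,2r/3)$ at $\infty$ from Lemma~\ref{lemma: local exponents at infinity}, locates the finite singularities (none in case~(i); in case~(ii) a simple zero of $W_f$ at ${\rm i}$ with exponents $-1/3$, $2/3$, $8/3$, forced by integrality of differences, distinctness from apparentness, and the sum being $3$), and then matches the parameters of Lemma~\ref{lemma: Q R general form} against the indicial equations of Lemmas~\ref{lemma: indicial at infinity} and~\ref{lemma: indicial at z}, leaving only $s_{\rm i}^{(1)}$ to be fixed by apparentness at ${\rm i}$. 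Where you genuinely diverge is the step $\ord_\infty W_f=\ord_\infty f$: the paper simply cites Pellarin~\cite{PN}, whereas you derive it internally by combining the four-case order formula of Lemma~\ref{lemma: local exponents at infinity} with the valence-formula cap $\ord_\infty W_f\le k/4$ for the holomorphic weight-$3k$ form $W_f$; every case other than $\ord_\infty(f_1+2f_2E_2)=0$ overshoots the cap. This is a clean self-contained substitute, and it buys two extras: the same exhausted budget shows the $f_j$ cannot share a zero (a common zero would force $\ord_{z_0}W_f\ge 3$), which the paper proves separately by a dimension count, and it lets you bypass the paper's explicit identification $W_f=c\Delta(z)^{k/4}$, respectively $c\Delta(z)^{(k-2)/4}E_6(z)$, by distributing the zero orders directly.

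One caveat on the final step: you correctly identify, via Lemma~\ref{lemma-4-7} and Proposition~\ref{proposition: existence}, that exactly one apparentness constraint survives at ${\rm i}$ (the exponent pair with difference $8/3-2/3=2$) and that it is linear in $s_{\rm i}^{(1)}$ with nonzero leading coefficient, so $s_{\rm i}^{(1)}$ is uniquely determined; but you assert rather than derive the value $\frac{12-(k-2)^2}{144}$. This is the one genuinely computational part of the paper's proof: it expands $\wt Q(x)$ and $\wt R(x)$ at ${\rm i}$ using the series of Remark~\ref{remark: Q(x) SL(2,Z)}, runs the Frobenius recursion with $\alpha=2/3$, and extracts the coefficient of $x^{-1/3}$. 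Your framework guarantees a unique answer (and existence of extremal forms guarantees the linear equation is consistent) but does not by itself produce the number, so that expansion, or an equivalent finite check, still needs to be carried out to complete part~(ii).
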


\begin{Remark}We note that the differential equation in the case $k\equiv 0\mod 4$ is
 equivalent to the following
 equation studied by Kaneko and Koike \cite[Theorem~3.1]{KK-2006}:
\begin{equation*}
D_q^3f-\frac{k}{4}E_2D_q^2f+\frac{k(k-1)}{4}D_qE_2D_qf-\frac{k(k-1)(k-2)}{24}D_q^2E_2f=0.
\end{equation*}
Indeed, by letting $f(z)=\Delta(z)^{\frac{k}{12}}y(z)$, a direct computation shows that $y(z)$ solves
\begin{equation*}
D_q^3y-\frac{k^2}{48}E_4(z)D_qy-\frac{2k^2}{12^3}\left(3E_2(z)E_4(z)+(k-3)E_6(z)\right)y=0.
\end{equation*}
\end{Remark}

To prove Theorem~\ref{thm-2}, we need the following general lemma,
in which the quasimodular form~$f(z)$ is not assumed to be extremal.

\begin{Lemma} \label{lemma: local exponents at infinity}
 Assume that
 \begin{gather*}
 f(z)=f_0(z)+f_1(z)E_2(z)+f_2(z)E_2(z)^2\in \wt\sM^{\le2}_k (\SL(2,\Z)),\\
 f_j(z)\in\sM_{k-2j}(\SL(2,\Z)).
 \end{gather*}
 Let
 \begin{gather*}
 g(z)=f_1(z)+2f_2(z)E_2(z), \qquad h(z)=f_2(z), \qquad m=\min(\ord_\infty f, \ord_\infty g,\ord_\infty h).
 \end{gather*} Let
 $\kappa_\infty^{(1)}\le\kappa_\infty^{(2)}\le\kappa_\infty^{(3)}$ be
 the local exponents of \eqref{eq-1} at $\infty$.
 \begin{enumerate}
 \item[$(i)$] If $\ord_\infty f=m$, then $\ord_\infty W_f=3m$ and
 $\kappa_\infty^{(j)}=0$ for all $j$.
 \item[$(ii)$] If $\ord_\infty g=m$ and $\ord_\infty f\neq m$, then
 $\ord_\infty W_f=\ord_\infty f+2\ord_\infty g$ and
 $\kappa_\infty^{(1)}=\kappa_\infty^{(2)}=(\ord_\infty
 g-\ord_\infty f)/3$ and $\kappa_\infty^{(3)}=2(\ord_\infty
 f-\ord_\infty g)/3$.
 \item[$(iii)$] If $\ord_\infty h<\ord_\infty f\le\ord_\infty g$, then
 $\ord_\infty W_f=2\ord_\infty f+\ord_\infty h$ and
 $\kappa_\infty^{(1)}=2(\ord_\infty h-\ord_\infty f)/3$ and
 $\kappa_\infty^{(2)}=\kappa_\infty^{(3)}=(\ord_\infty
 f-\ord_\infty h)/3$.
 \item[(iv)] If $\ord_\infty h<\ord_\infty g<\ord_\infty f$, then
 $\ord_\infty W_f=\ord_\infty f+\ord_\infty g+\ord_\infty h$ and
 $\kappa_\infty^{(1)}=\ord_\infty h-\frac13\ord_\infty W_f$,
 $\kappa_\infty^{(2)}=\ord_\infty g-\frac13\ord_\infty W_f$, and
 $\kappa_\infty^{(3)}=\ord_\infty f-\frac13\ord_\infty W_f$.
 \end{enumerate}
\end{Lemma}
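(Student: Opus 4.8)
The plan is to read everything off the explicit solutions $g_j=h_j/\sqrt[3]{W_f}$ of~\eqref{eq-1}, with $h_1,h_2,h_3$ as in~\eqref{eq-22}. Abbreviate $a=\ord_\infty f$, $b=\ord_\infty g$, $c=\ord_\infty h$ and $d=\ord_\infty W_f$. Since $f\not\equiv0$ I factor $h_j=f\,u_j$ with $u_3\equiv1$, $u_2=2z+\alpha(g/f)$ and $u_1=z^2+\alpha z(g/f)+\alpha^2(h/f)$, and use the multiplicativity $W(fu_1,fu_2,fu_3)=f^3W(u_1,u_2,u_3)$ together with $u_3\equiv1$ to collapse the Wronskian to a second-order one,
\[
W_f=f^3\big(u_1'u_2''-u_1''u_2'\big).
\]
Carrying out the differentiation, the explicit powers of $z$ cancel (as they must, $W_f$ being modular), and writing $\mu=g/f$, $\nu=h/f$ one is left with the closed form
\[
u_1'u_2''-u_1''u_2'=-4-6\alpha\mu'+\alpha^2\big(\mu\mu''-2(\mu')^2-2\nu''\big)+\alpha^3\big(\nu'\mu''-\mu'\nu''\big).
\]
This identity is the engine of the proof.

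First I would compute $d$. Since $\ord_\infty\mu=b-a$ and $\ord_\infty\nu=c-a$, and since the second-order Wronskian $xy'-x'y$ of $x\sim q^p$, $y\sim q^s$ has order $p+s$ with leading coefficient a nonzero multiple of $s-p$, one identifies the lowest-order term of the bracket in each regime: it is the constant $-4$ in case~(i), the $\alpha^2$-term in cases~(ii) and~(iii), and the $\alpha^3$-term in case~(iv). The surviving leading coefficients factor as nonzero multiples of $(a-b)^2$, $(a-c)^2$ and $(a-b)(a-c)(b-c)$, respectively, so the order of the bracket is $0$, $2(b-a)$, $c-a$, $b+c-2a$, and $d=3a+\ord_\infty(\text{bracket})$ equals $3a$, $a+2b$, $2a+c$, $a+b+c$ in turn. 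Equivalently, in all four cases $d=a+\min(a,b)+\min(a,b,c)$.

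For the local exponents I would use the solution structure at the cusp $\infty$ recorded in Theorem~\ref{thm-1} and Remark~\ref{rmk-apparent}. From the explicit shapes $g_3=f/\sqrt[3]{W_f}$, $g_2=2zg_3+\alpha g/\sqrt[3]{W_f}$ and $g_1=z^2g_3+\alpha z\,g/\sqrt[3]{W_f}+\alpha^2 h/\sqrt[3]{W_f}$ one sees at once that the logarithm-free solutions form a one-dimensional space spanned by $g_3$; since the top Frobenius exponent is always realised without logarithms, $g_3$ carries the largest exponent and $\kappa_\infty^{(3)}=\ord_\infty g_3=a-d/3$. On the other hand every solution is a combination of $g_1,g_2,g_3$, so the order of each of its logarithmic coefficients is at least the minimum of the orders $a-d/3$, $b-d/3$, $c-d/3$ of the three logarithm-free blocks $f/\sqrt[3]{W_f}$, $g/\sqrt[3]{W_f}$, $h/\sqrt[3]{W_f}$, with equality attained; thus the smallest exponent is $\kappa_\infty^{(1)}=\min(a,b,c)-d/3$. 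Finally $\kappa_\infty^{(1)}+\kappa_\infty^{(2)}+\kappa_\infty^{(3)}=0$ forces $\kappa_\infty^{(2)}=\min(a,b)-d/3$ once $d=a+\min(a,b)+\min(a,b,c)$ is used. Substituting the four values of $d$ reproduces each case of the lemma.

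The main obstacle is the order computation: I must guarantee that the term I claim to dominate the bracket does not cancel, and that the ties permitted inside a single case do not spoil the count. The point that makes this clean is that whenever two of $a,b,c$ coincide the relevant second-order Wronskian loses order---its leading coefficient carries precisely the vanishing difference---so the putatively competing term is pushed to higher order and the surviving coefficient, visibly factored through $(a-b)$, $(a-c)$, $(b-c)$, is nonzero. A secondary subtlety is the exponent bookkeeping in the cases where two exponents coincide (cases~(i) and~(iii)): there one should not read $\kappa_\infty^{(2)}$ off a leading order directly but close the computation with the trace relation $\sum_j\kappa_\infty^{(j)}=0$, exactly as above.
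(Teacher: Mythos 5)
Your proposal is correct, but it reaches the Wronskian orders by a genuinely different route than the paper. The paper's proof never computes $\ord_\infty W_f$ directly: it sets $r=\frac13\ord_\infty W_f$, observes that $g_3=f/\sqrt[3]{W_f}$ is (up to scalars) the unique logarithm-free solution at the completely-not-apparent cusp, so $\kappa_\infty^{(3)}=\ord_\infty f-r$, asserts likewise $\kappa_\infty^{(2)}=\min(\ord_\infty f,\ord_\infty g)-r$ and $\kappa_\infty^{(1)}=\min(\ord_\infty f,\ord_\infty g,\ord_\infty h)-r$ from the shapes of $g_2$, $g_1$ and the Frobenius structure of Appendix A, and then recovers $\ord_\infty W_f$ case by case from the trace relation $\sum_j\kappa_\infty^{(j)}=0$. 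You invert this logic: your collapse $W_f=f^3\bigl(u_1'u_2''-u_1''u_2'\bigr)$ with $u_2=2z+\alpha\mu$, $u_1=z^2+\alpha z\mu+\alpha^2\nu$ is correct (the explicit $z$-terms do cancel, and your closed form for the bracket checks out), and your leading-term analysis gives $\ord_\infty W_f=a+\min(a,b)+\min(a,b,c)$ directly, with the non-cancellation neatly guaranteed by the factors $(a-b)^2$, $(a-c)^2$, $(a-b)(a-c)(b-c)$; the only tie that could threaten the $\alpha^2$-term in case (ii), namely $c-a=2(b-a)$, is excluded by $b\le c$, as your computation implicitly uses. What each approach buys: the paper's is shorter but leans entirely on the appendix to justify that the exponents $\kappa_\infty^{(1)}$, $\kappa_\infty^{(2)}$ are \emph{attained} by the $g$, $h$ blocks; yours makes the Wronskian order an independent computation, after which only upper bounds on the exponents are needed.

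One soft spot in your write-up: from $\alpha^2 h/\sqrt[3]{W_f}$ being a combination of $\eta_2$, $\eta_3$, $y_+$ (whose orders are only bounded \emph{below} by $\kappa_\infty^{(1)}$, $\kappa_\infty^{(2)}$, $\kappa_\infty^{(3)}$, since the leading coefficients $c_{0,j}$ in Remark \ref{rmk-apparent} may vanish), you only get the upper bounds $\kappa_\infty^{(1)}\le\min(a,b,c)-d/3$ and $\kappa_\infty^{(2)}\le\min(a,b)-d/3$; your phrase ``with equality attained'' is asserted, not proved. But this costs nothing given your step 1: since $\kappa_\infty^{(1)}+\kappa_\infty^{(2)}=-\kappa_\infty^{(3)}=d/3-a$ and your formula $d=a+\min(a,b)+\min(a,b,c)$ makes this sum equal to the sum of the two upper bounds, both inequalities are forced to be equalities. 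So the trace relation, which you invoke only to recover $\kappa_\infty^{(2)}$, actually closes the whole argument; stated that way your proof is complete and self-contained, needing from the appendix only the one-dimensionality of the log-free solution space at a completely-not-apparent cusp.
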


\begin{proof}
 Let $r=\frac13\ord_\infty W_f$. Since up to scalars,
 $g_3(z)=f(z)/\sqrt[3]{W_f(z)}$ is the unique solution of~\eqref{eq-1} without logarithmic singularity near $\infty$, according to
 Frobenius' method for complex ordinary differential equations (see, e.g., Appendix~\ref{section-3}), we
 must have $\kappa_\infty^{(3)}=\ord_\infty f-r$. Likewise, because
 $g_2(z)=(2zf(z)+\alpha g(z))/\sqrt[3]{W_f(z)}$ and
 $g_1(z)=\big(z^2f(z)+\alpha zg(z)+\alpha^2h(z)\big)/\sqrt[3]{W_f(z)}$ are the other
 two linearly independent solutions of \eqref{eq-1}, we have
 \[\kappa_\infty^{(2)}=\min(\ord_\infty f,\ord_\infty g)-r,\qquad\kappa_\infty^{(1)}=\min(\ord_\infty f,\ord_\infty g,\ord_\infty
 h)-r.\] Analyzing case by case, we obtain the claimed conclusions.
\end{proof}

\begin{proof}[Proof of Theorem~\ref{thm-2}] First of all, recall that
 \begin{equation} \label{equation: dim}
 \dim\wt\sM_k^{\le 2}(\SL(2,\Z))=1+\gauss{\frac k4}.
 \end{equation}
 Let $f(z)=f_0(z)+f_1(z)E_2(z)+f_2(z)E_2(z)^2$,
 $f_j\in\sM_{k-2j}(\SL(2,\Z))$, be an extremal quasimodular form in
 $\wt\sM_k^{\le 2}(\SL(2,\Z))$. Note that $f_j(z)$ cannot have a
 common zero on $\H$. To see this, say, assume that $f_j(z)$ has a
 common zero at $z_0$. Let $F(z)$ be a modular form of weight $k'$
 with a simple zero at $z_0$ and nonvanishing elsewhere. Then
 $f(z)/F(z)\in \widetilde{\sM}_{k-k'}^{\le 2}(\SL(2,\Z))$ has order $\gauss{k/4}$ at $\infty$, which is impossible
 by \eqref{equation: dim} and the facts that $k'\ge 4$ and that
 extremal quasimodular forms of depth $2$ exist for any weight and
 are unique up to scalars. Therefore, $f_j(z)$ have no common zeros
 on $\H$.

 Now according to Pellarin's argument \cite{PN}, one has $\ord_\infty
 W_f=\ord_\infty f=\gauss{k/4}$. Hence, we have
\[
 W_f(z)=\begin{cases}
 c\Delta(z)^{k/4}, &\text{if }k\equiv 0\mod 4, \\
 c\Delta(z)^{(k-2)/4}E_6(z),
 &\text{if }k\equiv 2\mod 4, \end{cases}
\]
 for some nonzero complex number $c$. Also,
 by Lemma \ref{lemma: local exponents at
 infinity}, we must have $\ord_\infty(f_1+2f_2E_2)=0$
 and the local exponents at $\infty$ must be $-r/3$, $-r/3$, and
 $2r/3$, where $r=\gauss{k/4}$. In other words, the indicial equation
 of \eqref{eq-1} at $\infty$ is
 \begin{equation} \label{equation: indicial in proof}
 x^3-\frac{r^2}3x-\frac{2r^3}{27}=0.
 \end{equation}

 Consider first the case $k\equiv 0\mod 4$. In this case, since
 $\Delta(z)$ has no zeros on $\H$,
 \eqref{eq-1} has no singularities on $\H$. Hence, $Q(z)$
 is a multiple of $E_4(z)$, while $R(z)$ is
 a multiple of $E_6(z)$. In view of \eqref{equation: indicial in
 proof} and Lemma \ref{lemma: indicial at infinity}, we see that
\[
 Q(z)=-\frac{k^2}{48}E_4(z), \qquad R(z)=-\frac{k^3}{864}E_6(z).
\]

 We now consider the case $k\equiv 2\mod 4$. In this case,
 $W_f(z)=c\Delta(z)^{(k-2)/4}E_6(z)$ has a simple zero at ${\rm i}$. Thus,
 the local exponents of \eqref{eq-1} at ${\rm i}$ are $-1/3$,
 $2/3$, and $8/3$ since the differences must be positive integers
 and the sum must be equal to $3$, and the indicial equation at ${\rm i}$
 is
 \begin{equation} \label{equation: indicial at i}
 x^3-3x^2+\frac23x+\frac{16}{27}=0.
 \end{equation}
 We will use this information, together with the apparentness
 property, to determine~$Q(z)$ and~$R(z)$.

 First of all, according to Lemma \ref{lemma: Q R general form}, $Q(z)$ is of the form
\[
 Q(z)=r_\infty E_4(z)+r_{\rm i}^{(2)}\frac{E_4(z)\big(E_4(z)^3-E_6(z)^2\big)}{E_6(z)^2},
 \] while $R(z)$ is of the form
\[
 R(z)=s_\infty E_6(z)+s_{\rm i}^{(3)}\frac{\big(E_4(z)^3-E_6(z)^2\big)^2}{E_6(z)^3}
 +s_{\rm i}^{(1)}\frac{E_4(z)^3-E_6(z)^2}{E_6(z)}
\]
 for some complex numbers $r_\infty$, $r_{\rm i}^{(2)}$, $s_\infty$,
 $s_{\rm i}^{(3)}$, and $s_{\rm i}^{(1)}$.
 The parameters $r_\infty$ and $s_\infty$ are determined by the local exponents
 at $\infty$. As in the case $k\equiv 0\mod 4$, we find that
 $r_\infty=-\frac{(k-2)^2}{48}$ and $s_\infty=-\frac{(k-2)^3}{864}$.
 We now determine the other parameters.

 By \eqref{equation: indicial at i} and Lemma~\ref{lemma: indicial at
 z}, we have
 $r_{\rm i}^{(2)}=-\frac13$ and $ s_{\rm i}^{(3)}=\frac 5{54} $.
 To determine the remaining parameter $s_{\rm i}^{(1)}$, we let
 $w=(z-{\rm i})/(z+{\rm i})$ and recall that, by~\eqref{equation: E4 E6},
\[
 E_4(z)=(1-w)^4\left(B+\frac5{72}B^2u^2+\frac{5}{6912}B^3u^4
 +\cdots\right)
\]
 and
\[
 E_6(z)=(1-w)^6\left(-\frac12B^2u-\frac7{432}B^3u^3
 -\frac7{17280}B^4u^5+\cdots\right),
\]
 where $u=-4\pi w$ and $B=E_4({\rm i})$. (Note that $E_6(z_0)$ and the
 constant~$A$ in Lemma~\ref{lemma: local solutions} are both~$0$ when
 $z_0={\rm i}$.) Then the power series~$\wt Q(x)$ and~$\wt R(x)$ such
 that $Q(z)=(1-w)^4\wt Q(-4\pi w)$ and
 $R(z)=(1-w)^6\wt R(-4\pi w)$
 are
 \begin{equation*}
 \wt Q(x)=\frac{4r_{\rm i}^{(2)}}{x^2}
 +\left(r_\infty-\frac{4r_{\rm i}^{(2)}}{27}\right)B+\cdots
 =-\frac{4}{3x^2}+\left(-\frac{(k-2)^2}{48}
 +\frac4{81}\right)B+\cdots
 \end{equation*}
 and
 \begin{equation*}
 \wt R(x)=-\frac{8s_{\rm i}^{(3)}}{x^3}
 +\left(-2s_{\rm i}^{(1)}+\frac{13}9s_{\rm i}^{(3)}\right) \frac Bx+\cdots
 =-\frac{20}{27x^3}+\left(-2s_{\rm i}^{(1)}+\frac{65}{486}\right)\frac
 Bx+\cdots,
 \end{equation*}
 respectively. By Lemma \ref{lemma: local solutions}, the series~\eqref{eq-11} with $c_0=1$
 is a solution of~\eqref{equation: DE in theorem} if and only if
 the power series $\wt y(x)=\sum_{n=0}^\infty c_nx^{n+\alpha}$
 satisfies~\eqref{equation: DE in wt y}.
 Consider $\wt y(x)$ with $\alpha=2/3$. The coefficients~$c_n$ need
 to satisfy
 \begin{gather*}
 \sum_{n=0}^\infty c_n(n+2/3)(n-1/3)(n-4/3) x^{n-7/3} \\
 \qquad{}+\left(-\frac4{3x^2}
 +\left(\frac4{81}-\frac{(k-2)^2}{48}\right)B+\cdots\right)
 \sum_{n=0}^\infty c_n(n+2/3)x^{n-1/3} \\
 \qquad{}+\left(\frac{16}{27x^3}
 +\left(\frac{65}{486}-2s_{\rm i}^{(1)}\right)\frac Bx+\cdots\right)
 \sum_{n=0}^\infty c_nx^{n+2/3}=0.
 \end{gather*}
 Considering the coefficients of
 $x^{-1/3}$, we find that
 $s_{\rm i}^{(1)}=\frac{12-(k-2)^2}{144}.$
 This completes the proof of the theorem.
\end{proof}

\appendix

\section{The solution structure of third order ODE}\label{section-3}

In this appendix, we apply Frobenius' method to study the solution structure for
 \begin{equation} \label{eq-23}
 \mathcal{L}y:=\frac{{\rm d}^3}{{\rm d}x^3} y(x)+ \mathcal{Q}(x)\frac{{\rm d}}{{\rm d}x} y(x)
 +\mathcal{R}(x)y(x)=0.
 \end{equation}
See, e.g., \cite{Henrici,Hille} for detailed expositions of Frobenius' method.
The following arguments are known to experts in this field. However, since we can not find a suitable reference, we would like to provide all necessary details for later usage.

Suppose $0$ is a regular singular point of (\ref{eq-23}) with three local exponents
\[\kappa_1,\qquad \kappa_2=\kappa_1+m_1,\qquad \kappa_3=\kappa_2+m_2,\qquad \text{where}\quad m_1,m_2\in\mathbb{Z}_{\geq 0},\]
Since the exponent differences are all integers, there might be logarithmic singularities for solutions of~(\ref{eq-23}), or more precisely, the local expansion of some solutions at $x=0$ might contains $\ln x$ terms or even $(\ln x)^2$ terms. This leads us to give the following definition.

\begin{Definition}\quad
\begin{itemize}\itemsep=0pt
\item[(1)] We say (\ref{eq-23}) is apparent at $x=0$ if all solutions have no logarithmic singularities at $x=0$. Otherwise~(\ref{eq-23}) is called not apparent at $x=0$.
\item[(2)] If (\ref{eq-23}) is not apparent at $x=0$ and the local expansion of some solutions contains $(\ln x)^2$ terms, we say~(\ref{eq-23}) is completely not apparent at $x=0$.
\end{itemize}
\end{Definition}

Since $0$ is a regular singular point of (\ref{eq-23}), both $x^2\mathcal{Q}(x)$ and $x^3\mathcal{R}(x)$ are holomorphic at $x=0$, so we may write
\[\mathcal{Q}(x)=\sum_{n=-2}^{\infty}a_n x^{n},\qquad \mathcal{R}(x)=\sum_{n=-3}^{\infty}b_n x^{n}.\]
Let
\[y(x;\alpha)=x^{\alpha}\sum_{n=0}^{\infty}c_n(\alpha)x^n,\qquad\text{where} \quad c_0(\alpha)=1.\]
Then
\begin{align}
\mathcal{L}y(x;\alpha)&=\sum_{n=0}^{\infty}
\bigg(f(\alpha+n)c_n(\alpha)
+\sum_{k=0}^{n-1}[(\alpha+k)a_{n-k-2}+b_{n-k-3}]c_k(\alpha)\bigg)x^{n+\alpha-3}\nonumber\\
&=:x^{\alpha-3}\sum_{n=0}^{\infty}R_n(\alpha)x^n,\label{eq-33}
\end{align}
where
\begin{equation*}
f(s):=s(s-1)(s-2)+sa_{-2}+b_{-3}=\prod_{j=1}^{3}(s-\kappa_j),
\end{equation*}
i.e., $f(s)=0$ is the indicial equation of (\ref{eq-23}) at $x=0$.
Note $R_0(\alpha)=f(\alpha)$.

For any $\alpha$ satisfying $|\alpha-\kappa_3|< 1/2$, we have
\[f(\alpha+n)\neq 0\qquad \text{for any} \ n\geq 1,\]
so by letting
\[
R_{n}(\alpha)=f(\alpha+n)c_n(\alpha)
+\sum_{k=0}^{n-1}[(\alpha+k)a_{n-k-2}+b_{n-k-3}]c_k(\alpha)=0,\qquad n\geq 1,
\]
we see that $c_n(\alpha)$ can be uniquely solved for any $n\geq 1$ such that
\begin{equation}\label{eq-24}
\mathcal{L}y(x;\alpha)=x^{\alpha-3}f(\alpha),\qquad \text{for any} \ |\alpha-\kappa_3|< 1/2.
\end{equation}
Note that $c_n(\alpha)\in \mathbb{C}(\alpha)$ is a rational function of $\alpha$ for any $n\geq 1$.

In particular, letting $\alpha=\kappa_3$ in (\ref{eq-24}) leads to~$\mathcal{L}y(x;\kappa_3)=0$, so

\begin{Lemma}\label{lemma-3-2}
\begin{equation*}
y(x;\kappa_3)=x^{\kappa_3}\sum_{n=0}^{\infty}c_n(\kappa_3)x^n
\end{equation*}
is always a solution of \eqref{eq-23} with the local exponent~$\kappa_3$.
\end{Lemma}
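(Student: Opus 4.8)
The plan is to specialize the general Frobenius identity already derived in \eqref{eq-24} to the largest local exponent. First I would recall the computation \eqref{eq-33}, which shows that for the ansatz $y(x;\alpha)=x^{\alpha}\sum_{n\ge0}c_n(\alpha)x^n$ with $c_0(\alpha)=1$ one has $\mathcal{L}y(x;\alpha)=x^{\alpha-3}\sum_{n\ge0}R_n(\alpha)x^n$, where $R_0(\alpha)=f(\alpha)$ and, for $n\ge1$, $R_n(\alpha)=f(\alpha+n)c_n(\alpha)+\sum_{k=0}^{n-1}[(\alpha+k)a_{n-k-2}+b_{n-k-3}]c_k(\alpha)$.

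The key observation is that $\kappa_3=\max\{\kappa_1,\kappa_2,\kappa_3\}$ is the largest root of the indicial polynomial $f$. Hence for every $n\ge1$ the argument $\kappa_3+n$ strictly exceeds all three roots, so $f(\kappa_3+n)\neq0$; equivalently, $\alpha=\kappa_3$ lies in the range $|\alpha-\kappa_3|<1/2$ treated in \eqref{eq-24} and is not a pole of any of the rational functions $c_n(\alpha)$. This nonvanishing is precisely what guarantees that the recursion $R_n(\kappa_3)=0$ can be solved uniquely for $c_n(\kappa_3)$ for all $n\ge1$: no resonance between $\kappa_3$ and a lower exponent can occur when stepping up by a positive integer. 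With these coefficients the only surviving term in $\mathcal{L}y(x;\kappa_3)$ is $x^{\kappa_3-3}R_0(\kappa_3)=x^{\kappa_3-3}f(\kappa_3)=0$, so $y(x;\kappa_3)$ is annihilated by $\mathcal{L}$ and carries the local exponent $\kappa_3$.

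What remains is to promote this formal power series to a genuine analytic solution, i.e., to verify that $\sum_{n\ge0}c_n(\kappa_3)x^n$ converges in a punctured neighborhood of the regular singular point $x=0$. I do not expect this to be the conceptual difficulty: it is the standard convergence statement of Frobenius theory for a regular singular point, which can be cited from, e.g., \cite{Henrici,Hille}. Thus the genuine content of the lemma is the structural point that the top exponent never runs into an indicial obstruction, which makes the logarithm-free solution exist unconditionally, in contrast to the lower exponents $\kappa_1,\kappa_2$, where resonances may force logarithmic terms. The main obstacle, such as it is, is simply to present this nonvanishing argument cleanly and to invoke convergence rather than reprove it.
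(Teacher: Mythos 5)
Your proposal is correct and takes essentially the same route as the paper: the paper likewise establishes $\mathcal{L}y(x;\alpha)=x^{\alpha-3}f(\alpha)$ in \eqref{eq-24} by solving $R_n(\alpha)=0$ recursively for $|\alpha-\kappa_3|<1/2$ (possible because $f(\alpha+n)\neq0$ for all $n\ge1$, $\kappa_3$ being the largest exponent), and then simply sets $\alpha=\kappa_3$ to conclude $\mathcal{L}y(x;\kappa_3)=0$. Your deferral of the convergence of the Frobenius series to the standard theory is consistent with the paper, which cites \cite{Henrici,Hille} for these details rather than reproving them.
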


By (\ref{eq-24}), we have
\[\mathcal{L}\frac{\partial y(x;\alpha)}{\partial \alpha}=x^{\alpha-3}f(\alpha)\ln x +x^{\alpha-3}f'(\alpha),\]
so
\begin{equation}\label{eq-27}
\mathcal{L}\frac{\partial y(x;\alpha)}{\partial \alpha}\Big|_{\alpha=\kappa_3}=x^{\kappa_3-3}f'(\kappa_3).
\end{equation}
Similarly,
\begin{equation}\label{eq-28}
\mathcal{L}\frac{\partial^2 y(x;\alpha)}{\partial \alpha^2}\Big|_{\alpha=\kappa_3}=2x^{\kappa_3-3}f'(\kappa_3)\ln x +x^{\kappa_3-3}f''(\kappa_3).
\end{equation}
Note that
\begin{gather}\label{eq-30}
\frac{\partial y(x;\alpha)}{\partial \alpha}\Big|_{\alpha=\kappa_3}=(\ln x) y(x;k_3) +x^{\kappa_3}\sum_{n=1}^{\infty}c_n'(\kappa_3)x^n,
\\
\label{eq-31}
\frac{\partial^2 y(x;\alpha)}{\partial \alpha^2}\Big|_{\alpha=\kappa_3}=(\ln x)^2 y(x;k_3)+2(\ln x)x^{\kappa_3}\sum_{n=1}^{\infty}c_n'(\kappa_3)x^n
+x^{\kappa_3}\sum_{n=1}^{\infty}c_n''(\kappa_3)x^n.
\end{gather}

\begin{Theorem}\label{thm-3-3}
 If $\kappa_1=\kappa_2=\kappa_3$, then $\frac{\partial
 y(x;\alpha)}{\partial \alpha}|_{\alpha=\kappa_3}$ and
 $\frac{\partial^2 y(x;\alpha)}{\partial
 \alpha^2}|_{\alpha=\kappa_3}$ given in \eqref{eq-30} and
 \eqref{eq-31} are the other two linearly independent solutions of
 \eqref{eq-23}, namely \eqref{eq-23} is completely not apparent at
 $x=0$.
\end{Theorem}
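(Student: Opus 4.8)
The plan is to produce the two missing solutions by differentiating the Frobenius Ansatz $y(x;\alpha)$ in the parameter $\alpha$ at $\alpha=\kappa_3$, which is exactly what the identities \eqref{eq-27} and \eqref{eq-28} are engineered to permit. The starting observation is that the hypothesis $\kappa_1=\kappa_2=\kappa_3$ forces the indicial polynomial to factor as $f(s)=(s-\kappa_3)^3$, so that $f(\kappa_3)=f'(\kappa_3)=f''(\kappa_3)=0$. Feeding these vanishings into \eqref{eq-24}, \eqref{eq-27} and \eqref{eq-28} shows at once that all three right-hand sides are zero: $\mathcal{L}y(x;\kappa_3)=0$ (this is Lemma~\ref{lemma-3-2}), $\mathcal{L}\,\partial_\alpha y|_{\alpha=\kappa_3}=x^{\kappa_3-3}f'(\kappa_3)=0$, and $\mathcal{L}\,\partial_\alpha^2 y|_{\alpha=\kappa_3}=2x^{\kappa_3-3}f'(\kappa_3)\ln x+x^{\kappa_3-3}f''(\kappa_3)=0$. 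Hence all three functions are solutions of \eqref{eq-23}.

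Before differentiating I would first record that this operation is legitimate. Each $c_n(\alpha)$ is a rational function of $\alpha$ obtained by successively dividing by $f(\alpha+n)$; since the only root of $f$ is $\kappa_3$, we have $f(\alpha+n)\neq0$ for every $n\geq1$ on the disk $|\alpha-\kappa_3|<1/2$, so each $c_n(\alpha)$ is holomorphic there. Consequently $y(x;\alpha)$ is smooth in $\alpha$ near $\kappa_3$, its $\alpha$-derivatives have the explicit shapes \eqref{eq-30} and \eqref{eq-31}, and $\partial_\alpha$ commutes with $\mathcal{L}$ (whose coefficients are independent of $\alpha$), which is precisely the mechanism behind \eqref{eq-27} and \eqref{eq-28}.

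The remaining point is linear independence, which I would read off from the triangular structure in powers of $\ln x$ displayed in \eqref{eq-30} and \eqref{eq-31}. The three solutions are polynomials in $\ln x$ of exact degrees $0$, $1$, $2$, and in each the coefficient of the top power of $\ln x$ equals $y(x;\kappa_3)=x^{\kappa_3}(1+O(x))\not\equiv0$. Thus, if $A\,y(x;\kappa_3)+B\,\partial_\alpha y|_{\alpha=\kappa_3}+C\,\partial_\alpha^2 y|_{\alpha=\kappa_3}\equiv0$, then comparing the coefficient of $(\ln x)^2$ gives $C\,y(x;\kappa_3)=0$, whence $C=0$; the coefficient of $\ln x$ then gives $B=0$, and finally $A=0$. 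Therefore the three solutions form a fundamental system, and since $\partial_\alpha^2 y|_{\alpha=\kappa_3}$ genuinely contains a $(\ln x)^2$ term, \eqref{eq-23} is completely not apparent at $x=0$ by definition.

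The only step requiring any real care is the justification in the second paragraph that differentiation in $\alpha$ is valid and interchanges with $\mathcal{L}$; everything else is bookkeeping once $f(\kappa_3)=f'(\kappa_3)=f''(\kappa_3)=0$ is noted. I expect no genuine obstacle here, since the analyticity of the $c_n(\alpha)$ on $|\alpha-\kappa_3|<1/2$ is immediate and the interchange is already built into \eqref{eq-27} and \eqref{eq-28}.
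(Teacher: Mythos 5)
Your proof is correct and follows essentially the same route as the paper, which likewise deduces the result from \eqref{eq-27} and \eqref{eq-28} via $f(s)=(s-\kappa_3)^3$ and hence $f'(\kappa_3)=f''(\kappa_3)=0$. The extra details you supply---the holomorphy of the $c_n(\alpha)$ on $|\alpha-\kappa_3|<1/2$ justifying differentiation under $\mathcal{L}$, and the linear independence argument via the triangular structure in powers of $\ln x$---are exactly the points the paper leaves implicit, and both are handled correctly.
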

\begin{proof}
Since $f(s)=(s-\kappa_3)^3$,
we have $f'(\kappa_3)=f''(\kappa_3)=0$, so this theorem follows from (\ref{eq-27}) and~(\ref{eq-28}).
\end{proof}

Next we consider the case $\kappa_1<\kappa_2=\kappa_3$, i.e., $m_1>0$, $m_2=0$ and $f(s)=(s-\kappa_1)(s-\kappa_3)^2$.
Then $f'(\kappa_3)=0$ and $f''(\kappa_3)\neq 0$, so
$\frac{\partial y(x;\alpha)}{\partial \alpha}|_{\alpha=\kappa_3}$ given in~(\ref{eq-30}) is the second solution of~(\ref{eq-23}), and (\ref{eq-28}) becomes
\begin{equation}\label{eq-38}
\mathcal{L}\frac{\partial^2 y(x;\alpha)}{\partial \alpha^2}\Big|_{\alpha=\kappa_3}=x^{\kappa_3-3}f''(\kappa_3)\neq 0.
\end{equation}

On the other hand, $f(\kappa_1+n)\neq 0$ for $n\in\mathbb{N}\setminus\{m_1\}$. Thus by letting $c_{m_1}(\kappa_1)=0$ and $R_{n}(\kappa_1)=0$ for any $n\in\mathbb{N}\setminus\{m_1\}$ in $\mathcal{L}y(x;\kappa_1)$ (see (\ref{eq-33}) with $\alpha=\kappa_1$), we see that $c_n(\kappa_1)$ can be uniquely solved for any $n\in\mathbb{N}\setminus\{m_1\}$ such that
\begin{equation}\label{eq-34}
\mathcal{L}y(x;\kappa_1)=x^{\kappa_1-3}R_{m_1}(\kappa_1)x^{m_1}=R_{m_1}(\kappa_1)x^{\kappa_3-3},
\end{equation}
where
\begin{equation*}
R_{m_1}(\kappa_1)
=\sum_{k=0}^{m_1-1}[(\kappa_1+k)a_{m_1-k-2}+b_{m_1-k-3}]c_k(\kappa_1)
\end{equation*}
is a constant. Thus we obtain

\begin{Theorem}\label{thm-3-4} Suppose
 $\kappa_1<\kappa_2=\kappa_3$. Then $\frac{\partial
 y(x;\alpha)}{\partial \alpha}|_{\alpha=\kappa_3}$ given in~\eqref{eq-30} is the second solution of~\eqref{eq-23}. Furthermore,
\begin{itemize}\itemsep=0pt
\item[$(1)$]
If $R_{m_1}(\kappa_1)=0$, then
\begin{equation*}
y(x;\kappa_1)=x^{\kappa_1}\sum_{n=0}^{\infty}c_n(\kappa_1)x^n
\end{equation*}
is the third solution of \eqref{eq-23} that has the local exponent~$\kappa_1$.

\item[$(2)$] If $R_{m_1}(\kappa_1)\neq 0$, then
 \eqref{eq-38} and \eqref{eq-34} imply that
\begin{align*}
y_3(x):={} &\frac{\partial^2 y(x;\alpha)}{\partial \alpha^2}\Big|_{\alpha=\kappa_3}-\frac{f''(\kappa_3)}{R_{m_1}(\kappa_1)}y(x;\kappa_1)\\
={}&(\ln x)^2 y(x;k_3)+2(\ln x)x^{\kappa_3}\sum_{n=1}^{\infty}c_n'(\kappa_3)x^n\\
&{} +x^{\kappa_3}\sum_{n=1}^{\infty}c_n''(\kappa_3)x^n
-\frac{f''(\kappa_3)}{R_{m_1}(\kappa_1)}y(x;\kappa_1)
\end{align*}
is the third solution of~\eqref{eq-23} that corresponds to the local
exponent $\kappa_1$, namely~\eqref{eq-23} is completely not apparent
at $x=0$.
\end{itemize}
\end{Theorem}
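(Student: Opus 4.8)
The plan is to read everything off from the factorization $f(s)=(s-\kappa_1)(s-\kappa_3)^2$ together with the inhomogeneous identities \eqref{eq-24}, \eqref{eq-27}, \eqref{eq-28}, \eqref{eq-34}, and \eqref{eq-38} already derived above. Since $\kappa_3$ is a double root of $f$, we have $f(\kappa_3)=f'(\kappa_3)=0$ while $f''(\kappa_3)\neq 0$. First I would substitute $f'(\kappa_3)=0$ into \eqref{eq-27} to conclude $\mathcal{L}\,\partial_\alpha y(x;\alpha)|_{\alpha=\kappa_3}=0$, so that the function displayed in \eqref{eq-30} is a genuine solution of \eqref{eq-23}. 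Comparing it with the solution $y(x;\kappa_3)$ of Lemma~\ref{lemma-3-2}, linear independence is immediate: the expression in \eqref{eq-30} carries a term $(\ln x)\,y(x;\kappa_3)$, whose coefficient of $(\ln x)x^{\kappa_3}$ equals $c_0(\kappa_3)=1$, whereas $y(x;\kappa_3)$ is log-free. This establishes the ``second solution'' assertion.

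For part~(1) I would simply specialize \eqref{eq-34}: when $R_{m_1}(\kappa_1)=0$ the right-hand side vanishes, so $\mathcal{L}y(x;\kappa_1)=0$ and $y(x;\kappa_1)=x^{\kappa_1}\sum_n c_n(\kappa_1)x^n$ is a solution with local exponent $\kappa_1$. It is independent of the previous two because its leading exponent $\kappa_1$ is strictly smaller than $\kappa_3$ (as $m_1>0$) and it contains no logarithm; counting the lowest power of $x$ together with the absence of a log separates it from every combination of the other two solutions.

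For part~(2) the key observation is that \eqref{eq-38} and \eqref{eq-34} send $\partial_\alpha^2 y(x;\alpha)|_{\alpha=\kappa_3}$ and $y(x;\kappa_1)$, respectively, to \emph{the same} monomial $x^{\kappa_3-3}$, up to the nonzero constants $f''(\kappa_3)$ and $R_{m_1}(\kappa_1)$. Hence the stated combination $y_3(x)=\partial_\alpha^2 y|_{\alpha=\kappa_3}-\tfrac{f''(\kappa_3)}{R_{m_1}(\kappa_1)}\,y(x;\kappa_1)$ lies in $\ker\mathcal{L}$ by direct cancellation. Reading the expansion \eqref{eq-31}, the term $(\ln x)^2 y(x;\kappa_3)$ has leading part $(\ln x)^2 x^{\kappa_3}$ with coefficient $1$, and subtracting the log-free $y(x;\kappa_1)$ cannot cancel it; therefore $y_3$ genuinely contains a $(\ln x)^2$ term and \eqref{eq-23} is completely not apparent at $x=0$. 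Linear independence of $y(x;\kappa_3)$, $\partial_\alpha y|_{\alpha=\kappa_3}$, and $y_3$ then follows by peeling off the coefficients of $(\ln x)^2$, then $\ln x$, then the log-free part in any vanishing linear combination.

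I expect the only real subtlety to be bookkeeping rather than ideas: confirming that the two inhomogeneous images are proportional to the identical power $x^{\kappa_3-3}$ — which is precisely the mechanism by which the coincidence $\kappa_2=\kappa_3$ forces a $(\ln x)^2$ singularity — and then checking that the surviving $(\ln x)^2$ coefficient does not accidentally vanish. Both points reduce to the normalization $c_0(\kappa_3)=1$ and the nondegeneracies $f''(\kappa_3)\neq 0$ and $R_{m_1}(\kappa_1)\neq 0$, so no new estimates are required beyond what is already in place.
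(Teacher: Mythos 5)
Your proposal is correct and follows essentially the same route as the paper, which obtains Theorem~\ref{thm-3-4} directly from the identities \eqref{eq-27}, \eqref{eq-38} and \eqref{eq-34}: since $f(s)=(s-\kappa_1)(s-\kappa_3)^2$ gives $f'(\kappa_3)=0$ and $f''(\kappa_3)\neq 0$, the function in \eqref{eq-30} is a solution, and the two inhomogeneous terms, both proportional to $x^{\kappa_3-3}$ (because $\kappa_1+m_1=\kappa_3$), cancel in the stated combination $y_3$. Your explicit linear-independence verification by peeling off the $(\ln x)^2$, $\ln x$, and log-free parts is a detail the paper leaves implicit, but it is the same mechanism.
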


The remaining case is $\kappa_1\leq \kappa_2<\kappa_3$, i.e., $m_2=\kappa_3-\kappa_2>0$.
Then for any $\alpha$ satisfying $|\alpha-\kappa_2|< 1/2$, we have
\[f(\alpha+n)\neq 0\qquad \text{for any} \ n\in\mathbb{N}\setminus\{m_2\},\]
and
\[f(\alpha+n)=0\qquad \text{for $n\geq 1$ if and only if $\alpha=\kappa_2$ and $n=m_2$},\]
so by letting $c_{m_2}(\alpha)=0$ and $R_{n}(\alpha)=0$ for any $n\in\mathbb{N}\setminus\{m_2\}$ in $\mathcal{L}y(x;\alpha)$ (see (\ref{eq-33})), we see that $c_n(\alpha)$ can be uniquely solved for any $n\in\mathbb{N}\setminus\{m_2\}$ such that
\begin{equation}\label{eq-37}
\mathcal{L}y(x;\alpha)=x^{\alpha-3}f(\alpha)+x^{\alpha+m_2-3}R_{m_2}(\alpha),
\qquad \text{for} \ |\alpha-\kappa_2|< 1/2,
\end{equation}
where
\[R_{m_2}(\alpha)=\sum_{k=0}^{m_2-1}[(\alpha+k)a_{m_2-k-2}+b_{m_2-k-3}]c_k(\alpha)\in\mathbb{C}(\alpha),\]
because $c_k(\alpha)\in\mathbb{C}(\alpha)$ for any $n\in\mathbb{N}\setminus\{m_2\}$.

Similarly as before, it follows from (\ref{eq-37}) that
\begin{gather}\label{eq-39}
\mathcal{L}y(x;\kappa_2)=x^{\kappa_3-3}R_{m_2}(\kappa_2),
\\ \label{eq-40}
\mathcal{L}\frac{\partial y(x;\alpha)}{\partial \alpha}\Big|_{\alpha=\kappa_2}=x^{\kappa_2-3}f'(\kappa_2)+x^{\kappa_3-3}R_{m_2}'(\kappa_2)
+(\ln x)x^{\kappa_3-3}R_{m_2}(\kappa_2),
\end{gather}
where
\begin{equation*}
\frac{\partial y(x;\alpha)}{\partial \alpha}\Big|_{\alpha=\kappa_2}=(\ln x) y(x;\kappa_2) +x^{\kappa_2}\sum_{n=1}^{\infty}c_n'(\kappa_2)x^n.
\end{equation*}

\begin{Theorem}\label{thm-3-5} Suppose $\kappa_1=\kappa_2<\kappa_3$.
\begin{itemize}\itemsep=0pt
\item[$(1)$] If $R_{m_2}(\kappa_2)=0$, then \eqref{eq-39} implies that
 $y(x;\kappa_2)$ is the second solution of \eqref{eq-23}, and
 \begin{align*}
y_3(x):={}&\frac{\partial y(x;\alpha)}{\partial \alpha}\Big|_{\alpha=\kappa_2}-\frac{R_{m_2}'(\kappa_2)}{f'(\kappa_3)}\frac{\partial y(x;\alpha)}{\partial \alpha}\Big|_{\alpha=\kappa_3}\\
={}&(\ln x) y(x;\kappa_2) +x^{\kappa_2}\sum_{n=1}^{\infty}c_n'(\kappa_2)x^n\\
&{}-\frac{R_{m_2}'(\kappa_2)}{f'(\kappa_3)}\bigg((\ln x) y(x;\kappa_3) +x^{\kappa_3}\sum_{n=1}^{\infty}c_n'(\kappa_3)x^n\bigg)
\end{align*}
is the third solution of \eqref{eq-23}.
\item[$(2)$] If $R_{m_2}(\kappa_2)\neq 0$, then \eqref{eq-27} and
 \eqref{eq-39} imply that
\begin{gather*}
\frac{\partial y(x;\alpha)}{\partial \alpha}\Big|_{\alpha=\kappa_3}-\frac{f'(\kappa_3)}{R_{m_2}(\kappa_2)}y(x;\kappa_2)\\
\qquad{} =(\ln x) y(x;\kappa_3) +x^{\kappa_3}\sum_{n=1}^{\infty}c_n'(\kappa_3)x^n-\frac{f'(\kappa_3)}{R_{m_2}(\kappa_2)}y(x;\kappa_2)
\end{gather*}
is the second solution of \eqref{eq-23}, and
\begin{align*}
y_3(x):={}&\frac{\partial^2 y(x;\alpha)}{\partial \alpha^2}\Big|_{\alpha=\kappa_3}-
\frac{2f'(\kappa_3)}{R_{m_2}(k_2)}\frac{\partial y(x;\alpha)}{\partial \alpha}\Big|_{\alpha=\kappa_2}\\
&{}-\frac{f''(\kappa_3)R_{m_2}(\kappa_2)-2f'(\kappa_3)R_{m_2}'(\kappa_2)}{R_{m_2}(\kappa_2)^2}y(x;\kappa_2)\\
={}&(\ln x)^2 y(x;k_3)+2(\ln x)x^{\kappa_3}\sum_{n=1}^{\infty}c_n'(\kappa_3)x^n
+x^{\kappa_3}\sum_{n=1}^{\infty}c_n''(\kappa_3)x^n\\
&{}-\frac{2f'(\kappa_3)}{R_{m_2}(k_2)}\bigg((\ln x) y(x;\kappa_2) +x^{\kappa_2}\sum_{n=1}^{\infty}c_n'(\kappa_2)x^n\bigg)\\
&{}-\frac{f''(\kappa_3)R_{m_2}(\kappa_2)-2f'(\kappa_3)R_{m_2}'(\kappa_2)}{R_{m_2}(\kappa_2)^2}y(x;\kappa_2).
\end{align*}
is the third solution of \eqref{eq-23}, namely \eqref{eq-23} is completely not apparent at $x=0$.
\end{itemize}
\end{Theorem}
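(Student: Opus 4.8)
The plan is to reduce everything to a direct verification built on the identities \eqref{eq-27}, \eqref{eq-28}, \eqref{eq-39}, and \eqref{eq-40} already established, exploiting the single structural fact that in this regime $f(s)=(s-\kappa_2)^2(s-\kappa_3)$ has $\kappa_2$ as a double root. First I would record the consequences of this factorization: $f'(\kappa_2)=0$, $f''(\kappa_2)=2(\kappa_2-\kappa_3)\neq0$, and $f'(\kappa_3)=(\kappa_3-\kappa_2)^2\neq0$. Because $f'(\kappa_2)=0$, the term $x^{\kappa_2-3}f'(\kappa_2)$ in \eqref{eq-40} drops out, so that
\[
\mathcal{L}\frac{\partial y(x;\alpha)}{\partial\alpha}\Big|_{\alpha=\kappa_2}=x^{\kappa_3-3}R_{m_2}'(\kappa_2)+(\ln x)x^{\kappa_3-3}R_{m_2}(\kappa_2),
\]
which is the crucial simplification making the whole scheme work.

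For part (1), where $R_{m_2}(\kappa_2)=0$, identity \eqref{eq-39} immediately gives $\mathcal{L}y(x;\kappa_2)=0$, so $y(x;\kappa_2)$ is the announced second solution. To verify the stated $y_3$, I would apply $\mathcal{L}$ term by term: the $\partial_\alpha y|_{\alpha=\kappa_2}$ piece contributes $x^{\kappa_3-3}R_{m_2}'(\kappa_2)$ (the $\ln x$ term vanishes since $R_{m_2}(\kappa_2)=0$), while \eqref{eq-27} gives $x^{\kappa_3-3}f'(\kappa_3)$ for the $\partial_\alpha y|_{\alpha=\kappa_3}$ piece; the prescribed coefficient $R_{m_2}'(\kappa_2)/f'(\kappa_3)$ is exactly what cancels these. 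The explicit expansion then follows from \eqref{eq-30} applied at both $\kappa_2$ and $\kappa_3$.

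For part (2), where $R_{m_2}(\kappa_2)\neq0$, the same mechanism applies, but now $y(x;\kappa_2)$ is used as an inhomogeneous corrector rather than a solution. I would first check the second solution by combining \eqref{eq-27} with \eqref{eq-39}: the choice of coefficient $f'(\kappa_3)/R_{m_2}(\kappa_2)$ cancels the two right-hand sides, both proportional to $x^{\kappa_3-3}$. For the third solution I would compute $\mathcal{L}y_3$ using \eqref{eq-28} (which supplies both a $\ln x$ and a constant contribution), the simplified \eqref{eq-40} displayed above, and \eqref{eq-39}, then collect the coefficients of $(\ln x)x^{\kappa_3-3}$ and of $x^{\kappa_3-3}$ separately; the first vanishes because of the factor $2f'(\kappa_3)/R_{m_2}(\kappa_2)$, and the second vanishes precisely because the last coefficient in the definition of $y_3$ was chosen to be $(f''(\kappa_3)R_{m_2}(\kappa_2)-2f'(\kappa_3)R_{m_2}'(\kappa_2))/R_{m_2}(\kappa_2)^2$. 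The explicit logarithmic form then comes from \eqref{eq-31} and the expansions of $\partial_\alpha y|_{\alpha=\kappa_2}$.

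Finally I would address linear independence, the only non-mechanical point. In each case the first solution $y(x;\kappa_3)$ (Lemma~\ref{lemma-3-2}) and the second solution are distinguished by their leading powers or by the presence of a $\ln x$ factor, and the third solution $y_3$ is genuinely logarithmic: in part (1) its $\ln x$-coefficient is $y(x;\kappa_2)-\frac{R_{m_2}'(\kappa_2)}{f'(\kappa_3)}y(x;\kappa_3)$, which is nonzero since $y(x;\kappa_2)$ carries the strictly lower leading power $x^{\kappa_2}$; in part (2) $y_3$ contains $(\ln x)^2 y(x;\kappa_3)$, a term absent from the lower two solutions. I do not anticipate a genuine obstacle; the only care needed is the bookkeeping of which identity feeds which term and checking that the two independent coefficient cancellations in part~(2) occur simultaneously, which is exactly what the double-root condition $f'(\kappa_2)=0$ guarantees.
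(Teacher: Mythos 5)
Your proposal is correct and takes essentially the same route as the paper's own proof: both rest on the double-root fact $f'(\kappa_2)=0$ coming from $f(s)=(s-\kappa_2)^2(s-\kappa_3)$, which simplifies \eqref{eq-40}, and then verify $\mathcal{L}y_3=0$ by combining \eqref{eq-27}, \eqref{eq-28}, \eqref{eq-39} and the simplified \eqref{eq-40} so that the $(\ln x)x^{\kappa_3-3}$ and $x^{\kappa_3-3}$ contributions cancel exactly as the chosen coefficients dictate. Your explicit linear-independence check via the degree in $\ln x$ of each solution is a detail the paper leaves implicit, but it does not change the argument.
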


\begin{proof} Since $\kappa_1=\kappa_2<\kappa_3$, i.e., $m_1=0$, $m_2>0$ and $f(s)=(s-\kappa_2)^2(s-\kappa_3)$,
so $f'(\kappa_2)=0$ and $f'(\kappa_3)\neq 0$.

(1) Note that (\ref{eq-40}) becomes
\[
\mathcal{L}\frac{\partial y(x;\alpha)}{\partial \alpha}\Big|_{\alpha=\kappa_2}=x^{\kappa_3-3}R_{m_2}'(\kappa_2),
\]
we see from (\ref{eq-27}) that $\mathcal{L}y_3=0$.

(2) Note that (\ref{eq-40}) becomes
\begin{equation}\label{eq-42}
\mathcal{L}\frac{\partial y(x;\alpha)}{\partial \alpha}\Big|_{\alpha=\kappa_2}=x^{\kappa_3-3}R_{m_2}'(\kappa_2)
+(\ln x)x^{\kappa_3-3}R_{m_2}(\kappa_2).
\end{equation}
Then (\ref{eq-28}), (\ref{eq-39}) and (\ref{eq-42}) together imply $\mathcal{L}y_3=0$.
\end{proof}

Finally, we consider the last case $\kappa_1<\kappa_2<\kappa_3$, i.e., $m_1>0$ and $m_2>0$.
Then $f'(\kappa_j)\neq 0$ for all $j$. Since $f(\kappa_1+n)=0$ for $n\geq 1$ if and only if $n\in \{m_1, m_1+m_2\}$, by letting $c_{m_1}(\kappa_1)=c_{m_1+m_2}=0$ and $R_{n}(\kappa_1)=0$ for any $n\in\mathbb{N}\setminus\{m_1,m_1+m_2\}$ in $\mathcal{L}y(x;\kappa_1)$ (see (\ref{eq-33}) with $\alpha=\kappa_1$), we see that $c_n(\kappa_1)$ can be uniquely solved for any $n\in\mathbb{N}\setminus\{m_1,m_1+m_2\}$ such that
\begin{equation}\label{eq-34-0}
\mathcal{L}y(x;\kappa_1)=R_{m_1}(\kappa_1)x^{\kappa_2-3}+R_{m_1+m_2}(\kappa_1)x^{\kappa_3-3},
\end{equation}
where
\begin{gather*}
R_{m_1}(\kappa_1)=\sum_{k=0}^{m_1-1}[(\kappa_1+k)a_{m_1-k-2}+b_{m_1-k-3}]c_k(\kappa_1),\\
R_{m_1+m_2}(\kappa_1)=\sum_{k=0}^{m_1+m_2-1}[(\kappa_1+k)a_{m_1+m_2-k-2}+b_{m_1+m_2-k-3}]c_k(\kappa_1),
\end{gather*}
are constants.

\begin{Theorem}\label{thm-3-6} Suppose $\kappa_1<\kappa_2<\kappa_3$ and $R_{m_2}(\kappa_2)=0$. Then \eqref{eq-39} implies that $y(x;\kappa_2)$ is the second solution of \eqref{eq-23}. Furthermore,
\begin{itemize}\itemsep=0pt
\item[$(1)$] If $R_{m_1}(\kappa_1)=R_{m_1+m_2}(\kappa_1)=0$, then~\eqref{eq-34-0} implies that $y(x;\kappa_3)$ is the third solution
 of~\eqref{eq-23}, namely $0$ is an apparent singularity of~\eqref{eq-23}.
\item[$(2)$] If $R_{m_1}(\kappa_1)=0$ and $R_{m_1+m_2}(\kappa_1)\neq 0$,
 then
\begin{align*}
y_3(x):={}&\frac{\partial y(x;\alpha)}{\partial \alpha}\Big|_{\alpha=\kappa_3}-\frac{f'(\kappa_3)}{R_{m_1+m_2}(\kappa_1)}y(x;\kappa_1)\\
={} &(\ln x) y(x;\kappa_3) +x^{\kappa_3}\sum_{n=1}^{\infty}c_n'(\kappa_3)x^n-\frac{f'(\kappa_3)}{R_{m_1+m_2}(\kappa_1)}y(x;\kappa_1)
\end{align*}
is the third solution of \eqref{eq-23}.
\item[$(3)$] If $R_{m_1}(\kappa_1)\neq 0$, then
\begin{align*}
y_3(x)
:={} &\frac{\partial y(x;\alpha)}{\partial \alpha}\Big|_{\alpha=\kappa_2}-
\frac{f'(\kappa_2)}{R_{m_1}(k_1)}y(x;\kappa_1)\\
&{} -\frac{R_{m_1}(\kappa_1)R_{m_2}'(\kappa_2)-f'(\kappa_2)R_{m_1+m_2}(\kappa_1)}{f'(\kappa_3)R_{m_1}(\kappa_1)}\frac{\partial y(x;\alpha)}{\partial \alpha}\Big|_{\alpha=\kappa_3}\\
={} &(\ln x) y(x;\kappa_2) +x^{\kappa_2}\sum_{n=1}^{\infty}c_n'(\kappa_2)x^n-
\frac{f'(\kappa_2)}{R_{m_1}(k_1)}y(x;\kappa_1)-\\
&\frac{R_{m_1}(\kappa_1)R_{m_2}'(\kappa_2)-f'(\kappa_2)R_{m_1+m_2}(\kappa_1)}
{f'(\kappa_3)R_{m_1}(\kappa_1)}\bigg((\ln x) y(x;\kappa_3) +x^{\kappa_3}\sum_{n=1}^{\infty}c_n'(\kappa_3)x^n\bigg)
\end{align*}
is the third solution of \eqref{eq-23}.
\end{itemize}
\end{Theorem}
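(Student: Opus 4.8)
The plan is to exhibit, in each of the three subcases, a complete system of three linearly independent solutions, using only the Frobenius identities already established in the appendix. First I would record the structural consequences of the hypothesis $\kappa_1<\kappa_2<\kappa_3$: here $m_1,m_2>0$ and $f(s)=(s-\kappa_1)(s-\kappa_2)(s-\kappa_3)$ has three simple roots, so $f'(\kappa_j)\neq0$ for $j=1,2,3$. Combining the standing assumption $R_{m_2}(\kappa_2)=0$ with \eqref{eq-39} gives $\mathcal{L}y(x;\kappa_2)=0$, so $y(x;\kappa_2)$ is an honest, logarithm-free solution with leading term $x^{\kappa_2}$; together with $y(x;\kappa_3)$ from Lemma~\ref{lemma-3-2} this already supplies two linearly independent solutions, and the entire task reduces to producing a third one adapted to the lowest exponent $\kappa_1$.

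Next I would invoke \eqref{eq-34-0}, which records that the Frobenius series $y(x;\kappa_1)$, built with its two resonant coefficients $c_{m_1}$ and $c_{m_1+m_2}$ set to zero, satisfies $\mathcal{L}y(x;\kappa_1)=R_{m_1}(\kappa_1)x^{\kappa_2-3}+R_{m_1+m_2}(\kappa_1)x^{\kappa_3-3}$. The three subcases are then organized by which obstruction constants vanish. In subcase (1) both vanish, so $y(x;\kappa_1)$ is itself a solution; its leading term $x^{\kappa_1}$ is distinct from those of $y(x;\kappa_2)$ and $y(x;\kappa_3)$, giving three logarithm-free solutions and hence apparentness at $0$. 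In subcase (2) only $R_{m_1+m_2}(\kappa_1)\neq0$ survives, so $\mathcal{L}y(x;\kappa_1)$ is a nonzero multiple of $x^{\kappa_3-3}$; since \eqref{eq-27} gives $\mathcal{L}\,\partial_\alpha y|_{\alpha=\kappa_3}=f'(\kappa_3)x^{\kappa_3-3}$ with $f'(\kappa_3)\neq0$, subtracting the multiple $f'(\kappa_3)/R_{m_1+m_2}(\kappa_1)$ of $y(x;\kappa_1)$ annihilates the right-hand side and yields the stated $y_3$; its $(\ln x)\,y(x;\kappa_3)$ term, visible from \eqref{eq-30}, shows that $y_3$ is independent of the first two and that $0$ is not apparent.

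The substantive case is (3), where $R_{m_1}(\kappa_1)\neq0$ and I must cancel an $x^{\kappa_2-3}$ and an $x^{\kappa_3-3}$ source term simultaneously. Here I would combine three source functions, namely $\partial_\alpha y|_{\alpha=\kappa_2}$, $y(x;\kappa_1)$, and $\partial_\alpha y|_{\alpha=\kappa_3}$, whose images under $\mathcal{L}$ are, respectively, $f'(\kappa_2)x^{\kappa_2-3}+R_{m_2}'(\kappa_2)x^{\kappa_3-3}$ (from \eqref{eq-40} with $R_{m_2}(\kappa_2)=0$), the expression above, and $f'(\kappa_3)x^{\kappa_3-3}$. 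Matching the $x^{\kappa_2-3}$ coefficient fixes the multiple of $y(x;\kappa_1)$ to be $-f'(\kappa_2)/R_{m_1}(\kappa_1)$, which is exactly where $R_{m_1}(\kappa_1)\neq0$ enters; then matching the $x^{\kappa_3-3}$ coefficient fixes the multiple of $\partial_\alpha y|_{\alpha=\kappa_3}$ to be $-[R_{m_1}(\kappa_1)R_{m_2}'(\kappa_2)-f'(\kappa_2)R_{m_1+m_2}(\kappa_1)]/[f'(\kappa_3)R_{m_1}(\kappa_1)]$, reproducing the $y_3$ in the statement, and the explicit closed form then follows by substituting \eqref{eq-30} at $\kappa_2$ and $\kappa_3$.

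The main obstacle is precisely this two-step coefficient bookkeeping in subcase (3) together with the final independence check. After cancellation one must confirm that the $(\ln x)\,y(x;\kappa_2)$ term surviving in $y_3$ cannot be absorbed by the remaining contributions, so that $y(x;\kappa_3)$, $y(x;\kappa_2)$, and $y_3$ stay linearly independent and $0$ is completely not apparent. This amounts to carefully tracking lowest-order powers and logarithmic terms through \eqref{eq-30}; it is routine once the two linear conditions determining the combination coefficients have been solved, but it is the step most prone to sign and indexing errors, so I would verify it by comparing the leading $x^{\kappa_1}$, $x^{\kappa_2}$, $x^{\kappa_3}$ behaviour of the three solutions directly.
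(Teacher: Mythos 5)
Your proposal is correct and follows essentially the same route as the paper: you combine the inhomogeneous Frobenius identities \eqref{eq-27}, \eqref{eq-34-0}, \eqref{eq-39} and \eqref{eq-40} (the last simplified by $R_{m_2}(\kappa_2)=0$) and solve the two linear coefficient-matching conditions in $x^{\kappa_2-3}$ and $x^{\kappa_3-3}$, which is exactly how the paper obtains $\mathcal{L}y_3=0$ in cases (2) and (3) — and you even silently correct the paper's typo in case (1), where \eqref{eq-34-0} of course yields $y(x;\kappa_1)$, not $y(x;\kappa_3)$, as the third solution. One small slip in your closing remarks: in case (3) the solution $y_3$ contains only a first power of $\ln x$, so the singularity is not apparent but not \emph{completely} not apparent — that stronger conclusion requires a $(\ln x)^2$ term and is reserved for the situations of Theorems \ref{thm-3-3}, \ref{thm-3-4}(2), \ref{thm-3-5}(2) and \ref{thm-3-7}(3).
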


\begin{proof}Note that (\ref{eq-40}) becomes
\begin{equation}\label{eq-44}
\mathcal{L}\frac{\partial y(x;\alpha)}{\partial \alpha}\Big|_{\alpha=\kappa_2}=x^{\kappa_2-3}f'(\kappa_2)+x^{\kappa_3-3}R_{m_2}'(\kappa_2).
\end{equation}

(2) Note that (\ref{eq-34-0}) becomes
\begin{equation}\label{eq-45}\mathcal{L}y(x;\kappa_1)=R_{m_1+m_2}(\kappa_1)x^{\kappa_3-3}\neq 0.\end{equation}
From here and (\ref{eq-27}), we easily obtain $\mathcal{L}y_3=0$.

(3) Similarly, it is easy see from (\ref{eq-27}), (\ref{eq-34-0}) and (\ref{eq-44}) that $\mathcal{L}y_3=0$.
\end{proof}

Similarly, we can obtain

\begin{Theorem}\label{thm-3-7} Suppose $\kappa_1<\kappa_2<\kappa_3$ and $R_{m_2}(\kappa_2)\neq 0$. Then
 \eqref{eq-27} and \eqref{eq-39} imply that
\begin{align*}
\frac{\partial y(x;\alpha)}{\partial \alpha}\Big|_{\alpha=\kappa_3}-\frac{f'(\kappa_3)}{R_{m_2}(\kappa_2)}y(x;\kappa_2)
 = (\ln x) y(x;\kappa_3) +x^{\kappa_3}\sum_{n=1}^{\infty}c_n'(\kappa_3)x^n-\frac{f'(\kappa_3)}{R_{m_2}(\kappa_2)}y(x;\kappa_2)
\end{align*}
is the second solution of \eqref{eq-23}. Furthermore,
\begin{itemize}\itemsep=0pt
\item[$(1)$] If $R_{m_1}(\kappa_1)=R_{m_1+m_2}(\kappa_1)=0$, then
\eqref{eq-34-0} implies that $y(x;\kappa_3)$ is the third solution of~\eqref{eq-23}.
\item[$(2)$] If $R_{m_1}(\kappa_1)=0$ and $R_{m_1+m_2}(\kappa_1)\neq 0$, then \eqref{eq-39} and \eqref{eq-45} imply that
\[y(x;\kappa_1)-\frac{R_{m_1+m_2}(\kappa_1)}{R_{m_2}(\kappa_2)}y(x;\kappa_2)\]
is the third solution of \eqref{eq-23}.
\item[$(3)$] If $R_{m_1}(\kappa_1)\neq 0$, then \eqref{eq-28}, \eqref{eq-39}, \eqref{eq-40} and \eqref{eq-34-0} imply that
\begin{align*}
y_3(x):={}
&\frac{\partial^2 y(x;\alpha)}{\partial \alpha^2}\Big|_{\alpha=\kappa_3}-\frac{2f'(\kappa_3)}{R_{m_2}(\kappa_2)}\frac{\partial y(x;\alpha)}{\partial \alpha}\Big|_{\alpha=\kappa_2}+C_1y(x;\kappa_1)
-C_2y(x;\kappa_2)\\
={} &(\ln x)^2 y(x;\kappa_3)+2(\ln x)x^{\kappa_3}\sum_{n=1}^{\infty}c_n'(\kappa_3)x^n
+x^{\kappa_3}\sum_{n=1}^{\infty}c_n''(\kappa_3)x^n\\
&{} -\frac{2f'(\kappa_3)}{R_{m_2}(k_2)}\bigg((\ln x) y(x;\kappa_2) +x^{\kappa_2}\sum_{n=1}^{\infty}c_n'(\kappa_2)x^n\bigg)+C_1y(x;\kappa_1)
-C_2y(x;\kappa_2)
\end{align*}
is the third solution of \eqref{eq-23}, where
\begin{gather*}
C_1:=\frac{2f'(\kappa_3)f'(\kappa_2)}{R_{m_2}(\kappa_2)R_{m_1}(\kappa_1)},\\
C_2:=\frac{1}{R_{m_2}(\kappa_2)}\left[f''(\kappa_3)-
\frac{2f'(\kappa_3)R_{m_2}'(\kappa_2)}{R_{m_2}(\kappa_2)}
+\frac{2f'(\kappa_3)f'(\kappa_2)R_{m_1+m_2}(\kappa_1)}{R_{m_2}(\kappa_2)R_{m_1}(\kappa_1)}\right].
\end{gather*}
In particular, \eqref{eq-23} is completely not apparent at $x=0$.
\end{itemize}
\end{Theorem}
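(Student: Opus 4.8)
The plan is to prove Theorem~\ref{thm-3-7} by the same mechanism driving Theorems~\ref{thm-3-5} and~\ref{thm-3-6}: each solution is assembled as a $\mathbb{C}$-linear combination of the generating functions $y(x;\alpha)$ and their $\alpha$-derivatives evaluated at $\kappa_1,\kappa_2,\kappa_3$, the scalar coefficients being chosen so that the inhomogeneous terms produced by $\mathcal{L}$ cancel. All source identities are already in hand: \eqref{eq-27} and \eqref{eq-28} for the $\alpha$-derivatives at $\kappa_3$, \eqref{eq-39} and \eqref{eq-40} for $y(x;\kappa_2)$ and $\frac{\partial y}{\partial\alpha}|_{\alpha=\kappa_2}$, and \eqref{eq-34-0} for $y(x;\kappa_1)$; the first solution is $y(x;\kappa_3)$ supplied by Lemma~\ref{lemma-3-2}. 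Since $\kappa_1<\kappa_2<\kappa_3$, the indicial polynomial $f(s)=(s-\kappa_1)(s-\kappa_2)(s-\kappa_3)$ has $f'(\kappa_j)\neq 0$ for all $j$, legitimizing every division by $f'(\kappa_2)$ or $f'(\kappa_3)$ below. For the second solution I would apply $\mathcal{L}$ to $\frac{\partial y}{\partial\alpha}|_{\alpha=\kappa_3}-\frac{f'(\kappa_3)}{R_{m_2}(\kappa_2)}y(x;\kappa_2)$; by \eqref{eq-27} and \eqref{eq-39} its image equals $x^{\kappa_3-3}f'(\kappa_3)-\frac{f'(\kappa_3)}{R_{m_2}(\kappa_2)}x^{\kappa_3-3}R_{m_2}(\kappa_2)=0$, the hypothesis $R_{m_2}(\kappa_2)\neq 0$ making the coefficient well defined. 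This combination begins at the exponent $\kappa_2$ and so is independent of $y(x;\kappa_3)$.

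For the third solution I would split according to the resonance quantities $R_{m_1}(\kappa_1)$ and $R_{m_1+m_2}(\kappa_1)$. In case~(1) both vanish, so \eqref{eq-34-0} reduces to $\mathcal{L}y(x;\kappa_1)=0$ and $y(x;\kappa_1)$ is itself the third solution, with exponent $\kappa_1$ (the symbol $y(x;\kappa_3)$ appearing in the statement should be read as $y(x;\kappa_1)$, since $y(x;\kappa_3)$ is already the first solution). In case~(2), \eqref{eq-34-0} collapses to \eqref{eq-45}, and combining this with \eqref{eq-39} via division by $R_{m_2}(\kappa_2)$ annihilates the source term $x^{\kappa_3-3}$, producing the stated solution $y(x;\kappa_1)-\frac{R_{m_1+m_2}(\kappa_1)}{R_{m_2}(\kappa_2)}y(x;\kappa_2)$, again of leading exponent $\kappa_1$.

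Case~(3), where $R_{m_1}(\kappa_1)\neq 0$, is the crux and the one that genuinely yields a logarithmic solution. I would apply $\mathcal{L}$ to the ansatz $y_3=\frac{\partial^2 y}{\partial\alpha^2}|_{\alpha=\kappa_3}-\frac{2f'(\kappa_3)}{R_{m_2}(\kappa_2)}\frac{\partial y}{\partial\alpha}|_{\alpha=\kappa_2}+C_1y(x;\kappa_1)-C_2y(x;\kappa_2)$ and insert \eqref{eq-28}, \eqref{eq-40}, \eqref{eq-34-0} and \eqref{eq-39}; the result is a combination of the three functions $(\ln x)x^{\kappa_3-3}$, $x^{\kappa_2-3}$ and $x^{\kappa_3-3}$. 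The preassigned coefficient $\frac{2f'(\kappa_3)}{R_{m_2}(\kappa_2)}$ is precisely what cancels the logarithmic term; requiring the coefficient of $x^{\kappa_2-3}$ to vanish then determines $C_1$ (where $R_{m_1}(\kappa_1)\neq 0$ enters), and requiring the coefficient of $x^{\kappa_3-3}$ to vanish determines $C_2$, and substituting the value of $C_1$ reproduces the two displayed formulas. The surviving $(\ln x)^2 y(x;\kappa_3)$ contribution from \eqref{eq-31} confirms that $y_3$ is a genuine logarithmic solution, so \eqref{eq-23} is completely not apparent; the three solutions are independent because their expansions begin, respectively, at $x^{\kappa_3}$, $x^{\kappa_2}$, and either $x^{\kappa_1}$ or $(\ln x)^2 x^{\kappa_3}$.

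The main obstacle I anticipate is the simultaneous bookkeeping in case~(3), where three linearly independent source monomials must be eliminated by a combination carrying only the two free constants $C_1,C_2$, the logarithmic coefficient having been fixed beforehand. Two points deserve care: first, that \eqref{eq-40} really does contribute a nonzero $x^{\kappa_2-3}f'(\kappa_2)$ term, which needs $f'(\kappa_2)\neq 0$, i.e.\ $\kappa_1<\kappa_2<\kappa_3$, and is exactly what distinguishes this theorem from Theorems~\ref{thm-3-5} and~\ref{thm-3-6}; and second, that the monomials $(\ln x)x^{\kappa_3-3}$, $x^{\kappa_2-3}$, $x^{\kappa_3-3}$ are linearly independent over $\mathbb{C}$, so that matching coefficients term by term is valid.
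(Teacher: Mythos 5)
Your proposal is correct and follows exactly the route the paper intends: Theorem~\ref{thm-3-7} is stated with ``Similarly, we can obtain'' and its proof is the same mechanism as Theorems~\ref{thm-3-5} and~\ref{thm-3-6}, namely applying $\mathcal{L}$ to the candidate combinations and cancelling the source terms $(\ln x)x^{\kappa_3-3}$, $x^{\kappa_2-3}$, $x^{\kappa_3-3}$ via \eqref{eq-27}, \eqref{eq-28}, \eqref{eq-39}, \eqref{eq-40}, \eqref{eq-34-0}, and your term-by-term matching indeed reproduces the stated $C_1$ and $C_2$. Your reading of the $y(x;\kappa_3)$ in item~(1) as a typo for $y(x;\kappa_1)$ is also right, since \eqref{eq-34-0} concerns $y(x;\kappa_1)$ and $y(x;\kappa_3)$ is already the first solution by Lemma~\ref{lemma-3-2}.
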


\begin{Remark}\label{rmkk} It follows from Theorem \ref{thm-3-6}(1) that $0$ can be apparent only for the case $\kappa_1<\kappa_2<\kappa_3$.
\end{Remark}

\begin{Remark}\label{rmk-apparent}
Clearly all the above arguments work when we study whether the regular singularity $\infty$ is apparent or not for
\begin{equation}\label{eq-60-0}y'''(z)+Q_2(z)y'(z)+Q_3(z)y(z)=0,\qquad z\in\mathbb{H},\end{equation}
when the local exponents $\kappa_\infty^{(1)}\leq\kappa_{\infty}^{(2)}\leq\kappa_{\infty}^{(3)}$ satisfy $\kappa_{\infty}^{(j)}-\kappa_{\infty}^{(j-1)}\in\mathbb{Z}$. Since $Q_j(z)$'s have Fourier expansions in terms of $q_N={\rm e}^{\frac{2\pi {\rm i}z}{N}}$ (where $N$ is the width of the cusp $\infty$ on $\Gamma$ and $N=1$ for $\Gamma=\SL(2,\mathbb{Z})$),
this is equivalent to whether the regular singularity $q_N=0$ is apparent or not for
\begin{equation}\label{eq-60}\left(q_N\frac{{\rm d}}{{\rm d}q_N}\right)^3y+\frac{N^2}{(2\pi {\rm i})^2}Q_2q_N\frac{{\rm d}}{{\rm d}q_N}y+\frac{N^3}{(2\pi {\rm i})^3}Q_3y=0.\end{equation}
All the above statements are true for (\ref{eq-60}) in terms of $q_N$. In particular, (\ref{eq-60-0}) or equivalently~(\ref{eq-60}) always has a solution of the form
\begin{equation}\label{eq-60-1}y_+(z):=q_N^{\kappa_{\infty}^{(3)}}\sum_{n=0}^{\infty}c_n(\kappa_{\infty}^{(3)})q_N^n,\quad c_0=1,\end{equation}
and (\ref{eq-60-0}) is completely not apparent at $z=\infty$ or equivalently (\ref{eq-60}) is completely not apparent at $q_N=0$ if and only if the local expansion of some solutions in terms of $q_N$ contains the term $z^2y_+(z)$ because of $\ln q_N=2\pi {\rm i} z/N$.
More precisely, if (\ref{eq-60-0}) is completely not apparent at
$z=\infty$, then it follows from Theorems~\ref{thm-3-3},
\ref{thm-3-4}(2), \ref{thm-3-5}(2) and Theorem~\ref{thm-3-7}(3)
that~(\ref{eq-60-0}) has two solutions of the following form
\begin{gather}
 y_-(z) :=z^2y_+(z)+z\eta_1(z)+\eta_2(z),\qquad
 y_\perp(z) :=zy_+(z)+\eta_3(z),\label{eq-60-2}
 \end{gather}
such that $(y_-,y_\perp,y_+)^t$ is a basis of solutions, where
\[\eta_1(z)=q_N^{\kappa_{\infty}^{(2)}}\sum_{n=0}^{\infty}c_{n,1}q_N^{n},\qquad \eta_2(z)=q_N^{\kappa_{\infty}^{(1)}}\sum_{n=0}^{\infty}c_{n,2}q_N^{n},
\qquad\eta_3(z)=q_N^{\kappa_{\infty}^{(2)}}\sum_{n=0}^{\infty}c_{n,3}q_N^{n}.\]
Note that $c_{0,j}=0$ may happen for any $j$; see Theorem~\ref{thm-3-3} for example.
\end{Remark}

\subsection*{Acknowledgements}

We would like to thank the referees for many valuable comments and pointing out some references. The research of Z.~Chen was supported by NSFC (No.~12071240).

\pdfbookmark[1]{References}{ref}
\LastPageEnding

\end{document}